\documentclass{article}

\newif\ifshownavigationpage
\newif\ifshowreminders
\newif\ifshownotationindex
\newif\ifshowtheoremlinks
\newif\ifshowtheoremtree
\newif\ifshowtheoremlist
\newif\ifshowequationlist

\newif\ifshowcomments
\newif\ifhighlight 
\newif\ifelaborate

\newif\ifshowaddressedcomments
\newif\ifshowrvin
\newif\ifshowrvout

\showrvintrue
\showrvouttrue

\showcommentstrue
\usepackage{amsthm, amsmath, amssymb}
\usepackage{thmtools}
\usepackage{soul}
\usepackage{authblk}
\usepackage{framed}
\usepackage{mathrsfs}
\usepackage{hyperref}
\hypersetup{
    colorlinks,
    linkcolor={red!50!black},
    citecolor={blue!50!black},
    urlcolor={blue!80!black}
}

\usepackage[dvipsnames]{xcolor}
\usepackage{graphicx, multicol}
\usepackage{marvosym, wasysym}
\usepackage{fancyhdr}
\usepackage{stackengine}
\usepackage{algorithm}
\usepackage{bm}
\usepackage[noend]{algpseudocode}
\usepackage{bbm}
\usepackage{verbatim}
\usepackage{mdframed}
\usepackage{needspace}
\makeatletter
\renewcommand{\ALG@beginalgorithmic}{\scriptsize}
\makeatother
\usepackage{xcolor}
\allowdisplaybreaks

\DeclareFontFamily{U}{mathx}{}
\DeclareFontShape{U}{mathx}{m}{n}{ <-> mathx10 }{}
\DeclareSymbolFont{mathx}{U}{mathx}{m}{n}
\DeclareFontSubstitution{U}{mathx}{m}{n}

\renewcommand{\dj}[1]{\bm{d}_{J_1}^{_{#1}}}

\ifhighlight
\else
    
\fi

\newcommand{\rvoutopacity}{20}
\ifshowrvin

\else

\fi

\ifshowrvout
    \newcommand{\rvout}[1]{{\color{red!\rvoutopacity}{#1}} }
    \newcommand{\chrout}[1]{{\color{blue!\rvoutopacity}{#1}} }    
    \newcommand{\rvoutm}[1]{{\color{black!\rvoutopacity}{\ifmmode\text{\sout{\ensuremath{\displaystyle#1}}}\else\sout{#1}\fi}} } % rvout in math mode
\else
    \newcommand{\rvout}[1]{}
    \newcommand{\chrout}[1]{}
\fi

\ifshowcomments
    \newcommand{\summ}[1]{{\color{blue}[summary: #1]} } % summary
    \newcommand{\chr}[1]{{\color{PineGreen}[CR: #1]} } % comments by Chang-Han Rhee
    \newcommand{\xw}[1]{{\color{RoyalBlue}[XW: #1]} } % comments by Xingyu Wang
    \ifshowaddressedcomments
        \newcommand{\chra}[1]{{\color{PineGreen}\sout{[CR: #1]}} } % addressed comments by C.-H.R. 
        \newcommand{\xwa}[1]{{\color{RoyalBlue}\sout{[XW: #1]}} } % addressed comments by X.W.
    \else
        \newcommand{\chra}[1]{} % addressed comments by C.-H.R. 
        \newcommand{\xwa}[1]{} % addressed comments by X.W.
    \fi
\else
    \newcommand{\summ}[1]{} % summary
    \newcommand{\chr}[1]{} % comments by Chang-Han Rhee
    \newcommand{\chra}[1]{} % comments by C.-H.R. that are already addressed
    \newcommand{\xw}[1]{} % comments by Xingyu Wang
    \newcommand{\xwa}[1]{} % comments by X.W. that are already addressed 
\fi

\usepackage[shortlabels]{enumitem}

\newlist{thmdependence}{itemize}{10}
\setlist[thmdependence]{nosep,label=-}

\newcommand{\thmtreenode}[5]{\item[#1] \linkdest{location, thm tree #3} {#2}~\ref{#3} \linktopf{#3} \thmsum{#4}{#5}}
\newcommand{\thmtreenodewopf}[5]{\item[#1] \linkdest{location, thm tree #3} {#2}~\ref{#3} \thmsum{#4}{#5}}
\newcommand{\thmtreeref}[2]{\item[\elsewhere] {{\hyperlink{location, thm tree #2}{\color{gray}#1}}}~\ref{#2}\thmsum{0.5}{}}

\ifshowtheoremlinks
    \newcommand{\linksinthm}[1]{\emph{\linkdest{location, #1}\linktopf{#1} \linktothmtree{location, thm tree #1} }}
    \newcommand{\linksinthmwopf}[1]{\emph{\linkdest{location, #1} \linktothmtree{location, thm tree #1} }}
    \newcommand{\linksinpf}[1]{\linkdest{location, proof of #1}\linktothm{#1} \linktothmtree{location, thm tree #1} }
\else
    \newcommand{\linksinthm}[1]{}
    \newcommand{\linksinthmwopf}[1]{}
    \newcommand{\linksinpf}[1]{}
\fi

\ifshownotationindex
    \newcommand{\notationdef}[2]{\linkdest{location, notation definition of #1}\hyperlink{location, notation index of #1}{#2}}
    \newcommand{\notationidx}[2]{\linkdest{location, notation index of #1}\hyperlink{location, notation definition of #1}{#2}}
\else
    \newcommand{\notationdef}[2]{#2}
\fi
    
\newcommand{\linktopf}[1]{\hyperlink{location, proof of #1}{\pflinksymbol}}
\newcommand{\linktothm}[1]{\hyperlink{location, #1}{\thmlinksymbol}}
\newcommand{\linktothmtree}[1]{\hyperlink{#1}{\thmtreelinksymbol}}
\newcommand{\thmlinksymbol}{{\tiny [Theorem]}}
\newcommand{\pflinksymbol}{{\tiny [Proof]}}
\newcommand{\thmtreelinksymbol}{{\tiny [ThmTree]}}

\newcommand{\complete}{{\color{black}\checkmark}}

\newcommand{\issue}{{\color{red}\checkmark}}
\newcommand{\elsewhere}{}

\newcommand{\thmsum}[2]{\quad{\color{gray}\begin{minipage}[t]{#1\linewidth}{#2}\vspace{0.5\baselineskip}\end{minipage}}}

\makeatletter
\newcommand{\linkdest}[1]{%
    \Hy@raisedlink{\raisebox{5mm}[0pt][0pt]{\hypertarget{#1}{}}}%
}
\makeatother

\newcommand{\elaborateopacity}{50}
\newcommand{\elaboratecolor}{RawSienna}
\ifelaborate
    \newcommand{\elaborate}[1]{{\color{\elaboratecolor!\elaborateopacity}{
    \begin{framed}%[beforebreak=\par\noindent] 
    \noindent {\footnotesize[Elaboration]}
    #1 
    \end{framed}
    }}\noindent}
\else
    \newcommand{\elaborate}[1]{}
\fi
\usepackage[margin=1.2in]{geometry}

\newtheorem{theorem}{Theorem}
\newtheorem{lemma}[theorem]{Lemma}
\newtheorem{corollary}[theorem]{Corollary}
\newtheorem{proposition}[theorem]{Proposition}

\newtheorem{definition}[theorem]{Definition}

\newtheorem{assumption}{Assumption}
\newtheorem{remark}{Remark}
\newtheorem{condition}{Condition}

\newtheorem*{theorem-nonumber}{Theorem}
\newtheorem*{condition-nonumber}{Condition}
\newtheorem*{proposition-nonumber}{Proposition}

\usepackage{mathtools}
\DeclarePairedDelimiter{\ceil}{\lceil}{\rceil}
\DeclarePairedDelimiter\floor{\lfloor}{\rfloor}

\newcommand{\D}{\mathbb D}

\newcommand{\cmt}[1]{#1} % print note in box
\renewcommand{\cmt}[1]{} % omit note
\renewcommand{\P}{\mathbf{P}}

\newcommand{\E}{\mathbf{E}}
\newcommand{\RV}{\mathcal{RV}}
\newcommand{\R}{\mathbb{R}}
\newcommand{\Z}{\mathbb{Z}}
\renewcommand{\S}{\mathbb{S}}
\newcommand{\C}{\mathbb{C}}
\newcommand{\M}{\mathbb{M}}
\newcommand{\I}{\mathbbm{I}}
\renewcommand{\complement}{c}

\newcommand{\lo}{\mathit{o}}
\newcommand{\bo}{\mathcal{O}}

\usepackage[normalem]{ulem}

\usepackage{multirow}
\usepackage{longtable}

\usepackage{array}
\def\delequal{\mathrel{\ensurestackMath{\stackon[1pt]{=}{\scriptscriptstyle\Delta}}}}
\def\distequal{\mathrel{\ensurestackMath{\stackon[1pt]{=}{\scriptstyle d}}}}
\newcommand{\norm}[1]{\left\lVert#1\right\rVert}
\usepackage{mathtools}

\algrenewcommand\algorithmicrequire{\textbf{Require:}}
\algrenewcommand\algorithmicensure{\textbf{Postcondition:}}
\usepackage{subfigure}
\usepackage{float}
\usepackage{subfiles}
\title{First-Exit Time Analysis for Truncated Heavy-Tailed Dynamical Systems}

\DeclareMathAccent{\widecheck}{0}{mathx}{"71}

\author[1]{Xingyu Wang} 
\author[2]{Chang-Han Rhee}
\affil[1]{Quantitative Economics, University of Amsterdam\\
    Amsterdam, 1018 WB, NL}
\affil[2]{Industrial Engineering and Management Sciences, Northwestern University\\
    Evanston, IL, 60613, USA}
\begin{document}
%%%%%%%%%%%%%%%%
\maketitle

\begin{abstract}
\noindent
In this paper, we study the first-exit time of stochastic difference equation $X^\eta_{j+1}(x) = X^\eta_{j}(x) + \eta a\big( X^\eta_{j}(x)\big) + \eta \sigma\big( X^\eta_{j}(x)\big)Z_{j+1}$ and its truncated variant $X^{\eta|b}_{j+1}(x) =  X^{\eta|b}_{j}( x) + \varphi_b\big(\eta  a\big(  X^{\eta|b}_{j}( x)\big) + \eta \sigma\big(  X^{\eta|b}_{j}( x)\big) Z_{j+1}\big)$, where $\varphi_b(x) = (x/\norm{x})\min\{\norm{x}, b\}$
and the law of the noise $Z_t$ is multivariate regularly varying.
The truncation operator $\varphi_b(\cdot)$ is often introduced as a modulation mechanism in heavy-tailed systems, such as stochastic gradient descent algorithms in deep learning.  
By developing a framework that connects large deviations with metastability, we leverage the locally uniform sample-path large deviations for both processes in \cite{wang2024largedeviationsmetastabilityanalysis} to obtain precise characterizations of the joint distributions of the first exit times and exit locations. 
The resulting limit theorem unveils a discrete hierarchy of phase transitions (i.e., exit times) as the truncation threshold $b$ varies, and manifests the catastrophe principle, whereby key events or metastable behaviors in heavy-tailed systems are driven by catastrophic behavior in a few components while the rest of the system behaves nominally. 
These developments lead to a comprehensive heavy-tailed counterpart of the classical Freidlin-Wentzell theory.
\end{abstract}

\counterwithin{equation}{section}
\counterwithin{lemma}{section}
\counterwithin{corollary}{section}
\counterwithin{theorem}{section}
\counterwithin{definition}{section}
\counterwithin{proposition}{section}
\counterwithin{figure}{section}
\counterwithin{table}{section}

\tableofcontents

\section{Introduction}
Metastability analysis in stochastic dynamical systems has a rich history in probability theory and related fields. 
Since the foundational works of Kramers and Eyring \cite{eyring1935chemical, kramers1940brownian, glasstone1941theory}, which analyzed phase transitions in stochastic dynamical systems in the context of chemical reaction-rate theory, extensive theoretical advancements have been made. 
One of the most notable breakthroughs is the now-classical Freidlin-Wentzell theory \cite{freidlin1970onsmall, freidlin1973some, freidlin1984random}, which introduced large deviations machinery to the analysis of exit times and global behaviors of small random perturbations of dynamical systems. 
Further extensions of this approach in the context of statistical physics were pioneered in \cite{cassandro1984metastable} and described in detail in \cite{Olivieri_Vares_2005}. 
One of the key advantages of this approach---often called \emph{the pathwise approach}---is its ability to describe in detail the scenarios that lead to phase transitions. 
In particular, the large deviations formalism at the sample-path level enables precise identification of the most likely paths out of the metastability sets. 
This ensures that, asymptotically, whenever the dynamical system escapes from the metastability set, the escape routes almost always closely resemble these most likely paths.
However, the sample-path-level large deviations are typically available only in the form of logarithmic asymptotics, and hence, the asymptotic scale of the exit time can be determined only up to its exponential rate, requiring different approaches to identify the prefactor.
Another breakthrough is \emph{the potential-theoretic approach} initiated in \cite{bovier2001metastability, bovier2004metastability, bovier2005metastability} and later summarized in \cite{bovier2016metastability}. 
Instead of relying on large deviations machinery, this approach leverages potential-theoretic tools:
the scale of exit times for Markov processes can be expressed in terms of capacity, which, in turn, can be bounded using variational principles. 
The key advantage of this approach, compared to the pathwise approach, is that it is often possible to find test functions that tightly bound the capacity of the Markov chains, thereby yielding \emph{precise} asymptotics---rather than merely logarithmic asymptotics as in the pathwise approach---of the scales of exit times. 
Although the potential theoretic approach does not provide as much information (such as the most likely paths) as the pathwise approach beyond the asymptotics of the exit times, its sharpness has inspired extensive research activity. 
The early works in the potential theoretic approach were focused on reversible Markov processes. 
However, recent developments have extended the scope of the approach to enable the analysis of non-reversible Markov processes; see, for example, \cite{slowik2012note, landim2014metastability, gaudilliere2014dirichlet, lee2022non}.

While these developments provide powerful means to understand metastability of light-tailed systems, 
heavy-tailed systems exhibit a fundamentally different metastable behaviors and call for a different set of technical tools for successful analysis.
For example, early foundational works in heavy-tailed context \cite{imkeller2006first, imkeller2008levy, pavlyukevich2008metastable, imkeller2010first} 
proved that the exit times of the stochastic processes driven by heavy-tailed noises scale polynomially with respect to the scaling parameter.  
These papers also reveal that the exit events are almost always driven by a single disproportionately large jump, while the rest of the system's behavior remains close to its nominal behavior. 
By nominal behavior, we refer to the functional law of large numbers limit.
This is in sharp contrast to the light-tailed cases, where the exit times scale exponentially, and the exit events are driven by smooth tilting of the entire system from its nominal behavior.
One can view this as a manifestation of \emph{the principle of a single big jump}, a well-known folklore in extreme value theory. 
% For stochastic processes with independent increments over a finite time horizon, \cite{hult2005functional} systematically characterized the principle of a single big jump with an early formulation of heavy-tailed sample-path large deviations.

However, across machine learning, finance, operations research, and other disciplines,
many impactful key events or metastable behaviors
cannot be driven by a single big jump (e.g.\ \cite{Albrecher_Chen_Vatamidou_Zwart_2020,tankov2003financial,foss2006heavy,doi:10.1287/moor.1120.0539,wang2022eliminating}).
For example, in the context of deep learning, 
stochastic gradient descent (SGD) and its variants are the powerhouse behind the 
training of deep neural networks (DNNs),
and SGD's ability to find flat local minima are believed to be associated with the generalization performance of DNNs on test data (see, e.g., \cite{keskar2017on}).
Nevertheless, the commonly used gradient clipping technique (see, e.g., \cite{Engstrom2020Implementation, pascanu2013difficulty})
modifies the step size of SGD to prevent drastic changes of model weights in a single iteration,
which prevents the escape from a wide, flat local minima using a single big jump in SGD.
In such cases, we observe
the more general \emph{catastrophe principle}, where key events or metastable behaviors are driven by multiple big jumps under heavy tails.
% Stochastic gradient descent (SGD) and its variants are the methods of choice in training deep neural networks (DNNs). 
% Heavy-tailed SGDs have attracted significant attention in the recent past because of their ability to escape local minima with a single big jump, enabling them to explore non-convex energy landscapes within realistic training time horizons. 
% Such ability is widely believed to have fundamental connection to DNNs' remarkable generalization performance on test data.
% However, the pure form of SGD is rarely employed in practice.
% In particular, when the gradient noise appears to exhibit heavy-tailed behavior causing SGD to occasionally attempt to travel a long distance in a single step, the step size is truncated at a threshold. 
% This is a common practice known as gradient clipping; see, e.g., \cite{Engstrom2020Implementation, merity2018regularizing, graves2013generating, pascanu2013difficulty,zhang2020why}.
% With gradient clipping, the exit event from a large attraction field cannot be solely driven by a single big jump. 
% In general---as we rigorously confirm in this paper---when a single big jump is insufficient to cause the rare event of interest, it is driven by the minimal number of big jumps required to trigger it, while the rest of the system remains close to its nominal dynamics. 
% This portrayal provides a more complete picture than the principle of a single big jump and is referred to as \emph{the catastrophe principle}. 
% We will use the term catastrophe principle throughout this paper. 

A rigorous mathematical characterization of the catastrophe principle for L\'evy processes and random walks was established in the form of heavy-tailed sample-path large deviations \cite{rhee2019sample}, leveraging the $\M$-convergence theory originally introduced in \cite{lindskog2014regularly}. 
More recently, \cite{wang2024largedeviationsmetastabilityanalysis} considered a uniform extension of the $\M$-convergence theory and characterized the catastrophe principle for a much broader class of processes, including SGD-type stochastic difference equations under multivariate regularly varying dynamics.
The results in \cite{rhee2019sample, wang2024largedeviationsmetastabilityanalysis} can be viewed as the heavy-tailed counterpart of the Mogulskii's theorem \cite{lynch1987large, mogulskii1993large}
and Freidlin-Wentzell theory \cite{freidlin1984random},
and such a formulation of large deviations
provides precise asymptotics for heavy-tailed processes, in contrast to the logarithmic asymptotics of the classical large deviation principle.
This raises the hope that, for heavy-tailed dynamical systems, it may be possible to simultaneously obtain \emph{both} detailed descriptions of the scenarios leading to phase transitions (as in the pathwise approach \cite{freidlin1984random, Olivieri_Vares_2005}) and sharp asymptotics for the exit time (as in the potential-theoretic approach \cite{bovier2016metastability}).
% Successfully implementing this strategy for practical systems requires establishing strong enough sample-path large deviations and developing tools to translate these results into exit-time analyses tailored for heavy-tailed dynamical systems with transition dynamics potentially modulated by truncation. 
In this paper, we successfully implemented this strategy and obtained metastability analyses for 
heavy-tailed dynamical systems with transition dynamics potentially modulated by truncation. 
Specifically, the main contributions of this paper are as follows:
\begin{itemize}

    \item 
        We establish a scaling limit of the exit-time and exit-location for stochastic difference equations. 
        We accomplish this by developing a machinery for local stability analysis of general (heavy-tailed) Markov processes.
        Central to the development is the concept of asymptotic atoms, where the process recurrently enters and asymptotically regenerates.
        Leveraging the recent developments of locally uniform version of sample-path large deviations in \cite{wang2024largedeviationsmetastabilityanalysis} over these asymptotic atoms, 
        we derive sharp asymptotics for the joint distribution of (scaled) exit-times and exit-locations for heavy-tailed processes, as detailed in Theorem~\ref{theorem: first exit time, unclipped} and Corollary~\ref{corollary: first exit time, untruncated case}.
        Notably, the scaling rate parameter reflects an intricate interplay between the truncation threshold and the geometry of the drift, which is a feature absent in both the principle of a single big jump regime (heavy-tailed systems without truncation) and the conspiracy principle regime (light-tailed systems).

\end{itemize}

Below, we give a more detailed account of related literature and overview of the main results.
Let $(\bm Z_i)_{i \geq 1}$ be a sequence of independent copies of a random vector $\bm Z$ in $\R^d$ such that $\E \bm Z = \bm 0$.
% That is, there exists some slowly varying function $\phi$ such that $\P(\norm{\bm Z_1} > x) = \phi(x)x^{-\alpha}$.
For any $\eta > 0$ and $\bm x \in \R^m$, let $\big(\bm X^\eta_t(\bm x)\big)_{t \geq 0}$ be the solution to
% \begin{align}
%     \bm X^\eta_{j+1}(\bm x) & = \bm X^\eta_{j}(\bm x) + \eta \bm a\big( \bm X^\eta_{j}(\bm x)\big) + \eta \bm \sigma\big( \bm X^\eta_{j}(\bm x)\big)\bm Z_{j+1}\quad \forall j \geq 0
%     \label{intro, def for X eta j}
% \end{align}
\begin{align}
    \bm X^\eta_0(\bm x) = \bm x;\qquad
    \bm X^\eta_t(\bm x) = \bm X^\eta_{t - 1}(\bm x) +  \notationdef{notation-eta}{\eta} \bm a\big(\bm X^\eta_{t - 1}(\bm x)\big) + \eta\bm \sigma\big(\bm X^\eta_{t - 1}(\bm x)\big)\bm Z_t,\quad \forall t \geq 1.
     \label{def: X eta b j x, unclipped SGD}
\end{align}
Here, the vector-valued function 
$\notationdef{a}{\bm a}: \mathbb{R}^m \to \mathbb{R}^m$
and the matrix-valued function 
 $\notationdef{sigma}{\bm \sigma}:\mathbb{R}^m\to \mathbb{R}^{m\times d}$
are the drift coefficient and diffusion coefficients, and $\eta$ denotes the step length parameter.
The equivalent scalar form is
% Recall that we interpret all vectors as column vectors.
\begin{align}
    X^\eta_{t,i}(\bm x)
    =
    X^\eta_{t - 1,i}(\bm x)
    +
    \eta a_i\big(\bm X^\eta_{t - 1}(\bm x)\big)
    +
    \eta \sum_{j \in [d]} \sigma_{i,j}\big(\bm X^\eta_{t - 1}(\bm x)\big)Z_{t,j},
    \quad 
    \forall t \geq 1,\ i \in [m],    
    \nonumber
\end{align}
with
$
\bm a(\cdot) = \big(a_1(\cdot),\ldots,a_m(\cdot)\big)^\top,
$
$
\bm\sigma(\cdot) = \big(\sigma_{i,j}(\ldots)\big)_{i \in [m], j \in [d]},
$
and
$
\bm X^\eta_t(\bm x) = \big( X^\eta_{t,1}(\bm x),\ldots,X^\eta_{t,m}(\bm x) \big)^\top,
$
$
\bm Z_t = (Z_{t,1},\ldots,Z_{t,d})^\top.
$
% Dynamics of the form \eqref{def: X eta b j x, unclipped SGD}, as well as their extensions, 
% have been central to the study of applied probability and operations research, 
% as they commonly arise in key applications such as stochastic approximation (e.g., \cite{kushner2003stochastic}), 
% stochastic optimization (e.g., \cite{lan2020first}), 
% and Markov Chain Monte Carlo (e.g., \cite{10.1111/rssb.12183}).
Of particular interest is a truncated variant of \eqref{def: X eta b j x, unclipped SGD} under the operator
\begin{align}
    \notationdef{notation-truncation-operator-level-b}{\varphi_b}(\bm w) 
    \delequal{} 
     (b \wedge \norm{\bm w}) \cdot \frac{\bm w}{\norm{\bm w}}
    \ \ \ \forall \bm w \neq \bm 0,
    \qquad
    {\varphi_b}(\bm 0) \delequal \bm 0.
    \label{defTruncationClippingOperator}
\end{align}
Here, $u\wedge v = \min\{u,v\}$ and $u \vee v = \max\{u,v\}$, 
and throughout this paper we interpret $\norm{\cdot}$ over $\R^d$ as the $L_2$ norm.
That is, $\varphi_b$ is the projection operator onto the closed $L_2$ ball centered at the origin with radius $b$.
Given $\eta,b \in (0,\infty)$ and $\bm x \in \R^m$, let
\begin{align}
    \bm X^{\eta|b}_0(\bm x) = \bm x;\qquad
    {\bm X^{\eta|b}_t(\bm x)} = \bm X^{\eta|b}_{t - 1}(\bm x) +  \varphi_b\Big(\eta \bm a\big(\bm X^{\eta|b}_{t - 1}(\bm x)\big) + \eta \bm \sigma\big(\bm X^{\eta|b}_{t - 1}(\bm x)\big)\bm Z_t\Big),\quad \forall t \geq 1.
    \label{def: X eta b j x, clipped SGD}
\end{align}
% under the initial condition $$.
That is, 
$\bm X^{\eta|b}_t(\bm x)$ is a variation of $\bm X^{\eta}_t(\bm x)$ where the update at each step is bounded by $b$.
In the context of modern machine learning, 
 \eqref{def: X eta b j x, unclipped SGD} and \eqref{def: X eta b j x, clipped SGD}
can be viewed as models for the dynamics of SGD and the variant under gradient clipping (\cite{Engstrom2020Implementation, pascanu2013difficulty,zhang2020why,pmlr-v202-koloskova23a}), respectively.
In particular, if $\bm a$ is the negative gradient of the training loss $U$, then the argument of $\varphi_b(\cdot)$ in \eqref{def: X eta b j x, clipped SGD}, 
$\eta \bm a\big(\bm X_t^\eta(\bm x)\big) + \eta \bm \sigma \big(\bm X_t^\eta(\bm x)\big) \bm Z_{t+1} = -\eta \big(\nabla U\big(\bm X_t^\eta(\bm x)\big) - \bm \sigma \big(\bm X_t^\eta(\bm x)\big) \bm Z_{t+1} \big)$, represents the state-dependent stochastic gradient of $U$ at $\bm X_t^\eta(\bm x)$, scaled by the negative learning rate $-\eta$, which corresponds to the one-step displacement of SGD.

% The first exit time problem finds applications in numerous contexts, including chemical reactions \cite{kramers1940brownian}, physics \cite{chechkin2005barrier,chechkin2007barrier}, extreme climate events \cite{penland2008modelling}, mathematical finance \cite{scalas2000fractional}, and queueing systems \cite{shwartz1995large}.
As noted earlier, two modern approaches to the analysis of exit times for light-tailed stochastic dynamical systems are
% include the original Eyring-Kramers formula \cite{eyring1935chemical, kramers1940brownian, glasstone1941theory}, 
the Freidlin-Wentzell theory (or pathwise approach) detailed in the monographs \cite{freidlin1984random, Olivieri_Vares_2005} and the potential theoretic approach summarized in the monograph \cite{bovier2016metastability}.
% A classical result in this literature is the Eyring-Kramers law \cite{eyring1935chemical,kramers1940brownian} (see also \cite{freidlin1984random}), which characterizes the exit times of Brownian particles.
% See also \cite{Olivieri_Vares_2005} for the connections between large deviations, metastability, and statistical physics models.
Despite their success in the light-tailed contexts, neither the pathwise approach nor the potential theoretic approach readily extends to heavy-tailed contexts. 
% due to fundamentally different characteristics of the heavy-tailed dynamical systems. 
In particular, for truncated heavy-tailed dynamics such as $\bm X_{t}^{\eta|b}(\bm x)$, the explicit formula for the stationary distribution is rarely available, and its generator lacks the simplicity of the Brownian case, making the adaptation of potential theoretic approach to our context challenging.
Meanwhile, the pathwise approach hinges on the large deviation principles at the sample-path level.
Historically, however, the heavy-tailed large deviations at the sample-path level have been unavailable and considered to be out of reach until recently.
See, for instance, 
\cite{hult2005functional} for a functional level characterization of the principle of a single big jump,
and 
\cite{rhee2019sample,bernhard2020heavy,wang2024largedeviationsmetastabilityanalysis} for sample-path-level characterizations of the catastrophe principle. 
See also for an alternative formulation of the catastrophe principle in terms of the extended LDP in
\cite{borovkov2010large} providing log asymptotics.

Successful results in metastability analyses for heavy-tailed systems are relatively recent. 
For one-dimensional L\'evy driven SDEs, \cite{imkeller2006first, pavlyukevich2008metastable} proved that the exit times from metastability sets scale at a polynomial rate and the prefactor of the of the scale depend on the width of the potential wells rather than the height of the potential barrier.
Similar results have been established in more general settings,
such as the multi-dimensional analog in \cite{imkeller2010first},
% These results were extended to the multi-dimensional settings in \cite{imkeller2010first},
exit times for a global attractor instead of a stable point \cite{hogele2014exit}
(see also \cite{doi:10.1142/S0219493715500197} for its application in characterizing the limiting Markov chain of hyperbolic dynamical systems driven by heavy-tailed perturbations),
exit times under multiplicative noises in $\R^d$ \cite{pavlyukevich2011first},
extensions to infinite-dimensional spaces \cite{debussche2013dynamics},
and the (discretized) stochastic difference equations driven by $\alpha$-stable noises \cite{NEURIPS2019_a97da629}, to name a few.
% \cite{imkeller2009exponential} paints interesting picture of the hierarchy in the asymptotics of the first exit times.
Such metastability analyses were applied in \cite{simsekli2019tail} to study the generaliztion performance of DNNs trained by SGDs with heavy-tailed dynamics and, more recently, in \cite{JMLR:v25:21-1343} to analyze the sample efficiency of policy gradient algorithms in reinforcement learning.
% See also \cite{imkeller2009exponential,imkeller2010hierarchy} for the summary of the hierarchy in the asymptotics of the first exit times for heavy-tailed dynamics.
We note that these results focus on events associated with the principle of a single big jump.

In contrast, this paper develops a systematic tool for analyzing the exit times and locations, even in cases where the principle of a single big jump fails to account for the exit events, and more complex patterns arise during the exit process.
The process $\bm X_t^{\eta|b}(\bm x)$ exemplifies such a scenario, as the truncation operator $\varphi_b(\cdot)$ may prevent exits driven by a single big jump. 
We reveal phase transitions in the first exit times of $\bm X^{\eta|b}_t(\bm x)$, which depend on a notion of the ``discretized widths'' of the attraction fields.
% even when they are driven by multiple big jump events as in the case of $\bm X^{\eta|b}_j(\bm x)$. 
% Indeed, we characterize the asymptotics of the joint law of the first exit time and the exit location for heavy-tailed processes.
% In essence, under truncation threshold $b > 0$, it requires a minimum of $J^*_b = \ceil{ |s_\text{left}|\vee s_\text{right}/b }$ jumps for the truncated dynamics $X^{\eta|b}_j(x)$
% to exit from $I = (s_\text{left},s_\text{right})$ when initialized at the origin.
Specifically,
consider \eqref{def: X eta b j x, clipped SGD}
% and \eqref{intro, def for Y eta t} 
with drift coefficients $\bm a(\cdot) = - \nabla U(\cdot)$ 
for some potential function $U \in \mathcal{C}^1(\R^m)$.
Without loss of generality, let $I \subseteq \R^m$ be some open and bounded set containing the origin.
Suppose that the entire domain $I$ falls within the attraction field of the origin,
and the gradient field $-\nabla U(\cdot)$ is locally contractive around the origin
(see Section~\ref{subsec: first exit time, results, SGD} for details).
% in the following sense:
% for the ODE path $d\bm y_t(\bm x)/dt = -\nabla U(\bm y_t(\bm x))$ with initial condition $\bm y_0(\bm x) =\bm  x$,
% it holds that $\lim_{t \to \infty}\bm y_t(\bm x) = \bm 0$ for all $\bm x \in I$.
In other words,
when initialized within $I$, the deterministic gradient flow
$d\bm y_t(\bm x)/dt = -\nabla U(\bm y_t(\bm x))$ (under the initial condition $\bm y_0(\bm x) =\bm  x$)
will be attracted to and remain trapped near the origin.
However, due to the presence of random perturbations, 
% although 
$\bm X^{\eta|b}_t(\bm x)$ 
% tend to be attracted to the origin, they 
will eventually escape from $I$ after a sufficiently long time.
Of particular interest are the asymptotic scale of the first exit times as $\eta \downarrow 0$.
Theorem \ref{theorem: first exit time, unclipped} proves that
the joint law of the first exit time $\tau^{\eta|b}(\bm x) = \min\{t \geq 0:\ \bm X^{\eta|b}_t(\bm x) \notin I \}$
and the exit location $\bm X^{\eta|b}_{\tau}(\bm x) \triangleq \bm X^{\eta|b}_{\tau^{\eta|b}(\bm x)}(\bm x)$
admits the limit (uniformly for all $\bm x$ bounded away from $I^\complement$):
\begin{align}
    \Big(\lambda^I_b(\eta) \cdot  \tau^{\eta|b}(\bm x),\ \bm X^{\eta|b}_{\tau}(\bm x)\Big)
    \Rightarrow 
    (E,V_b)\qquad\text{ as }\eta \downarrow 0
    \label{result, intro, first exit time}
\end{align}
with some (deterministic) time-scaling function $\lambda^I_b(\eta)$.
Here, $E$ is an exponential random variable with the rate parameter 1, 
$V_b$ is some random element independent of $E$ and supported on $I^\complement$,
and the scaling function 
$\lambda_b^I(\eta)$ is regularly varying with index $-[1 + \mathcal J^I_b(\alpha-1)]$ as $\eta \downarrow 0$,
where $\mathcal J^I_b$ is the aforementioned discretized width of domain $I$ relative to the truncation threshold $b$.
The precise definition of $\mathcal J_b^I$ is provided in \eqref{def: first exit time, J *} in Section~\ref{subsec: first exit time, results, SGD}. 
However, we note that in the special case $b= \infty$, one can immediately verify that $\mathcal J_b^I = 1$, regardless of the geometry of $U$. 
Consequently, \eqref{result, intro, first exit time} reduces to the principle of a single big jump, as expected. 
When the drift is contractive so that $\nabla U(\bm x) \cdot \bm x \geq 0$ for all $\bm x \in I$, it is also straightforward to see that $\mathcal J_b^I = \lceil r / b \rceil$ where $r$ is the distance between $\bm 0$ and $I^c$, and hence, $\mathcal J_b^I$ is indeed precisely the discretized width of the attraction field $I$ relative to $b$. 
In particular, note that the drift is contractive within any attraction field in the one-dimensional cases.  
However, in general multi-dimensional spaces, $\mathcal J_b^I$ reflects a much more intricate interplay between the geometry of the drift $\bm a(\cdot)$ (or the potential $U(\cdot)$) and the truncation threshold $b$.  
\begin{figure}[t]
\vskip 0.2in
\begin{center}
\begin{tabular}{cc}
\footnotesize{(i)} & \footnotesize{(ii)} 
% & \footnotesize{(iii)} & \footnotesize{(iv)}
\\
\includegraphics[width=0.35\textwidth]{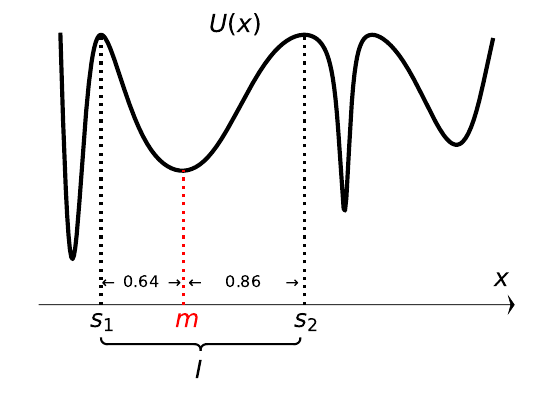} \qquad\qquad   & \qquad\qquad
\includegraphics[width=0.35\textwidth]{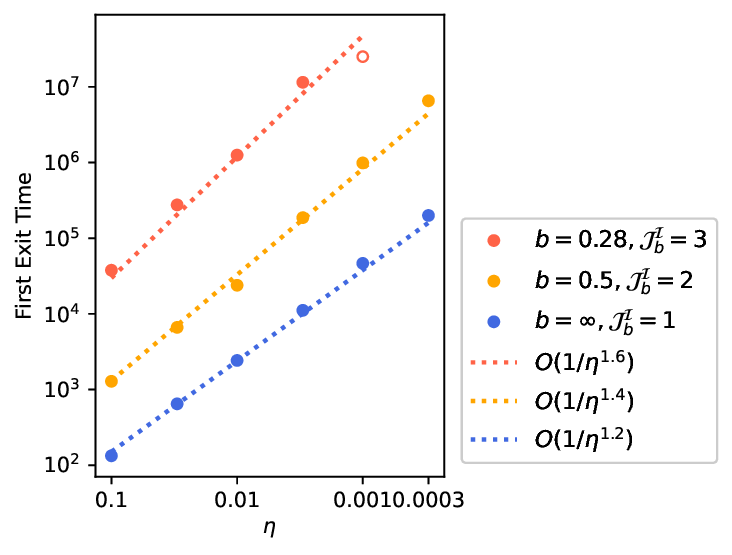} 
\end{tabular}
\caption{ 
Numerical examples of the metastability analysis.
\textbf{(i)} The univariate potential $U(\cdot)$ defined in \eqref{aeq: potential U, first exit time}.
\textbf{(ii)} First exit times $\tau^{\eta|b}(m)$ from $I$ under different truncation thresholds $b$ and scale parameters $\eta$.
Dashed lines are predictions from our results in Section~\ref{sec: first exit time simple version}, whereas
the dots are the exit times estimated using 20 samples.
Non-solid dot represents an underestimation due to early termination after long runtime ($5 \times 10^7$ steps). 
% It can be observed that the predictions and estimates align well.
% \textbf{(iii)} Histograms of locations within the potential potential $U(\cdot)$ visited by $X^{\eta|b}_t(x)$.
% Note that in (b), the sharp minima are almost completely eliminated from the trajectory of the SGD.
% \textbf{(iv)} Sample paths of $X^{\eta|b}_t(x)$.
% Dashed lines in (iii) and (iv) are added as references for the locations of local minima.
% Driven by truncated heavy-tailed noise, $X^{\eta|b}_t(x)$ almost completely avoids the sharp minima of $U(\cdot)$ in (b).
}
\label{fig: exit time}
\end{center}
\vskip -0.2in
\end{figure}
% All the associated definitions and the formal statement of \eqref{result, intro, first exit time} can be found in Section~\ref{subsec: first exit time, results, SGD} and Theorem~\ref{theorem: first exit time, unclipped}.
Figure~\ref{fig: exit time} illustrates the key role of the relative width $\mathcal J_b^{I}$ in one dimension.

Figure~\ref{fig: exit time} illustrates the catastrophe principle in the first exit behaviors of $\bm X^{\eta|b}_t(\bm x)$ by considering a univariate example.
Specifically, consider a potential function $U:\R \to \R$ depicted in 
Figure~\ref{fig: exit time} (i), where $I = (s_1,s_2)$ is the attraction field of the local minimum $m$.
Since $m$ is closer to the left boundary $s_1$, the minimal number of steps required to exit $I$ when starting from $m$ is $\mathcal J^{I}_b = \ceil{ |s - m_1|/b }$ where $b \in (0,\infty)$.  
In the untruncated case \eqref{def: X eta b j x, unclipped SGD} (i.e., with $b = \infty$), 
we simply have $\mathcal J^\mathcal{I}_\infty = 1$.
Figure~\ref{fig: exit time} (ii) illustrates the discrete structure of phase transitions in \eqref{result, intro, first exit time}, 
where
the first exit time $\tau^{\eta|b}(\bm x)$ is (roughly) of order
$
1/\eta^{ 1 + \mathcal J^I_b \cdot (\alpha - 1)  }
$
for small $\eta$, with $\alpha = 1.2$ being the index of $\bm Z_i$'s regular variation.
This means that the order of the first exit time $\tau^{\eta|b}(\bm x)$ does not vary continuously with respect to the truncation threshold $b$.
Instead, it exhibits a discrete dependence on $b$ through the integer-valued quantity $\mathcal J^I_b$.
Consequently, the wider the domain $I$, the \emph{asymptotically longer} the exit time $\tau^{\eta|b}(\bm x)$ will be.
See Section~\ref{subsec: numerical examples} for details of the numerical experiments presented in Figure~\ref{fig: exit time}.
% In the companion paper \cite{wangSGDpaper2}, we build on these phase transitions in exit times to reveal an intriguing global behavior of $\bm X_j^{\eta|b}$ over a multi-well potential: the distribution of $\bm X_j^{\eta|b}$'s sample path closely resembles a Markov chain that \textbf{completely avoids narrow local minima}; see Figure~\ref{fig: exit time} (iii) and (iv).
% More importantly, we demonstrate in \cite{wangSGDpaper2} that
% such global dynamics under truncated heavy tails are intimately related to the generalization performance of deep neural networks.

% generated under some probability measure $C_b(\cdot)$ supported on $I^\complement$, and $E$ and $V_b$ are independent with each other.
% $C^I_b$ is a normalization constant,
% and
% the scale function $\lambda(\eta) = \eta^{-1} \cdot \P(|Z_1| > \eta^{-1})$ is roughly decaying at a polynomial rate $\eta^{\alpha - 1}$ for small $\eta$
% with $\alpha > 1$ being the heavy-tailed index for noises $Z_i$.
% See Section \ref{subsec: first exit time, results, SGD} for precise definitions.
% Notably, Theorem \ref{theorem: first exit time, unclipped} presents a stronger result where the asymptotics in \eqref{result, intro, first exit time} hold uniformly for initial values $x$ over any compact set within $I$.
% Theorem~\ref{theorem: first exit time, unclipped} also obtains the first exit time analysis for $\bm X^\eta_j(\bm x)$ by considering an arbitrarily large truncation threshold $b \approx \infty$.

Our approach to the metastability analysis hinges on the concept of asymptotic atoms,
a general machinery we develop in Section \ref{subsec: framework, first exit time analysis}.
Asymptotic atoms are nested regions of recurrence at which the process asymptotically regenerates upon each visit.
Our locally uniform sample-path large deviations then prove to be the right tool in this framework,
empowering us to characterize the behavior of the stochastic processes uniformly for all initial values over the asymptotic atoms.
% We also point out that our results make weaker assumptions than the previous results, allowing for non-constant diffusion coefficient $\bm \sigma(\cdot)$
% and eliminating the need for regularity conditions such as $U \in \mathcal C^3(\R^m)$ and non-degeneracy for the Hessian of $U(\cdot)$ at the boundary of $I$.
It should be noted that
\cite{imkeller2009exponential} also investigated the exit events driven by multiple truncated jumps. 
However, in their context, the mechanism through which multiple jumps arise is due to a different tail behavior of the increment distribution that is lighter than any polynomial rate---more precisely, a Weibull tail---and it is fundamentally different from that of the regularly varying case.
(See also \cite{imkeller2010hierarchy} for the summary of the hierarchy in the asymptotics of the first exit times for heavy-tailed dynamics.)
% case where the L\'evy measure $\nu$ decays exponentially fast with speed $\nu\big((-\infty,-u]\cup[u,\infty)\big) \approx \exp(-u^\alpha)$.
% Along with the aforementioned results \cite{imkeller2006first,pavlyukevich2008metastable,imkeller2010first} for regularly varying SDEs, \cite{imkeller2009exponential} paints interesting picture of the hierarchy in the asymptotics of the first exit times. 
% See \cite{imkeller2010hierarchy} for the summary of such hierarchy.
Our results complement the picture and provide a missing piece of the puzzle by unveiling the phase transitions in the exit times under truncated regularly varying perturbations.
% In particular, we characterize a discrete structure of phase transitions in \eqref{result, intro, first exit time}, where
% we find that the first exit time $\tau^{\eta|b}(\bm x)$ is (roughly) of order
% $
% 1/\eta^{ 1 + \mathcal J^I_b \cdot (\alpha - 1)  }
% $
% for small $\eta$.
% This means that the order of the first exit time $\tau^{\eta|b}(\bm x)$ does not vary continuously with $b$;
% rather, it exhibits a discrete dependence on $b$
% through $\mathcal J^I_b$.
%, i.e., the minimum number of jumps  required for the exit.
% This phase transition phenomenon further exemplifies the catastrophe principle under regularly varying noises, as the ``cost'' function $J^*_b$ dictates not only the most likely cause (i.e., through $J^*_b$ large noises) but also the rarity of the exit (i.e., occurring roughly once every $1/\eta^{ 1 + J^*_b \cdot (\alpha - 1)  }$ steps).

A culmination of metastability analysis is the sharp characterization of the global dynamics of heavy-tailed processes, often established in the form of process-level convergence of scaled processes to simpler ones, such as Markov jump processes on a discrete state space; see, for example, \cite{Olivieri_Vares_2005, bovier2016metastability, imkeller2008levy, pavlyukevich2008metastable, lee2022non-b, rezakhanlou2023scaling}. 
For the SDE counterpart of the unregulated processes such as \eqref{def: X eta b j x, unclipped SGD}, it is well known 
that the exit events are governed by the principle of a single big jump,
and
the scaling limit is a Markov jump process with a state space consisting of the local minima of the potential function; see e.g., \cite{doi:10.1142/S0219493715500197, imkeller2008levy, pavlyukevich2008metastable}. 
In a companion paper \cite{wangSGDpaper2}, 
we demonstrate that the framework developed in this paper is strong enough to extend the above-mentioned results to the systems \emph{not} governed by the principle of a single big jump over a multi-well potential, by identifying scaling limits and characterizing their global behavior at the process level. 
In particular, the scaling limit for the truncated heavy-tailed dynamics is a Markov jump process that \emph{only visits the widest minima of the potential}. 
This is in sharp contrast to the untruncated cases \cite{doi:10.1142/S0219493715500197, imkeller2008levy, pavlyukevich2008metastable} where the limiting Markov jump process visits all the local minima with certain fractions.
% These findings systematically characterize a curious phenomena that the truncated heavy-tailed processes avoid narrow local minima altogether in the limit.
As a result, the fraction of time such processes spend in narrow attraction fields converges to zero as the scaling parameter (often called learning rate in the machine learning literature) tends to zero.
Precise characterization of such phenomena is of fundamental importance in understanding and further leveraging the curious effectiveness of the stochastic gradient descent (SGD) algorithms in training deep neural networks.
\\

Some of the the metastability analysis in Section~\ref{sec: first exit time simple version} of this paper have been presented in a preliminary univariate form at a conference \cite{wang2022eliminating}.
The main focus of \cite{wang2022eliminating} was the connection between the metastability analysis of stochastic gradient descent (SGD) and its generalization performance in the context of training deep neural networks. 
Compared to the brute force approach in \cite{wang2022eliminating},
the current paper provides a systematic framework to characterize the global dynamics for significantly more general class of heavy-tailed dynamical systems.
We also note that first-exit analysis of the form \eqref{result, intro, first exit time}
can be extended to stochastic difference equations akin to \eqref{def: X eta b j x, unclipped SGD} and \eqref{def: X eta b j x, clipped SGD} but under different scaling w.r.t.\ $\eta$,
or L\'evy-driven stochastic differential equations.
We state those results in Sections~\ref{sec: appendix, SGD, general scaling, results} and \ref{sec: appendix, SDE reults} in the Appendix.
\\
% We also note that (i) by sending the truncation threshold $b$ to $\infty$ in \eqref{intro, result, metastability}, we recover the global dynamics of $X^\eta_j(x)$ in Theorem \ref{theorem: metastability, unclipped}; 
% (ii) metastability analysis can be conducted analogously for stochastic differential equation $Y^\eta_t(x)$ and its corresponding truncated dynamics; see Section~\ref{subsec: metastability, SDE}.

The rest of the paper is organized as follows. 
Section~\ref{sec: preliminaries} reviews the uniform sample path large deviations developed in \cite{wang2024largedeviationsmetastabilityanalysis}.
Section~\ref{sec: main results} carries out the first-exit analysis and presents the main results of this paper,
with numerical examples collected in Section~\ref{subsec: numerical examples}.
% Section \ref{subsec: implications in ML, metastability} discusses the implication of the results in machine learning.
Sections~\ref{subsec: Exit time analysis framework}, \ref{sec: proof of proposition: first exit time}, and \ref{sec: proof for untruncated case} provides the proofs.
% Section~\ref{sec: LD of SGD, proof} and Section~\ref{sec: first exit time simple version, proof}
% % , and Section~\ref{sec: metastability, proof} 
% provide the proofs of 
% Sections~\ref{sec: M convergence, asymptotic equivalence}, \ref{subsec: LD, SGD}, and \ref{sec: first exit time simple version}.
Results for stochastic difference equations under more general scaling regimes are presented in Appendix~\ref{sec: appendix, SGD, general scaling, results}.
Results for SDEs driven by L\'evy processes with regularly varying increments are collected in Appendix~\ref{sec: appendix, SDE reults}.

\section{Preliminaries}
\label{sec: preliminaries}

In Section~\ref{subsec: notations}, we collect frequently used notations.
In Section~\ref{subsec: LD, SGD}, we review the sample-path large deviations developed in \cite{wang2024largedeviationsmetastabilityanalysis} for heavy-tailed dynamical systems.

\subsection{Notations}
\label{subsec: notations}

Let $\notationdef{set-for-integers-below-n}{[n]} \delequal \{1,2,\cdots,n\}$ for any positive integer $n$.
Let $\notationdef{notation-non-negative-numbers}{\mathbb{N}} = \{0,1,2,\cdots\}$ be the set of non-negative integers.
% and 
% $\notationdef{notation-non-negative-numbers-and-zero}{\mathbb{Z}_+} = \{0,1,2,\cdots\}$
% be the set of non-negative integers.
Let $(\mathbb{S},\bm{d})$ be a metric space with 
$\notationdef{notation-borel-sigma-algebra}{\mathscr{S}_\mathbb{S}}$
being the corresponding Borel $\sigma$-algebra.
For any $E\subseteq \mathbb{S}$,
let
$\notationdef{notation-interior-of-set-E}{E^\circ}$ and $\notationdef{notation-closure-of-set-E}{E^-}$ be the interior and closure of $E$, respectively.
For any $r > 0$, 
let
$\notationdef{notation-epsilon-enlargement-of-set-E}{E^r} \delequal 
\{ y \in \mathbb{S}:\ \bm{d}(E,y)\leq r\}$ be the $r$-enlargement of a set $E$.
Here for any set $A \subseteq \mathbb{S}$ and any $x \in \mathbb{S}$,
we define $\bm{d}(A,x) \delequal \inf\{\bm{d}(y,x):\ y \in A\}$.
Also, let
$
\notationdef{notation-epsilon-shrinkage-of-set-E}{E_{r}} \delequal
% \{x \in E: \bm{d}(x,y) < r \Longrightarrow y \in E\}
((E^c)^r)^\complement
$
be the $r$-shrinkage
of $E$.
Note that for any $E$, the enlargement $E^r$ of $E$ is closed, and the shrinkage $E_r$ of $E$ is open.
We say that set $A \subseteq \mathbb{S}$ is bounded away from another set $B \subseteq \mathbb{S}$
if $\inf_{ x\in A,y\in B }\bm{d}(x,y) > 0$.
For any Borel measure $\mu$ on $(\mathbb{S},\mathscr{S}_{\mathbb{S}})$,
let the support of $\mu$
(denoted as $\notationdef{notation-support-of-mu}{\text{supp}(\mu)}$)
% $\delequal \bigcap_{E:\ E\text{ closed},\ \mu(E) = 1}E $ 
be the smallest closed set $C$ such that $\mu(\S \setminus C)= 0$.
% be the smallest closed set $E$ such that $\mu(E) = 1$.
For any function $g: \mathbb{S} \to \R$, let
$
\notationdef{notation-support-of-function-g}{\text{supp}(g)} \delequal \big(\{ x \in \mathbb{S}:\ g(x) \neq 0 \}\big)^-.
$
Given two sequences of positive real numbers $(x_n)_{n \geq 1}$ and $(y_n)_{n \geq 1}$, 
we say that $x_n = \bo(y_n)$ (as $n \to \infty$) if there exists some $C \in [0,\infty)$ such that $x_n \leq C y_n\ \forall n\geq 1$.
Besides, we say that $x_n = \lo(y_n)$ if $\lim_{n \rightarrow \infty} x_n/y_n = 0$.

\subsection{Heavy-Tailed Large Deviations}
\label{subsec: LD, SGD}

We review the sample path large deviations for the truncated dynamics $\bm X^{\eta|b}_t(\bm x)$ in \eqref{def: X eta b j x, clipped SGD}
under i.i.d.\ $\bm Z_t$'s whose heavy tails are characterized via multivariate regular variation.
A measurable function $\phi:(0,\infty) \to (0,\infty)$ is regularly varying as $x \rightarrow\infty$ with index $\beta$ (denoted as $\phi(x) \in \RV_\beta(x)$ as $x \to \infty$) if $\lim_{x \rightarrow \infty}\phi(tx)/\phi(x) = t^\beta$ for all $t>0$. 
See also 
\cite{bingham1989regular, resnick2007heavy, foss2011introduction}
for a more detailed treatment to regularly varying functions.
We also say that a measurable function $\phi(\eta)$
is regularly varying as $\eta \downarrow 0$ with index $\beta$ 
if $\lim_{\eta \downarrow 0} \phi(t\eta)/\phi(\eta) = t^\beta$ for any $t > 0$,
denoted by
 $\phi(\eta) \in \notationdef{notation-RV-LDP}{\RV_{\beta}}(\eta)$ as $\eta \downarrow 0$.
Let
\begin{align}
    \notationdef{notation-H}{H(x)} \delequal \P(\norm{\bm Z} > x). \label{def: H, law of Z_j}
\end{align}
Given $\alpha > 0$, let $\notationdef{notation-measure-nu-alpha}{\nu_\alpha}$ be the Borel measure on $(0,\infty)$ with
\begin{align}
     \nu_\alpha[x,\infty) = x^{-\alpha}. \label{def: measure nu alpha}
\end{align}
Let $\notationdef{notation-R-d-unit-sphere}{\mathfrak N_d} \delequal \{\bm x \in \R^d:\ \norm{\bm x} = 1\}$ be the unit sphere of $\R^d$.
Let $\Phi: \R^d \setminus \{\bm 0\} \to [0,\infty) \times \mathfrak N_d$ be the polar transform, i.e.,
\begin{align}
    \notationdef{notation-Phi-polar-transform}{\Phi(\bm x)} \delequal 
    \Big(\norm{\bm x},\frac{\bm x}{\norm{\bm x}}\Big).
    % \begin{cases}
    %       &\text{ if }\bm x \neq 0,
    %      \\
    %      \big( 0, (1,0,0,\cdots,0)\big) & \text{ otherwise.}
    % \end{cases}
    \label{def: Phi, polar transform in Rm}
\end{align}
% Note that the origin is included in the domain of $\Phi$ simply to ease notations in the proofs, and $\Phi(\bm x)$ will not be applied at $\bm x = \bm 0$ in the analyses.
% Thus, $\Phi$ can be interpreted as the polar transform with domain extended to $\bm 0$.
Throughout this paper, we impose the following multivariate regular variation (MRV) assumption on $\bm Z$.
\begin{assumption}[Regularly Varying Noise]\label{assumption gradient noise heavy-tailed}
$\E \bm Z = \bm 0$. 
Besides, there exist some $\notationdef{alpha-noise-tail-index-LDP}{\alpha} > 1$ and 
a probability measure $\mathbf S(\cdot)$ on the unit sphere $\mathfrak N_d$ such that
\begin{itemize}
    \item 
        $H(x) \in \RV_{-\alpha}(x)$ as $x \to \infty$,
    \item 
        for the polar coordinates $(R,\bm \Theta) \delequal \Psi(\bm Z)$ (when $\bm Z \neq \bm 0$), we have (as $x \to \infty$)
        \begin{align}
            \frac{
                \P\Big( (x^{-1}R, \bm \Theta) \in\ \cdot\ \Big)
            }{
                H(x)
            }
            \xrightarrow{v}
            \nu_\alpha \times \mathbf S,
            % \qquad 
            % \text{in $\mathbb M\Big( 
            % \big([0,\infty) \times \mathfrak N_d \big)
            % \setminus
            % \big( \{0\} \times \mathfrak N_d \big)
            % \Big)$},
            \label{claim, Rd heavy tailed assumption}
        \end{align}
        where
        $\xrightarrow{v}$ denotes vague convergence on $(0,\infty] \times \mathfrak N_d$.
        % \textcolor{red}{state vague convergence of $M$-convergence?}

    % \item 
    %     the measure $\mathbf S(dx) = f_{\mathbf S}(x)dx$ admits a density over $\mathfrak N_d$, with $\inf_{ x \in \mathfrak N_d }f_{\mathbf S}(x) > 0$.
\end{itemize}
\end{assumption}
As noted in \cite{wang2024largedeviationsmetastabilityanalysis},
the classical vague convergence approach to the characterization of MRV (see, e.g.,  \cite{Resnick_2004, hult2006regular})
is equivalent to the $\M$-convergence approach adopted in \cite{wang2024largedeviationsmetastabilityanalysis}.

Throughout, we fix $m$ and $d$ to denote the dimensionality of $\bm X^{\eta|b}_t(\bm x)$ and $\bm Z_t$, respectively.
We consider the $L_2$ vector norm induced matrix norm
$
\norm{\textbf A} = \sup_{ \bm x \in \R^q:\ \norm{\bm x} = 1 }\norm{\textbf A\bm x}
$
for any $\textbf A \in \R^{p \times q}$,
and impose the following conditions on
the drift coefficient $
\bm a(\cdot) = \big(a_1(\cdot),\cdots,a_m(\cdot)\big)^\top
$ and
the
diffusion coefficient $
\bm\sigma(\cdot) = \big(\sigma_{i,j}(\cdot)\big)_{i \in [m], j \in [d]}
$
in \eqref{def: X eta b j x, clipped SGD}.

\begin{assumption}[Lipschitz Continuity]
\label{assumption: lipschitz continuity of drift and diffusion coefficients}
There exists $\notationdef{notation-Lipschitz-constant-L-LDP}{D} \in (0,\infty)$ such that
$$\norm{\bm \sigma(\bm x) - \bm \sigma(\bm y)} \vee \norm{\bm a(\bm x)-\bm a(\bm y)} \leq D\norm{\bm x - \bm y},\qquad \forall \bm x,\ \bm y \in \mathbb{R}^m.$$
\end{assumption}

We introduce a few notations required for presenting the precise asymptotics in the sample path large deviations results.
Let $(\notationdef{notation-D-0T-cadlag-space}{\mathbb{D}{[0,T]}},\notationdef{notation-D-J1}{\dj{[0,T]}})$
be the metric space where $\mathbb{D}[0,{T}] = \D\big([0,T],\R^m\big)$ is the space of all càdlàg functions with domain $[0,{T}]$ and codomain $\R^m$,
and $\dj{[0,T]}$ is the Skorodkhod $J_1$ metric
\begin{align}
    \dj{[0,T]}(x,y) \delequal 
\inf_{\lambda \in \Lambda_T} \sup_{t \in [0,T]}|\lambda(t) - t|
\vee \norm{ x(\lambda(t)) - y(t) }.
\label{def: J 1 metric on D 0 T}
\end{align}
Here,
$
\Lambda_T
$
is the set of all homeomorphism on $[0,T]$.
Throughout this paper, we fix some $m$ and $d$ and consider $\bm X^{\eta|b}_t(\bm x)$ taking values in $\R^m$ driven by $\bm Z_t$'s in $\R^d$.
% so we omit the subscript $m$ in $\D[0,T]$ and $\dj{[0,T]}$.
Given $A \subseteq \R$, 
let
$
\notationdef{order-k-time-on-[0,t]}{A^{k \uparrow}} \delequal 
\{
(t_1,\cdots,t_k) \in A^k:\ t_1 < t_2 < \cdots < t_k
\}
$
be the set of sequences of increasing real numbers on $A$ with length $k$.
For any $b$, $T \in (0,\infty)$ and $k \in \mathbb{N}$,
% any $\R^{m \times p}$-matrix-valued mapping $\textbf{V}(\cdot) = \big(V_{i,j}(\cdot)\big)_{i \in [m], j \in [p]}: \R^m \to \R^{m \times p}$ for some $p \geq 1$,
define the mapping 
$\notationdef{notation-h-k-t-bar-mapping-truncation-level-b-LDP}{\bar h^{(k)|b}_{[0,T]}}: \mathbb{R}^m\times \mathbb{R}^{d \times k} \times \R^{m \times k} \times (0,{T}]^{k\uparrow} \to \mathbb{D}{[0,T]}$ as follows.
Given
$\bm x \in \mathbb{R}^m$,
$\textbf{W} = (\bm w_1,\cdots,\bm w_k) \in \mathbb{R}^{d \times k}$, 
$\textbf V = (\bm v_1,\cdots,\bm v_k) \in \R^{m \times k}$,
and $\bm{t} = (t_1,\cdots,t_k)\in (0,T]^{k\uparrow}$, let $\bar h^{(k)|b}_{[0,T]}(\bm x,\textbf{W},\textbf V,\bm{t})$ be the solution $\xi$ to
\begin{align}
    \xi_0 & = \bm x; \label{def: perturb ode mapping h k b, 1}
    \\
    \frac{d\xi_s}{d s} & = \bm a(\xi_s)\ \ \ \forall s \in [0,{T}],\ s \neq t_1,t_2,\cdots,t_k; \label{def: perturb ode mapping h k b, 2}
    \\
    \xi_s & = \xi_{s-} + \bm v_j + \varphi_b\big( \bm \sigma(\xi_{s-} + \bm v_j)\bm w_j\big)\ \ \ \text{ if }s = t_j\text{ for some }j\in[k] \label{def: perturb ode mapping h k b, 3}
\end{align}
% That is, $\bar h^{(k)|b}_{[0,T]}(\bm x, \textbf W,\textbf V,\bm{t})$ returns a perturbed ODE path where the impact of the jumps $\bm w_j$ are modulated by $\bm \sigma(\cdot)$ and then truncated under $b$.
Similarly, define the mapping
$
\notationdef{notation-h-k-b-t-mapping-LDP}{h^{(k)|b}_{[0,T]}}:\R^m \times \R^{d \times k} \times (0,T]^{k \uparrow} \to \D[0,T]
$
by
\begin{align}
    h^{(k)|b}_{[0,T]}(\bm x, \bm W, \bm t)
    \delequal 
    \bar h^{(k)|b}_{[0,T]}\big(\bm x,\bm W, (\bm 0,\cdots,\bm 0), \bm t\big).
    \label{def: perturb ode mapping h k b, 4}
\end{align}
In other words,  $h^{(k)|b}_{[0,T]}(\bm x,\textbf{W},\bm{t})$ produces an ODE path perturbed by jumps $\bm w_1,\cdots,\bm w_k$ (with their sizes modulated by $\bm \sigma(\cdot)$ and then truncated under threshold $b$) at times $t_1,\cdots,t_k$,
and the mapping $\bar h^{(k)|b}_{[0,T]}$ further includes perturbations $\bm v_j$'s right before each jump.
For $k = 0$,
we adopt the convention that $\xi = \bar h^{(0)|b}_{[0,T]}(\bm x)$ is the solution to the ODE
${d\xi_s}/{d s} = \bm a( \xi_s)\ \forall s \in [0,T]$
with  $\xi_0 = \bm x$.
For each $r > 0$ and $\bm x \in \R^m$, let $\notationdef{notation-ball-r-x}{\bar B_r(\bm x)} \delequal \{ \bm{y}\in\R^m:\ \norm{\bm y - \bm x} \leq r \}$
be the closed ball with radius $r$ centered at $\bm x$.
Given $b,T \in (0,\infty)$, $\epsilon \geq 0$, $A \subseteq \R^m$ and $k \in \mathbb N$,
let
\begin{align}
    \notationdef{notation-D-A-k-t-truncation-b-LDP}{\mathbb{D}_{A}^{(k)|b}{[0,T]}(\epsilon) } \delequal \bar h^{(k)|b}_{[0,T]}\Big( A,\ \mathbb{R}^{m \times k},\ \big(\bar B_\epsilon(\bm 0)\big)^k,\ (0,T]^{k\uparrow} \Big)
    \label{def: l * tilde jump number for function g, clipped SGD}
\end{align}
% \chr{
% $$ 
% h^{(k)|b}_{[0,T]}\Big( A ,\, \mathbb{R}^{m \times k} ,\, \big(\bar B_\epsilon(\bm 0)\big)^k ,\, (0,T]^{k\uparrow} \Big)
% $$
% }%
be the set that contains all the ODE path with $k$ jumps by time $T$, 
i.e., the image of the mapping $\bar h^{(k)|b}_{[0,T]}$ defined in \eqref{def: perturb ode mapping h k b, 1}--\eqref{def: perturb ode mapping h k b, 3}, 
under small perturbations $\norm{\bm v_j} \leq \epsilon$ for all $j \in [k]$.
% By our definition of $\bar h^{(0)|b}_{[0,T]}$ above, $\D^{(0)|b}_A[0,T](\epsilon)$ simply contains all ODE paths associated with vector field $\bm a(\cdot)$ and initial values over $A$.
For $k = -1$, we adopt the convention that
$\mathbb{D}_{A}^{(-1)|b}[0,T](\epsilon) \delequal \emptyset$.
% Also, note the monotonicity property that 
% $
% \mathbb{D}^{(k)|b}_A{[0,T]}(\epsilon) \subseteq \mathbb{D}^{(k)|b}_A{[0,T]}(\epsilon^\prime)
% $
% for any $0 \leq \epsilon < \epsilon^\prime$
% and $k \geq -1$.
For any $t > 0$, let 
$\notationdef{notation-lebesgue-measure-restricted}{\mathcal{L}_t}$ be the Lebesgue measure restricted on $(0,t)$ and $\notationdef{notation-lebesgue-measure-on-ordered-[0,t]}{\mathcal{L}^{k\uparrow}_t}$ be the Lebesgue measure restricted on $(0,t)^{k \uparrow}$.
Given $\bm x \in \mathbb{R}^m$, $k \in \mathbb N$ and $b,T \in (0,\infty)$,
define the Borel measure
\begin{align}
 \notationdef{notation-measure-C-k-t-truncation-b-LDP}{{\mathbf{C}}^{(k)|b}_{{[0,T]}}(\ \cdot \ ;\bm x)} \delequal &
   \int \mathbbm{I}\Big\{ h^{(k)|b}_{{[0,T]}}\big( \bm x,\textbf W,\bm t  \big) \in\ \cdot\  \Big\} 
   \big((\nu_\alpha \times \mathbf S)\circ \Phi\big)^k(d \textbf W) \times\mathcal{L}^{k\uparrow}_{ {T} }(d\bm t),
   \label{def: measure mu k b t}
\end{align}
where
$\mathbf S$ is the probability measure on the unit sphere $\mathfrak N_d$ characterized in Assumption~\ref{assumption gradient noise heavy-tailed},
$\nu_\alpha$ is specified in \eqref{def: measure nu alpha},
$(\nu_\alpha \times \mathbf S) \circ \Phi$
is the composition of the product measure $\nu_\alpha \times \mathbf S$ with the polar transform $\Phi$, 
i.e.,
\begin{align}
    \big((\nu_\alpha \times \mathbf S)\circ \Phi\big)(B) \delequal 
    (\nu_\alpha \times \mathbf S)\big( \Phi(B) \big),
    \qquad \forall \text{Borel set }B \subseteq \R^d\setminus\{\bm 0\},
    \label{def, nu alpha times S composition polar transform}
\end{align}
and $\big((\nu_\alpha \times \mathbf S)\circ \Phi\big)^k$ is the $k$-fold product of $(\nu_\alpha \times \mathbf S)\circ \Phi$.
% In other words, 
% $(\nu_\alpha \times \mathbf S)^k$ is the $k$-fold measure of $\nu_\alpha \times \mathbf S$.
% In other words, for $\textbf W = (\bm w_1,\cdots,\bm w_k) \in \R^{d \times k}$ with $\bm w_j \neq 0\ \forall j \in [k]$, we have
% $
% \big((\nu_\alpha \times \mathbf S)\circ \Phi\big)^k(d\textbf W) 
%     = 
% \bigtimes_{j \in [k]}\big((\nu_\alpha \times \mathbf S)\circ \Phi\big)(d \bm w_j).
% $
% Note that for any $\bm x \in A$, the measure ${{\mathbf{C}}^{(k)|b}_{{[0,T]}}(\ \cdot \ ;\bm x)}$ is supported on $\D^{(k)|b}_A[0,T](0)$.
Next, define the rate function
\begin{align}
    \notationdef{notation-lambda-scale-function}{\lambda(\eta)} \delequal \eta^{-1}H(\eta^{-1})
    \label{def lambda scale function}
\end{align}
with $H(x) = \P(\norm{\bm Z} > x)$ in \eqref{def: H, law of Z_j}.
By Assumption~\ref{assumption gradient noise heavy-tailed}, $\lambda(\eta) \in \RV_{\alpha - 1}(\eta)$ as $\eta \downarrow 0$.
We write $\lambda^k(\eta) = \big(\lambda(\eta)\big)^k$.
For any $T,\ \eta,\ b \in (0,\infty)$, and $\bm x \in \R^m$,
let
$$
 \notationdef{notation-scaled-X-eta-mu-truncation-b-LDP}{{\bm{X}}^{\eta|b}_{[0,T]}(\bm x)} \delequal \{ \bm X^{\eta|b}_{ \floor{ t/\eta } }(\bm x):\ t \in [0,T] \}
$$
be the time-scaled version of the sample path of $\bm X_t^{\eta|b}(\bm x)$ embedded in $\D[0,T]$,
with $\notationdef{floor-operator}{\floor{t}}\delequal \max\{n \in \mathbb{Z}:\ n \leq t\}$
and
$\notationdef{ceil-operator}{\ceil{t}} \delequal \min\{n \in \Z:\ n \geq t\}$.
The next result is established in \cite{wang2024largedeviationsmetastabilityanalysis},
and our extension from domain $[0,1]$ to $[0,T]$ follows directly from the change of variable $\eta^\prime = \eta/T$.

% Now, we are ready to state Theorem~\ref{corollary: LDP 2},
% which establishes the uniform $\mathbb M$-convergence for the law of $\bm X^{\eta|b}_{[0,T]}(\bm x)$ to $\mathbf C^{(k)|b}_{[0,T]}(\ \cdot\ ;\bm x)$ and a uniform version of the sample path large deviations for $\bm X^{\eta|b}_{[0,T]}(\bm x)$.

\begin{theorem}[Theorem 2.5 of \cite{wang2024largedeviationsmetastabilityanalysis}]
\label{corollary: LDP 2}
\linksinthm{corollary: LDP 2}
Let Assumptions \ref{assumption gradient noise heavy-tailed} and \ref{assumption: lipschitz continuity of drift and diffusion coefficients} hold.
Let $k \in \mathbb N$, $b,T,\epsilon \in (0,\infty)$, and  $A\subset \R^m$ be compact.
% $$
%  \lambda^{-k}(\eta) \P\big( \bm{X}^{\eta|b}_{[0,T]}(\bm x) \in\ \cdot\ \big) 
%     \rightarrow 
%     \mathbf{C}^{(k)|b}_{[0,T]}   (\ \cdot\ ; \bm x)
%     % \text{ in } \mathbb{M}\big( \mathbb{D}[0,T]\setminus \mathbb{D}^{(k-1)|b}_{A}[0,T](\epsilon) \big),
%     % \text{ uniformly in $\bm x$ on $A$} 
% $$
% in
% $
% \mathbb{M}\big( \mathbb{D}[0,T]\setminus \mathbb{D}^{(k-1)|b}_{A}[0,T](\epsilon) \big)
% $
% uniformly in {$\bm x$ on $A$}
% as $\eta \downarrow 0$.
Given
$B \in \mathscr{S}_{\mathbb{D}[0,T]}$
that is 
bounded away from ${ \mathbb{D}}_{A}^{(k - 1)|b}[0,T](\epsilon)$ for some (and hence all) $\epsilon > 0$ small enough,
% Let Assumptions \ref{assumption gradient noise heavy-tailed}, \ref{assumption: lipschitz continuity of drift and diffusion coefficients}, and \ref{assumption: nondegeneracy of diffusion coefficients} hold.
% For any $b,T > 0$,
% any compact $A\subseteq \mathbb{R}$,
% any
% $B \in \mathscr{S}_{\mathbb{D}[0,T]}$ and $k \geq 1$,
% if
% $B$ is bounded away from ${ \mathbb{D}}_{A}^{(k - 1)|b}[0,T]$, then
\begin{equation}\label{claim, uniform sample path LD, corollary: LDP 2}
    \begin{split}
        \inf_{\bm x \in A}
    \mathbf{C}^{(k)|b}_{[0,T]}\big( B^\circ; \bm x \big)
& \leq  \liminf_{\eta \downarrow 0}\frac{ \inf_{\bm x \in A}\P\big({\bm{X}}^{\eta|b}_{[0,T]}(\bm x) \in B \big) }{  \lambda^k(\eta)  } 
\\
   & \leq  \limsup_{\eta \downarrow 0}\frac{ \sup_{\bm x \in A}\P\big({\bm{X}}^{\eta|b}_{[0,T]}(\bm x) \in B \big) }{  \lambda^k(\eta)  } 
    \leq 
    \sup_{\bm x \in A}
    \mathbf{C}^{(k)|b}_{[0,T]}\big( B^-; \bm x \big)
    <
    \infty.
    \end{split}
\end{equation}
\end{theorem}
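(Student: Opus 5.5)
The statement is quoted from \cite{wang2024largedeviationsmetastabilityanalysis}, where it is proved on the horizon $[0,1]$. So the plan is to \emph{reduce the $[0,T]$ version to the $[0,1]$ version by rescaling time}, rather than redo the large deviations analysis.

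\textbf{Step 1 (time change).} Set $\eta' = \eta/T$. Define the rescaled drift $\tilde{\bm a} = T\bm a$ and diffusion $\tilde{\bm\sigma} = T\bm\sigma$; both are still Lipschitz, with constant $TD$. Then
\[
\eta\bm a + \eta\bm\sigma\bm Z = \eta'\tilde{\bm a} + \eta'\tilde{\bm\sigma}\bm Z ,
\]
so $\bm X^{\eta|b}_t(\bm x)$ is exactly the truncated recursion with step length $\eta'$ and coefficients $(\tilde{\bm a},\tilde{\bm\sigma})$. Write $\tilde{\bm X}^{\eta'|b}_{[0,1]}(\bm x)$ for its time-scaled path on $[0,1]$. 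Let $\Gamma_T:\D[0,T]\to\D[0,1]$, $\Gamma_T(\xi)(u)=\xi(Tu)$. Then
\[
\Gamma_T\big(\bm X^{\eta|b}_{[0,T]}(\bm x)\big) = \tilde{\bm X}^{\eta'|b}_{[0,1]}(\bm x),
\]
since $\lfloor Tu/\eta\rfloor = \lfloor u/\eta'\rfloor$.

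\textbf{Step 2 (transport the limiting objects).} $\Gamma_T$ is a homeomorphism for the $J_1$ metrics: $\lambda\mapsto T^{-1}\lambda(T\,\cdot)$ is a bijection $\Lambda_T\to\Lambda_1$, so the metrics are equivalent up to constants in $T$. Hence $\Gamma_T$ maps interiors to interiors and closures to closures, and preserves the relation ``bounded away from''.

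Next, a time change in the ODE of \eqref{def: perturb ode mapping h k b, 2} gives
\[
\Gamma_T\circ \bar h^{(k)|b}_{[0,T]}(\bm x,\textbf W,\textbf V,\bm t)=\tilde{\bar h}^{(k)|b}_{[0,1]}(\bm x,\textbf W/T,\textbf V,\bm t/T).
\]
Here the jump in \eqref{def: perturb ode mapping h k b, 3} is unchanged, because $\varphi_b(\bm\sigma\bm w)=\varphi_b(\tilde{\bm\sigma}\,\bm w/T)$. So $\Gamma_T(\D^{(k-1)|b}_A[0,T](\epsilon))=\tilde\D^{(k-1)|b}_A[0,1](\epsilon)$.

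For the measure, substitute $\bm w\mapsto\bm w/T$ and $\bm t\mapsto \bm t/T$. Homogeneity gives $\nu_\alpha\circ(T\,\cdot)^{-1}=T^{\alpha}\nu_\alpha$ for the jump sizes, and the Lebesgue measure on $(0,T)^{k\uparrow}$ contributes a factor $T^{k}$. Therefore $\mathbf C^{(k)|b}_{[0,T]}(\Gamma_T^{-1}\cdot\,;\bm x)=c_{T,k}\,\tilde{\mathbf C}^{(k)|b}_{[0,1]}(\cdot\,;\bm x)$ for an explicit constant $c_{T,k}=T^{k(1+\alpha)}$ or similar. The exact power has to be checked against the paper's conventions.

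\textbf{Step 3 (scale function).} $\lambda(\eta)=\lambda(T\eta')$, and $\lambda^k(T\eta')/\lambda^k(\eta')\to T^{k(\alpha-1)}$ by regular variation. Applying the $[0,1]$ theorem to $\Gamma_T(B)$ and combining with the constants from Step 2 yields \eqref{claim, uniform sample path LD, corollary: LDP 2}, once all the constant factors are matched consistently. Finiteness is inherited from the $[0,1]$ result.

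\textbf{Main obstacle.} The delicate part is the bookkeeping of Step 2. One must confirm that the $[0,1]$ theorem is stated for arbitrary Lipschitz coefficients, so that it applies to $T\bm a$ and $T\bm\sigma$. One must also confirm that the constants from $\nu_\alpha$, from Lebesgue measure and from $\lambda$ reconcile exactly with the normalization in \eqref{def: measure mu k b t}. If they do not, the natural fix is to absorb $T$ into $\bm\sigma$ differently, e.g.\ rescale only the drift and keep $\bm Z$ unscaled.
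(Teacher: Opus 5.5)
Your reduction is the one the paper uses: its entire proof is the remark that the $[0,T]$ case follows from the $[0,1]$ theorem via $\eta'=\eta/T$. Several of your steps are correct as they stand:
\begin{itemize}
\item Step~1.
\item The homeomorphism part of Step~2 and the identity $\Gamma_T(\D^{(k-1)|b}_A[0,T](\epsilon))=\widetilde{\D}^{(k-1)|b}_A[0,1](\epsilon)$.
\item Step~3.
\item Applicability of the $[0,1]$ theorem to $(T\bm a,T\bm\sigma)$, since only Assumptions~\ref{assumption gradient noise heavy-tailed} and~\ref{assumption: lipschitz continuity of drift and diffusion coefficients} are required and Lipschitz continuity survives with constant $TD$.
\end{itemize}
However, the one computation that actually has to be done is wrong as written, so the argument does not close.

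In $\mathbf C^{(k)|b}_{[0,T]}(\Gamma_T^{-1}E;\bm x)$ the integration variables are $(\textbf W',\bm t')$, and the integrand is $\mathbbm{I}\{\widetilde h^{(k)|b}_{[0,1]}(\bm x,\textbf W'/T,\bm t'/T)\in E\}$. You therefore need the image of $\mu=(\nu_\alpha\times\mathbf S)\circ\Phi$ under $\bm w\mapsto\bm w/T$, i.e.\ $B\mapsto\mu(TB)$. Since $\nu_\alpha[Tx,\infty)=T^{-\alpha}x^{-\alpha}$, this image is $T^{-\alpha}\mu$, not $T^{\alpha}\mu$; you pushed forward in the wrong direction. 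Meanwhile $\mathcal L^{k\uparrow}_T$ maps to $T^k\mathcal L^{k\uparrow}_1$. Hence
\[
\mathbf C^{(k)|b}_{[0,T]}\big(\Gamma_T^{-1}(\,\cdot\,);\bm x\big)=T^{-k(\alpha-1)}\,\widetilde{\mathbf C}^{(k)|b}_{[0,1]}(\,\cdot\,;\bm x).
\]
With your constant $c_{T,k}=T^{k(1+\alpha)}$, the final bounds would be off by a factor $T^{-2k\alpha}$.

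With the correct constant everything cancels exactly. Write
\[
\frac{\P\big(\bm X^{\eta|b}_{[0,T]}(\bm x)\in B\big)}{\lambda^k(\eta)}
=\frac{\P\big(\widetilde{\bm X}^{\eta'|b}_{[0,1]}(\bm x)\in\Gamma_T(B)\big)}{\lambda^k(\eta')}\cdot\frac{\lambda^k(\eta')}{\lambda^k(T\eta')}.
\]
The last factor does not depend on $\bm x$ and tends to $T^{-k(\alpha-1)}$. Since $\Gamma_T$ is a homeomorphism,
\[
\widetilde{\mathbf C}^{(k)|b}_{[0,1]}(\Gamma_T(B)^{-};\bm x)=\widetilde{\mathbf C}^{(k)|b}_{[0,1]}(\Gamma_T(B^{-});\bm x)=T^{k(\alpha-1)}\mathbf C^{(k)|b}_{[0,T]}(B^-;\bm x),
\]
and likewise for interiors. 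Per jump, Lebesgue measure contributes $T$, $\nu_\alpha$ contributes $T^{-\alpha}$, and the scale contributes $T^{\alpha-1}$. This exact cancellation is the whole content of the paper's one-line remark, so your ``main obstacle'' disappears and no fallback is needed.

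Your proposed fallback would in fact be invalid as stated. Since $\eta\bm\sigma\bm Z=\eta'(T\bm\sigma)\bm Z$, the factor $T$ must be absorbed by $\bm\sigma$ or by $\bm Z$; keeping both unscaled gives no identity. Absorbing it into the noise is a legitimate variant: $T\bm Z$ satisfies Assumption~\ref{assumption gradient noise heavy-tailed} with the same $\mathbf S$. It even makes the scale transform exactly, $\eta'^{-1}\P(T\norm{\bm Z}>\eta'^{-1})=T\lambda(\eta)$, so no regular-variation limit is needed.
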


\section{Main Results}
\label{sec: main results}
This section presents the main results of this paper.
Section~\ref{sec: first exit time simple version} carries out the metastability analysis.
Section~\ref{subsec: numerical examples} presents details for numerical example in Figure~\ref{fig: exit time}.

\subsection{Metastability Analysis}
\label{sec: first exit time simple version}
This section analyzes the metastability of $\bm X_t^\eta(\bm x)$ and $\bm X_t^{\eta|b}(\bm x)$
defined in \eqref{def: X eta b j x, unclipped SGD} and \eqref{def: X eta b j x, clipped SGD}.
Section~\ref{subsec: first exit time, results, SGD} establishes the scaling limits of their exit times.
Section~\ref{subsec: framework, first exit time analysis} introduces a framework that facilitates such analysis for general Markov chains.
The results for stochastic differential equations and/or under more general scaling regimes are collected in the Appendix.

\subsubsection{First Exit Times and Locations}
\label{subsec: first exit time, results, SGD}
In this section,
we analyze the first exit times and locations of $\bm X^\eta_t(\bm x)$ and $\bm X^{\eta|b}_t(\bm x)$
% , defined in \eqref{def: X eta b j x, unclipped SGD} and \eqref{def: X eta b j x, clipped SGD},
% where
% the heavy-tailed noises $(Z_j)_{j \geq 1}$ satisfy Assumption \ref{assumption gradient noise heavy-tailed}
% and $a(\cdot)$ and $\sigma(\cdot)$ satisfy Assumptions \ref{assumption: lipschitz continuity of drift and diffusion coefficients} and \ref{assumption: nondegeneracy of diffusion coefficients}.
% We study the first exit time of $X^\eta_j(x)$ and $X^{\eta|b}_j(x)$
from an attraction field of some potential with a unique local minimum at the origin.
Specifically, throughout Section \ref{subsec: first exit time, results, SGD}, we fix an open set $\notationdef{notation-exit-domain-I}{I} \subset \R^m$ 
that is bounded and contains the origin,
i.e., $\sup_{\bm x \in I}\norm{\bm x} < \infty$
and
$\bm 0 \in I$.
Let $\notationdef{notation-continuous-gradient-descent}{\bm{y}_t(\bm x)}$ be the solution of ODE
\begin{align}
\bm y_0(\bm x) = \bm x,\qquad
   \frac{d\bm{y}_t(\bm x)}{dt} = \bm a\big(\bm{y}_t(\bm x)\big) \ \ \forall t \geq 0.
   \label{def ODE path y t}
\end{align}
We impose the following assumption on the gradient field $\bm a:\ \R^m \to \R^m$.

\begin{assumption}
\label{assumption: shape of f, first exit analysis}
$\bm a(\bm 0) = \bm 0$.
The open set $I \subset \R^m$ contains the origin and is bounded, i.e.,
$\sup_{\bm x \in I}\norm{\bm x} < \infty$
and
$\bm 0 \in I$.
For all $\bm x \in I \setminus \{\bm 0\}$,
\begin{align*}
    \bm y_t(\bm x) \in I\ \ \forall t \geq 0,
    \qquad
    \lim_{t \to \infty}\bm y_t(\bm x) = \bm 0.
\end{align*}
Besides, it holds for all $\epsilon > 0$ small enough that
$
\bm a(\bm x)\bm x < 0\ \forall \bm x \in \bar B_\epsilon(\bm 0) \setminus \{\bm 0\}.
$

\end{assumption}

An immediate consequence of the condition $\lim_{t \to \infty}\bm y_t(\bm x) = \bm 0\ \forall \bm x \in I\setminus\{\bm 0\}$ is that $\bm a(\bm x) \neq \bm 0$ for all $\bm x \in I\setminus \{\bm 0\}$.
Of particular interest is the case where $\bm a(\cdot) = -\nabla U(\cdot)$
for some potential $U\in \mathcal{C}^1(\R^m)$
that has a unique local minimum at $\bm x = 0$ over the domain ${I}$.
In particular, Assumption~\ref{assumption: shape of f, first exit analysis} holds if $U$ is also locally $\mathcal C^2$ around the origin, and the Hessian of $U(\cdot)$ at the origin $\bm x = \bm 0$ is positive definite.
We note that Assumption~\ref{assumption: shape of f, first exit analysis} is a standard one in existing literature;
see e.g.\ \cite{doi:10.1142/S0219493711003413,imkeller2010first}.
% Assumption \ref{assumption: shape of f, first exit analysis}
% then implies that $U$ has a unique local minimum at $\bm x = 0$ over the domain ${I}$.
% Besides, the potential $U$ has a non-degenerate second-order derivative $U^{\prime\prime}(0) = - a^\prime(0) > 0$.
% It is worth noticing that Assumption \ref{assumption: shape of f, first exit analysis} is more flexible than the assumptions in other related works in the literature.
% For instance,
% \cite{pavlyukevich2008metastable,imkeller2006first} required the second-order derivative $U^{\prime\prime}(\cdot)$ to be non-degenerate at the origin as well as the boundary points of $I$, with an extra condition of $U \in \mathcal C^3$ over a wide enough compact set, and held the drift coefficient $\sigma(\cdot)$ as constant.
% In contrast, Assumption~\ref{assumption: shape of f, first exit analysis} is close to minimum assumption required for $I$ to be an attraction field associated with the origin. 
% we conduct a first exit time analysis with significantly relaxed assumptions.

Define 
\begin{align}
    \notationdef{notation-tau-eta-x-first-exit-time}{\tau^\eta(\bm x)} \delequal \min\big\{j \geq 0:\ \bm X^\eta_j(\bm x) \notin I\big\},
    \qquad
    \notationdef{notation-tau-eta-b-x-first-exit-time}{\tau^{\eta|b}(\bm x)} \delequal \min\big\{j \geq 0:\ \bm X^{\eta|b}_j(\bm x) \notin I \big\},
    \label{def: first exit time for heavy tailed SGD}
\end{align}
as the first exit time of $\bm X^\eta_j(\bm x)$ and $\bm X^{\eta|b}_j(\bm x)$ from $I$, respectively.
To facilitate the presentation of the main results, we introduce a few concepts.
Define the mapping 
$
\bar g^{(k)|b}: \R^m \times \R^{d \times k} \times \R^{m \times k} \times (0,\infty)^{k\uparrow} \to \R^m
$
as the location of the (perturbed) ODE with $k$ jumps at the last jump time:
\begin{align}
    \notationdef{notation-mapping-bar-g-k-b}{\bar g^{(k)|b}\big( \bm x, \textbf W, \textbf V, (t_1,\cdots,t_k)\big)}
    \delequal 
    \bar h^{(k)|b}_{ [0,t_k + 1] }
    \Big(
        \bm x,
        \textbf W,
        \textbf V,
        (t_1,\cdots,t_k)
    \Big)(t_k),
    \label{def: bar g k b mapping, metastability}
\end{align}
where $\bar h^{(k)|b}_{[0,T]}$ is the perturbed ODE mapping defined in \eqref{def: perturb ode mapping h k b, 1}--\eqref{def: perturb ode mapping h k b, 3}.
Note that the definition remains the same if, in \eqref{def: bar g k b mapping, metastability}, we use mapping $\bar h^{(k)|b}_{[0,T]}$
with any $T \in [t_k,\infty)$ instead of $\bar h^{(k)|b}_{[0,t_k + 1]}$.
We include a $+1$ offset only to extend the time range of the mapping and simplify some arguments in our proofs.
Besides,
define 
$
\widecheck{g}^{(k)|b}: \R^m \times \R^{d\times k} \times (0,\infty)^{k\uparrow} \to \R^m
$
by
\begin{align}
    \notationdef{notation-check-g-k-b}{\widecheck{g}^{(k)|b}(\bm x,\textbf W,\bm t)}
    \delequal 
    \bar g^{(k)|b}\big(\bm x, \textbf W, (\bm 0,\cdots,\bm 0), \bm t\big)
    =
    h^{(k)|b}_{[0,t_k+1]}(\bm x,\textbf W,\bm t)(t_{k}),
    \label{def: mapping check g k b, endpoint of path after the last jump, first exit analysis}
\end{align}
where $\bm t = (t_1,\ldots,t_k) \in (0,\infty)^{k\uparrow}$, and the mapping 
$h^{(k)|b}_{[0,T]}$ is defined in \eqref{def: perturb ode mapping h k b, 4}.
\elaborate{%%%%%%%%%%%%%%%%%%%%%%%%%%%%%%%%%%%%%%%
That is,
for any $T > t_k$ and $\xi =  h^{(k)|b}_{[0,T]}(x,w_1,\cdots,w_k,t_1,\cdots,t_k)$,
we set $\widecheck{g}^{(k)|b}(x,w_1,\cdots,w_k,t_1,\cdots,t_k) = \xi(t_k)$, i.e., the value of path $\xi$ right after the last jump.
}%%%%%%%%%%%%%%% end of \elaborate %%%%%%%%%%%%%%%
For $k = 0$,
we adopt the convention that $\bar{g}^{(0)|b}(\bm x) = \bm x$.
With mappings $\bar g^{(k)|b}$ defined, we are able to introduce 
(for any $k \geq 1$, $b> 0$, and $\epsilon \geq 0$)
\begin{align}
    \notationdef{notation-set-G-k-b-epsilon}{\mathcal G^{(k)|b}(\epsilon)} 
    & \delequal 
    \bigg\{
    \bar g^{(k - 1)|b}
    \Big( \bm v_1 + \varphi_b\big(\bm \sigma(\bm v_1)\bm w_1\big),
    (\bm w_2,\cdots, \bm w_k), (\bm v_2,\cdots,\bm v_k), \bm t 
    \Big):
    \nonumber
    \\ 
    & \qquad\qquad
    \textbf W = (\bm w_1,\cdots, \bm w_k) \in \R^{d\times k},
    \textbf V = (\bm v_1,\cdots, \bm v_k) \in \Big(\bar B_\epsilon(\bm 0)\Big)^k,
    \bm t \in (0,\infty)^{k - 1 \uparrow}
    \bigg\}
    \label{def: set G k b epsilon}
\end{align}
as the set covered by the $k^\text{th}$ jump of along ODE path initialized at the origin, with each jump modulated by $\bm \sigma(\cdot)$ and truncated under $b$ (and an $\epsilon$ perturbation right before each jump).
Here, the truncation operator $\varphi_b$ is defined in \eqref{defTruncationClippingOperator}, and $\bar B_{r}(\bm 0)$ is the closed ball with radius $r$ centered at the origin.
Under $\epsilon = 0$, we write
\begin{align*}
    \notationdef{notation-set-G-k-b}{\mathcal G^{(k)|b}}
    \delequal 
    \mathcal G^{(k)|b}(0)
    = 
    \bigg\{
    \widecheck{g}^{(k - 1)|b}
    \Big( \varphi_b\big(\bm \sigma(\bm 0)\bm w_1\big),
    (\bm w_2,\cdots, \bm w_k), \bm t 
    \Big):\ 
    \textbf W = (\bm w_1,\cdots,\bm w_k) \in \R^{d \times k},
    \bm t \in (0,\infty)^{k - 1 \uparrow}
    \bigg\}.
\end{align*}
Furthermore, as a convention for the case with $k = 0$, we set
\begin{align}
    \mathcal{G}^{(0)|b}(\epsilon) \delequal \bar B_{\epsilon}(\bm 0).
    \label{def: 0 jump coverage set, first exit times}
\end{align}
We note that $\mathcal G^{(k)|b}(\epsilon)$ is monotone in $\epsilon$, $k$, and $b$, in the sense that
$
\mathcal{G}^{(k)|b}(\epsilon) \subseteq \mathcal{G}^{(k)|b}(\epsilon^\prime) 
$
for all $0 \leq \epsilon \leq \epsilon^\prime$,
$
\mathcal{G}^{(k)|b}(\epsilon) \subseteq \mathcal{G}^{(k+1)|b}(\epsilon), 
$
and
$
\mathcal{G}^{(k)|b}(\epsilon) \subseteq \mathcal{G}^{(k)|b^\prime}(\epsilon) 
$
for all $0 < b \leq b^\prime$.

The intuition behind our metastability analysis (in particular, Theorem~\ref{theorem: first exit time, unclipped})
is as follows.
The characterization of the $k$-jump-coverage sets of form $\mathcal G^{(k)|b}$ reveals that, due to the truncation of  $\varphi_b(\cdot)$, the space reachable by ODE paths would expand as more jumps are added to the ODE path.
This leads to an intriguing phase transition for the law of the first exit times $\tau^{\eta|b}(\bm x)$ (as $\eta \downarrow 0$) in terms of the minimum number of jumps required for exit.
% i.e., the smallest $k$ such that $\mathcal G^{(k)|b} \cap I^c \neq \emptyset$.
More precisely, let
\begin{align}
    % \notationdef{notation-r-radius-of-exit-domain}{l} & \delequal \inf_{x \in I^\complement}|x| = |s_\text{left}| \wedge s_\text{right},\qquad 
    \notationdef{notation-J-*-first-exit-analysis}{\mathcal J^I_b} \delequal
    \min\big\{ k \geq 1:\ \mathcal G^{(k)|b} \cap I^\complement \neq \emptyset \big\}
    \label{def: first exit time, J *}
\end{align}
be the smallest $k$ such that, under truncation at level $b$, the $k$-jump-coverage sets can reach outside the attraction field $I$.
Theorem~\ref{theorem: first exit time, unclipped} reveals a discrete hierarchy that 
% the asymptotics of $\tau^{\eta|b}(\bm x)$ does not vary with the truncation level $b$ in a continuous fashion;
% instead, 
the order of the first exit time $\tau^{\eta|b}(\bm x)$ and the limiting law of the exit location $\bm X^{\eta|b}_{ \tau^{\eta|b}(\bm x) }(\bm x)$ are dictated by this ``discretized width'' metric $\mathcal J^I_b$ of the domain $I$, relative to the truncation threshold $b$.
Here, the limiting law is characterized by measures
% This allows us to define Borel measures (for each $k \geq 1$ and $b > 0$)
\begin{align}
    % \widecheck{ \mathbf C }^{(k)}(\cdot )
    % & \delequal 
    % \int \mathbbm{I}\Big\{ g^{(k-1)}\big( \sigma(0)\cdot w_k, w_1,\cdots,w_{k-1},\bm t \big) \in \ \cdot \  \Big\}
    % \nu^k_\alpha(d w_1,\cdots,dw_k) \times \mathcal{L}^{k-1\uparrow}_\infty(d\bm t),
    % \\
     \notationdef{notation-check-C-k-b}{\widecheck{ \mathbf C }^{(k)|b}(\ \cdot\ )}
    & \delequal 
    \int \mathbbm{I}\bigg\{ \widecheck{g}^{(k-1)|b}\Big( \varphi_b\big(\bm\sigma(\bm x)\bm w_1\big),(\bm w_2,\cdots,\bm w_k),\bm t \Big) \in \ \cdot \  \bigg\}
     \big((\nu_\alpha \times \mathbf S)\circ \Phi\big)^k(d \textbf W) \times \mathcal{L}^{k-1\uparrow}_\infty(d\bm t),
    \label{def: measure check C k b}
\end{align}
where 
$\alpha > 1$ is the heavy-tail index in Assumption~\ref{assumption gradient noise heavy-tailed},
$\textbf W = (\bm w_1, \bm w_2, \cdots, \bm w_k) \in \R^{d \times k}$,
$\big((\nu_\alpha \times \mathbf S)\circ \Phi\big)^k$ is the $k$-fold of $(\nu_\alpha \times \mathbf S)\circ \Phi$
defined in \eqref{def, nu alpha times S composition polar transform},
and
\notationdef{notation-measure-L-k-up-infty}{$\mathcal{L}^{k\uparrow}_\infty$}
is the Lebesgue measure restricted on $\{ (t_1,\cdots,t_k) \in (0,\infty)^k:\ 0 < t_1 < t_2 < \cdots < t_k \}$.
% Section~\ref{subsec: lemma for measure check C} collects useful properties of the mapping $\widecheck g^{(k)|b}$ and the measure ${\widecheck{ \mathbf C }^{(k)|b}}$.
% In case that $x = 0$, we write $\widecheck{ \mathbf C }^{(k)|b}(\cdot ) \delequal \widecheck{ \mathbf C }^{(k)|b}(\ \cdot\ ;0)$.
% and
% $\notationdef{notation-check-C-x=0}{\widecheck{ \mathbf C }(\cdot )} \delequal \widecheck{ \mathbf C }(\ \cdot\ ;0 )$.
% Also, let

Recall that $H(\cdot) = \P(\norm{\bm Z_1} > \cdot)$, $\lambda(\eta) = \eta^{-1}H(\eta^{-1})$,
and for any $k \geq 1$ we write $\lambda^k(\eta) = \big(\lambda(\eta)\big)^k$.
Recall that 
$
\notationdef{notation-I-epsilon-shrinkage}{I_\epsilon} = \{ \bm y:\ \norm{\bm x - \bm y} < \epsilon\ \Longrightarrow\ \bm x \in I \}
$
is the $\epsilon$-shrinkage of $I$.
As the main result of this section,
Theorem~\ref{theorem: first exit time, unclipped} provides sharp asymptotics for the joint law of first exit times and exit locations of  $\bm X^{\eta|b}_t(\bm x)$ and $\bm X^\eta_t(\bm x)$.
The proof of Theorem~\ref{theorem: first exit time, unclipped} 
is deferred to Section~\ref{sec: proof of proposition: first exit time},
and the proof
is based on a general framework that will be developed in Section~\ref{subsec: framework, first exit time analysis}.
% By applying the general framework developed in Secton \ref{subsec: framework, first exit time analysis}
% and verifying Condition \ref{condition E2} over the asymptotic atoms $A(\epsilon) = (-\epsilon,\epsilon)$,
% we obtain the following result regarding the first exit times and exit locations of $X^{\eta|b}_j(x)$ and $X^\eta_j(x)$ from $I$.
% The detailed proof is provided in Section \ref{sec: proof of proposition: first exit time}.
% % Now we are ready to state the main results regarding the first exit times $\tau^\eta(x)$ and $\tau^{\eta|b}(x)$.

\begin{theorem}
\label{theorem: first exit time, unclipped}
\linksinthm{theorem: first exit time, unclipped}
\textbf{(First Exit Times and Locations: Truncated Case)}
    Let Assumptions \ref{assumption gradient noise heavy-tailed}, \ref{assumption: lipschitz continuity of drift and diffusion coefficients}, and \ref{assumption: shape of f, first exit analysis} hold.
    Let $b > 0$.
        Suppose that 
        $\mathcal J^I_b < \infty$,
        $I^c$ is bounded away from $\mathcal G^{(\mathcal J^I_b - 1)|b}(\epsilon)$ for some (and hence all) $\epsilon > 0$ small enough,
        and
        $
        \widecheck{\mathbf C}^{( \mathcal J^I_b )|b}(\partial I) = 0.
        $
        Then 
        $\notationdef{notation-C-b-*}{C^I_b} \delequal  \widecheck{ \mathbf{C} }^{ (\mathcal{J}^I_b)|b }(I^\complement) < \infty$.
        Furthermore, if $C^I_b \in (0,\infty)$,
        then
        for any $\epsilon > 0$, $t \geq 0$, and measurable set $B \subseteq I^c$,
    \begin{align*}
    \limsup_{\eta\downarrow 0}\sup_{\bm x \in I_\epsilon}
    \P\bigg(
        C^I_b \eta\cdot \lambda^{ \mathcal J^I_b }(\eta)\tau^{\eta|b}(\bm x) > t;\ 
        \bm X^{\eta|b}_{ \tau^{\eta|b}(\bm x)}(\bm x) \in B
     \bigg)
     & \leq \frac{ \widecheck{\mathbf{C}}^{ (\mathcal J^I_b)|b }(B^-) }{ C^I_b }\cdot\exp(-t),
     \\
     \liminf_{\eta\downarrow 0}\inf_{\bm x \in I_\epsilon}
    \P\bigg(
        C^I_b \eta\cdot \lambda^{ \mathcal J^I_b }(\eta)\tau^{\eta|b}(\bm x) > t;\ 
        \bm X^{\eta|b}_{ \tau^{\eta|b}(\bm x)}(\bm x) \in B
     \bigg)
     & \geq \frac{ \widecheck{\mathbf{C}}^{ (\mathcal J^I_b)|b }(B^\circ) }{ C^I_b }\cdot\exp(-t).
    \end{align*}
    Otherwise, we have $C^I_b = 0$, and
    \begin{align*}
        \limsup_{\eta\downarrow 0}\sup_{\bm x \in I_\epsilon}
    \P\bigg(
        \eta\cdot \lambda^{ \mathcal J^I_b }(\eta)\tau^{\eta|b}(\bm x) \leq t
     \bigg) = 0
     \qquad
     \forall \epsilon > 0,\ t \geq 0.
    \end{align*}
\end{theorem}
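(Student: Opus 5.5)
The plan is to reduce the theorem to a general ``asymptotic atom'' framework for Markov chains, of the kind announced for Section~\ref{subsec: framework, first exit time analysis}. We then verify the framework's hypotheses using the locally uniform sample-path large deviations of Theorem~\ref{corollary: LDP 2}.

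\textbf{Step 1: the framework.} The framework should read roughly as follows. Suppose a family of sets $A(\epsilon)$, here the balls $\bar B_\epsilon(\bm 0)$, satisfies three conditions:
\begin{enumerate}[(i)]
    \item Starting anywhere in $I_\epsilon$, the chain enters $A(\epsilon)$ within $\bo(1/\eta)$ steps with probability tending to one, without exiting $I$.
    \item Starting uniformly from $\bm x\in A(\epsilon)$, the probability of exiting $I$ into $B$ during a cycle of length $T/\eta$ is $\gamma(\eta)\,T\,(\mu(B)+o(1))$, where $\gamma(\eta)=\eta\lambda^{\mathcal J^I_b}(\eta)$. Upper and lower bounds use $B^-$ and $B^\circ$.
    \item Otherwise the chain returns to $A(\epsilon)$ at the end of the cycle with overwhelming probability.
\end{enumerate}
Under these conditions, $\gamma(\eta)\tau$ is asymptotically exponential, with rate $\mu(I^c)$, and independent of the exit location, whose law is $\mu(\cdot)/\mu(I^c)$. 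The proof is a geometric-trials argument. The number of cycles before exit is approximately geometric with success probability $\gamma T\mu(I^c)$, and each cycle has length $T/\eta$. Let $T\to\infty$ slowly, and then $\epsilon\downarrow0$, to remove the errors from time spent outside cycles. The uniformity over $\bm x\in A(\epsilon)$ is exactly what allows the strong Markov property to be applied at each return.

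\textbf{Step 2: verifying (i) and (iii).} These follow from Theorem~\ref{corollary: LDP 2} with $k=1$, or even $k=0$. The event of not reaching $\bar B_\epsilon(\bm 0)$ within time $T$ is bounded away from ODE paths. This uses Assumption~\ref{assumption: shape of f, first exit analysis} and compactness: paths from $I_\epsilon^-$ enter $\bar B_{\epsilon/2}$ in uniformly bounded time and stay at positive distance from $I^c$. The same applies to leaving a neighborhood of the origin without a jump. Hence these events have probability $\bo(\lambda(\eta))$ per cycle. The local contractivity $\bm a(\bm x)\bm x<0$ near the origin ensures that an ODE started in $\bar B_\epsilon$ stays there.

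\textbf{Step 3: verifying (ii), the main step.} Take $A=\bar B_\epsilon(\bm 0)$ and $k=\mathcal J^I_b$. Let $E(B)$ be the set of paths in $\D[0,T]$ that exit $I$, with first exit location in $B$. The hypothesis that $I^c$ is bounded away from $\mathcal G^{(k-1)|b}(\epsilon)$ implies that $E(B)$ is bounded away from $\D^{(k-1)|b}_A[0,T](\epsilon)$. This holds because any path with at most $k-1$ jumps, each with $\epsilon$-perturbations, stays within the coverage set.

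Theorem~\ref{corollary: LDP 2} then gives $\P(\text{exit in a cycle})\approx\lambda^k(\eta)\,\mathbf C^{(k)|b}_{[0,T]}(E(B);\bm x)$. One must check that $\mathbf C^{(k)|b}_{[0,T]}(E(B);\bm x)\approx T\,\widecheck{\mathbf C}^{(k)|b}(B)$ as $\epsilon\to0$ and $T\to\infty$. The computation goes as follows:
\begin{itemize}
    \item Exit with exactly $k$ jumps forces the exit to occur at the last jump time.
    \item Integrating over $t_1\in(0,T)$ produces the factor $T$, with boundary effects of order $o(T)$.
    \item The shift $t_i\mapsto t_i-t_1$ yields the measure $\widecheck{\mathbf C}^{(k)|b}$. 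The ODE between jumps, starting near $\bm 0$, stays near $\bm 0$, so $\bm x\approx\bm 0$ may be used.
\end{itemize}
The condition $\widecheck{\mathbf C}^{(k)|b}(\partial I)=0$ ensures that the closure and interior bounds match. Finiteness of $C^I_b$ follows because the jumps must have size bounded below, by the bounded-away assumption, and $\nu_\alpha$ is finite away from zero. Moreover, only an integrable set of inter-jump times contributes, because the ODE contracts to the origin.

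\textbf{The expected obstacle.} The delicate points are twofold. The first is the discretization mismatch: the chain's exit location and time $\tau$ versus the exit location of the continuous-time path $\bm X^{\eta|b}_{[0,T]}$. This is fine, since the embedding samples every step. The second is the uniformity in $\bm x\in\bar B_\epsilon$ of the limit of $\mathbf C$ as $\epsilon\to0$, together with the fact that the integrand depends on $\bm\sigma(\bm x)$; Lipschitz continuity handles the latter.

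\textbf{The degenerate case $C^I_b=0$.} Here the same bound gives a per-cycle exit probability of $o(\gamma(\eta)T)$. A union bound over $t/(\gamma T)$ cycles then yields $\P(\gamma\tau\le t)\to0$.
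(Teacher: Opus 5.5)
Your architecture matches the paper's: an asymptotic-atom framework, verified through Theorem~\ref{corollary: LDP 2}, with exit at the last of $\mathcal J^I_b$ jumps, the factor $T$ from integrating over $t_1$, and the shift to $\widecheck{\mathbf C}^{(\mathcal J^I_b)|b}$. However, your verification of condition (iii) has a genuine gap.

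In the geometric-trials argument there are roughly $1/\big(T\lambda^{\mathcal J^I_b}(\eta)\big)$ cycles before exit. A cycle that fails to regenerate (it neither exits nor returns to $A(\epsilon)$ in time) must therefore have probability $o\big(\lambda^{\mathcal J^I_b}(\eta)\big)$, uniformly in the starting point. Otherwise such failures accumulate. Each failure leaves the chain outside the atom, where the uniform estimate (ii) is unavailable, so both the exponential law and the exit-location law are contaminated.

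Your Step 2 gives only $\mathcal O(\lambda(\eta))$ per cycle. That is already the same order as the exit probability when $\mathcal J^I_b=1$, and much larger when $\mathcal J^I_b\ge 2$. Letting $T\to\infty$ does not help, since $T$ is fixed while $\eta\downarrow0$. (The $\mathcal O(\lambda(\eta))$ bound is fine for (i), which is used only once, at the start; that is the paper's condition \eqref{eq:E4}.)

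With deterministic cycle ends there is a further problem. ``Being in $A(\epsilon)$ at time $T/\eta$'' can fail with probability of order $\lambda(\eta)$, through one large jump shortly before $T/\eta$. That formulation therefore cannot be sharpened. Regeneration must occur at random times, such as the first visit to $A(\epsilon)$ after a cooling period of $\lfloor rT/\eta\rfloor$ steps (the paper's $\rho_i$). What is then needed is condition \eqref{eq:E3}: lingering in $I_\epsilon\setminus B_\epsilon(\bm 0)$ for $T/\eta$ steps has probability $o\big(\lambda^{\mathcal J^I_b}(\eta)\big)$.

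The missing idea is a multi-jump version of your Step 2. Take $T\ge(\mathcal J^I_b+1)\,\bm t(\epsilon/2)$.
\begin{itemize}
\item Any path in $\D^{(\mathcal J^I_b)|b}_{(I_\epsilon)^-}[0,T](\epsilon)$ has at most $\mathcal J^I_b$ jumps, hence an inter-jump interval of length at least $\bm t(\epsilon/2)$.
\item If the path stays in $I_{\epsilon/2}$, its ODE segment on that interval enters $\bar B_{\epsilon/2}(\bm 0)$.
\item So the set of paths remaining in $I_\epsilon\setminus B_\epsilon(\bm 0)$ on $[0,T]$ is at $J_1$-distance at least $\epsilon/2$ from $\D^{(\mathcal J^I_b)|b}_{(I_\epsilon)^-}[0,T](\epsilon)$.
\item Theorem~\ref{corollary: LDP 2} with $k=\mathcal J^I_b+1$ then gives $\mathcal O\big(\lambda^{\mathcal J^I_b+1}(\eta)\big)=o\big(\lambda^{\mathcal J^I_b}(\eta)\big)$ uniformly over $I_\epsilon$.
\end{itemize}
This is Lemma~\ref{lemma: fixed cycle exit or return taking too long, first exit analysis}. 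The same estimate is needed in your degenerate case $C^I_b=0$, because the union bound over cycles must also control exits during over-long cycles.

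A smaller point: you track exits from $I$ itself, so the $\mathcal J^I_b$-th jump can land in $I\setminus I_\epsilon$, where your (i) no longer applies. The paper avoids this by running the framework with the covering $I(\epsilon)=I_\epsilon$, so cycles end at exits from $I_\epsilon$. It then shows this exit location lies in $I$ with probability vanishing as $\epsilon\downarrow 0$, using $\widecheck{\mathbf C}^{(\mathcal J^I_b)|b}(\partial I)=0$.
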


\begin{remark}
\label{remark: regularity conditions for first exit times}
Regarding the regularity conditions in Theorem~\ref{theorem: first exit time, unclipped},
conditions of form 
$\widecheck{\mathbf C}^{( \mathcal J^I_b )|b}(\partial I) = 0$ are standard even for metastability analyses of untruncated dynamics;
see e.g.\ \cite{doi:10.1142/S0219493715500197,
hogele2014exit}.
% Besides, we note that theses conditions hold almost automatically in the non-degenerate one-dimensional settings:
% suppose that $m = d = 1$ (so $\bm Z_j$'s and $\bm X^{\eta|b}_j(\bm x)$'s are random variables in $\R^1$) and for the diffusion coefficient $\sigma: \R \to \R$ we have $\inf_{x \in I}\sigma(x) > 0$;
% then for (Lebesgue) almost every $b \in (0,\infty)$,
% $I^c$ is bounded away from $\mathcal G^{(\mathcal J^I_b - 1)|b}(\epsilon)$ (for small $\epsilon > 0$),
% $
%         \widecheck{\mathbf C}^{( \mathcal J^I_b )|b}(\partial I) = 0,
%         $
% and $C^I_b \in (0,\infty)$
% with $\mathcal J^I_b = \inf_{ x \notin I }\ceil{ |x|/b  }$.
% See Lemmas~\ref{lemma: measure check C J * b, continuity, first exit analysis, R1} and \ref{lemma: exit rate strictly positive, first exit analysis, R1} for more details.
\end{remark}
\begin{remark}
As will be confirmed in Corollary~\ref{corollary: first exit time, untruncated case} below, $\mathcal J_b^I = 1$ when $b= \infty$,  regardless of the geometry of $\bm a(\cdot)$. 
In this case, Theorem~\ref{theorem: first exit time, unclipped} reduces to the manifestation of the principle of a single big jump. 
For $b \neq \infty$ and a contractive drift---i.e., $a(\bm x) \cdot \bm x \leq 0$ for all $\bm x \in I$---note that $\mathcal J_b^I = \lceil r / b \rceil$, where $r \triangleq \inf\{ \|\bm x - \bm 0\|: \bm x \in I^c\}$. 
This is because gradient flow will not bring $\bm X_j^{\eta|b}(\bm x)$ closer to $I^c$, and hence, the most efficient way to escape from $I$ is through $\lceil r / b \rceil$ consecutive jumps in the direction where $I^c$ is closest.
In the general case, however, $\mathcal J_b^I$ is determined as the solution to the discrete optimization problem in \eqref{def: first exit time, J *}, where the geometry of $\bm a(\cdot)$---in particular, gradient flows and their distances from $I^c$---plays a more sophisticated role. 
\end{remark}

% In summary, by developing the machinery of uniform $\mathbb M$-convergence and asymptotic atoms, we provide a general framework that connects large deviations and first exit analysis.
% Applying this framework for the truncated heavy-tailed dynamics $\bm X^{\eta|b}_j(\bm x)$,
% we reveal an intriguing phase transition in terms of the truncation threshold $b$, where the discretized width $\mathcal J^I_b$ dictates the order of the first exit times and the limiting law of first exit locations.
We conclude this section by noting that the first exit analysis for untruncated process $\bm X_j^\eta(\bm x)$
(see e.g.\ \cite{imkeller2006first, pavlyukevich2008metastable, imkeller2008levy}
% , doi:10.1142/S0219493711003413
% ,imkeller2010first} 
for analogous results for continuous processes) 
follows directly from Theorem~\ref{theorem: first exit time, unclipped}.
Let
\begin{align}
    \notationdef{notation-check-C}{\widecheck{\mathbf C}(\ \cdot\ )} \delequal \int \mathbbm{I}\Big\{ \bm \sigma(\bm 0) \bm w \in\ \cdot\ \Big\}
    \big((\nu_\alpha \times \mathbf S)\circ \Phi\big)(d \bm w).
    \label{def: measure check C}
\end{align}
The asymptotic analysis for exit times and locations of the untruncated dynamics $\bm X^\eta_j(\bm x)$ follows from the result for $\bm X^{\eta|b}_j(\bm x)$ by sending $b$ to $\infty$,
and the limiting laws of the exit location $\bm X^\eta_{\tau^\eta(\bm x)}(\bm x)$ is characterized by $\widecheck{\mathbf C}(\ \cdot\ )$, as presented in Corollary~\ref{corollary: first exit time, untruncated case}.
The proof is straightforward and we collect it in Section~\ref{sec: proof for untruncated case}
for the sake of completeness.

\begin{corollary}\label{corollary: first exit time, untruncated case}
\linksinthm{corollary: first exit time, untruncated case}
\textbf{(First Exit Times and Locations: Untruncated Case)}
Let Assumptions \ref{assumption gradient noise heavy-tailed}, \ref{assumption: lipschitz continuity of drift and diffusion coefficients}, and \ref{assumption: shape of f, first exit analysis} hold.
        Suppose that $\widecheck{\mathbf C}(\partial I) = 0$ and $\norm{\bm \sigma(\bm 0)} > 0$.
        Then $\notationdef{notation-C-*-first-exit-time}{C^I_\infty} \delequal  \widecheck{ \mathbf{C} }(I^\complement) < \infty$.
        Furthermore,
        if $C^I_\infty > 0$,
        then
        for any $t \geq 0$, $\epsilon > 0$, and measurable set $B \subseteq I^c$,
    \begin{align*}
        \limsup_{\eta\downarrow 0}\sup_{\bm x \in I_\epsilon}
        \P\bigg(
            C^I_\infty H(\eta^{-1})\tau^\eta(\bm x) > t;\ \bm X^\eta_{ \tau^\eta(\bm x)}(\bm x) \in B
        \bigg)
        & \leq
        \frac{ \widecheck{\mathbf{C}}(B^-) }{ C^I_\infty }\cdot\exp(-t),
        \\ 
        \liminf_{\eta\downarrow 0}\inf_{\bm x \in I_\epsilon}
        \P\bigg(
            C^I_\infty H(\eta^{-1})\tau^\eta(\bm x) > t;\ \bm X^\eta_{ \tau^\eta(\bm x)}(\bm x) \in B
        \bigg)
        & \geq
        \frac{ \widecheck{\mathbf{C}}(B^\circ) }{ C^I_\infty }\cdot\exp(-t).
    \end{align*}
    Otherwise, we have $C^I_\infty = 0$, and 
    \begin{align*}
        \limsup_{\eta\downarrow 0}\sup_{\bm x \in I_\epsilon}
        \P\bigg(
            H(\eta^{-1})\tau^\eta(\bm x) \leq t
        \bigg) = 0
        \qquad
        \forall \epsilon > 0,\ t \geq 0.
    \end{align*}
\end{corollary}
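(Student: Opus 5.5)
Since $I$ is bounded, we fix $b$ larger than twice $\sup_{\bm x\in I}\norm{\bm x}$ plus a margin, and derive the corollary from Theorem~\ref{theorem: first exit time, unclipped}. Throughout, we couple $\bm X^\eta_j(\bm x)$ and $\bm X^{\eta|b}_j(\bm x)$ on the same noise sequence $(\bm Z_j)$.

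\emph{Step 1: identify $\mathcal J^I_b$ and the limit measure.} Since $\norm{\bm\sigma(\bm 0)}>0$, some direction $\bm w$ has $\bm\sigma(\bm 0)\bm w\neq\bm 0$. A single jump $\varphi_b(\bm\sigma(\bm 0)c\bm w)$ with large $c$ then reaches any point on the ray up to distance $b$, hence reaches outside $I$. So $\mathcal G^{(1)|b}\cap I^c\neq\emptyset$ and $\mathcal J^I_b=1$.

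The bounded-away condition then concerns $\mathcal G^{(0)|b}(\epsilon)=\bar B_\epsilon(\bm 0)$, which is trivially bounded away from $I^c$ for small $\epsilon$, because $\bm 0\in I$ and $I$ is open.

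For $k=1$, \eqref{def: measure check C k b} reduces to $\widecheck{\mathbf C}^{(1)|b}(\cdot)=\int\mathbbm I\{\varphi_b(\bm\sigma(\bm 0)\bm w)\in\cdot\}((\nu_\alpha\times\mathbf S)\circ\Phi)(d\bm w)$. Take $A\subseteq\bar B_{R}(\bm 0)$ with $R=\sup_{\bm x\in I}\norm{\bm x}+1<b$. Since $\varphi_b$ is the identity on $\bar B_b(\bm 0)$, and a point outside $\bar B_b(\bm 0)$ is mapped to the sphere of radius $b$, hence outside $\bar B_R(\bm 0)$, we get $\widecheck{\mathbf C}^{(1)|b}(A)=\widecheck{\mathbf C}(A)$. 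This applies to $\partial I$, to $B^-$, and to $B^\circ$ intersected with $\bar B_R(\bm 0)$.

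For the part of $B$ beyond radius $R$, the two measures genuinely differ, since $\varphi_b$ collapses mass onto the sphere of radius $b$. This is why I would let $b$ vary in the final step. In particular, $C^I_b=\widecheck{\mathbf C}(I^c\cap\bar B_R(\bm 0))+\widecheck{\mathbf C}^{(1)|b}(I^c\setminus\bar B_R(\bm 0))$. This quantity is finite, because $I^c$ is bounded away from $\bm 0$ and $\nu_\alpha$ is finite away from $0$. Moreover, $C^I_b=\widecheck{\mathbf C}(I^c)=C^I_\infty$ exactly: the set $I^c\setminus \bar B_R(\bm 0)$ has the same total mass under both measures, because $\varphi_b$ maps $\{\norm{\bm y}>R\}$ into itself and preserves membership in that set.

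Finally, $\eta\lambda(\eta)=H(\eta^{-1})$, so the time scaling matches that of the corollary.

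\emph{Step 2: compare exit times.} Up to time $\tau^\eta(\bm x)-1$, the process $\bm X^\eta$ stays in $I$. For small $\eta$, the drift term $\eta\bm a(\cdot)$ is uniformly small on $I$. Hence on the event that $\norm{\eta\bm\sigma\bm Z_{j}}\le b-1$ at each step, the increments are not truncated and the two paths coincide. They agree up to and including the exit step, except possibly at the exit step itself, where a large jump may be truncated. So $\tau^\eta(\bm x)=\tau^{\eta|b}(\bm x)$ always holds, since with $b>2R$ any truncated step still exits.

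The exit locations also agree unless the untruncated exit increment has norm exceeding $b-1$. That event forces $\norm{\bm X^\eta_{\tau}}>b-1-R$, i.e.\ the exit location lies far out.

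\emph{Step 3: conclude.} For $B\subseteq I^c$, bound
\[
\big|\P(\tau^\eta>\cdot;\bm X^\eta_\tau\in B)-\P(\tau^{\eta|b}>\cdot;\bm X^{\eta|b}_\tau\in B)\big|
\le \P\big(\norm{\bm X^{\eta|b}_\tau}\ge b-1-R\big).
\]
Applying Theorem~\ref{theorem: first exit time, unclipped} to the closed set $\{\norm{\bm y}\ge b-1-R\}$, this error is asymptotically at most $\widecheck{\mathbf C}(\{\norm{\bm y}\ge b-1-R\})/C^I_\infty$. That quantity is of order $(b-1-R)^{-\alpha}\norm{\bm\sigma(\bm 0)}^\alpha$, which vanishes as $b\to\infty$.

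Similarly, $\widecheck{\mathbf C}^{(1)|b}(B^-)\le\widecheck{\mathbf C}(B^-)+\widecheck{\mathbf C}(\norm{\bm y}\ge b)$, with the corresponding lower bound for $B^\circ$. Letting $\eta\downarrow0$ first and then $b\to\infty$ yields both inequalities. In the degenerate case $C^I_\infty=0$, the statement follows from the $C^I_b=0$ branch combined with $\tau^\eta=\tau^{\eta|b}$.

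The main obstacle is Step 2. The claim $\tau^\eta=\tau^{\eta|b}$ must be made rigorous uniformly in $\bm x\in I_\epsilon$: once $\eta$ is small enough that $\eta\sup_I\norm{\bm a}<1$, every pre-exit step that needs truncation already lands outside $I$. This also requires care with the boundary case where the truncation changes whether the exit step leaves $I$. The choice $b>2R+2$ handles this, since any truncated step has length $\ge b-1>2R$.
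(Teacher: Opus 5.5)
Your argument is correct, but it follows a different route from the paper.

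\textbf{The paper's route.} The paper first modifies $\bm a,\bm\sigma$ outside $I$ so that both are bounded by $C$. It then couples $\bm X^\eta$ and $\bm X^{\eta|b}$ only on the event $E_b(\eta,T)=\{\eta\norm{\bm Z_j}\le b/(2C)\ \forall j\le T/(C^I_\infty H(\eta^{-1}))\}$, and splits according to whether the scaled exit time lies in $(t,T]$ or exceeds $T$. It shows $\P(E_b(\eta,T)^\complement)\to 1-\exp(-T(2C/b)^\alpha/C^I_\infty)$ via the geometric-tail Lemma~\ref{lemmaGeomFront}. Finally it lets $b\to\infty$ and then $T\to\infty$, using dominated convergence for $\widecheck{\mathbf C}^{(1)|b}\to\widecheck{\mathbf C}$.

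\textbf{Your route.} You instead use an exact pathwise fact. Suppose $b>2\sup_{\bm y\in I}\norm{\bm y}$, and consider any step from a point of $I$ whose increment has norm exceeding $b$. That step moves the truncated chain by exactly $b$ and the untruncated chain by more than $b$, so both leave $I$ there. Hence $\tau^\eta(\bm x)=\tau^{\eta|b}(\bm x)$ on every path and for every $\eta$. The exit locations can differ only when both have norm at least $b-\sup_I\norm{\bm y}$. That discrepancy is itself an exit-location event for the truncated chain. So Theorem~\ref{theorem: first exit time, unclipped}, applied to the far set $\{\norm{\bm y}\ge b-\sup_I\norm{\bm y}\}$, bounds it asymptotically by $(\norm{\bm\sigma(\bm 0)}/(b-\sup_I\norm{\bm y}))^\alpha/C^I_\infty$. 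Two facts then give $C^I_b=C^I_\infty$ and the comparisons of $\widecheck{\mathbf C}^{(1)|b}$ with $\widecheck{\mathbf C}$ on $B^-$ and $B^\circ$:
\begin{itemize}
\item the radial identity $\widecheck{\mathbf C}^{(1)|b}(\{\norm{\bm y}\ge r\})=\widecheck{\mathbf C}(\{\norm{\bm y}\ge r\})$ for $r\le b$;
\item $\varphi_b$ is the identity on $\bar B_b(\bm 0)$.
\end{itemize}

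\textbf{Comparison.} Your route is shorter: it needs no time cutoff $T$, no geometric lemma, and no boundedness modification, and the case $C^I_\infty=0$ is immediate from $\tau^\eta=\tau^{\eta|b}$. The paper's event-level coupling is more generic. It shows the two chains agree over the whole relevant time horizon with high probability, without using the geometric fact that oversized steps from $I$ automatically exit. That kind of argument transfers to comparisons where this fact fails.

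\textbf{Minor points.} The $\eta$-smallness and the ``$b-1$'' slack in your Step~2 are unnecessary, since the identity holds for all $\eta$ once $b>2\sup_I\norm{\bm y}$. In Step~3, state explicitly that the \emph{truncated} exit location has norm at least $b-\sup_I\norm{\bm y}$ (a step of length exactly $b$ from a point of norm below $\sup_I\norm{\bm y}$). That is the quantity the truncated theorem controls, whereas your Step~2 only records this bound for the untruncated location.
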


\subsubsection{General Framework: Asymptotic Atoms}
\label{subsec: framework, first exit time analysis}
This section proposes a general framework that enables sharp characterization of exit times and exit locations of Markov chains.
The new heavy-tailed large deviations formulation introduced in Section~\ref{subsec: LD, SGD} is conducive to this framework.

Consider a general metric space $(\mathbb S,\bm d)$
and a family of $\mathbb S$-valued Markov chains $\big\{\{V_j^\eta(x): j\geq 0\}:\eta>0\big\}$ parameterized by $\eta$, where $x\in \mathbb S$ denotes the initial state and $j$ denotes the time index. 
We use
$
\bm V_{[0,T]}^\eta(x)\delequal \{V^\eta_{\lfloor t/\eta\rfloor}(x): t  \in [0,T]\}
$ 
to denote the scaled version of $\{V_j^\eta(x): j\geq 0\}$ as a  $\D[0,T]$-valued random element.
% We will consider $\tau^{\eta}_{I_\epsilon^c}(x) \delequal \min\{j\geq0: V_j^\eta(x) \notin I_\epsilon\}$, and $\tau^\eta_{J(\delta)}(x) \delequal \min\{j\geq0: V_j^\eta(x) \in J(\delta)\}$.
For a given set $E$, let $\tau^{\eta}_{E}(x) \delequal \min\{j\geq0: V_j^\eta(x) \in  E\}$ denote $\{V_j^\eta(s): j\geq 0\}$'s first hitting time of $E$.
We consider an asymptotic domain of attraction $I\subseteq \mathbb S$, within which $\bm V_{[0,T]}^\eta(x)$ typically (i.e., as $\eta\downarrow 0$) stays within $I$ throughout any fixed time horizon $[0,T]$ as far as the initial state $x$ is in $I$. 
% We will make these informal descriptions precise in Condition~\ref{condition E2}.
However, if one considers an infinite time horizon, $V_{\boldsymbol{\cdot}}^\eta(x)$ is typically bound to escape $I$ eventually due to the stochasticity. 
The goal of this section is to establish an asymptotic limit of the joint distribution of the exit time $\tau^{\eta}_{I^\complement}(x)$ and the exit location $V^\eta_{\tau^\eta_{I^\complement}(x)}(x)$.
Throughout this section, we will
denote 
\(
    V^\eta_{\tau^\eta_{I(\epsilon)^\complement}(x)}(x)
\)
and 
\(
    V^\eta_{\tau^\eta_{I^\complement}(x)}(x)
\)
with 
\(
    V^\eta_{\tau_\epsilon}(x)
\)
and
\(
    V^\eta_{\tau}(x)
\),
respectively, for notation simplicity. 

% Let $E(\epsilon,T) \delequal \{\xi \in \D[0,T]: \xi(t) \notin I(\epsilon) \text{ for some } t\in[0,T]\}$.

% \begin{condition}\label{condition E0}
% $\big\{\{V_j^\eta(x): j\geq 0\}:\eta>0\big\}$ possesses an asymptotic atom $\{A(\epsilon)\subseteq S: \epsilon>0\}$ in the following sense:
% For sufficiently large $T > 0$, sufficiently small $\epsilon>0$,
% \begin{align*}
% l(\epsilon,T)
% & \leq  \liminf_{\eta \downarrow 0}\frac{ \inf_{x \in A(\epsilon)}\P\big({\bm{V}}^{\eta}_{[0,T]}(x) \in E(0,T) \big) }{  \gamma(\eta)T/\eta } 
% \\
% & \leq  \limsup_{\eta \downarrow 0}\frac{ \sup_{x \in A(\epsilon)}\P\big({\bm{V}}^{\eta}_{[0,T]}(x) \in E(\epsilon,T) \big) }{  \gamma(\eta)T/\eta  } 
% \leq 
% u(\epsilon,T)
% \end{align*}
% where 
% % for each sufficiently small $\epsilon>0$ and sufficiently large $T>0$
% $u(\epsilon,T;\cdot):\D[0,T]\to \R_+$ 
% and 
% $l(\epsilon,T;\cdot):\D[0,T]\to \R_+$ 
% are set functions such that 
% $$
% \lim_{\epsilon\to0}\lim_{T\to\infty}\frac {l(\epsilon,T)}{T} 
% = \lim_{\epsilon\to0}\lim_{T\to\infty}\frac {u(\epsilon,T)}{T} 
% = 1
% $$
% \end{condition}

% \begin{condition}\label{condition E1}
% % There exists $C_0:\R_+ \to \R_+$ such that $C_0(\epsilon) \to \infty$ as $\epsilon\to 0$ and
% For all sufficiently small $\epsilon>0$, 
% \begin{align}
% \liminf_{\eta\downarrow 0} \frac{\ \inf_{x\in I\setminus I(\epsilon)} \P\big( \tau^{\eta}_{I^\complement}(x) \leq T/\eta\big)} {\gamma(\eta) T/\eta} \geq 1
% % \tag{E3}
% \label{eq: exit time condition easy to exit from around the boundary}
% \end{align}
% \end{condition}

We introduce the notion of asymptotic atoms to facilitate the analyses.
Let $\{I(\epsilon)\subseteq I: \epsilon>0\}$ and $\{A(\epsilon)\subseteq \mathbb S: \epsilon>0\}$ be collections of subsets of $I$ such that $\bigcup_{\epsilon>0}I(\epsilon) = I$ and $\bigcap_{\epsilon>0}A(\epsilon) \neq \emptyset$.
Let $C(\cdot)$ is a Borel measure on $\mathbb S\setminus I$ satisfying $C(\partial I) = 0$ that 
characterizes the (asymptotics limit of the) exit location of $V^\eta_{\cdot}(x)$.
Specifically, we consider two different cases for the location measure $C(\cdot)$:
\begin{enumerate}[$(i)$]
    \item
        $C(I^\complement) \in (0,\infty)$: by incorporating the normalizing constant $C(I^\complement)$ into the scale function $\gamma(\eta)$, we can assume w.l.o.g.\ that $C(\cdot)$ \emph{is a probability measure}, and $C(B)$ dictates the limiting probability that $\P(V^\eta_\tau(x) \in B)$ as shown in Theorem~\ref{thm: exit time analysis framework};

    \item 
        $C(I^\complement) = 0$: as a result, $C(B) = 0$ for any Borel set $B \subseteq I^\complement$, and it is equivalent to stating that $C(\cdot)$ \emph{is trivially zero}.
\end{enumerate}

\begin{definition} \label{def: asymptotic atom}
 $\big\{\{V_j^\eta(x): j\geq 0\}:\eta>0\big\}$ possesses an asymptotic atom $\{A(\epsilon)\subseteq \mathbb S: \epsilon>0\}$ associated with the domain $I$, location measure $C(\cdot)$, scale $\gamma:(0,\infty) \to (0,\infty)$, and covering $\{I(\epsilon)\subseteq I: \epsilon>0\}$ if the following holds: 
For each measurable set $B \subseteq \mathbb S$,
there exist $\delta_B:(0,\infty)\times(0,\infty)\to(0,\infty)$, $\epsilon_B>0$, and $T_B:(0,\infty) \to (0,\infty)$ such that 
% For any given $\epsilon>0$, there exist sets $A(\epsilon)$ and $I(\epsilon)$ such that $A(\epsilon) \subsetneq I(\epsilon) \subseteq I$ and
% \textcolor{red}{in \eqref{eq: exit time condition lower bound}, $\tau^\eta_{I^c}$ or $\tau^\eta_{I(\epsilon)^c}$}
\begin{align}
C(B^\circ) - \delta_B(\epsilon,T)
\leq\,
&
\liminf_{\eta\downarrow0} \frac{\ \inf_{x \in A(\epsilon)} \P\big(\tau^{\eta}_{I(\epsilon)^\complement}(x) \leq T/\eta;\; V_{\tau_{\epsilon}}^\eta(x)\in B\big)}{\gamma(\eta)T/\eta} 
\label{eq: exit time condition lower bound}
\\
\leq\,
&
\limsup_{\eta\downarrow0} \frac{\sup_{x \in A(\epsilon)} \P\big(\tau^{\eta}_{I(\epsilon)^\complement}(x) \leq T/\eta;\; V_{\tau_{\epsilon}}^\eta(x)\in B\big)}{\gamma(\eta)T/\eta} 
\leq C(B^-) + \delta_B(\epsilon,T)
\label{eq: exit time condition upper bound}
\\
&
\limsup_{\eta\downarrow 0} \frac{\sup_{x\in I(\epsilon)} \P\big( \tau^{\eta}_{(I(\epsilon)\setminus A(\epsilon))^\complement}(x) > T/\eta\big)} {\gamma(\eta) T/\eta} = 0
% \tag{E4}
\label{eq:E3}
\\[7pt]
&
\liminf_{\eta\downarrow 0} \ \inf_{x\in I(\epsilon)} \P\big(\tau^{\eta}_{A(\epsilon)}(x) \leq T/{\eta}\big) = 1
% \tag{E5}
\label{eq:E4}
\end{align}
for any $\epsilon \leq \epsilon_B$ and $T \geq T_B(\epsilon)$,
where $\gamma(\eta)/\eta \to 0$ as $\eta \downarrow 0$ and
$\delta_B$'s
are such that 
$$
\lim_{\epsilon\downarrow 0}\lim_{T\to\infty} {\delta_B(\epsilon,T)}
% = \lim_{\epsilon\to0}\lim_{T\to\infty} {u(\epsilon,T)}
= 0.
$$
\end{definition}

To see how Definition~\ref{def: asymptotic atom} asymptotically characterize the atoms in $V^\eta_{ \boldsymbol{\cdot}}(x)$ for the first exit analysis from domain $I$,
note that
the condition~\eqref{eq:E4} requires the process to efficiently return to the asymptotic atoms $A(\epsilon)$.
The conditions~\eqref{eq: exit time condition lower bound} and \eqref{eq: exit time condition upper bound} then state that,
upon hitting the asymptotic atoms $A(\epsilon)$,
the process almost regenerates in terms of the law of the exit time $\tau^\eta_{I(\epsilon)^\complement}(x)$ and exit locations $V^\eta_{\tau_\epsilon}(x)$.
Furthermore, the condition~\eqref{eq:E3} prevents the process $V^\eta_{ \boldsymbol{\cdot}}(x)$ from spending a long time without either returning to the asymptotic atoms $A(\epsilon)$ or exiting from $I(\epsilon)$, which covers the domain $I$ as $\epsilon$ tends to $0$.

The existence of an asymptotic atom is a sufficient condition for characterization of exit time and location asymptotics as in Theorem~\ref{theorem: first exit time, unclipped}. 
To minimize repetition, we refer to the existence of an asymptotic atom---with specific domain, location measure, scale, and covering---Condition~\ref{condition E2} throughout the paper.
\begin{condition}\label{condition E2}
A family $\big\{\{V_j^\eta(x): j\geq 0\}:\eta>0\big\}$ of Markov chains possesses an asymptotic atom $\{A(\epsilon)\subseteq \mathbb S: \epsilon>0\}$ associated with the domain $I$, location measure $C(\cdot)$, scale $\gamma:(0,\infty) \to (0,\infty)$, and covering $\{I(\epsilon)\subseteq I: \epsilon>0\}$.
\end{condition}

Recall that, right before Definition~\ref{def: asymptotic atom}, we state that for the location measure $C(\cdot)$ we consider two cases that $(i)$ $C(I^\complement) = 1$ (more generally, $C(\cdot)$ is a finite measure), and $(ii)$ $C(I^\complement) = 0$.
The following theorem is the key result of this section. See Section~\ref{subsec: Exit time analysis framework} for the proof of the theorem.

% Thanks to the previously developed uniform sample-path large deviations, 
% we can verify Condition \ref{condition E2} uniformly for all initial values over the asymptotic atoms $A(\epsilon) = (-\epsilon,\epsilon)$.

\begin{theorem}
\label{thm: exit time analysis framework} 
\linksinthm{thm: exit time analysis framework} 
If Condition~\ref{condition E2} holds, then the first exit time $\tau_{I^\complement}^\eta(x)$ scales as $1/\gamma(\eta)$, and the distribution of the location $V_\tau^\eta(x)$ at the first exit time converges to $C(\cdot)$. 
Moreover, the convergence is uniform over $I(\epsilon)$ for any $\epsilon>0$.
That is, 
\begin{enumerate}[$(i)$]
    \item 
        If $C(I^\complement) = 1$, then for each $\epsilon>0$, measurable $B\subseteq I^\complement$, and $t\geq 0$,
        \begin{align*}
            C(B^\circ) \cdot e^{-t}
            & \leq 
            \liminf_{\eta\downarrow 0} \inf_{x\in I(\epsilon)}
            \P\big(
                \gamma(\eta)\tau_{I^\complement}^{\eta}(x)>t,\,V_{\tau}^\eta(x)\in B  
            \big) 
            \\
            & \leq 
            \limsup_{\eta\downarrow 0} \sup_{x\in I(\epsilon)}
            \P\big(
                \gamma(\eta)\tau_{I^\complement}^{\eta}(x)>t,\,V_{\tau}^\eta(x)\in B  
            \big) 
            \leq C(B^-) \cdot e^{-t};
        \end{align*}

    \item 
        If $C(I^\complement) = 0$, then for each $\epsilon,t >0$,
        \begin{align*}
            \lim_{\eta\downarrow 0} \sup_{x\in I(\epsilon)}
            \P\big(
                \gamma(\eta)\tau_{I^\complement}^{\eta}(x) \leq t
            \big) = 0.
        \end{align*}
\end{enumerate}

\end{theorem}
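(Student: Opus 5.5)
We prove Theorem~\ref{thm: exit time analysis framework} by a renewal (geometric-trials) argument over successive returns to the asymptotic atom, using the strong Markov property at hitting times of $A(\epsilon)$. Fix the target $\epsilon_0>0$ (the set $I(\epsilon_0)$ over which uniformity is claimed), $t\geq 0$, and $B\subseteq I^\complement$. We work with a smaller auxiliary $\epsilon\le\epsilon_0\wedge\epsilon_B$ and a large $T\ge T_B(\epsilon)$. We may assume $I(\epsilon_0)\subseteq I(\epsilon)$; if the covering is not monotone, we replace it by unions, or simply use the fact that the conditions are applied at the level $\epsilon$. We then partition time into blocks of length $T/\eta$. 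Within each block, started from a point $x\in A(\epsilon)$, conditions \eqref{eq: exit time condition lower bound}--\eqref{eq: exit time condition upper bound} say that the probability of leaving $I(\epsilon)$ during the block, landing in a set $B'$, is $\big(C(B')\pm\delta\big)\gamma(\eta)T/\eta+o(\gamma(\eta)T/\eta)$, uniformly in $x$. Since $\gamma(\eta)/\eta\to0$ is needed, each block has small exit probability $p_\eta\approx \gamma(\eta)T/\eta$.

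\textbf{Step 1 (reduction from $I(\epsilon)$ to $I$).} Exiting $I(\epsilon)$ is not the same as exiting $I$. On the event that $V$ leaves $I(\epsilon)$ at a point $y\in I\setminus I(\epsilon)$, we need to control what happens afterwards. Taking $B=I^\complement$ and $B=\mathbb S\setminus I$ versus $B= I\setminus I(\epsilon)$ in the conditions, we get that the limiting mass $C$ lives on $I^\complement$ with $C(\partial I)=0$. Hence the probability of exiting $I(\epsilon)$ into $I$ within a block is $\le \delta_{I}(\epsilon,T)\gamma T/\eta$ asymptotically, which is negligible relative to the total. After such a "failed exit", condition \eqref{eq:E4} (together with \eqref{eq:E3}) brings the chain back to $A(\epsilon)$ within $T/\eta$ steps with probability $\to1$. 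For this we need the point to lie in $I(\epsilon')$ for some other level; we handle it by shrinking $\epsilon$ along the sequence. I expect this bookkeeping to be the main obstacle: the covering sets are indexed by the same $\epsilon$ as the atom, so a point of $I\setminus I(\epsilon)$ is not covered by \eqref{eq:E4} at level $\epsilon$. I would first try to use the upper bound with $B=I\cap I(\epsilon)^\complement$, whose closure meets $I^\complement$ only in $\partial I$, which is $C$-null, to show these events have asymptotically zero rate. I would then bound their total contribution over the $\approx t/(\gamma T)\cdot\eta$ blocks by $t\,\delta/C$-type terms vanishing as $\epsilon\downarrow0$, $T\to\infty$.

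\textbf{Step 2 (initial phase).} Starting from $x\in I(\epsilon_0)$, by \eqref{eq:E4} the chain hits $A(\epsilon)$ within $T/\eta$ steps with probability $\to1$, uniformly. Exiting $I$ before that is possible only with vanishing probability. Indeed, by \eqref{eq:E3} and the fact that $\gamma(\eta)T/\eta\to0$, either the chain hits $A(\epsilon)$ or it leaves $I(\epsilon)$; the latter before hitting $A$ has probability bounded by $1-\inf\P(\tau_{A}\le T/\eta)\to0$. Since $T/\eta$ is $o(1/\gamma(\eta))$, this phase does not affect the time scale.

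\textbf{Step 3 (geometric trials).} Define successive block starts at the return times to $A(\epsilon)$. In each trial the chain, started in $A(\epsilon)$, runs for $T/\eta$ steps. If it has not exited $I(\epsilon)$, by \eqref{eq:E3} applied to the endpoint (which is in $I(\epsilon)$) together with \eqref{eq:E4}, it returns to $A(\epsilon)$ within another $T/\eta$ steps except with probability $o(\gamma T/\eta)$. Note that \eqref{eq:E3} is needed precisely at rate $o(\gamma T/\eta)$ so that these failures accumulate to $o(1)$ over $\sim 1/(\gamma T/\eta)$ trials. This is where \eqref{eq:E3} is more useful than \eqref{eq:E4} alone: we use it to ensure that the time spent outside $A$ without exit is excursion-bounded, and we apply \eqref{eq:E4} with probability $\to1$ per excursion. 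I would combine these by requiring, per trial, the stronger uniform bound obtained by iterating \eqref{eq:E3}. With per-trial exit probability into $B$ sandwiched between $(C(B^\circ)-\delta)p$ and $(C(B^-)+\delta)p$, where $p=\gamma(\eta)T/\eta$, and total per-trial exit probability $\approx(1\pm\delta)p$ (using $C(I^\complement)=1$), the strong Markov property gives
\[
\P(\text{exit after }k\text{ trials, in }B)\approx \big(1-(1\pm\delta)p\big)^{k}(C(B)\pm\delta)p/(1\pm\delta)\ \text{(summed)}.
\]
Each trial lasts between $T/\eta$ and $2T/\eta$ steps. More precisely, the return lag is $o$ of the block length in probability; to avoid the factor $2$ we use trial length $T/\eta$ plus a return time that is $\le \theta T/\eta$ w.h.p. for any $\theta$, by applying \eqref{eq:E4} with $\theta T$ in place of $T$. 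Hence $\{\gamma\tau>t\}$ corresponds to $k\gtrsim t\eta/(\gamma T(1+\theta))$ trials, and summing the geometric series gives $e^{-t(1\pm O(\delta+\theta))}\,(C(B^\mp)\pm\delta)$. Letting $\eta\downarrow0$, then $T\to\infty$, $\epsilon\downarrow0$, and $\theta\downarrow0$ yields (i). For (ii), the same upper bound with $C\equiv0$ gives a per-trial exit probability $\le\delta p$. Over $t\eta/(\gamma T)$ trials, the probability of exiting by time $t/\gamma$ is $\le 1-(1-\delta p)^{t/p}\to 1-e^{-\delta t}$, which tends to $0$ as $\delta\to0$.
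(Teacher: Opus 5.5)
Your architecture is the paper's. Your trials are its cycles $\rho^\eta_i$ with cooling length $r=1$, and your $\theta$ plays the role of $R-1$ and $1-r$. The survival and exit-location estimates come from the same geometric sums, and the passage from $\tau^\eta_{I(\epsilon)^\complement}$ to $\tau^\eta_{I^\complement}$ is the same.

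Step~2, however, has a genuine gap. You claim the chain started in $I(\epsilon_0)$ does not leave $I$ before $\tau^\eta_{A(\epsilon)}$. This does not follow from \eqref{eq:E3}--\eqref{eq:E4}:
\eqref{eq:E4} only asks that $A(\epsilon)$ be hit by time $T/\eta$, possibly after an excursion outside $I$, and \eqref{eq:E3} does not say through which side $I(\epsilon)\setminus A(\epsilon)$ is left. So your bound by $1-\inf_x\P(\tau^\eta_{A(\epsilon)}(x)\le T/\eta)$ is a non sequitur.

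Nor can it be repaired from Condition~\ref{condition E2} as stated. Take $\mathbb S=\mathbb R$, $I=(-1,1)$, $I(\epsilon)=(-1+\epsilon,1-\epsilon)$, $A(\epsilon)=(-\epsilon,\epsilon)$, $\gamma(\eta)=\eta^2$, and $C$ the unit mass at $2$. Define the chain as follows: from $0$ it jumps to $2$ with probability $\eta^2$ and otherwise stays; from $1/2$ it jumps to $2$ or to $0$ with probability $1/2$ each; from every other point it jumps to $0$.
\begin{itemize}
\item For $\epsilon\le 1/4$, started anywhere in $A(\epsilon)$, the chain exits $I(\epsilon)$ after (one plus) a geometric time with parameter $\eta^2$, always at $2$. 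Hence \eqref{eq: exit time condition lower bound}--\eqref{eq: exit time condition upper bound} hold with arbitrarily small $\delta_B$.
\item From any point of $I(\epsilon)$ the chain reaches $A(\epsilon)\cup I(\epsilon)^\complement$ in at most one step and $A(\epsilon)$ in at most two. Hence \eqref{eq:E3}--\eqref{eq:E4} hold.
\end{itemize}
Yet for $x=1/2\in I(1/4)$, $B=(3/2,5/2)$ and $t>0$ we get $\P\big(\gamma(\eta)\tau^\eta_{I^\complement}(x)>t,\ V^\eta_\tau(x)\in B\big)\to e^{-t}/2<C(B^\circ)e^{-t}$. So Theorem~\ref{thm: exit time analysis framework} is false as literally stated.

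The fix is a stronger return hypothesis, for instance $\lim_{\eta\downarrow0}\inf_{x\in I(\epsilon)}\P\big(\tau^\eta_{A(\epsilon)}(x)\le T/\eta,\ \tau^\eta_{A(\epsilon)}(x)<\tau^\eta_{I^\complement}(x)\big)=1$. You also need $I(\epsilon_0)\subseteq I(\epsilon)$ for small $\epsilon$; this is not implied either, and replacing the covering by unions does not preserve Condition~\ref{condition E2}. With these, apply the strong Markov property at $\tau^\eta_{A(\epsilon)}$ on $\{\tau^\eta_{A(\epsilon)}\le T/\eta,\ \tau^\eta_{A(\epsilon)}<\tau^\eta_{I^\complement}\}$. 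Together with $T/\eta=o(1/\gamma(\eta))$, this transfers the atom estimates to $I(\epsilon_0)$.

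The paper's proof uses the same property tacitly. In its lower bound for $x\in I(\epsilon')$ it conditions on $\mathcal F_{\tau_{A(\epsilon)}}$ on $\{\tau^\eta_{A(\epsilon)}(x)\le T/\eta\}$ as if $I(\epsilon)$ had not been left before $\tau^\eta_{A(\epsilon)}(x)$. In the application this is harmless, because Lemma~\ref{lemma: cycle, efficient return} proves return to $\{\norm{\bm x}<\epsilon\}$ \emph{while staying in} $I$.

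Steps~3 and~1 also need repairs.

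In Step~3, you cannot get the trial length down from $2T/\eta$ to $(1+\theta)T/\eta$ via \eqref{eq:E4}. Its failure probability is only $o(1)$ per trial, which does not survive a union bound over $\asymp\eta/(\gamma(\eta)T)$ trials. Without the reduction you only get $e^{-t/2}$ and $e^{-2t}$ as bounds. Instead use \eqref{eq:E3} with horizon $\theta T$, whose failure probability is $o(\gamma(\eta)T/\eta)$. Note that it yields ``hit $A(\epsilon)$ or leave $I(\epsilon)$'', not a guaranteed return. Exits during the return lag start outside the atom, so control them by \eqref{eq: exit time condition upper bound} at horizon $(1+\theta)T$ from the trial's starting point in $A(\epsilon)$. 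This is exactly what the paper's cooling periods $\lfloor rT/\eta\rfloor$ implement, with $r<1$ for the upper bound and $1<r<R$ for the lower bound.

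In Step~1, no restart after a ``failed exit'' is needed. On $\{V^\eta_{\tau_\epsilon}(x)\notin I\}$ one has $\tau^\eta_{I^\complement}(x)=\tau^\eta_{I(\epsilon)^\complement}(x)$. So it suffices that the first exit from $I(\epsilon)$ lands in $I$ with small probability, counting exits at any time and not only before $t/\gamma(\eta)$. This is the paper's term $\delta_{0,I}(\epsilon)$, from Proposition~\ref{prop: exit time analysis main proposition} with $t=0$ and $B=I$. Use the fixed set $B=I$, for which $C(I^-)=C(\partial I)=0$. Do not use the $\epsilon$-dependent set $I\setminus I(\epsilon)$, since $\delta_B$ is only controlled for fixed $B$.
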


\subsection{Numerical Examples}
\label{subsec: numerical examples}

In this section, we provide the details for numerical samples illustrated in Figure~\ref{fig: exit time}.
We consider iterates in $\R^1$:
\begin{align}
    \bm X^{\eta|b}_0(\bm x) = \bm x,\qquad
\bm X^{\eta|b}_j(\bm x) = \bm X^{\eta|b}_{j-1}(\bm x) + \varphi_b\big( -\eta U^\prime\big(\bm X^{\eta|b}_{j-1}(\bm x)\big) + \eta \bm Z_j\big)\quad \forall j \geq 1,
\label{def: R1 dynamics, numerical examples}
\end{align}
under the potential function
\begin{equation}\label{aeq: potential U, first exit time}
\begin{aligned}
    & U(x)= (x+1.6)(x+1.3)^2(x-0.2)^2(x-0.7)^2(x-1.6)\big(0.05|1.65-x|\big)^{0.6} \\
    & \ \ \cdot \Big( 1 + \frac{1}{ 0.01 + 4(x-0.5)^2  } \Big)\Big( 1 + \frac{1}{0.1 + 4(x+1.5)^2} \Big)\Big( 1 - \frac{1}{4}\exp( -5(x + 0.8)(x + 0.8)  ) \Big).
\end{aligned}
\end{equation}
See Fig~\ref{fig: exit time} (i) for an illustration.
Specifically, we consider the case where the law of $(Z_j)_{j \geq 1}$ is of form
\begin{align}
      c_{\text{pareto}} \cdot W_\alpha + c_{\text{normal}}\cdot N(0,1)
      \label{def: law of noises, LD experiment}
\end{align}
with  $c_\text{pareto} = 0.1$, $c_\text{normal} = 0$, and $\alpha = 1.2$,
and
focus on the local minimum $m = -0.66$ and its attraction field $I = (-1.3, 0.2)$.
We initialize the process at $\bm x = m$ and are interested in first exit times $\tau^{\eta|b}(\bm x)$  from $I$; see \eqref{def: first exit time for heavy tailed SGD}. 
In this case,
the index $\mathcal J^I_b$ defined in \eqref{def: first exit time, J *}
reduces to $\mathcal J^I_b = \ceil{ 0.64/b  }$ for any $b \in (0,\infty)$,
and the regularity conditions in Theorem~\ref{theorem: first exit time, unclipped}
hold for (Lebesgue) almost all $b > 0$; see Remark~\ref{remark: regularity conditions for first exit times}.
Therefore, for Lebesgue almost all $b > 0$,
the stopping times $\tau^{\eta|b}(\bm x)$ is roughly of order $1/\eta^{ 1 + \mathcal J^I_b(\alpha - 1) } = 1/\eta^{ 1 + \mathcal J^I_b * 0.2 }$ as $\eta \downarrow 0$.
This characterizes the phase transitions in the order of first exit times depending on the (discretized) relative width $\mathcal J^I_b$.
In case that $b = \infty$, we apply Corollary~\ref{corollary: first exit time, untruncated case} and obtain that 
the exit times $\tau^\eta(\bm x)$ in the untruncated case (see \eqref{def: first exit time for heavy tailed SGD})
is roughly of order 
$1/\eta^{\alpha } = 1/\eta^{ 1.2 }$ for small $\eta$.

We confirm these asymptotics through Monte-Carlo simulation and present the results in 
Fig~\ref{fig: exit time} (ii).
This is a log-log scale plot, where each dot represents an average of 20 samples, and the dashed lines indicate the asymptotics provided by our metastability analysis.
To prevent the experiment from running too long, a stopping criterion of $5\times 10^7$ steps 
is employed.
This stopping criterion was reached only in the case where $b = 0.28$ and $\eta = 0.001$, which is indicated in the plot by the only non-solid dot, highlighting that it is an underestimation.
The plot confirms the asymptotic law of first exit times established in our metastability analysis, as well as the phase transition in first exit times  w.r.t.\ $\mathcal J^I_b$.

\section{Proof of Theorem~\ref{thm: exit time analysis framework}}
\label{subsec: Exit time analysis framework}

Our proof of Theorem~\ref{thm: exit time analysis framework} hinges on the following proposition.

\begin{proposition}\label{prop: exit time analysis main proposition}
\linksinthm{prop: exit time analysis main proposition}
Suppose that Condition~\ref{condition E2} holds.
\begin{enumerate}[$(i)$]
    \item 
        If $C(\cdot)$ is a probability measure supported on $I^\complement$ (i.e., $C(I^\complement) = 1$),
        then for each measurable set $B\subseteq \mathbb S$ and $t\geq 0$, there exists $\delta_{t,B}(\epsilon)$ such that
        \begin{align*}
            C(B^\circ)\cdot e^{-t} - \delta_{t,B}(\epsilon)
            &
            \leq
            \liminf_{\eta\downarrow 0} \inf_{x\in A(\epsilon)}\P\big(\gamma(\eta) \tau_{I(\epsilon)^\complement}^\eta(x) > t;\; V^\eta_{\tau_\epsilon}(x) \in B\big)
            \\
            &
            \leq
            \limsup_{\eta\downarrow 0} \sup_{x\in A(\epsilon)}\P\big(\gamma(\eta) \tau_{I(\epsilon)^\complement}^\eta(x) > t;\; V^\eta_{\tau_\epsilon}(x) \in B\big)
            \leq C(B^-)\cdot e^{-t} + \delta_{t,B}(\epsilon)
        \end{align*}
        for all sufficiently small $\epsilon>0$, where $\delta_{t,B}(\epsilon) \to 0$ as $\epsilon \to 0$.

    \item 
        If $C(I^\complement) = 0$ (i.e., $C(\cdot)$ is trivially zero), then for each $t > 0$, there exists $\delta_{t}(\epsilon)$ such that
        \begin{align*}
            \limsup_{\eta \downarrow 0}
            \sup_{x\in A(\epsilon)}\P\big(\gamma(\eta) \tau_{I(\epsilon)^\complement}^\eta(x) \leq t\big)
            \leq \delta_{t}(\epsilon)
        \end{align*}
        for all $\epsilon > 0$ sufficiently small, where $\delta_t(\epsilon) \to 0$ as $\epsilon \to 0$.
\end{enumerate}

\end{proposition}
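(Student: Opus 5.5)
The plan is a regeneration argument: cut time into blocks of length $T/\eta$ and, at the start of each block, use the near-atom property to return the chain to $A(\epsilon)$. Fix $\epsilon$ small and $T\ge T_B(\epsilon)$ large. Write $p_\eta \delequal \gamma(\eta)T/\eta \to 0$. By \eqref{eq: exit time condition lower bound}--\eqref{eq: exit time condition upper bound}, for every $x\in A(\epsilon)$ the chain exits $I(\epsilon)$ during one block of length $T/\eta$ with probability $p_\eta(C(I^\complement)\pm\delta(\epsilon,T))$. It exits into $B$ with probability $p_\eta(C(B^{\circ})-\delta_B,\ C(B^-)+\delta_B)$. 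If it does not exit, the state at time $\lfloor T/\eta\rfloor$ lies in $I(\epsilon)$. Condition \eqref{eq:E4} then returns it to $A(\epsilon)$ within another $T/\eta$ steps, except on an event of probability $o(1)$.

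\textbf{Handling the failure probability.} A failure probability of $o(1)$ per block is too crude, since we need about $t/(\gamma T/\eta)=t/p_\eta$ blocks. The remedy is to combine \eqref{eq:E4} with \eqref{eq:E3}. Starting from $I(\epsilon)$, \eqref{eq:E3} says that the probability of staying in $I(\epsilon)\setminus A(\epsilon)$ for longer than $T/\eta$ is $o(p_\eta)$. So within $T/\eta$ steps the chain either hits $A(\epsilon)$ or exits $I(\epsilon)$, up to $o(p_\eta)$.

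The chance of exiting $I(\epsilon)$ during these ``return'' phases still has to be controlled. Here I use \eqref{eq:E4} in a different way: return happens with probability tending to 1, so the time spent in return phases is, in distribution, a vanishing fraction. Concretely, I compare the return phase with a block started from $A(\epsilon)$. Alternatively, the exit probability during a return phase is bounded by $p_\eta\cdot O(1)$ times the probability of that phase, and a return phase is $o(1)$ of the block count. Making this precise is the main obstacle. The cleanest route may be a stopping-time decomposition. Let $\sigma_1$ be the first time after $\lfloor T/\eta\rfloor$ that the chain hits $A(\epsilon)$ or leaves $I(\epsilon)$. By \eqref{eq:E3}, $\sigma_1\le 2T/\eta$ except on an event of probability $o(p_\eta)$. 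The chance of leaving before hitting $A(\epsilon)$ is at most
\[
\sup_{y\in A(\epsilon)}\P\bigl(\tau_{I(\epsilon)^\complement}^\eta(y)\le 2T/\eta\bigr)\;+\;\text{(probability of not having reached }A(\epsilon)\text{ after the first short phase)}.
\]
The missing ingredient is to split each block into a long sub-block of length $T/\eta$ started from $A(\epsilon)$ and a short return sub-block. The exit rate in the short sub-block is then charged to the constant $\delta(\epsilon,T)$, which vanishes as $T\to\infty$ and $\epsilon\to 0$.

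\textbf{Assembling the bounds.} With the per-cycle estimates in hand, the strong Markov property at the successive return times gives a sandwich between geometric sums. For the upper bound,
\[
\P(\gamma\tau>t,\ V_{\tau_\epsilon}\in B)\le \sum_{k\ge \lfloor t/(p_\eta(1+o(1)))\rfloor}\bigl(1-p_\eta(1-\delta)\bigr)^{k}\,p_\eta\bigl(C(B^-)+\delta\bigr)+o(1).
\]
As $\eta\downarrow0$, the sum is $\approx \frac{C(B^-)+\delta}{1-\delta}\,e^{-t(1-\delta)/(1+\delta')}$. The lower bound is symmetric, using $C(B^\circ)-\delta$. Since $C(I^\complement)=1$, the total exit rate is $1\pm\delta$, and this gives the $e^{-t}$ factor. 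Cycle lengths are between $T/\eta$ and $(T+\text{return time})/\eta$, and the return time is small relative to $T$ when measured at the $\gamma$ scale. Letting $T\to\infty$ makes the cycle-count-to-time conversion exact, and we can then set $\delta_{t,B}(\epsilon)\delequal\limsup_{T}$ of the resulting error, which tends to 0 as $\epsilon\to0$. For part $(ii)$, the same union bound over roughly $t/p_\eta$ cycles gives an exit probability of at most $(t/p_\eta)\cdot p_\eta(0+\delta)+o(1)$, which goes to $t\delta(\epsilon,T)\to0$.
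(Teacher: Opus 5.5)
Your overall architecture is the paper's: cycles made of a cooling period followed by the first visit to $A(\epsilon)$, the strong Markov property at cycle endpoints, geometric sums, and \eqref{eq:E3} to make overlong cycles cost $o(\gamma(\eta)T/\eta)$. Part $(ii)$ goes through as you describe, because there only the order of the per-cycle exit probability matters. The gap is in part $(i)$, at exactly the step you flag as the main obstacle, and none of the remedies you list closes it.

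What you actually establish is a return window of length $T/\eta$: $\sigma_1\le 2T/\eta$ via \eqref{eq:E3} at horizon $T$. So the per-cycle exit probability is only bounded by $\sup_{y\in A(\epsilon)}\P(\tau^\eta_{I(\epsilon)^\complement}(y)\le 2T/\eta)+o(\gamma(\eta)T/\eta)\approx 2\gamma(\eta)T/\eta$, and cycles may last up to $2T/\eta$. Fed into your geometric sums, this gives bounds of the type $\tfrac12 C(B^\circ)e^{-2t}$ and $2C(B^-)e^{-t/2}$, not the sharp ones.

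Your appeals to \eqref{eq:E4} (``return phases are a vanishing fraction'') and to an exit rate of order $\gamma(\eta)/\eta$ per step during return phases are unsupported. Condition~\ref{condition E2} gives no $O(\gamma(\eta)T/\eta)$ control of exits started from a generic point of $I(\epsilon)\setminus A(\epsilon)$. Indeed \eqref{eq:E4} only makes exit-before-return $o(1)$, which may be far larger than $\gamma(\eta)T/\eta$. For the same reason, return-phase exits cannot be ``charged to $\delta(\epsilon,T)$'' by a separate estimate. The $\delta_B$ are given by the condition, and the extra error has relative size (return window)$/T$, which you never make small. The ``$1+o(1)$'' in your cycle count has the same problem: it is not $o(1)$ as $\eta\downarrow 0$.

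The missing idea is that \eqref{eq:E3} holds at every horizon $\ge T_B(\epsilon)$, so it can be used at a horizon that is a small fraction of $T$. Exits during the return phase are then absorbed into the exit probability of the trajectory started in $A(\epsilon)$ over a slightly longer horizon. The paper sets $\rho_1^\eta=\inf\{j\ge\lfloor rT/\eta\rfloor: V^\eta_j\in A(\epsilon)\}$ and applies \eqref{eq:E3} at horizon $\frac{R-r}{2}T$ to get $\P(\tau^\eta_{I(\epsilon)^\complement}\wedge\rho_1^\eta>RT/\eta)=o(\gamma(\eta)T/\eta)$. Then \eqref{eq: exit time condition upper bound} at horizon $RT$ gives
\[
\sup_{y\in A(\epsilon)}\P\big(\tau^\eta_{I(\epsilon)^\complement}(y)\le\rho_1^\eta(y);\ V^\eta_{\tau_\epsilon}(y)\in B\big)\le\big(C(B^-)+\delta_B(\epsilon,RT)+o(1)\big)\,\gamma(\eta)RT/\eta,
\]
so the only loss is the factor $R$, which is sent to $1$ after $T\to\infty$.

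For the lower bound the paper takes $1<r<R$. Each cycle then contains a full window of length $T/\eta$ started in $A(\epsilon)$, and $\lceil t\eta/(\gamma(\eta)T)\rceil$ cycles cover time $t/\gamma(\eta)$.

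For the upper bound it takes $R=1$ and $r<1$, and bounds survival per cycle by \eqref{eq: exit time condition lower bound} with $B=\mathbb S$ at horizon $rT$. A cycle surviving beyond $T/\eta$ forces roughly $(1-r)T/\eta$ consecutive steps in $I(\epsilon)\setminus A(\epsilon)$. \eqref{eq:E3}, together with an averaging/union argument over $\lfloor t\eta/(\gamma(\eta)T)\rfloor$ cycles, then makes this event $o(1)$, which is exactly what your ``$+o(1)$'' needs.

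Your variant with cooling period exactly $T/\eta$ would also work once the return window is $\theta T/\eta$, with $\theta\downarrow 0$ after $T\to\infty$.
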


\begin{proof}
\linksinpf{prop: exit time analysis main proposition}
Fix some measurable $B \subseteq \S$ and $t \geq 0$.
Henceforth in the proof, given any choice of $0 < r < R$, we only consider
$\epsilon \in (0,\epsilon_B)$ and $T$ sufficiently large such that Condition~\ref{condition E2} holds with $T$ replaced with $\frac{1-r}{2}T$, $\frac{2-r}{2}T$, $rT$, and $RT$.
Let
\[
    % \rho^{(i+1)}_{r}(x) 
    % \rho^{\eta}_{i+1}(x) = 
    \rho^{\eta}_{i}(x)
    \delequal \inf\Big\{j\geq \rho^{\eta}_{i-1}(x) + \lfloor rT/\eta \rfloor: V_j^\eta(x) \in A(\epsilon)\Big\}
\]
where 
\(\rho_0^\eta(x) = 0.\)
One can interpret these as the $i$\textsuperscript{th} asymptotic regeneration times after cooling period $rT/\eta$.
We start with the following two observations: For any $0 < r < R$,
\begin{align}
    % &
    \P\Big(\tau_{I(\epsilon)^\complement}^\eta(y) \in \big(RT/\eta,\, \rho_1^\eta(y)\big]\Big)
    &
    \leq 
    \P\Big(\tau_{I(\epsilon)^\complement}^\eta(y) \wedge \rho_1^\eta(y) > RT/\eta\Big)
    \nonumber\\
    &
    \leq
    \P\Big(V_j^\eta(y)\in I(\epsilon)\setminus A(\epsilon)\quad \forall j \in \big[\lfloor r T/\eta \rfloor,\,RT/\eta \big] \Big)
    \nonumber\\
    &
    \leq
    \sup_{z\in I(\epsilon)\setminus A(\epsilon)}
        \P\Big(\tau^\eta_{(I(\epsilon)\setminus A(\epsilon))^c}(z) > \frac{R-r}{2} T/\eta \Big)
    \nonumber\\
    &
    =
    \gamma(\eta)T/\eta \cdot \lo(1),
    \label{eq: exit time analysis: almost nothing happens between T over eta and rho one}
\end{align}
where  the last equality is from \eqref{eq:E3} of Condition~\ref{condition E2}, and
\begin{align}
    &
    \sup_{y\in A(\epsilon)}
    \P\Big(
        V_{\tau_\epsilon}^\eta(y) \in B
        ;\;
        \tau_{I(\epsilon)^\complement}^\eta(y) \leq \rho_1^\eta(y)
    \Big)
    \nonumber\\
    &
    \leq 
    \sup_{y\in A(\epsilon)}
    \P\Big(
        V_{\tau_\epsilon}^\eta(y) \in B
        ;\;
        \tau_{I(\epsilon)^\complement}^\eta(y) \leq RT/\eta
    \Big)
    +
    \sup_{y\in A(\epsilon)}
    \P\Big(
        \tau_{I(\epsilon)^\complement}^\eta(y) \in \big(RT/\eta,\, \rho_1^\eta(y)\big]
    \Big)
    \nonumber\\
    &
    \leq 
    \sup_{y\in A(\epsilon)}
    \P\Big(
        V_{\tau_\epsilon}^\eta(y) \in B
        ;\;
        \tau_{I(\epsilon)^\complement}^\eta(y) \leq RT/\eta
    \Big)
    +
    \gamma(\eta)T/\eta \cdot \lo(1)
    \nonumber\\
    % &
    % =\big(u(\epsilon, T)\cdot C(B)+ \lo(1)\big) \cdot \gamma(\eta) RT/\eta + \frac{R-r}2\cdot \gamma(\eta)T/\eta \cdot \lo(1)
    % \nonumber\\
    &
    \leq\big(C(B^-)+ \delta_B(\epsilon,RT) + \lo(1)\big) \cdot \gamma(\eta) RT/\eta,
    \label{eq: exit time analysis: upper bound of the probability that tau is before rho}
\end{align}
where the second inequaility is from \eqref{eq: exit time analysis: almost nothing happens between T over eta and rho one} and the last equality is from \eqref{eq: exit time condition upper bound} of Condition~\ref{condition E2}.
% for sufficiently large $T$'s.

\medskip
\noindent
\textbf{Proof of Case $(i)$.}

We work with different choices of $R$ and $r$ for the lower and upper bounds.
For the lower bound, we work with $R>r>1$ and set
\(K = \left\lceil \frac{t/\gamma(\eta)}{T/\eta} \right\rceil\).
Note that for $\eta \in \big(0, (r-1)T\big)$,
we have
$\lfloor r T/\eta \rfloor \geq T/\eta$ and hence $\rho_K^\eta(x) \geq K \lfloor r T/\eta\rfloor \geq t/\gamma(\eta)$.
Note also that from the Markov property conditioning on $\mathcal F_{\rho_j^\eta(x)}$,
\begin{align}
    &
    \inf_{x \in A(\epsilon)}\P\big(\gamma(\eta) \tau_{I(\epsilon)^\complement}^\eta(x) > t;\; V^\eta_{\tau_\epsilon}(x) \in B\big)
    % \\
    % &
    % =
    %     \P\big(\tau_{I(\epsilon)^\complement}^\eta(x) > t/\gamma(\eta);\; V^\eta_{\tau_\epsilon}(x) \in B\big)
    \nonumber\\
    &
    \geq
    \inf_{x \in A(\epsilon)}
        \P(\tau_{I(\epsilon)^\complement}^\eta (x) > \rho_K^\eta(x);\; V^\eta_{\tau_\epsilon}(x) \in B)
    % \\
    % &
    =
    \inf_{x \in A(\epsilon)}
    \sum_{j=K}^\infty     
        \P\Big(
            \tau_{I(\epsilon)^\complement}^\eta(x) \in \big(\rho_j^\eta(x),\, \rho_{j+1}^\eta(x) \big];\; V^\eta_{\tau_\epsilon}(x) \in B
        \Big)         
    \nonumber\\
    &
    \geq
    \inf_{x \in A(\epsilon)}
    \sum_{j=K}^\infty     
        \P\Big(
            \tau_{I(\epsilon)^\complement}^\eta(x) \in \big(\rho_j^\eta(x),\, \rho_{j}^\eta(x) + T/\eta \big];\; V^\eta_{\tau_\epsilon}(x) \in B
        \Big)         
    \nonumber\\
    &
    \geq
    \inf_{x \in A(\epsilon)}
    \sum_{j=K}^\infty     
        \inf_{y\in A(\epsilon)}
        \P\Big(
            \tau_{I(\epsilon)^\complement}^\eta(y) \leq T/\eta;\; V^\eta_{\tau_\epsilon}(y) \in B
        \Big) 
        \cdot
        \P\Big(\tau_{I(\epsilon)^\complement}^\eta(x) > \rho_j^\eta(x)\Big).
    \nonumber\\
    &
    \geq
    \inf_{y\in A(\epsilon)}
    \P\Big(
        \tau_{I(\epsilon)^\complement}^\eta(y) \leq T/\eta;\; V^\eta_{\tau_\epsilon}(y) \in B
    \Big) 
    \cdot
    \sum_{j=K}^\infty \inf_{x \in A(\epsilon)}  
        \P\Big(\tau_{I(\epsilon)^\complement}^\eta(x) > \rho_j^\eta(x)\Big).
        \label{eq: exit time and location analysis with epsilon lower bound}
\end{align}
\elaborate{
\begin{align*}
    &
    \P\Big(
        \tau_{I(\epsilon)^\complement}^\eta(x) \in \big(\rho_j^\eta(x),\, \rho_j^\eta(x) + T/\eta\big];\; V^\eta_{\tau_\epsilon}(x) \in B
    \Big) 
    \\
    &
    =
    \E\bigg[
        \P\Big(
            \tau_{I(\epsilon)^\complement}^\eta(x) \in \big(\rho_j^\eta(x),\, \rho_j^\eta(x) + T/\eta\big];\; V^\eta_{\tau_\epsilon}(x) \in B
        \Big|
            \mathcal F_{\rho_j^\eta(x)}
        \Big) 
    \bigg]
    \\    
    &
    =
    \E\bigg[
        \P\Big(
            \tau_{I(\epsilon)^\complement}^\eta(x) \leq \rho_j^\eta(x) + T/\eta;\; V^\eta_{\tau_\epsilon}(x) \in B
        \Big|
            \mathcal F_{\rho_j^\eta(x)}
        \Big) 
        \cdot
        \I\big\{\tau_{I(\epsilon)^\complement}^\eta(x) > \rho_j^\eta(x)\big\}
    \bigg]
    \\
    &
    \geq
        \E\bigg[
            \inf_{y\in A(\epsilon)}
            \P\Big(
                \tau_{I(\epsilon)^\complement}^\eta(y) \leq T/\eta;\; V^\eta_{\tau_\epsilon}(y) \in B
            \Big) 
            \cdot
            \I\big\{\tau_{I(\epsilon)^\complement}^\eta(x) > \rho_j^\eta(x)\big\}
        \bigg]
    \\
    &
    =
    \inf_{y\in A(\epsilon)}
    \P\Big(
        \tau_{I(\epsilon)^\complement}^\eta(y) \leq T/\eta;\; V^\eta_{\tau_\epsilon}(y) \in B
    \Big) 
    \cdot
    \P\big(\tau_{I(\epsilon)^\complement}^\eta(x) > \rho_j^\eta(x)\big).
\end{align*}
}%
From the Markov property conditioning on $\mathcal F_{\rho_j^\eta(x)}$,
the second term can be bounded as follows:
\begin{align}
    &
    \sum_{j=K}^\infty \inf_{x \in A(\epsilon)}  
    \P\Big(\tau_{I(\epsilon)^\complement}^\eta(x) > \rho_j^\eta(x)\Big)
    \nonumber\\
    &
    \geq 
    \sum_{j=0}^\infty
    \bigg(
        \inf_{y \in A(\epsilon)}
            \P\Big(\tau_{I(\epsilon)^\complement}^\eta(y) > \rho_1^\eta(y)\Big)
    \bigg)^{K+j}
    % \\
    % &
    =
    \sum_{j=0}^\infty
        \bigg(
            1 -\sup_{y \in A(\epsilon)}
                \P\Big(\tau_{I(\epsilon)^\complement}^\eta(y) \leq \rho_1^\eta(y) \Big)
        \bigg)^{K+j}
    \nonumber\\
    &
    =
    \frac1{\sup_{y \in A(\epsilon)}\P\Big(\tau_{I(\epsilon)^\complement}^\eta(y) \leq \rho_1^\eta(y) \Big)}
    \cdot
    \bigg(
        1 - \sup_{y \in A(\epsilon)}\P\Big(\tau_{I(\epsilon)^\complement}^\eta(y) \leq \rho_1^\eta(y) \Big)
    \bigg)^{ \ceil{\frac{t/\gamma(\eta)}{T/\eta}}  }   
    \nonumber\\
    &
    \geq
    \frac1{\big(1+\delta_{\S}(\epsilon, RT) + \lo(1)\big) \cdot \gamma(\eta) RT/\eta}
    \cdot
    \bigg(
        1 - \big(1+\delta_{\S}(\epsilon, RT) + \lo(1)\big) \cdot \gamma(\eta) RT/\eta
    \bigg)^{\frac{t/\gamma(\eta)}{T/\eta} +1}.    
    \label{eq:exit time and location tail of tau upper bound}    
\end{align}
where the last inequality is from \eqref{eq: exit time analysis: upper bound of the probability that tau is before rho} with $B=\mathbb S$.
From \eqref{eq: exit time and location analysis with epsilon lower bound},
\eqref{eq:exit time and location tail of tau upper bound}, and    
\eqref{eq: exit time condition lower bound} of Condition~\ref{condition E2}, we have
\begin{align*}
    &
    \liminf_{\eta\downarrow 0} 
    \inf_{x\in A(\epsilon)}
        \P\big(\gamma(\eta) \tau_{I(\epsilon)^\complement}^\eta(x) > t;\; V^\eta_{\tau_\epsilon}(x) \in B\big) 
    \\
    &
    \geq
    \liminf_{\eta\downarrow 0}
    \frac{C(B^\circ) - \delta_B(\epsilon, T)+ \lo(1)}{\big(1+\delta_{\S}(\epsilon, RT)+ \lo(1)\big)\cdot R}
    \cdot 
        \bigg(
            1 - \big(1+\delta_{\S}(\epsilon, RT) + \lo(1)\big) \cdot \gamma(\eta) RT/\eta
        \bigg)^{\frac{ R\cdot t}{\gamma(\eta)RT/\eta} +1}.    
    \\
    &
    \geq
    \frac{C(B^\circ)-\delta_B(\epsilon,T)}{1+\delta_{\S}(\epsilon,RT)}\cdot\exp\Big(-\big(1+\delta_{\S}(\epsilon,RT)\big) \cdot R\cdot t\Big).
\end{align*}
By taking limit $T\to\infty$ and then considering an $R$ arbitrarily close to 1, it is straightforward to check that the desired lower bound holds.
% \begin{align*}
%     &
%     C(B^\circ)\cdot\exp(-t)
%     \leq 
%     \liminf_{\eta\downarrow 0} 
%         \P\big(\gamma(\eta) \tau_{I(\epsilon)^\complement}^\eta(x) > t;\; V^\eta_{\tau_\epsilon}(x) \in B\big).
% \end{align*}

Moving on to the upper bound, we set $R=1$ and fix an arbitrary $r\in (0,1)$.
% Let $\rho^\eta_{i+1}(x;\epsilon,t) \delequal \inf\{j\geq \rho_i(x; \epsilon,t) + \lfloor t/\eta \rfloor: V_j^\eta(x) \in A(\epsilon)\}$ where $\rho_0^\eta(x;\epsilon,t) = 0$.
Set
\(k = \left\lfloor \frac{t/\gamma(\eta)}{T/\eta} \right\rfloor\)
and note that
\begin{align*}
    % &
    \sup_{x \in A(\epsilon)}\P\big(\gamma(\eta) \tau_{I(\epsilon)^\complement}^\eta(x) > t;\; V^\eta_{\tau_\epsilon}(x) \in B\big)
    % \\
    &
    =\sup_{x \in A(\epsilon)}
        \P\big(\tau_{I(\epsilon)^\complement}^\eta(x) > t/\gamma(\eta);\; V^\eta_{\tau_\epsilon}(x) \in B\big)
    \\
    &
    =
        \underbrace{ \sup_{x \in A(\epsilon)}
            \P\big(\tau_{I(\epsilon)^\complement}^\eta(x) > t/\gamma(\eta)\geq \rho_k^\eta(x);\; V^\eta_{\tau_\epsilon}(x) \in B\big)
        }_{\mathrm{(I)}}
        \\&
        \quad
        +
        \underbrace{ \sup_{x \in A(\epsilon)}
            \P\big(\tau_{I(\epsilon)^\complement}^\eta(x) > t/\gamma(\eta);\; \rho_k^\eta(x) > t/\gamma(\eta);\; V^\eta_{\tau_\epsilon}(x) \in B\big)
        }_{\mathrm{(II)}}
\end{align*}
We first show that (II) vanishes as $\eta \to 0$. 
Our proof hinges on the following claim:
\begin{align*}
    \big\{
        \tau_{I(\epsilon)^\complement}^\eta (x) > t/ \gamma(\eta)
        ;\;
        % \quad\&\quad 
        \rho_k^\eta(x) > t/\gamma(\eta)
    \big\}
    % \quad
    % \implies 
    % \quad
    \ \subseteq\ 
    \bigcup_{j=1}^k\big\{\tau_{I(\epsilon)^\complement}^\eta(x) \wedge \rho_j^\eta(x) - \rho_{j-1}^\eta(x) \geq T/\eta\big\}
\end{align*}
Proof of the claim: 
Suppose that \(\tau_{I(\epsilon)^\complement}^\eta (x) > t/ \gamma(\eta)\) and \(\rho_k^\eta(x) > t/\gamma(\eta)\).
Let $k^* \delequal \max\{j\geq 1: \rho_j^\eta(x) \leq t/\gamma(\eta)\}$. 
Note that $k^* < k$.
We consider two cases separately: (i)
\(
\rho_{k^*}^\eta(x)/k^* > (t/\gamma(\eta) - T/\eta)/k^*
\)
and
(ii) 
\(
\rho_{k^*}^\eta(x) \leq t/\gamma(\eta) - T/\eta.
\)
In case of (i), since $\rho_{k^*}^\eta(x)/k^*$ is the average of \(\{\rho_j^\eta(x) - \rho_{j-1}^\eta(x): j=1,\ldots,k^*\}\), there exists $j^*\leq k^*$ such that
\[
    \rho_{j^*}^\eta(x) - \rho_{j^*-1}^\eta(x) > \frac{t/\gamma(\eta) - T/\eta}{k^*} \geq \frac{kT/\eta - T/\eta}{k-1} = T/\eta
\]
Note that since $\rho_{j^*}^\eta(x) \leq \rho_{k^*}^\eta(x) \leq t/\gamma(\eta) \leq \tau_{I(\epsilon)^\complement}^\eta(x)$, this proves the claim for case (i). 
For case (ii), note that
\[
    \rho_{k^*+1}^\eta(x) \wedge \tau_{I(\epsilon)^\complement}^\eta(x) - \rho_{k^*}^\eta(x)
    \geq t / \gamma(\eta)  - (t/\gamma(\eta) - T/\eta)   = T/\eta,
\]
which proves the claim.

Now, with the claim in hand, we have that
\begin{align*}
% & 
    \mathrm{\text{(II)}}
    % \\
    &
    \leq  
    \sum_{j=1}^k \sup_{x \in A(\epsilon)}
        \P\big(\tau_{I(\epsilon)^\complement}^\eta(x) \wedge \rho_j^\eta(x) - \rho_{j-1}^\eta (x) \geq T/\eta\big)
    \\
    &
    =
    \sum_{j=1}^k \sup_{x \in A(\epsilon)}
        \E\Big[
            \P\big(
                \tau_{I(\epsilon)^\complement}^\eta(x) \wedge \rho_j^\eta(x) - \rho_{j-1}^\eta (x) \geq T/\eta
            \big|
                \mathcal F_{\rho_{j-1}^\eta(x)}
            \big)
        \Big]
    \\
    &
    \leq
    \sum_{j=1}^k \sup_{y \in A(\epsilon)}
        \P\big(
            \tau_{I(\epsilon)^\complement}^\eta(y) \wedge \rho_1^\eta(y) \geq T/\eta        
        \big)
    \\
    &
    \leq 
    \frac{t}{\gamma(\eta)T/\eta}
    \cdot 
    \gamma(\eta) T/\eta \cdot \lo(1) 
    = \lo (1)
\end{align*}
for sufficiently large $T$'s, where the last inequality is from the definition of $k$ and \eqref{eq: exit time analysis: almost nothing happens between T over eta and rho one}.
We are now left with bounding (I) from above.
\begin{align*}
    \mathrm{(I)}
    &
    =  \sup_{x \in A(\epsilon)}
    \P
    \big(
        \tau_{I(\epsilon)^\complement}^\eta(x) > t/\gamma(\eta)\geq \rho_K^\eta(x);\; V^\eta_{\tau_\epsilon}(x) \in B
    \big)
    % \\&
    \leq  \sup_{x \in A(\epsilon)}
    \P
    \big(
        \tau_{I(\epsilon)^\complement}^\eta(x) 
        >
        \rho_K^\eta(x);\; V^\eta_{\tau_\epsilon}(x) \in B
    \big)
    \\&
    = 
    \sum_{j=k}^\infty \sup_{x \in A(\epsilon)}
        \P
        \Big(
            \tau_{I(\epsilon)^\complement}^\eta(x) \in \big(\rho_j^\eta(x),\,\rho_{j+1}^\eta(x)\big] ;\; 
            V^\eta_{\tau_\epsilon}(x) \in B
        \Big)
     % \textcolor{red}{\big[\rho_j^\eta(x),\,\rho_{j+1}^\eta(x)\big)
    % \text{ instead?}}
    \\
    &=
    \sum_{j=k}^\infty     \sup_{x \in A(\epsilon)}
    \E\bigg[
        \E\Big[
            \I\big\{V_{\tau_\epsilon}^\eta(x) \in B\big\}
            \cdot
            \I\big\{\tau_{I(\epsilon)^\complement}^\eta(x) \leq \rho_{j+1}^\eta(x)\big\}
        \Big|
            \mathcal F_{\rho_j^\eta(x)}
        \Big]
        \cdot
        \I\big\{\tau_{I(\epsilon)^\complement}^\eta(x) > \rho_j^\eta(x) \big\}
    \bigg]
    \\
    &\leq
    \sum_{j=k}^\infty     \sup_{x \in A(\epsilon)}
    \E\bigg[
        \sup_{y\in A(\epsilon)}
        \P\Big(
            V_{\tau_\epsilon}^\eta(y) \in B
            ;\;
            \tau_{I(\epsilon)^\complement}^\eta(y) \leq \rho_1^\eta(y)
        \Big)
        \cdot
        \I\big\{\tau_{I(\epsilon)^\complement}^\eta(x) > \rho_j^\eta(x) \big\}
    \bigg]    
    \\
    &=
    \sup_{y\in A(\epsilon)}
    \P\Big(
        V_{\tau_\epsilon}^\eta(y) \in B
        ;\;
        \tau_{I(\epsilon)^\complement}^\eta(y) \leq \rho_1^\eta(y)
    \Big)
    \cdot
    \sum_{j=k}^\infty     \sup_{x \in A(\epsilon)}
    \P\Big(\tau_{I(\epsilon)^\complement}^\eta(x) > \rho_j^\eta(x)\Big)
\end{align*}
The first term can be bounded via \eqref{eq: exit time analysis: upper bound of the probability that tau is before rho} with $R=1$:
\begin{align*}
    &
    \sup_{y\in A(\epsilon)}
    \P\Big(
        V_{\tau_\epsilon}^\eta(y) \in B
        ;\;
        \tau_{I(\epsilon)^\complement}^\eta(y) \leq \rho_1^\eta(y)
    \Big)
    \\
    &
    \leq
    \big(C(B^-)+\delta_B(\epsilon, T)+ \lo(1)\big) \cdot \gamma(\eta) T/\eta + \frac{1-r}2\cdot \gamma(\eta)T/\eta \cdot \lo(1)
\end{align*}
whereas the second term is bounded via \eqref{eq: exit time condition lower bound} of Condition~\ref{condition E2} as follows: 
\begin{align}
    &
    \sum_{j=k}^\infty \sup_{x \in A(\epsilon)}
    \P\Big(\tau_{I(\epsilon)^\complement}^\eta(x) > \rho_{j}^{\eta}(x)\Big)
    \nonumber\\
    &
    \leq 
    \sum_{j=0}^\infty
    \bigg(
        \sup_{y \in A(\epsilon)}
            \P\Big(\tau_{I(\epsilon)^\complement}^\eta(y) > \lfloor rT/\eta\rfloor\Big)
    \bigg)^{k+j}
    % \nonumber\\
    % &
    =
    \sum_{j=0}^\infty
        \bigg(
            1 -\inf_{y \in A(\epsilon)}
                \P\Big(\tau_{I(\epsilon)^\complement}^\eta(y) \leq rT/\eta \Big)
        \bigg)^{k+j}
    \nonumber\\
    &\leq
    \frac1{\inf_{y \in A(\epsilon)}\P\Big(\tau_{I(\epsilon)^\complement}^\eta(y) \leq rT/\eta \Big)}
    \cdot
    \bigg(
        1 - \inf_{y \in A(\epsilon)}\P\Big(\tau_{I(\epsilon)^\complement}^\eta(y) \leq rT/\eta \Big)
    \bigg)^{\frac{t/\gamma(\eta)}{T/\eta} -1}
    \nonumber\\
    &
    = 
    \frac{1}
    {r\cdot\big(1-\delta_B(\epsilon, rT)+\lo(1)\big)\cdot \gamma(\eta)T/\eta} 
    \cdot 
    \Big(
    1- r\cdot\big(1-\delta_B(\epsilon, rT)+\lo(1)\big)\cdot \gamma(\eta)T/\eta
    \Big)^{\frac{t}{\gamma(\eta)T/\eta}-1}
    \label{eq:exit time and location tail of tau lower bound}
    \nonumber
\end{align}
Therefore,
\begin{align*}
    \limsup_{\eta\downarrow 0}
    \sup_{x\in A(\epsilon)}
        \P\big(\gamma(\eta) \tau_{I(\epsilon)^\complement}^\eta(x) > t;\; V^\eta_{\tau_\epsilon}(x) \in B\big)
    &
    \leq
    \frac{C(B^-)+\delta_B(\epsilon,T)}{r\cdot (1-\delta_B(\epsilon,rT))} \cdot \exp\Big(-r \cdot\big(1-\delta_B(\epsilon, rT)\big)\cdot  t\Big).
\end{align*}
Again, taking $T\to\infty$ and considering $r$ arbitrarily close to 1, we can check that the desired upper bound holds. 

\medskip
\noindent
\textbf{Proof of Case $(ii)$.}

We work with $R=1$ and set
\(K = \left\lceil \frac{t/\gamma(\eta)}{T/\eta} \right\rceil\).
Again, for $\eta \in \big(0, (r-1)T\big)$,
we have
$\lfloor r T/\eta \rfloor \geq T/\eta$ and hence $\rho_K^\eta(x) \geq K \lfloor r T/\eta\rfloor \geq t/\gamma(\eta)$.
By the Markov property conditioning on $\mathcal F_{\rho_j^\eta(x)}$,
\begin{align*}
    &
    \sup_{x \in A(\epsilon)}\P\big(\gamma(\eta) \tau_{I(\epsilon)^\complement}^\eta(x) \leq t\big)
    \\ 
    & \leq 
    \sup_{x \in A(\epsilon)}\P\Big( \tau_{I(\epsilon)^\complement}^\eta(x) \leq \rho^\eta_K(x) \Big)
    =
    \sup_{x \in A(\epsilon)}\sum_{j = 1}^K\P
    \Big(\tau_{I(\epsilon)^\complement}^\eta(x) \in \big(\rho^\eta_{j-1}(x), \rho^\eta_j(x)\big] \Big)
    \\ 
    & \leq 
    \sum_{j = 1}^K
    \sup_{y \in A(\epsilon)}
    \bigg(
        1 -
        \P\Big(
        \tau_{I(\epsilon)^\complement}^\eta(y) \leq \rho_1^\eta(y)
    \Big)
    \bigg)^{j - 1}
    \cdot 
    \sup_{y \in A(\epsilon)}\P\Big(
        \tau_{I(\epsilon)^\complement}^\eta(y) \leq \rho_1^\eta(y)
    \Big)
    \\ 
    & \leq 
    K \cdot \sup_{y \in A(\epsilon)}\P\Big(
        \tau_{I(\epsilon)^\complement}^\eta(y) \leq \rho_1^\eta(y)
    \Big)
    \leq 
    K \cdot \big(\delta_{I^\complement}(\epsilon,T) + \lo(1)\big) \cdot \gamma(\eta) T/\eta
    \\ 
    &\qquad
    \text{by \eqref{eq: exit time analysis: upper bound of the probability that tau is before rho} (with $B = I^\complement$) and the running assumption of Case $(ii)$ that $C(\cdot)\equiv 0$}
    \\ 
    & \leq 
    \frac{2t/\gamma(\eta)}{T/\eta} \cdot 
     \big(\delta_{I^\complement}(\epsilon,T) + \lo(1)\big) \cdot \gamma(\eta) T/\eta
     \qquad
     \text{ for all $\eta$ small enough under }K = \ceil{\frac{t/\gamma(\eta)}{T/\eta}}
     \\
     & = 
     2t \cdot   \big(\delta_{I^\complement}(\epsilon,T) + \lo(1)\big).
\end{align*}
Lastly, by Condition~\ref{condition E2} (specifically, $\lim_{\epsilon \downarrow 0}\lim_{T \uparrow \infty}\delta_{I^\complement}(\epsilon,T) = 0$ in Definition~\ref{def: asymptotic atom}),
we verify the upper bounds in Case $(ii)$ and conclude the proof.
\end{proof}

Now, we are ready to prove Theorem~\ref{thm: exit time analysis framework}.
\begin{proof}[Proof of Theorem~\ref{thm: exit time analysis framework}]
\linksinpf{thm: exit time analysis framework} 
We focus on the proof of Case $(i)$ since the proof of Case $(ii)$ is almost identical, with the only key difference being that we apply part $(ii)$ of Proposition~\ref{prop: exit time analysis main proposition} instead of part $(i)$.

We first claim that for any $\epsilon, \epsilon' > 0$, $t\geq 0$, and measurable $B\subseteq \mathbb S$,
\begin{equation}\label{exit time and location analysis framework main theorem proof claim}
\begin{aligned}
C(B^\circ)\cdot e^{-t} - \delta_{t,B}(\epsilon)
&
\leq
\liminf_{\eta\downarrow 0} 
    \inf_{x\in I(\epsilon')}
        \P\Big(\gamma(\eta)\cdot\tau_{I(\epsilon)^\complement}^\eta (x) > t,\, V_{\tau_\epsilon}^\eta(x) \in B \Big)
\\
&
\leq
\limsup_{\eta\downarrow 0} 
    \sup_{x\in I(\epsilon')}
        \P\Big(\gamma(\eta)\cdot\tau_{I(\epsilon)^\complement}^\eta (x) > t,\, V_{\tau_\epsilon}^\eta(x) \in B \Big)
\leq 
C(B^-)\cdot e^{-t} + \delta_{t,B}(\epsilon)
\end{aligned}
\end{equation}
where $\delta_{t,B}(\epsilon)$ is characterized in part $(i)$ of Proposition \ref{prop: exit time analysis main proposition} such that $\delta_{t,B}(\epsilon) \to 0$ as $\epsilon \to 0$.
Now, note that for any measurable $B \subseteq I^\complement$,
\begin{align}
    &
    \P\Big(\gamma(\eta)\cdot \tau^\eta_{I^\complement}(x) > t,\, V_\tau^\eta (x) \in B\Big)
    \nonumber
    \\
    &
    =
    \underbrace{
        \P\Big(\gamma(\eta)\cdot \tau^\eta_{I^\complement}(x) > t,\, V_\tau^\eta (x) \in B,\, V^\eta_{\tau_\epsilon}(x) \in I\Big)
    }_{\mathrm{\text{(I)}}}
    +
    \underbrace{
        \P\Big(\gamma(\eta)\cdot \tau^\eta_{I^\complement}(x) > t,\, V_\tau^\eta (x) \in B,\, V^\eta_{\tau_\epsilon}(x) \notin I\Big)
    }_{\mathrm{\text{(II)}}}
    \nonumber
\end{align}
and since 
\[
    \mathrm{\text{(I)}}
    \leq 
    \P\Big(V^\eta_{\tau_\epsilon}(x) \in I\Big)
    \qquad\text{and}\qquad
    \mathrm{\text{(II)}}
    =\P\Big(\gamma(\eta)\cdot \tau^\eta_\epsilon(x) > t,\, V^\eta_{\tau_\epsilon}(x) \in B\setminus I\Big),
\]
we have that
\begin{align*}
    % &
    \liminf_{\eta\downarrow 0}
        \inf_{x\in I(\epsilon')}
            \P\Big(\gamma(\eta)\cdot \tau^\eta_{I^\complement}(x) > t,\, V_\tau^\eta (x) \in B\Big)    
    % \\
    &
    \geq
    \liminf_{\eta\downarrow 0}
        \inf_{x\in I(\epsilon')}
            \P\Big(\gamma(\eta)\cdot \tau^\eta_\epsilon(x) > t,\, V^\eta_{\tau_\epsilon}(x) \in B\setminus I\Big)
    \\
    &
    \geq
    C\big((B\setminus I)^\circ\big) \cdot e^{-t} - \delta_{t,B\setminus I}(\epsilon)
    \\[2pt]
    &
    =
    C(B^\circ)\cdot e^{-t} - \delta_{t,B\setminus I}(\epsilon)
\end{align*}
due to $B \subseteq I^\complement$, and
\begin{align*}
    &
    \limsup_{\eta\downarrow 0}
        \sup_{x\in I(\epsilon')}
            \P\Big(\gamma(\eta)\cdot \tau^\eta_{I^\complement}(x) > t,\, V_\tau^\eta (x) \in B\Big)
    \\
    &
    \leq
    \limsup_{\eta\downarrow 0}
        \sup_{x\in I(\epsilon')}
            \P\Big(\gamma(\eta)\cdot \tau^\eta_\epsilon(x) > t,\, V^\eta_{\tau_\epsilon}(x) \in B\setminus I\Big) 
    + 
    \limsup_{\eta\downarrow 0}
        \sup_{x\in I(\epsilon')}
            \P\Big(V^\eta_{\tau_\epsilon}(x) \in I\Big)
    \\
    &
    \leq
    C\big((B\setminus I)^-\big)\cdot e^{-t} + \delta_{t,B\setminus I}(\epsilon) + C (I^-) + \delta_{0,I}(\epsilon)
    \\[3pt]
    &
    = 
    C(B^-)\cdot e^{-t} + \delta_{t,B\setminus I}(\epsilon) + \delta_{0,I}(\epsilon).
\end{align*}
Taking $\epsilon\to0$, we arrive at the desired lower and upper bounds of the theorem.
Now we are left with the proof of the claim~\eqref{exit time and location analysis framework main theorem proof claim} is true. 
Note that for any $x \in I$,
\begin{align}
    &
    \P\Big(\gamma(\eta)\cdot \tau^\eta_\epsilon(x) > t,\, V^\eta_{\tau_\epsilon}(x) \in B\Big)
    \nonumber
    \\
    &
    =
    \E
    \bigg[
        \P\Big(\gamma(\eta)\cdot \tau^\eta_\epsilon(x) > t,\, V^\eta_{\tau_\epsilon}(x) \in B\Big| \mathcal F_{\tau_{A(\epsilon)}^\eta(x)}\Big)
        \cdot 
        \Big( \I\big\{\tau_{A(\epsilon)}^\eta (x) \leq T/\eta\big\} + \I\big\{\tau_{A(\epsilon)}^\eta (x) > T/\eta\big\}\Big)
    \bigg]
    \label{eq:exit time and location analysis framework: proof of the main theorem: decomposition in the proof of the claim}
\end{align}
Fix an arbitrary $s>0$, and note that from the Markov property,
\begin{align*}
    &
    \P\Big(\gamma(\eta)\cdot \tau^\eta_\epsilon(x) > t,\, V^\eta_{\tau_\epsilon}(x) \in B\Big)
    \\
    &
    \leq
    \E\bigg[
        \sup_{y \in A(\epsilon)}
            \P
            \Big(
                \tau^\eta_\epsilon(y) > t/\gamma(\eta) - T/\eta,\, V^\eta_{\tau_\epsilon}(y) \in B
            \Big)
            \cdot 
            \I\big\{\tau_{A(\epsilon)}^\eta (x) \leq T/\eta\big\}
    \bigg] 
    % \\
    % &
    % \quad
    + 
    \P\Big(\tau_{A(\epsilon)}^\eta (x) > T/\eta\Big)
    \\
    &
    \leq
    \sup_{y \in A(\epsilon)}
        \P
        \Big(
            \gamma(\eta)\cdot \tau^\eta_\epsilon(y) > t-s,\, V^\eta_{\tau_\epsilon}(y) \in B
        \Big)
    % \\
    % &
    % \quad
    + 
    \P\Big(\tau_{A(\epsilon)}^\eta (x) > T/\eta\Big)
\end{align*}
for sufficiently small $\eta$'s;
here, we applied $\gamma(\eta)/\eta \to 0$ as $\eta \downarrow 0$ in the last inequality.
In light of part $(i)$ of Proposition~\ref{prop: exit time analysis main proposition},
by taking  $T \to \infty$ we yield
\begin{align*}
    \limsup_{\eta\downarrow 0}
        \sup_{x\in I(\epsilon')}
            \P\Big(\gamma(\eta)\cdot \tau^\eta_\epsilon(x) > t,\, V^\eta_{\tau_\epsilon}(x) \in B\Big)
    &
    \leq
    C(B^-)\cdot e^{-(t-s)} + \delta_{t,B}(\epsilon)
\end{align*}
Considering an arbitrarily small $s>0$, we get the upper bound of the claim~\eqref{exit time and location analysis framework main theorem proof claim}. 
For the lower bound, again from \eqref{eq:exit time and location analysis framework: proof of the main theorem: decomposition in the proof of the claim} and the Markov property,
\begin{align*}
    &
    \liminf_{\eta\downarrow 0}
        \inf_{x\in I(\epsilon')}
            \P\Big(\gamma(\eta)\cdot \tau^\eta_\epsilon(x) > t,\, V^\eta_{\tau_\epsilon}(x) \in B\Big)
    \\
    &
    \geq 
    \liminf_{\eta\downarrow 0}
        \inf_{x\in I(\epsilon')}
            \E\bigg[
                \inf_{y \in A(\epsilon)}
                    \P
                    \Big(
                        \tau^\eta_\epsilon(y) > t/\gamma(\eta),\, V^\eta_{\tau_\epsilon}(y) \in B
                    \Big)
                \cdot 
                \I\big\{\tau_{A(\epsilon)}^\eta (x) \leq T/\eta\big\}
            \bigg] 
    \\
    &
    \geq 
    \liminf_{\eta\downarrow 0}       
        \inf_{y \in A(\epsilon)}
            \P
            \Big(
                \gamma(\eta)\cdot \tau^\eta_\epsilon(y) > t,\, V^\eta_{\tau_\epsilon}(y) \in B
            \Big)
        \cdot 
        \inf_{x\in I(\epsilon')}
            \P\big(\tau_{A(\epsilon)}^\eta (x) \leq T/\eta\big)
    \\
    &
    \geq
    C(B^\circ) - \delta_{t,B}(\epsilon),
\end{align*}
which is the desired lower bound of the claim~\eqref{exit time and location analysis framework main theorem proof claim}. 
This concludes the proof. 
\end{proof}

\section{Proof of Theorem~\ref{theorem: first exit time, unclipped}}
\label{sec: proof of proposition: first exit time}

In this section, we apply the framework developed in Section~\ref{subsec: framework, first exit time analysis}
and prove Theorem~\ref{theorem: first exit time, unclipped}.
Throughout this section,
we impose Assumptions \ref{assumption gradient noise heavy-tailed}, \ref{assumption: lipschitz continuity of drift and diffusion coefficients}, and \ref{assumption: shape of f, first exit analysis}.

We start by fixing a few constants.
Recall the definition of the discretized width metric $\mathcal J^I_b$ defined in \eqref{def: first exit time, J *}.
To prove Theorem~\ref{theorem: first exit time, unclipped},
in this section
we fix some $b > 0$ such that the conditions in  Theorem~\ref{theorem: first exit time, unclipped} hold.
This allows us to fix some $\widecheck \epsilon > 0$ small enough such that 
\begin{align}
    \bar B_{\widecheck{\epsilon}}(\bm 0) \subseteq I_{\widecheck{\epsilon}},
    \qquad
    \bm a(\bm x)\bm x < 0\ \forall \bm x \in \bar B_{\widecheck \epsilon}(\bm 0)\setminus\{\bm 0\},
    \quad
    \inf\Big\{ 
        \norm{\bm x - \bm y}:\ 
        \bm x \in I^\complement,\ \bm y \in \mathcal G^{( \mathcal J^I_b - 1 )|b}(\widecheck \epsilon)
    \Big\}
    > 0.
    \label{constant bar epsilon, first exit time analysis}
\end{align}
Here, $\bar B_r(\bm x) = \{ \bm x:\ \norm{\bm x} \leq r  \}$ is the closed ball with radius $r$ centered at $\bm x$.
An implication of the first condition in \eqref{constant bar epsilon, first exit time analysis} is the following positive invariance property under the gradient field $\bm a(\cdot)$:
for any $r \in (0,\widecheck \epsilon]$, 
\begin{align}
     \bm x \in \bar B_{ r }(\bm 0)
    \quad \Longrightarrow \quad 
    \bm y_t(\bm x) \in \bar B_{ r }(\bm 0)\ \forall t \geq 0,
    \label{property: contraction of the ODE around the origin}
\end{align}
with the ODE $\bm y_t(x)$ defined in \eqref{def ODE path y t}.
Next, for any $\epsilon \in (0,\widecheck \epsilon)$, let
\begin{align}
    \notationdef{notation-domain-check-I-epsilon}{\widecheck I(\epsilon)} \delequal 
    \Big\{
        \bm x \in I:\  \norm{ \bm y_{1/\epsilon}(\bm x) } < \widecheck \epsilon
    \Big\}
    \label{def: covering sets I epsilon, first exit time}
\end{align}
By Gronwall's inequality, it is easy to see that $\widecheck I(\epsilon)$ is an open set.
Meanwhile, by Assumption \ref{assumption: shape of f, first exit analysis},
given $\bm x \in I$ we must have $\bm x \in \widecheck I(\epsilon)$ for any $\epsilon > 0$ small enough.
As a result, the collection of open sets $\{ \widecheck I(\epsilon):\ \epsilon \in (0,\widecheck \epsilon) \}$
provides a covering for $I$:
\begin{align*}
    \bigcup_{\epsilon \in (0,\widecheck \epsilon)}\widecheck I(\epsilon) = I.
\end{align*}
% see Section~\ref{subsec: framework, first exit time analysis} and Condition~\ref{condition E2} for the definition of a covering for asymptotic atoms.
% We will also make use of the following property:
% given $0 < \epsilon^\prime < \epsilon$,
% \begin{align}
%     \inf\Big\{
%         \norm{\bm x - \bm z}:\ \bm x \in I(\epsilon),\ \bm z \notin I(\epsilon^\prime)
%     \Big\} > 0.
%     \label{property: bounded away, I epsilon, different epsilon}
% \end{align}
% To see why, it suffices to consider a straightforward proof by contradiction.
% Suppose that we have sequences $\bm x_n \in I(\epsilon)$ and $\bm z_n \notin I(\epsilon^\prime)$
% such that
% $
% \norm{\bm x_n - \bm z_n } \leq 1/n.
% $
% Then from the boundedness of $I$ (and hence $I(\epsilon)$),
% by picking a converging sub-sequence if necessary we can w.l.o.g.\ assume that $\bm x_n \to \bm x^*$ for some $\bm x^* \in \big(I(\epsilon)\big)^-$,
% and hence $\norm{\bm z_n - \bm x^*} \to 0$.
% However, due to $\bm z_n \notin I(\epsilon^\prime)$,
% we must have $\norm{\bm y_{1/\epsilon^\prime}(\bm z_n)} \geq \widecheck \epsilon$ for all $n$.
% Gronwall's inequality then implies that $\norm{\bm y_{1/\epsilon^\prime}(\bm x^*)} \geq \widecheck \epsilon$.
% By \eqref{property: contraction of the ODE around the origin},
% we yield the contradiction that $\norm{\bm y_{1/\epsilon}(\bm x^*)} \geq \widecheck \epsilon$, and hence $\bm x^* \notin I(\epsilon)$.
% This contradiction verifies claim \eqref{property: bounded away, I epsilon, different epsilon}.

Next, recall that we use 
$
{I_\epsilon} = \{ \bm y\in \R^n:\  \norm{\bm x - \bm y} < \epsilon\ \Longrightarrow\ \bm x \in I  \}
$
to denote the $\epsilon$-shrinkage of the set $I$.
Given any $\epsilon > 0$,
note that $I_\epsilon$ is an open set and, by definition, its closure $(I_\epsilon)^-$ is still bounded away from $I^\complement$,
i.e.,
$
\norm{\bm x-\bm y} \geq \epsilon
$
for all $\bm x\in (I_\epsilon)^-$, $\bm y \in I^\complement$.
Besides, the continuity of $\bm a(\cdot)$ (see Assumption~\ref{assumption: lipschitz continuity of drift and diffusion coefficients})
implies the continuity (w.r.t.\ $\bm x$) of $\bm y_t(\bm x)$'s hitting time to $\bar B_{\widecheck{\epsilon}}(\bm 0)$.
Then, by
the boundedness of set $I$ and hence $(I_\epsilon)^- \subseteq I$,
as well as property \eqref{property: contraction of the ODE around the origin},
we know that
given any $\epsilon > 0$, the claim
\begin{align*}
    \norm{\bm y_T(\bm x)} < \widecheck \epsilon\qquad \forall \bm x \in (I_\epsilon)^-
\end{align*}
holds for any $T > 0$ large enough.
This confirms that
given $\epsilon > 0$, it holds for any $\epsilon^\prime > 0$ small enough that 
\begin{align}
    (I_\epsilon)^- \subseteq \widecheck I(\epsilon^\prime).
    \label{property, I epsilon and I _ epsilon }
\end{align}

As a direct consequence of the discussion above,
we highlight another important property of the sets $\mathcal G^{(k)|b}(\epsilon)$ defined in \eqref{def: set G k b epsilon}.
For any $k \in \mathbb N$, $b > 0$, and $\epsilon \geq 0$, let
\begin{align}
    \notationdef{notation-extended-coverage-set-bar-G-k-b-epsilon}{\bar{\mathcal G}^{(k)|b}(\epsilon)}
    \delequal
    \Big\{
        \bm y_t(\bm x):\ \bm x \in \mathcal G^{(k)|b}(\epsilon),\ t \geq 0
    \Big\},
    \label{def: bar G k b epsilon, extended k jump coverage set}
\end{align}
where $\bm y_\cdot(\bm x)$ is the ODE defined in \eqref{def ODE path y t}.
First, due to \eqref{property, I epsilon and I _ epsilon } and the fact that $\mathcal G^{(\mathcal J^I_b - 1)|b}(\widecheck{\epsilon})$ is bounded away from $I^\complement$ (see \eqref{constant bar epsilon, first exit time analysis}),
given any $\epsilon \in (0,\widecheck{\epsilon}]$, it holds for all $\epsilon^\prime > 0$ small enough that 
$
\mathcal G^{(\mathcal J^I_b - 1)|b}(\epsilon) \subseteq \widecheck I(\epsilon^\prime).
$
Furthermore, we claim that $\bar{\mathcal G}^{(\mathcal J^I_b - 1)|b}(\widecheck\epsilon)$ is also bounded away from $I^\complement$, i.e.,
\begin{align}
    \inf\Big\{
        \norm{\bm x - \bm z}:\ \bm x \in \bar{\mathcal G}^{(\mathcal J^I_b - 1)|b}(\widecheck\epsilon),\ \bm z \in I^\complement
    \Big\}
    > 0.
    \label{property, bar G k b epsilon bounded away from I complement}
\end{align}
This can be argued with a proof by contradiction.
Suppose that there exist sequences $\bm x_n^\prime \in \bar{\mathcal G}^{(\mathcal J^I_b - 1)|b}(\widecheck\epsilon)$ and $\bm z_n \notin I$
such that $\norm{\bm x_n^\prime - \bm z_n} \leq 1/n$.
By definition of $\bar{\mathcal G}^{(\mathcal J^I_b - 1)|b}(\widecheck\epsilon)$,
there exist sequences $\bm x_n \in {\mathcal G}^{(\mathcal J^I_b - 1)|b}(\widecheck\epsilon)$ and $t_n \geq 0$ such that
$
\bm x^\prime_n = \bm y_{t_n}(\bm x_n)
$
for each $n\geq 1$.
Besides, we have shown that
$
\mathcal G^{(\mathcal J^I_b - 1)|b}(\widecheck \epsilon) \subseteq\widecheck I(\epsilon)
$
holds
for any $\epsilon > 0$ small enough.
Then under such $\epsilon>0$, by \eqref{property: contraction of the ODE around the origin}\eqref{def: covering sets I epsilon, first exit time},
we must have $t_n \leq 1/\epsilon$ for each $n$;
otherwise, we will have $\bm x^\prime_n = \bm y_{t_n}(\bm x_n) \in \bar B_{\widecheck{\epsilon}}(\bm 0)$, 
which prevents  $\norm{\bm x_n^\prime - \bm z_n} \leq 1/n$ to hold for the sequence $\bm z_n \in I^c$ due to 
$\bar B_{\widecheck{\epsilon}}(\bm 0) \subseteq I_{\widecheck\epsilon}$; see
\eqref{constant bar epsilon, first exit time analysis}.
Now, in light of the boundedness for the sequence $\bm x_n \in \bar{\mathcal G}^{(\mathcal J^I_b - 1)|b}(\widecheck\epsilon) \subset I$ (due to the boundedness of $I$) and the boundedness of the sequence $t_n$,
by picking a sub-sequence if necessary, we can w.l.o.g.\ assume that $\bm x_n \to \bm x_*$ for some $\bm x_* \in \big(\mathcal{G}^{(\mathcal J^I_b - 1)|b}(\widecheck\epsilon)\big)^- \subset I$
and $t_n \to t_*$ for some $t_* \in [0,\infty)$.
Let $\bm x^\prime_* \delequal \bm y_{t_*}(\bm x_*)$.
Since $\bm x_* \in I$,
by Assumption~\ref{assumption: shape of f, first exit analysis}
we must have $\bm x^\prime_* \in I$.
However, the continuity of the flow (specifically, using Gronwall's inequality) implies $\bm x^\prime_n \to \bm x^\prime_*$.
Under the condition $\norm{\bm x_n^\prime - \bm z_n} \leq 1/n$ and $\bm z_n \in I^\complement$ for each $n$,
we arrive at the contradiction that $\bm z_n \to \bm x^\prime_* \in I^\complement$ due to $I$ being an open set and $I^\complement$ being closed.
This verifies \eqref{property, bar G k b epsilon bounded away from I complement}.
Summarizing \eqref{constant bar epsilon, first exit time analysis}, \eqref{property: contraction of the ODE around the origin}, and \eqref{property, bar G k b epsilon bounded away from I complement},
we can fix some $\bar \epsilon > 0$ small enough such that the following claims hold:
\begin{align}
    &\bar B_{\bar{\epsilon}}(\bm 0) \subseteq I_{\bar\epsilon},
    \label{constant bar epsilon, new, 1, first exit time analysis}
    \\ 
    &r \in (0,\bar\epsilon],\ \bm x \in \bar B_r(\bm 0)
    \quad \Longrightarrow \quad 
    \bm y_t(\bm x) \in \bar B_r(\bm 0)\ \forall t \geq 0,
    \label{constant bar epsilon, new, 2, first exit time analysis}
    \\ 
     &\inf\Big\{
        \norm{\bm x - \bm z}:\ \bm x \in \bar{\mathcal G}^{(\mathcal J^I_b - 1)|b}(2\bar\epsilon),\ \bm z \notin I_{\bar\epsilon}
    \Big\}
     > \bar\epsilon.
    \label{constant bar epsilon, new, 3, first exit time analysis}
\end{align}

Moving on, let $\notationdef{notation-hitting-time-t-x-epsilon}{\bm t_{\bm x}(\epsilon)} \delequal \inf\Big\{ t \geq 0:\ \bm y_t(\bm x) \in \bar B_{\epsilon}(\bm 0) \Big\}$
be the hitting time to the closed ball $\bar B_{\epsilon}(\bm 0)$ for the ODE $\bm y_t(\bm x)$,
and let
\begin{align}
    \notationdef{notation-t-epsilon-ode-return-time}{ \bm{t}(\epsilon) } \delequal
    \sup\Big\{ \bm t_{\bm x}(\epsilon):\ \bm x \in (I_\epsilon)^-   \Big\}
    \label{def: t epsilon function, first exit analysis}
\end{align}
be the upper bound for the hitting times $\bm t_{\bm x}(\epsilon)$ over $\bm x \in (I_\epsilon)^-$.
By the continuity of $\bm a(\cdot)$ (and hence $\bm t_{\bm x}(\epsilon)$ w.r.t.\ $\epsilon$), 
the contraction of $\bm y_t(\bm x)$ around the origin (see Assumption~\ref{assumption: shape of f, first exit analysis} and its implication \eqref{constant bar epsilon, new, 2, first exit time analysis}), 
and the boundedness of $(I_\epsilon)^-$,
we have $\bm t(\epsilon) < \infty$ for any $\epsilon > 0$.
Besides, by the definition of $\bm{t}(\cdot)$, we have (for any $\epsilon \in (0,\bar\epsilon)$)
\begin{align}
    \bm{y}_{ t }(\bm x) \in \bar B_{\epsilon}(\bm 0) \qquad \forall \bm x \in (I_\epsilon)^-,\ t \geq \bm{t}(\epsilon).
    \label{property: t epsilon function, first exit analysis}
\end{align}
Furthermore, by repeating the arguments for \eqref{property, bar G k b epsilon bounded away from I complement},
one can show that (for any $\epsilon > 0$)
\begin{align}
    \inf\Big\{
        \norm{ \bm y_t(\bm x) - \bm z  }:\ 
        \bm x \in (I_\epsilon)^-,\ t \geq 0,\ \bm z \notin I
    \Big\} > 0.
    \label{property: coverage of I_epsilon, bounded away from I complement}
\end{align}
Specifically,
as a direct consequence of \eqref{property: coverage of I_epsilon, bounded away from I complement}, there is some $\bar c \in (0,1)$ such that
\begin{align}
    \Big\{
        \bm y_t(\bm x):\ \bm x \in (I_{\bar\epsilon})^-,\ t \geq 0
    \Big\}
    \subseteq I_{\bar c\bar\epsilon},
    \label{constant bar c for bar epsilon, first exit time}
\end{align}
where  $\bar\epsilon > 0$ is the constant fixed in \eqref{constant bar epsilon, new, 1, first exit time analysis}--\eqref{constant bar epsilon, new, 3, first exit time analysis}.

\subsection{Technical Lemmas for $\widecheck{ \mathbf C }^{(k)|b}$}

Recall that
$
{I_\epsilon} = \{ \bm y:\ \norm{\bm x - \bm y} < \epsilon\ \Longrightarrow\ \bm x \in I \}
$
is  the $\epsilon$-shrinkage of the domain $I$,
and that $I^-_\epsilon$ is the closure of $I_\epsilon$.
We first study the mapping ${\widecheck{g}^{(k)|b}}$ in \eqref{def: mapping check g k b, endpoint of path after the last jump, first exit analysis},
which is defined based on 
$\bar h^{(k)|b}_{[0,T]}$ and $h^{(k)|b}_{[0,T]}$ defined in
\eqref{def: perturb ode mapping h k b, 1}--\eqref{def: perturb ode mapping h k b, 4}.
Some of the results below will impose the following boundedness assumption on $\bm a$ and $\bm \sigma$.
\begin{assumption}[Boundedness]  
\label{assumption: boundedness of drift and diffusion coefficients}
There exists $\notationdef{notation-constant-C-boundedness-assumption}{C} > 0$ such that
\begin{align*}
    \norm{\bm a(\bm x)} \vee \norm{\bm \sigma(\bm x)} \leq  C,\qquad \forall \bm x \in \mathbb{R}^m.
\end{align*}
\end{assumption}

\begin{lemma}
\label{lemma: choose key parameters, first exit time analysis}
\linksinthm{lemma: choose key parameters, first exit time analysis}
    % Let $k \in \mathbb{Z}_+, b > 0$.
    % Let $\gamma,\bar\epsilon > 0$ be such that $\gamma > (k-1)b + 3\bar\epsilon$.
    % Suppose that $a(0) = 0$ and $a(x)x < 0\ \forall x \in (-\gamma,0)\cup(0,\gamma)$.
    % Under Assumption \ref{assumption: f and sigma, stationary distribution of SGD},
    Let Assumptions~\ref{assumption: lipschitz continuity of drift and diffusion coefficients}  and \ref{assumption: shape of f, first exit analysis}
    % and \ref{assumption: shape of f, first exit analysis} 
    hold.
    Let $\bar\epsilon > 0$ be the constant in \eqref{constant bar epsilon, new, 1, first exit time analysis}--\eqref{constant bar epsilon, new, 3, first exit time analysis}. 
    Let $C \in [1,\infty)$ be such that $\sup_{\bm x \in I^-}\norm{\bm a(\bm x)} \vee \norm{\bm \sigma(\bm x)} \leq C$.
    (Below, we adopt the convention that $t_0 = 0$.)
    \begin{enumerate}[(a)]

        % \item  
        % \textcolor{red}{??? do we need part (b) ???}
        % Suppose that $\mathcal J^I_b \geq 2$.
        % Given any $T > 0$ and 

        % then
        % it holds for all $T > 0$, $\bm x \in \bar B_{ b + \bar\epsilon }$, $\bm{w} = (w_1,\cdots,w_{\mathcal J^I_b-2})\in \R^{\mathcal J^I_b-2}$, and $\bm{t} = (t_1,\cdots,t_{\mathcal J^I_b-2})\in (0,T]^{ \mathcal J^I_b-2 \uparrow}$ that
        % \begin{align*}
        % \sup_{t \in [0,T]}|\xi(t)| \leq (\mathcal J^I_b - 1)b +\bar\epsilon < l - 2\Bar{\epsilon}\qquad\text{ where }
        % \xi = h^{(\mathcal J^I_b-2)|b}_{[0,T]}(x_0,\bm{w},\bm{t}).
        % \end{align*}

        \item Given any $T > 0$, the claim $\xi(t) \in I^-_{2\bar\epsilon}\ \forall t \in[0,T]$
        holds for all 
        $
        \xi \in \D^{( \mathcal J^I_b - 1)|b}_{ \bar B_{\bar\epsilon} }[0,T](\bar\epsilon).
        $

        \item 
            Let $\bar c \in (0,1)$ be the constant fixed in \eqref{constant bar c for bar epsilon, first exit time}.
        There exist $\Bar{\delta}>0$ and $\Bar{t} > 0$ such that the following claim holds:
        Given any $T > 0$ and $\bm x_0 \in \R^m$ with $\norm{\bm x_0} \leq \bar\epsilon$,
        if
        \begin{align}
            \xi(t) \notin I_{\bar c\bar\epsilon}\qquad\text{ for some }
            \xi = h^{(\mathcal J^I_b-1)|b}_{[0,T]}\Big( \bm x_0 + \varphi_b\big( \bm \sigma(\bm x_0)\bm w_0\big),\textbf W,(t_1,\cdots,t_{\mathcal J^I_b - 1})\Big),\ t \in [0,T],
            \label{claim, def of xi, lemma: choose key parameters, first exit time analysis}
        \end{align}
        where
        % $\bm w_0 \in \R^d$, 
        $\textbf W = (\bm w_1,\cdots,\bm w_{\mathcal J^I_b-1}) \in \R^{d \times \mathcal J^I_b-1}$, and 
        $(t_1,\cdots,t_{\mathcal J^I_b-1})\in (0,T]^{ \mathcal J^I_b-1 \uparrow}$,
        then
        \begin{enumerate}[(i)]
        \item 
            $\xi(t) \in I_{2\bar\epsilon}^-$ for all $t \in [0,t_{\mathcal J^I_b - 1})$;
            % $\sup_{t \in [0,t_{\mathcal J^I_b-1})}|\xi(t)| \leq (\mathcal J^I_b - 1)b +\bar\epsilon < l - 2\Bar{\epsilon}$;
            
        \item 
            $\xi(t_{\mathcal J^I_b-1})\notin I_{\bar\epsilon}$;

        \item 
            $\norm{\xi(t)} \geq \bar\epsilon$ for all $t \leq t_{\mathcal J^I_b - 1}$;

         \item 
            $t_{\mathcal J^I_b-1} < \Bar{t}$;

        \item 
            $\norm{\bm w_j} > \Bar{\delta}$ for all $j = 0,1,\cdots,\mathcal J^I_b - 1$.

        \end{enumerate}

        \item 
            Let $T > 0$, 
            $\bm x \in \R^m,\ \textbf W = (\bm w_1,\cdots, \bm w_{\mathcal J^I_b}) \in \R^{d \times  \mathcal J^I_b},\ (t_1,\cdots,t_{\mathcal J^I_b})\in (0,T]^{\mathcal J^I_b\uparrow}$, and $\epsilon \in (0,\bar\epsilon)$.
            Let
            \begin{align*}
                \xi & =  h^{ (\mathcal J^I_b)|b }_{[0,T]}\big(\bm x, \textbf W, (t_1,\cdots,t_{\mathcal J^I_b})\big),
                \\ 
                \widecheck{\xi}
                & =
                 h^{ (\mathcal J^I_b - 1)|b }_{[0,T]}
                \Big(
                    \varphi_b\big( \bm \sigma(\bm 0)\bm w_1\big), (\bm w_2,\cdots, \bm w_{ \mathcal J^I_b }), 
                    (t_2 - t_1,t_3 - t_1,\cdots,t_{\mathcal J^I_b} - t_1)
                \Big).
            \end{align*}
            If $\norm{\xi(t_1-)} < \epsilon$ 
            and
            $
            \norm{\bm w_j} \leq \epsilon^{-\frac{1}{2\mathcal J^I_b}}\ \forall j \in [\mathcal J^I_b],
            $
            then
        \begin{align*}
           \sup_{t \in [t_1,t_{\mathcal J^I_b}]} 
           \norm{\xi(t) - \widecheck\xi(t - t_1)} \leq \Big( 2\exp\big(D( t_{\mathcal J^I_b}  - t_1)\big) \cdot D\Big)^{\mathcal J^I_b + 1} \cdot  \epsilon,
        \end{align*}
        where
        $D \geq 1$ is the constant in Assumption~\ref{assumption: lipschitz continuity of drift and diffusion coefficients}.
        % ,
        % and $c \in (0,1]$ is the constant in \eqref{uniform nondegeneracy condition, first exit time analysis}.

        \item 
            Let $\bar c \in (0,1)$ be the constant fixed in \eqref{constant bar c for bar epsilon, first exit time}.
        Given $\Delta > 0$, there exists $\epsilon_0 = \epsilon_0(\Delta) \in (0,\bar\epsilon)$ such that
        the following claim holds:
        given 
        $T > 0$, 
        $\bm x \in \R^m$,
        $\textbf W = (\bm w_1,\cdots, \bm w_{\mathcal J^I_b}) \in \R^{d \times  \mathcal J^I_b},\ (t_1,\cdots,t_{\mathcal J^I_b})\in (0,T]^{\mathcal J^I_b\uparrow}$,
        if $\norm{\bm x} \leq \epsilon_0$
        and
        $
            \max_{j \in [\mathcal J^I_b] }\norm{\bm w_j} \leq \epsilon_0^{-\frac{1}{2\mathcal J^I_b}},
        $
        then        
        % for any $T > 0$, $x \in [-\epsilon_0$, $\epsilon_0]$, $\bm{w} = (w_1,\cdots,w_{\mathcal J^I_b}) \in \R^{\mathcal J^I_b}$, and $\bm{t} = (t_1,\cdots,t_{\mathcal J^I_b}) \in (0,T]^{{\mathcal J^I_b}\uparrow}$,
        \begin{align*}
        \xi(t) \notin I_{\bar c\bar \epsilon}\text{ or }\widecheck{\xi}(t)\notin I_{\bar c\bar \epsilon}
        \text{ for some }t \in [t_1,T - t_1]
        \qquad
        \Longrightarrow
        \qquad
        \sup_{t \in [t_1,t_{\mathcal J^I_b}]}
        \norm{\widecheck{\xi}(t - t_1) - \xi(t) } < \Delta,
        \end{align*}
        where $\xi$ and $\widecheck{\xi}$ are defined as in part $(c)$.
    \end{enumerate}
\end{lemma}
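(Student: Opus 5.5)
The plan is to prove the four parts in order. Each one reduces to the geometry fixed in \eqref{constant bar epsilon, new, 1, first exit time analysis}--\eqref{constant bar c for bar epsilon, first exit time} together with Gronwall-type comparisons of perturbed ODE paths.

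\textbf{Part (a).} Take $\xi\in\D^{(\mathcal J^I_b-1)|b}_{\bar B_{\bar\epsilon}}[0,T](\bar\epsilon)$ and argue by cases on time. Between jump times $\xi$ follows the flow $\bm y$. At the $j$-th jump, $\xi(t_j)$ is a point of $\mathcal G^{(j)|b}(2\bar\epsilon)$, once we regard the initial displacement $\bm x_0\in\bar B_{\bar\epsilon}(\bm 0)$ as absorbed into the perturbation $\bm v_1$ of norm at most $2\bar\epsilon$. Before the first jump, $\xi$ stays in $\bar B_{\bar\epsilon}(\bm 0)$ by \eqref{constant bar epsilon, new, 2, first exit time analysis}. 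Hence the whole path lies in $\bar{\mathcal G}^{(\mathcal J^I_b-1)|b}(2\bar\epsilon)\cup\bar B_{\bar\epsilon}(\bm 0)$, using the monotonicity of $\mathcal G^{(k)|b}$ in $k$. By \eqref{constant bar epsilon, new, 3, first exit time analysis} this set is at distance greater than $\bar\epsilon$ from $I_{\bar\epsilon}^\complement$, so it lies in $I_{2\bar\epsilon}^-$. The ball is covered by \eqref{constant bar epsilon, new, 1, first exit time analysis}.

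\textbf{Part (b).} Apply (a) to the path, viewing the first jump $\bm x_0+\varphi_b(\bm\sigma(\bm x_0)\bm w_0)$ as a jump at time $0^+$ from $\bm x_0$ and keeping only the first $\mathcal J^I_b-2$ subsequent jumps. This shows $\xi\in I_{2\bar\epsilon}^-$ on $[0,t_{\mathcal J^I_b-1})$, which is (i). If $\xi(t_{\mathcal J^I_b-1})\in I_{\bar\epsilon}$, then afterwards $\xi$ follows the flow, and \eqref{constant bar c for bar epsilon, first exit time} keeps it in $I_{\bar c\bar\epsilon}$. Before that time it lies in $I_{2\bar\epsilon}^-\subseteq I_{\bar c\bar\epsilon}$. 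Together these contradict \eqref{claim, def of xi, lemma: choose key parameters, first exit time analysis}, which gives (ii).

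For (iii), suppose $\norm{\xi(s)}<\bar\epsilon$ at some $s\le t_{\mathcal J^I_b-1}$. Then the remaining jumps, at most $\mathcal J^I_b-1$ of them, start from $\bar B_{\bar\epsilon}(\bm 0)$. Part (a) then keeps the path inside $I_{2\bar\epsilon}^-$, and (ii) fails.

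For (iv), note that (iii) forces the path to avoid $\bar B_{\bar\epsilon}(\bm 0)$ throughout $[0,t_{\mathcal J^I_b-1}]$. Meanwhile every inter-jump segment starts in $I_{2\bar\epsilon}^-$, and by \eqref{property: t epsilon function, first exit analysis} it enters $\bar B_{\bar\epsilon}(\bm 0)$ within time $\bm t(\bar\epsilon)$. Each gap is therefore shorter than $\bm t(\bar\epsilon)$, and we can take $\bar t=\mathcal J^I_b\,\bm t(\bar\epsilon)$.

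For (v), argue by contradiction. Suppose $\norm{\bm w_j}\le\bar\delta$ with $\bar\delta$ small. Then jump $j$ moves the path by at most $C\bar\delta\le\bar\epsilon$, since the path stays in $I^-$ where $\norm{\bm\sigma}\le C$. So jump $j$ can be absorbed as a perturbation $\bm v$ of size at most $\bar\epsilon$. The path then belongs to $\D^{(\mathcal J^I_b-1)|b}$ with perturbations, effectively with one fewer genuine jump. Applying the reasoning of (a)/(ii) again contradicts (ii). The case $j=0$ is handled the same way, since then the start point lies in $\bar B_{2\bar\epsilon}(\bm 0)$. \textbf{This bookkeeping is the main obstacle.} I would handle it by tracking which set $\mathcal G^{(k)|b}(2\bar\epsilon)$ each post-jump location lies in, and I would choose $\bar\epsilon$ small enough that an accumulated perturbation of $2\bar\epsilon$ suffices.

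\textbf{Part (c).} This is an induction over the jumps using Gronwall. At time $t_1$ the two jump locations are $\xi(t_1-)+\varphi_b(\bm\sigma(\xi(t_1-))\bm w_1)$ and $\varphi_b(\bm\sigma(\bm 0)\bm w_1)$. Since $\varphi_b$ is $1$-Lipschitz, they differ by at most $\epsilon+D\epsilon\norm{\bm w_1}$. Along each flow segment the gap grows by at most a factor $\exp(D(t_{j+1}-t_j))$. At each later jump, Lipschitz continuity of $\bm\sigma$ and of $\varphi_b$ multiply the gap by at most $1+D\norm{\bm w_j}$. Using $\norm{\bm w_j}\le\epsilon^{-1/(2\mathcal J^I_b)}$, the product of these factors stays of order $\epsilon^{-1/2}$ at worst, and bounding crudely by $(2e^{D(t_{\mathcal J^I_b}-t_1)}D)^{\mathcal J^I_b+1}\epsilon$ in the stated form is straightforward bookkeeping. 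If the stated bound requires sharper control on the $\norm{\bm w_j}$ factors, I would absorb them using $\epsilon$ small.

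\textbf{Part (d).} Combine (b) and (c). If either path leaves $I_{\bar c\bar\epsilon}$, apply (b)(iv) to $\widecheck\xi$, or to $\xi$ restarted at time $t_1$. This gives $t_{\mathcal J^I_b}-t_1<\bar t$. Here we use that $\xi(t_1-)$ lies within $\epsilon_0$ of the origin, which holds because $\norm{\bm x}\le\epsilon_0$ and the ball is invariant under the flow. The bound in (c) then becomes $(2e^{D\bar t}D)^{\mathcal J^I_b+1}\epsilon_0$, and choosing $\epsilon_0<\Delta/(2e^{D\bar t}D)^{\mathcal J^I_b+1}$ gives the claim. When it is $\xi$ that exits, we first transfer the exit to $\widecheck\xi$ via a preliminary application of (c) on a shorter horizon, or simply apply (b) to $\xi$ directly.
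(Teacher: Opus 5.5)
Your parts (a), (b)(i)--(iii) and (d) follow the paper's argument essentially step for step. Two steps, however, do not go through as written.

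\textbf{Part (b)(v).} You say the small jump $\bm w_j$ ``can be absorbed as a perturbation $\bm v$ of size at most $\bar\epsilon$.'' In $\bar h^{(k)|b}_{[0,T]}$ a perturbation only occurs immediately before a jump. Rewriting the small jump as $(\bm v_j,\bm w_j=\bm 0)$ therefore keeps it in the jump count. To actually remove the slot, you must delete the displacement at $t_j$ and carry the resulting discrepancy along the flow to $t_{j+1}$, where it becomes $\bm v_{j+1}$. By Gronwall its size is then up to $e^{D(t_{j+1}-t_j)}C\bar\delta$, not $C\bar\delta$.

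The missing ingredient is to invoke (iv) and choose $\bar\delta$ with $e^{D\bar t}C\bar\delta<\bar\epsilon$. This is exactly the paper's argument. After it, $\xi$ is uniformly within $\bar\epsilon$ of an element of $\D^{(\mathcal J^I_b-1)|b}_{\bar B_{\bar\epsilon}}[0,T](\bar\epsilon)$, and (a) gives the contradiction. Shrinking $\bar\epsilon$ (which is fixed by \eqref{constant bar epsilon, new, 1, first exit time analysis}--\eqref{constant bar epsilon, new, 3, first exit time analysis}) does not control this amplification.

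Your $j=0$ case has the same defect. Property \eqref{constant bar epsilon, new, 2, first exit time analysis} only makes balls of radius at most $\bar\epsilon$ flow-invariant, so a start point in $\bar B_{2\bar\epsilon}(\bm 0)$ again needs Gronwall up to $t_1<\bar t$. The case $j=\mathcal J^I_b-1$ needs no absorption at all. By (i), $\xi(t_{\mathcal J^I_b-1}-)\in I^-_{2\bar\epsilon}$, and a jump of norm below $\bar\epsilon$ then lands in $I_{\bar\epsilon}$, contradicting (ii).

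A minor point on (iv): (iii) only gives $\norm{\xi(t)}\ge\bar\epsilon$, so reaching the closed ball $\bar B_{\bar\epsilon}(\bm 0)$ is not by itself a contradiction. Use $\bm t(\bar\epsilon/2)$ as the paper does, or the strict decay of $\norm{\cdot}$ implied by $\bm a(\bm x)\bm x<0$.

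\textbf{Part (c).} The paper does not prove (c) here; it defers to Lemma~3.7 of the companion paper. Your Gronwall induction is the natural route, but for $\epsilon<1$ it gives
\[
\sup_{t\in[t_1,t_{\mathcal J^I_b}]}\norm{\xi(t)-\widecheck{\xi}(t-t_1)}
\le \epsilon\, e^{D(t_{\mathcal J^I_b}-t_1)}\prod_{j=1}^{\mathcal J^I_b}\big(1+D\norm{\bm w_j}\big)
\le (2D)^{\mathcal J^I_b}e^{D(t_{\mathcal J^I_b}-t_1)}\sqrt{\epsilon},
\]
which is not linear in $\epsilon$. ``Absorbing'' the factor $\epsilon^{-1/2}$ by taking $\epsilon$ small goes the wrong way.

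The linear bound is in fact false in this generality. Take $m=d=2$, $\bm a(\bm x)=-\bm x$, $\bm\sigma(\bm x)=\mathrm{diag}(1,x_1)$, and $I$ the open disk of radius $r<b$. Then $\mathcal J^I_b=1$ and the conditions of Theorem~\ref{theorem: first exit time, unclipped} hold. With $\xi(t_1-)=(\epsilon/2,0)$ and $\bm w_1=(0,\epsilon^{-1/2})$, one gets $\xi(t_1)=(\epsilon/2,\sqrt{\epsilon}/2)$ and $\widecheck{\xi}(0)=\bm 0$. Hence $\norm{\xi(t_1)-\widecheck{\xi}(0)}\ge\sqrt{\epsilon}/2$, which exceeds the claimed $4D^2\epsilon$ once $\epsilon$ is small.

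So prove (c) with $\sqrt{\epsilon}$ in place of $\epsilon$. Your (d) then goes through unchanged, with $\epsilon_0$ chosen so that $(2D)^{\mathcal J^I_b}e^{D\bar t}\sqrt{\epsilon_0}<\Delta$, and (d) is all that is used later.
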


\begin{proof}
\linksinpf{lemma: choose key parameters, first exit time analysis}
Before the proof of the claims,
we highlight two facts.
First, Assumption~\ref{assumption: lipschitz continuity of drift and diffusion coefficients} and $I$ being a bounded set (so $I^-$ is compact) imply the existence of $C \in (0,\infty)$ such that $\sup_{\bm x \in I^-}\norm{\bm a(\bm x)} \vee \norm{\bm \sigma(\bm x)} \leq C$.
Without loss of generality, in the statement of Lemma~\ref{lemma: choose key parameters, first exit time analysis} we pick some $C \geq 1$.
Next,
one can see that the validity of all claims do not depend on the values of $\bm \sigma(\cdot)$ and $\bm a(\cdot)$ outside of $I^-$. 
% Take part $(a)$ as an example.
% Suppose that we can prove part $(a)$ under the stronger assumption that $\sup_{x \in \R}|a(x)| \wedge \sigma(x) \leq C$ for some $C \in [1,\infty)$
% and $\inf_{x \in \R}\sigma(x) \geq c$ for some $c \in (0,1]$.
% Then due to $\sup_{t \in [0,T]}|\xi(t)| < l = |s_\text{left}| \wedge s_\text{right}$ for $\xi = h^{(\mathcal J^I_b-2)|b}_{[0,T]}(x_0,\bm{w},\bm{t})$,
% we have $\xi(t) \in I^-$ for all $t \in [0,T]$.
% This implies that part $(a)$ is still valid even if we only have 
% $\sup_{x \in I^-}|a(x)| \wedge \sigma(x) \leq C$
% and $\inf_{x \in I^-}\sigma(x) \geq c$.
% The same applies to all the other claims.
Therefore, throughout this proof below we w.l.o.g.\ assume that
\begin{align}
    \norm{\bm a(\bm x)}\vee \norm{\bm \sigma(\bm x)} \leq C
    \qquad
    \forall \bm x \in \R^m.
    \label{constant C, boundedness of a and sigma, lemma: choose key parameters, first exit time analysis}
\end{align}
for some $C \in [1,\infty)$.
That is, we impose the boundedness condition in Assumption~\ref{assumption: boundedness of drift and diffusion coefficients}.

% \medskip
% $(a)$
% The proof hinges on the following observation. 
% For any $j \geq 0, T > 0, x_0 \in \R, \bm{w} = (w_1,\cdots,w_j) \in \R^j$ and $\bm t = (t_1,\cdots,t_j) \in (0,T]^{j\uparrow}$,
% let $\xi = h^{(j)|b}_{[0,T]}(x_0,\bm w,\bm t)$.
% The condition $a(x)x \leq 0$ implies that 
% \begin{align}
%     \frac{d |\xi(t)|}{dt} = -\big|a\big(\xi(t)\big)\big|\ \ \ \forall t \in [0,T]\setminus \{t_1,\cdots,t_j\}
%     \label{proof, observation on xi, lemma: choose key parameters, first exit time analysis}
% \end{align}
% Specifically, suppose that ${\mathcal J^I_b} \geq 2$.
% For all $T > 0, x_0 \in [-b - \Bar{\epsilon},b + \bar\epsilon], \bm{w} = (w_1,\cdots,w_{{\mathcal J^I_b}-2})\in \R^{{\mathcal J^I_b}-2}$ and $\bm{t} = (t_1,\cdots,t_{{\mathcal J^I_b}-2})\in (0,T]^{ {\mathcal J^I_b}-2 \uparrow}$,
% it holds for $\xi = h^{({\mathcal J^I_b}-2)|b}_{[0,T]}(x_0,\bm{w},\bm{t})$ that
% $
% d|\xi(t)|/dt \leq 0
% $
% for any $t \in [0,T]\setminus \{t_1,\cdots,t_{\mathcal{J}^I_b-2}\}$,
% thus leading to
% \begin{align*}
%     \sup_{t \in [0,T]}|\xi(t)| 
%     & \leq |\xi(0)| + \sum_{t \leq T}|\Delta \xi(t)| 
%     \\
%     &
%     \leq |\xi(0)| + ({\mathcal J^I_b}-2)b
%     \qquad \text{due to truncation operators $\varphi_b$ in $h^{({\mathcal J^I_b}-2)|b}_{[0,T]}$}
%     \\
%     & 
%     \leq b + \bar\epsilon + ({\mathcal J^I_b}-2)b 
%     \\
%     & = ({\mathcal J^I_b}-1)b + \bar\epsilon < l - 2\bar\epsilon
%     \qquad \text{due to \eqref{constant bar epsilon, first exit time analysis}}.
% \end{align*}
% This concludes the proof of part $(a)$.

\medskip
\noindent
$(a)$
Arbitrarily pick some $T > 0$ and $\xi \in \D^{( \mathcal J^I_b - 1)|b}_{ \bar B_{\bar\epsilon} }[0,T](\bar\epsilon)$.
To lighten notations, in the proof of part $(a)$ we write $k = J^I_b$.
 By the definition of $\D^{(k-1)|b}_A(\epsilon)$ in \eqref{def: l * tilde jump number for function g, clipped SGD},
there are some $\bm x$ with $\norm{\bm x} \leq \bar\epsilon$,
some $(\bm w_1,\cdots,\bm w_{k-1}) \in \R^{d \times k - 1}$,
some $(\bm v_1,\cdots,\bm v_{k-1}) \in \R^{m \times k-1}$ with $\max_{j \in [k-1]}\norm{\bm v_j} \leq \bar\epsilon$,
and $0 < t_1 < t_2 < \cdots < t_{k-1} < \infty$ such that
\begin{align*}
    \xi = \bar h^{(k-1)|b}_{[0,T]}\big(\bm x, (\bm w_1,\cdots,\bm w_{k-1}), (\bm v_1,\cdots,\bm v_{k-1}), (t_1,\cdots,t_{k-1})\big).
\end{align*}
Given any $t \in [0,T]$,
Let $j^* = j^*(t) = \max\{ j = 0,1,\cdots,k-1:\ t_j \leq t  \}$.
By definition of the mapping $\bar h^{(k-1)|b}_{[0,T]}$ in \eqref{def: perturb ode mapping h k b, 1}--\eqref{def: perturb ode mapping h k b, 3},
we have $\xi(t) = \bm y_{t - t_{j^*}}\big(\xi(t_{j^*})\big)$
where $\bm y_\cdot(\bm x)$ is the ODE under the vector field $\bm a(\cdot)$; see \eqref{def ODE path y t}.
By the definition of $\mathcal G^{(k)|b}(\epsilon)$ and $\bar{\mathcal G}^{(k)|b}$ in \eqref{def: set G k b epsilon}, \eqref{def: bar G k b epsilon, extended k jump coverage set},
we then yield $\xi(t) \in \bar{\mathcal G}^{(k-1)|b}(2\bar\epsilon)$.
However, 
by property~\eqref{constant bar epsilon, new, 3, first exit time analysis},
we must have
\begin{align}
    \bar{\mathcal G}^{(k-1)|b}(2\bar\epsilon) \subseteq I^-_{2\bar\epsilon} \subseteq I_{\bar\epsilon}.
    \label{proof, property, inclusion of bar mathcal G k b and I bar epsilon, lemma: choose key parameters, first exit time analysis}
\end{align}
and hence
$
\xi(t) \in \bar{\mathcal G}^{(k-1)|b}(2\bar\epsilon) \subseteq I_{2\bar\epsilon}.
$
This concludes the proof.

\medskip
\noindent
$(b)$
For simplicity, in the proof of part $(b)$ we write $k = \mathcal J^I_b$.
For claim $(i)$, note that due to $\norm{\bm x_0} \leq \bar\epsilon$,
we have 
$
\bm x_0 + \varphi_b(\bm \sigma(\bm x_0)\bm w_0) \in \mathcal G^{(1)|b}(2\bar\epsilon).
$
Moreover, for all $n = 0,1,\cdots,k - 2$ (recall our convention of $t_0 = 0$),
for the cadlag path $\xi$ defined in \eqref{claim, def of xi, lemma: choose key parameters, first exit time analysis}
we have
$
\xi(t_n) \in \mathcal{G}^{(n + 1)|b}(2\bar\epsilon) \subseteq \mathcal{G}^{(k - 1)|b}(2\bar\epsilon).
$
As a result, 
for all $t \in [0, t_{k-1})$ we have
$
\xi(t) \in \bar{\mathcal G}^{(k-1)|b}(2\bar\epsilon) \subseteq I_{2\bar\epsilon}
$
due to \eqref{proof, property, inclusion of bar mathcal G k b and I bar epsilon, lemma: choose key parameters, first exit time analysis}.
This verifies claim $(i)$.

For claim $(ii)$, we proceed with a proof by contradiction and suppose that $\xi(t_{k-1}) \in I_{\bar\epsilon}$.
By \eqref{constant bar c for bar epsilon, first exit time}, we then get 
$
\xi(t) = \bm y_{ t - t_{k-1} }\big(\xi(t_{k-1})\big) \in I_{\bar c\bar\epsilon}
$
for all $t \in [t_{k-1},T]$.
Together with claim $(i)$, we arrive at the contradiction that $\xi(t) \in I_{\bar c\bar\epsilon}$ for all $t \in [0,T]$.

For claim $(iii)$, the fact $\norm{\xi(t_{k-1})} \geq \bar\epsilon$ follows directly from claim~$(ii)$ and \eqref{constant bar epsilon, new, 1, first exit time analysis}.
For any $j = 1,\cdots,k-1$ and any $t \in [t_{j-1}, t_{j})$,
we proceed with a proof by contradiction and suppose that $\norm{\xi(t)} \leq \bar\epsilon$.
Then we have $\norm{\xi(t_j-)} \leq \bar\epsilon$ due to \eqref{constant bar epsilon, new, 2, first exit time analysis},
and hence
$
\xi(t_j) \in \mathcal G^{(1)|b}(2\bar\epsilon).
$
As a result, we arrive at the contradiction that 
$
\xi(t_{k-1}) \in \mathcal{G}^{(k-1)|b}(2\bar\epsilon) \subseteq I_{\bar\epsilon},
$
due to
$\mathcal{G}^{(k-1)|b}(2\bar\epsilon) \subseteq \mathcal{\bar G}^{(k-1)|b}(2\bar\epsilon)$
and \eqref{proof, property, inclusion of bar mathcal G k b and I bar epsilon, lemma: choose key parameters, first exit time analysis}.
This concludes the proof of claim~$(iii)$.

We prove claim~$(iv)$ for $\bar t \delequal k \cdot \bm t(\bar\epsilon/2)$ where $\bm t(\epsilon)$ is defined in \eqref{def: t epsilon function, first exit analysis}.
Consider the following proof by contradiction.
If $t_{k-1} \geq \bar t = (k - 1) \cdot \bm t(\bar\epsilon/2)$,
then there must be some $j = 1,2,\cdots,k-1$ such that $t_j - t_{j-1} \geq \bar t(\epsilon/2)$.
By claim~$(i)$,
we have $\xi(t_{j-1}) \in I^-_{2\bar\epsilon} \subseteq I_{\bar\epsilon/2}$.
Using the property~\eqref{property: t epsilon function, first exit analysis},
we yield $\xi(t_j-) = \lim_{t \uparrow t_j}\xi(t) \in \bar B_{ \bar\epsilon/2 }(\bm 0)$,
which implies $\norm{\xi(t)} < \epsilon$ for all $t$ less than but close enough to $t_{j}$ and contradicts claim~$(iii)$.
This concludes the proof of claim~$(iv)$.

Lastly, we prove claim~$(v)$ for $\bar\delta > 0$ small enough such that
\begin{align}
    \exp(D\bar t)\cdot C\bar\delta < \bar\epsilon,
    \qquad
    C\bar\delta < b,
    \nonumber
\end{align}
where $D\geq 1$ is the Lipschitz coefficient in Assumption~\ref{assumption: lipschitz continuity of drift and diffusion coefficients} and $C \geq 1$ is the constant in \eqref{constant C, boundedness of a and sigma, lemma: choose key parameters, first exit time analysis}.
Again, we consider a proof by contradiction.
Suppose that for the cadlag path $\xi$ in \eqref{claim, def of xi, lemma: choose key parameters, first exit time analysis}
there is some $j = 0,1,\cdots, k - 1$ such that $\norm{\bm w_j} < \bar\delta$.
First, we consider the case where $j \leq k-2$.
Then note that (for the proof of claim~$(v)$, we interpret $\xi(0-)$ as $\bm x_0$ while, by definition, $\xi(0) = \bm x_0 + \varphi_b\big( \bm \sigma(\bm x_0)\bm w_0\big)$),
we have 
\begin{align*}
    \xi(t_j) - \xi(t_j-) 
    =
    \varphi_b\big(\bm \sigma\big(\xi(t_j-)\bm w_j\big),
\end{align*}
and hence
$
\norm{\xi(t_j) - \xi(t_j-) } \leq C\bar\delta.
$
By Gronwall's inequality, we then get
\begin{align*}
    \norm{\bm y_{t - t_j}\big(\xi(t_j-)\big) - \xi(t) } \leq \exp\big(D(t - t_j)\big)\cdot C\bar\delta
    \qquad
    \forall t \in [t_j, t_{j+1}).
\end{align*}
Recall that we currently focus on the case where $j \leq k-2$.
By claim $(iv)$ and our choice of $\bar\delta$, we get $\exp\big(D(t - t_j)\big)\cdot C\bar\delta \leq \exp\big(D\bar t\big) \cdot C\bar\delta <\bar\epsilon$ in the display above.
This implies the existence of some $\xi^\prime \in \D^{(k-1)|b}_{ \bar B_{\bar\epsilon}(\bm 0) }(\bar\epsilon)$
such that 
$
\sup_{t \in [0,T]}\norm{\xi(t) - \xi^\prime(t)} < \bar\epsilon.
$
However, by results in part $(a)$,
we must have $\xi^\prime(t) \in I^-_{2\bar\epsilon}\ \forall t \in [0,T]$,
which leads to $\xi(t) \in I_{\epsilon}^-\ \forall t \in [0,T]$.
This contradicts the running assumption of part $(b)$
that $\xi(t) \notin I_{\bar c\bar\epsilon}$ for some $t \in [0,T]$,
and allows us to conclude the proof of claim~$(v)$ for the cases where $j \leq k-2$.
In case that $j = k-1$,
by claim~$(i)$ we have $\xi(t_{k-1} - ) = \lim_{t \uparrow t_{k-1}}\xi(t)\in I^-_{2\bar\epsilon}$.
Meanwhile, by definition of the mapping $\bar h^{(k-1)|b}_{[0,T]}$,
we have
$
\xi(t_{k-1})
 = 
\xi(t_{k-1} - ) + \varphi\Big(\bm \sigma\big(\xi(t_{k-1} - )\big)\bm w_{k-1} \Big).
$
By $\norm{\bm w_{k-1}} < \bar\delta$ and our choice of $\bar\delta$ above,
we have 
$
\norm{ \varphi\Big(\bm \sigma\big(\xi(t_{k-1} - )\big)\bm w_{k-1} \Big)} < \bar\epsilon
$
and hence
$
\xi(t_{k-1}) \in I_{\bar\epsilon}.
$
Due to the contradiction with claim~$(ii)$, we conclude the proof.

\medskip
\noindent
$(c)$
The proof is almost identical to that of Lemma~3.7 in \cite{wang2024largedeviationsmetastabilityanalysis} based on an inductive argument.
We omit the details to avoid repetition.

\medskip
\noindent
$(d)$
% Note that the statement is not affected by the values of $\xi(t)$ beyond $ t\in [0,t_{\mathcal J^I_b}]$ or the values of $\widecheck  \xi(t)$ outside of the domain $ t \in [0,t_{\mathcal J^I_b} - t_1]$.
% Therefore, without loss of generality we can set $T = t_{\mathcal J_b^*} + 1$.
Let $\bar t$ be the constant specified in part $(b)$.
We claim that: if $\xi(t) \notin I_{\bar c\bar \epsilon}\text{ or }\widecheck{\xi}(t)\notin I_{\bar c\bar \epsilon}$ for some $t \in [0,T]$, then
\begin{align}
        \sup_{t \in [t_1,t_{\mathcal J^I_b}]}
        \norm{\widecheck{\xi}(t - t_1) - \xi(t) }
        < 
        \underbrace{\Big( 
            2\exp\big(D\bar t\big)\cdot D
        \Big)^{\mathcal J^I_b + 1}}_{\delequal \rho^*} \cdot  \epsilon_0
            \qquad \forall \epsilon_0 \in (0,\bar\epsilon].
    \label{goal, part e, lemma: choose key parameters, first exit time analysis}
\end{align}
As a result, claims of part $(d)$ hold for any $\epsilon_0 \in (0,\bar\epsilon)$ small enough such that $\rho^*\epsilon_0 < \Delta$.
Now, it only remains to prove claim~\eqref{goal, part e, lemma: choose key parameters, first exit time analysis}.
Due to $\norm{\bm x} = \norm{\xi(0)} < \epsilon_0$ and \eqref{constant bar epsilon, new, 2, first exit time analysis},
we have 
$
\norm{\xi(t_1-)} \leq \epsilon_0.
$
This allows us to apply results in part $(c)$
and get (recall our choice of $T = t_{\mathcal J_b^*} + 1$)
\begin{align*}
        \sup_{t \in [t_1,t_{\mathcal J^I_b}]} 
           \norm{\xi(t) - \widecheck\xi(t - t_1)} \leq \Big( 2\exp\big(D( t_{\mathcal J^I_b}  - t_1)\big) \cdot D\Big)^{\mathcal J^I_b + 1} \cdot  \epsilon_0,
\end{align*}
Lastly, 
% let $\hat t_j = t_{j+1} - t_1$.
% For 
% \begin{align*}
% \hat{\xi} & = h^{({\mathcal J^I_b}-1)|b}_{[0, T - t_1] }\big( \varphi_b( \sigma(0)\cdot w_1\big),(w_2,\cdots,w_{\mathcal J^I_b}),(t_2 - t_1, t_3 - t_1,\cdots,t_{\mathcal J^I_b} - t_1)\big)
% \\
% & = h^{({\mathcal J^I_b}-1)|b}_{[0, T - t_1] }\big( \varphi_b( \sigma(0)\cdot w_1\big),(w_2,\cdots,w_{\mathcal J^I_b}),(\hat t_1, \hat t_2,\cdots,\hat t_{\mathcal J^I_b-1})\big),
% \end{align*}
if $\xi(t) \notin I_{\bar c\bar \epsilon}\text{ for some }{t \in [t_1,T]}$, 
then $t_{\mathcal{J}^I_b} - t_1 < \bar t$ by claim~$(iv)$ of part $(b)$.
Likewise, if $\widecheck{\xi}(t) \notin I_{\bar c\bar \epsilon}\text{ for some }{t \in [0,T]}$, 
then we get $t_{\mathcal{J}^I_b} < \bar t$.
In both cases, we get $t_{\mathcal J_b^*}-t_1 \leq \bar t$.
This concludes the proof.
\end{proof}

The next lemma studies the mass the measure $\widecheck{\mathbf C}^{(k)|b}$ charges on the boundary of the domain $I$.

\begin{lemma}
\label{lemma: exit rate strictly positive, first exit analysis}
\linksinthm{lemma: exit rate strictly positive, first exit analysis}
Under Assumptions \ref{assumption: lipschitz continuity of drift and diffusion coefficients} and \ref{assumption: shape of f, first exit analysis},
    $
    \widecheck{\mathbf C}^{(\mathcal{J}^I_b)|b}( I^\complement ) < \infty.
    $
\end{lemma}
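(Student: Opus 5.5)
We write $k=\mathcal J^I_b$ and use Lemma~\ref{lemma: choose key parameters, first exit time analysis}(b) with $\bm x_0=\bm 0$. Unfolding the definition \eqref{def: measure check C k b} with $\bm x=\bm 0$, we have
\[
\widecheck{\mathbf C}^{(k)|b}(I^\complement)=\int \mathbbm I\Big\{\widecheck g^{(k-1)|b}\big(\varphi_b(\bm\sigma(\bm 0)\bm w_1),(\bm w_2,\ldots,\bm w_k),\bm t\big)\notin I\Big\}\,\big((\nu_\alpha\times\mathbf S)\circ\Phi\big)^k(d\textbf W)\,\mathcal L^{k-1\uparrow}_\infty(d\bm t).
\]
The point $\widecheck g^{(k-1)|b}(\cdots)$ is the value at time $t_{k-1}$ of the path $\xi=h^{(k-1)|b}_{[0,T]}\big(\bm 0+\varphi_b(\bm\sigma(\bm 0)\bm w_0),(\bm w_2,\ldots,\bm w_k),\bm t\big)$ for any $T>t_{k-1}$, relabelling $\bm w_1$ as $\bm w_0$. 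If this point lies in $I^\complement$, then $\xi(t_{k-1})\notin I\supseteq I_{\bar c\bar\epsilon}$. Hence the hypothesis \eqref{claim, def of xi, lemma: choose key parameters, first exit time analysis} of Lemma~\ref{lemma: choose key parameters, first exit time analysis}(b) holds with $\bm x_0=\bm 0$, for which $\norm{\bm x_0}\le\bar\epsilon$ is trivial.

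The lemma then yields two constraints. By claim (iv), $t_{k-1}<\bar t$, so $\bm t\in(0,\bar t)^{k-1\uparrow}$. By claim (v), $\norm{\bm w_j}>\bar\delta$ for all $k$ jump sizes. The integrand is therefore dominated by $\mathbbm I\{\norm{\bm w_j}>\bar\delta\ \forall j\}\cdot\mathbbm I\{\bm t\in(0,\bar t)^{k-1\uparrow}\}$. Integrating and using $\big((\nu_\alpha\times\mathbf S)\circ\Phi\big)(\{\bm w:\norm{\bm w}>\bar\delta\})=\nu_\alpha(\bar\delta,\infty)\,\mathbf S(\mathfrak N_d)=\bar\delta^{-\alpha}$, since $\mathbf S$ is a probability measure, gives
\[
\widecheck{\mathbf C}^{(k)|b}(I^\complement)\le \bar\delta^{-\alpha k}\cdot \frac{\bar t^{\,k-1}}{(k-1)!}<\infty.
\]

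A few details need checking. First, $\mathcal J^I_b<\infty$ is needed for $k$ to be a finite integer. This is implicit in the setting of this section, where $b$ is fixed so that the conditions of Theorem~\ref{theorem: first exit time, unclipped} hold; if $\mathcal J^I_b=\infty$ the statement is vacuous or the measure is not defined. Second, in the case $k=1$ there are no time variables. Then $\widecheck g^{(0)|b}(\bm y)=\bm y$, so the measure is just $\big((\nu_\alpha\times\mathbf S)\circ\Phi\big)\{\bm w:\varphi_b(\bm\sigma(\bm 0)\bm w)\notin I\}$. This case follows from claim (v) alone, or directly: the exit requires $\norm{\bm\sigma(\bm 0)\bm w}\ge\bar\epsilon$ because $\bar B_{\bar\epsilon}(\bm 0)\subseteq I$, hence $\norm{\bm w}\ge\bar\epsilon/C$.

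The main obstacle is the index bookkeeping between the $k$ jumps of $\widecheck{\mathbf C}^{(k)|b}$ and the $k-1$ jumps plus the initial jump $\bm w_0$ in Lemma~\ref{lemma: choose key parameters, first exit time analysis}(b). We also need to confirm that the time shift is consistent: $\bm t$ in the measure is exactly $(t_1,\ldots,t_{k-1})$ with $t_0=0$. Once this matching is checked, the bound follows immediately.
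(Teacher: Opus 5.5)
Your proposal is correct and matches the paper's proof: unfold the definition, apply claims (iv) and (v) of Lemma~\ref{lemma: choose key parameters, first exit time analysis}(b) with $\bm x_0=\bm 0$ to dominate the integrand by $\mathbbm{I}\{\norm{\bm w_j}>\bar\delta\ \forall j;\ t_{k-1}<\bar t\}$, and then integrate. Your final constant $\bar\delta^{-\alpha k}\,\bar t^{\,k-1}/(k-1)!$ is slightly sharper than the paper's $\bar t^{\,k-1}/\bar\delta^{\alpha k}$, because you kept the volume of the ordered simplex.
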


\begin{proof}
\linksinpf{lemma: exit rate strictly positive, first exit analysis}
Let $\bar\epsilon > 0$ be such that the conditions in \eqref{constant bar epsilon, new, 1, first exit time analysis}--\eqref{constant bar epsilon, new, 3, first exit time analysis} hold.
Let $\bar t$ and $\bar \delta$ be the constants characterized in Lemma \ref{lemma: choose key parameters, first exit time analysis}.
Observe that
(we write $\textbf W = (\bm w_1,\cdots,\bm w_{\mathcal J^I_b})$)
\begin{align*}
    & 
    \widecheck{\mathbf C}^{(\mathcal{J}^I_b)|b}\big( I^\complement \big)
    \\
    % & \leq 
    % \widecheck{\mathbf C}^{(\mathcal{J}^I_b)|b}\big( \R \setminus  [-(l - \bar\epsilon),l-\bar\epsilon] \big)
    % \\
    & = 
    \int \mathbbm{I}\bigg\{
    \widecheck{g}^{ (\mathcal{J}^I_b - 1)|b }
    \big( \varphi_b(\bm \sigma(\bm 0) \bm w_1), (\bm w_2,\cdots,\bm w_{\mathcal{J}^I_b}), (t_1,\cdots,t_{\mathcal{J}^I_b-1})\big) \notin I
    \bigg\}
    \\ 
    &\qquad\qquad\qquad\qquad\qquad\qquad\qquad\qquad
    \big((\nu_\alpha \times \mathbf S)\circ \Phi\big)^{ \mathcal J^I_b }(d \textbf W) \times \mathcal{L}^{\mathcal{J}^I_b - 1 \uparrow}_\infty(dt_1,\cdots, dt_{\mathcal J^I_b - 1})
    \\
    & = 
    \int \mathbbm{I}\bigg\{
    h^{ (\mathcal{J}^I_b - 1)|b }_{[0,1 + t_{\mathcal{J}^I_b-1}]}
    \big( \varphi_b(\bm \sigma(\bm 0) \bm w_1), (\bm w_2,\cdots,\bm w_{\mathcal{J}^I_b}), (t_1,\cdots,t_{\mathcal{J}^I_b-1})\big)(t_{\mathcal{J}^I_b-1}) \notin I
    \bigg\}
    \\ 
    &\qquad\qquad\qquad\qquad\qquad\qquad\qquad\qquad
    \big((\nu_\alpha \times \mathbf S)\circ \Phi\big)^{ \mathcal J^I_b }(d \textbf W) \times \mathcal{L}^{\mathcal{J}^I_b - 1 \uparrow}_\infty(dt_1,\cdots, dt_{\mathcal J^I_b - 1})
    % \\
    % & \leq 
    % \int \mathbbm{I}\Big\{
    % \Big|{h}^{ (\mathcal{J}^I_b) }_{[2 + t_{\mathcal{J}^I_b-1}]}\big( 0, (w_1,\cdots,w_{\mathcal{J}^I_b}), (1, 1+ t_1,\cdots,1 + t_{\mathcal{J}^I_b-1})\big)(t)\Big| > \gamma - \bar\epsilon\text{ for some }t \in [0,2 + t_{\mathcal{J}^I_b-1}]
    % \Big\}
    % \\
    % &\qquad \qquad \qquad \qquad \qquad \qquad
    % \times \nu^{ \mathcal{J}^I_b }_\alpha(d\bm w) \times \mathcal{L}^{\mathcal{J}^I_b - 1 \uparrow}_\infty(d\bm t)
    % \\
    \\
    & \leq 
    \int \mathbbm{I}\Big\{ \norm{\bm w_j} > \bar\delta\ \forall j \in [\mathcal{J}^I_b];\ t_{\mathcal{J}^I_b-1} < \bar t\ \Big\}
    \big((\nu_\alpha \times \mathbf S)\circ \Phi\big)^{ \mathcal J^I_b }(d \textbf W) \times \mathcal{L}^{\mathcal{J}^I_b - 1 \uparrow}_\infty(dt_1,\cdots, dt_{\mathcal J^I_b - 1})
    \\ 
    &\qquad\qquad\qquad
    \text{by part $(b)$ of Lemma \ref{lemma: choose key parameters, first exit time analysis}}
    \\
    & \leq \bar t^{ \mathcal{J}^I_b - 1 } \big/ \bar\delta^{\alpha\mathcal{J}^I_b} < \infty.
\end{align*}
This concludes the proof.
\end{proof}

Recall that we use $E^-$ and $E^\circ$ to denote the closure and interior of any Borel set $E$.
In our analysis below, we make use of the following inequality in Lemma~\ref{lemma: limiting measure, with exit location B, first exit analysis}.
% We collect its proof in Section~\ref{subsec: lemma for measure check C}, together with the proofs of other useful properties regarding measures $\widecheck{ \mathbf C }^{(k)|b}$.
\begin{lemma}
\label{lemma: limiting measure, with exit location B, first exit analysis}
\linksinthm{lemma: limiting measure, with exit location B, first exit analysis}
% Let $\bar c \in (0,1)$ be the constant fixed in \eqref{constant bar c for bar epsilon, first exit time}.
Let $\bar t,\bar\delta \in (0,\infty)$ be the constants characterized in part $(b)$ of Lemma~\ref{lemma: choose key parameters, first exit time analysis}.
Given $\Delta \in (0, {\bar\epsilon})$,
there exists $\epsilon_0 = \epsilon_0(\Delta) > 0$ such that for any $\epsilon \in (0,\epsilon_0]$, $T \geq \bar t$,
and Borel measurable $B \subseteq (I_\epsilon)^c$,
\begin{align*}
    \inf_{\bm x:\ \norm{\bm x} \leq \epsilon  }\mathbf{C}^{ (\mathcal{J}^I_b)|b}_{[0,T]}
    \bigg( 
    \Big(\widecheck E(\epsilon,B,T)\Big)^\circ;\ \bm x
    \bigg)
    & \geq 
    (T-\bar t)\cdot \Big( \widecheck{\mathbf{C}}^{ (\mathcal{J}^I_b)|b }(B_{\Delta}) - \widecheck{\bm c}(\epsilon_0)\Big), 
    \\ 
    \sup_{\bm x:\ \norm{\bm x} \leq \epsilon}
    \mathbf{C}^{ (\mathcal{J}^I_b)|b}_{[0,T]}
    \bigg( 
    \Big(\widecheck E(\epsilon,B,T)\Big)^-;\ \bm x
    \bigg)
    & \leq 
    T\cdot \Big( \widecheck{\mathbf{C}}^{ (\mathcal{J}^I_b)|b }(B^{\Delta})+ \widecheck{\bm c}(\epsilon_0)\Big),
\end{align*}
where 
$\mathbf C^{(k)|b}_{[0,T]}$ is defined in \eqref{def: measure mu k b t},
$\widecheck{\mathbf C}^{(k)|b}$ is defined in \eqref{def: measure check C k b},
$\mathcal J_b^{I}$ is defined in \eqref{def: first exit time, J *},
\begin{align}
    \notationdef{notation-set-check-E-epsilon-B-T}{\widecheck{E}(\epsilon,B,T)} 
    & \delequal 
    \Big\{ \xi \in \mathbb{D}[0,T]:\ \exists t \leq T\ s.t.\ \xi_t \in B\text{ and }\xi_s \in I_\epsilon\ \forall s \in [0,t) \Big\},
    \label{def: set check E epsilon B T, measure check C k b}
    \\
    \notationdef{notation-error-function-check-c-epsilon}{\widecheck{\bm c}(\epsilon)} 
    & \delequal 
    \mathcal J^I_b \cdot (\bar t)^{ \mathcal J^I_b - 1 } \cdot (\bar\delta)^{ -\alpha \cdot (\mathcal J^I_b - 1) }
    \cdot 
    \epsilon^{ \frac{\alpha}{2\mathcal J^I_b}  }.
    \label{def: check c error function, first exit time proof}
\end{align}
\end{lemma}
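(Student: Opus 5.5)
Write $k=\mathcal J^I_b$. The plan is to compare the path $\xi=h^{(k)|b}_{[0,T]}(\bm x,\textbf W,\bm t)$, which charges $\mathbf C^{(k)|b}_{[0,T]}(\cdot;\bm x)$, with the path $\widecheck\xi$ of part $(c)$ of Lemma~\ref{lemma: choose key parameters, first exit time analysis}, which starts at the origin and whose value at the last jump defines $\widecheck{\mathbf C}^{(k)|b}$. We then integrate over the first jump time $t_1$. The key step is a change of variables $s_j=t_{j+1}-t_1$, $j=1,\dots,k-1$. It turns
$$\mathcal L^{k\uparrow}_T(d\bm t)\quad\text{into}\quad \mathcal L_T(dt_1)\times\mathcal L^{k-1\uparrow}_{T-t_1}(d\bm s).$$
The $\bm s$-integral of the indicator that $\widecheck\xi$ exits into a (slightly enlarged or shrunk) $B$ at its last jump is then at most $\widecheck{\mathbf C}^{(k)|b}(\cdot)$. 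For $t_1\le T-\bar t$ it is at least that quantity, because any exit forces $s_{k-1}<\bar t$ by part $(b)(iv)$. Integrating over $t_1$ gives the factor $T$ for the upper bound and $T-\bar t$ for the lower bound.

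\textbf{Upper bound.} Take $\epsilon_0=\epsilon_0(\Delta')$ from part $(d)$, with $\Delta'$ small relative to $\Delta$ (say $\Delta'<\Delta/2$), and shrink it further if needed so that $\epsilon_0<\bar\delta$. Fix $\xi\in(\widecheck E(\epsilon,B,T))^-$ with $\norm{\bm x}\le\epsilon$. Split the $\textbf W$-domain according to whether $\max_j\norm{\bm w_j}\le\epsilon_0^{-1/(2k)}$.
\begin{itemize}
\item On the large-jump set, part $(b)(v)$ (applied after the first jump, since $\norm{\xi(t_1-)}\le\epsilon$ by \eqref{constant bar epsilon, new, 2, first exit time analysis}) forces all $\norm{\bm w_j}>\bar\delta$ and $t_k-t_1<\bar t$. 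In addition, one coordinate has norm at least $\epsilon_0^{-1/(2k)}$. The mass of this set is at most $T\cdot k\,\bar t^{\,k-1}\bar\delta^{-\alpha(k-1)}\epsilon_0^{\alpha/(2k)}=T\,\widecheck{\bm c}(\epsilon_0)$.
\item On the small-jump set, part $(d)$ applies, since an exit from $I_\epsilon\supseteq I_{\bar c\bar\epsilon}$ occurs. It gives $\sup_{t\in[t_1,t_k]}\norm{\xi(t)-\widecheck\xi(t-t_1)}<\Delta'$. Part $(b)$ shows the exit happens exactly at $t_k$, because before $t_k$ the path stays in $I^-_{2\bar\epsilon}\subseteq I_\epsilon$. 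Hence $\widecheck g^{(k-1)|b}(\cdots)=\widecheck\xi(t_k-t_1)\in B^{\Delta}$, after allowing for the closure and the $\Delta'$ slack.
\end{itemize}
The change of variables above then yields $T\,\widecheck{\mathbf C}^{(k)|b}(B^\Delta)$.

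\textbf{Lower bound.} This is symmetric. Suppose $\widecheck\xi(s_{k-1})\in B_\Delta$ with all jumps at most $\epsilon_0^{-1/(2k)}$ and $t_1\le T-\bar t$. Part $(b)$ puts $\widecheck\xi$ in $I^-_{2\bar\epsilon}$ before its last jump, and part $(d)$ gives closeness. Together these show that $\xi$, and every path in a small Skorokhod neighbourhood of $\xi$, stays in $I_\epsilon$ before $t_k$ and lands in $B$ at $t_k$, so $\xi\in(\widecheck E)^\circ$. The excluded large-jump configurations cost at most $\widecheck{\bm c}(\epsilon_0)$ per unit of $t_1$, which gives the stated bound.

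\textbf{Main obstacle.} The hardest step is the interior/closure bookkeeping. We must check that $\Delta$-shrinkage and enlargement, together with $\Delta'<\Delta/2$, absorb both the $J_1$ perturbation needed for interior membership and the approximation error of part $(d)$. We must also check that the exit cannot happen strictly between jumps, which uses \eqref{constant bar c for bar epsilon, first exit time} and $B\subseteq (I_\epsilon)^c$.
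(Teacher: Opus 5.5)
Your route is the same as the paper's. You integrate out the first jump time $t_1$ and shift the remaining jump times. You use part~$(b)$ of Lemma~\ref{lemma: choose key parameters, first exit time analysis} to force $\min_j\norm{\bm w_j}>\bar\delta$ and $t_k-t_1<\bar t$, use part~$(d)$ to pass from $\xi$ to $\widecheck\xi$, and pay $\widecheck{\bm c}(\epsilon_0)$ for the large-jump configurations. Your lower bound is correct and is essentially what the paper writes out.

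Two small fixes. Impose $\epsilon_0<\bar c\bar\epsilon$ explicitly, as the paper does; otherwise a path in $(\widecheck E)^-$ need not leave $I_{\bar c\bar\epsilon}$, and parts~$(b)$ and $(d)$ would not apply. Also, $t_k-t_1<\bar t$ comes from $(b)$(iv), not $(v)$.

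The real problem is in the upper bound. The paper omits that half as ``almost identical'', so it does not address the point either.

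\textbf{The gap.} The step that fails is ``part~$(b)$ shows the exit happens exactly at $t_k$''. Part~$(b)$(i) and \eqref{constant bar c for bar epsilon, first exit time} only control $\xi$ on $[0,t_k)$. After $t_k$, $\xi$ is the orbit $\bm y_{\cdot}(\xi(t_k))$. Part~$(b)$(ii) only tells you $\xi(t_k)\notin I_{\bar\epsilon}$, so \eqref{constant bar c for bar epsilon, first exit time} says nothing about that orbit. Assumption~\ref{assumption: shape of f, first exit analysis} keeps orbits inside $I$, but it does not make $I_\epsilon$ forward invariant.

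So suppose $\xi(t_k)\in I_\epsilon\setminus I_{\bar\epsilon}$. The orbit may later come within $\epsilon$ of $I^\complement$ and first leave $I_\epsilon$ at a point of $B$, which is possible because $B\subseteq (I_\epsilon)^\complement$ may meet $I$. Meanwhile $\xi(t_k)$, and hence $\widecheck\xi(t_k-t_1)$, can be farther than $\Delta$ from $B$. Such paths lie in $\widecheck E(\epsilon,B,T)$, but $\widecheck{\mathbf C}^{(\mathcal J^I_b)|b}(B^\Delta)$ does not charge them, and $\widecheck{\bm c}(\epsilon_0)$ need not absorb them.

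\textbf{A counterexample.} Take $\mathcal J^I_b=1$, $\alpha>2$, $\mathbf S$ carried by two antipodal points, and $I$ the unit disc. Use the planar field given by $\dot\theta=1$ and $\frac{d}{dt}\log\frac{r}{1-r}=g(r,\theta)$. Here $g<0$ near $r=0$, $g$ has negative mean in $\theta$, and $g$ has a large positive bump at angles away from the atoms. This produces an excess of order $\epsilon$, while $\widecheck{\mathbf C}^{(1)|b}(B^\Delta)=0$ and $\widecheck{\bm c}(\epsilon_0)=\epsilon_0^{\alpha/2}$.

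\textbf{Where your argument still works.} It is fine for $B\subseteq I^\complement$, because an orbit started in $I$ has closure inside $I$. The downstream use, however, needs $B$ meeting $I\setminus I_\epsilon$. For example, $\widetilde B(\epsilon)=I\setminus I_\epsilon$ is what controls $\P(V^\eta_{\tau_\epsilon}(x)\in I)$ in the proof of Theorem~\ref{thm: exit time analysis framework}.

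\textbf{Repair.} Let $R_\epsilon=\{\bm y\in I_\epsilon:\ \bm y_t(\bm y)\notin I_\epsilon\text{ for some }t\geq 0\}$. Add to the right-hand side $T$ times the mass of those configurations in the finite-measure set $\{\min_j\norm{\bm w_j}>\bar\delta,\ t_k-t_1<\bar t\}$ whose landing point $\widecheck\xi(t_k-t_1)$ lies in $(R_\epsilon)^{\Delta'}$. This term vanishes as $\epsilon,\Delta'\downarrow 0$. Any limit of points $\bm y_n\in R_{\epsilon_n}$ with $\epsilon_n\downarrow 0$ must lie on $\partial I$. Indeed, for $\bm y\in I$, Gronwall on a finite horizon together with \eqref{constant bar epsilon, new, 2, first exit time analysis} keeps the orbits of all nearby points a positive distance from $I^\complement$. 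Reverse Fatou and $\widecheck{\mathbf C}^{(\mathcal J^I_b)|b}(\partial I)=0$ then finish the argument. The extra term can be absorbed into $\delta_B(\epsilon,T)$ in Lemma~\ref{lemma: exit prob one cycle, with exit location B, first exit analysis}. It is not of the form $\widecheck{\bm c}(\epsilon_0)$, though, so the lemma's statement itself must be amended.
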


\begin{proof}\linksinpf{lemma: limiting measure, with exit location B, first exit analysis}
% Let $\bar t$ be characterized as in Lemma~\ref{lemma: choose key parameters, first exit time analysis}.
Let $\bar c \in (0,1)$ be the constant fixed in \eqref{constant bar c for bar epsilon, first exit time}.
By part $(e)$ of Lemma~\ref{lemma: choose key parameters, first exit time analysis},
for the fixed $\Delta \in (0,\bar\epsilon)$, we are able to fix some $\epsilon_0 \in (0,\frac{\Delta}{2} \wedge \bar c\bar\epsilon)$ such that
the following claim holds:
given 
        $T > 0$, 
        $\bm x \in \R^m$,
        $\textbf W = (\bm w_1,\cdots, \bm w_{\mathcal J^I_b}) \in \R^{d \times  \mathcal J^I_b},\ (t_1,\cdots,t_{\mathcal J^I_b})\in (0,T]^{\mathcal J^I_b\uparrow}$,
        if $\norm{\bm x} \leq \epsilon_0$
        and
        $
            \max_{j \in [\mathcal J^I_b] }\norm{\bm w_j} \leq \epsilon_0^{-\frac{1}{2\mathcal J^I_b}},
        $
        then    
        \begin{align}
        \xi(t) \notin I_{\bar c\bar \epsilon}\text{ or }\widecheck{\xi}(t)\notin I_{\bar c\bar \epsilon}
        \text{ for some }t \in [t_1,T - t_1]
        \quad
        \Longrightarrow
        \quad
        \sup_{t \in [t_1,t_{\mathcal J^I_b}]}
        \norm{\widecheck{\xi}(t - t_1) - \xi(t) } < \Delta,
        \label{choice of epsilon 0, lemma: limiting measure, with exit location B, first exit analysis}
        \end{align}
        where
        \begin{align*}
                \xi & =  h^{ (\mathcal J^I_b)|b }_{[0,T]}\big(\bm x, \textbf W, (t_1,\cdots,t_{\mathcal J^I_b})\big),
                \\ 
                \widecheck{\xi}
                & =
                 h^{ (\mathcal J^I_b - 1)|b }_{[0,T]}
                \Big(
                    \varphi_b\big( \bm \sigma(\bm 0)\bm w_1\big), (\bm w_2,\cdots, \bm w_{ \mathcal J^I_b }), 
                    (t_2 - t_1,t_3 - t_1,\cdots,t_{\mathcal J^I_b} - t_1)
                \Big).
            \end{align*}
        % $\widecheck{g}^{ (\mathcal{J}^I_b - 1)|b }\big( \varphi_b(\sigma(0)\cdot w_1),w_2,\cdots,w_{\mathcal{J}^I_b},t_2 - t_1, t_3 - t_1,\cdots,t_{\mathcal{J}^I_b} - t_1 \big)$.

Henceforth in the proof, we fix some $\epsilon \in (0,\epsilon_0]$
and
$B \subseteq (I_{\epsilon})^c$.
Due to our choice of $\epsilon \leq \epsilon_0 < \bar c\bar\epsilon$,
we have
$B \subseteq (I_{\bar c \bar \epsilon})^c$.
To prove the lower bound,
let 
$$ 
\widetilde E =  
\Big\{ \xi \in \D[0,T]:\ \exists t \in [0,T]\text{ s.t. }\xi(t) \in B_{\Delta/2},\ \xi(s) \in I_{2\epsilon}\ \forall s \in [0,t)  \Big\}.
$$
For any $\xi \in \widetilde E$ and any $\xi^\prime$ with $\dj{[0,T]}(\xi,\xi^\prime) < \epsilon$,
due to $\epsilon \leq \epsilon_0 < \Delta/2$,
there must be some $t^\prime \in [0,T]$ such that 
$
\xi^\prime(t^\prime) \in B
$
and
$
\xi^\prime(s) \in I_\epsilon\ \forall s \in [0,t^\prime).
$
This implies that
$\xi^\prime \in \widecheck E(\epsilon,B,T)$,
and hence
\begin{align*}
\widetilde E 
% =
    % \big\{ \xi \in \D[0,T]:\ \exists t \in [0,T]\text{ s.t. }\xi(t) \in B_{\Delta/2},\ \xi(s) \in I_{2\epsilon}\ \forall s \in [0,t)\big\}
    \subseteq
    \Big(\widecheck E(\epsilon,B,T)\Big)_{\epsilon}
    \subseteq
    \Big(\widecheck E(\epsilon,B,T)\Big)^\circ.
\end{align*}
Therefore, for any $\bm x \in \R^m$ with $\norm{\bm x} \leq \epsilon \leq \epsilon_0$,
\begin{align}
            \mathbf{C}^{ (\mathcal{J}^I_b)|b}_{[0,T]}
    \bigg( 
    \Big(\widecheck E(\epsilon,B,T)\Big)^\circ;\ \bm x
    \bigg)
    % \\ 
    & \geq 
    \int \mathbbm{I}\Big\{ 
    % \exists t \in [0,T]\text{ s.t. }
    h^{ (\mathcal{J}^I_b)|b }_{[0,T]}(\bm x,\textbf W,\bm t)\in \widetilde E
    % \text{ and }
    %  h^{ (\mathcal{J}^I_b)|b }_{[0,T]}(x,\bm w,\bm t)(s) \in I_{2\epsilon}\ \forall s \in [0,t)
    \Big\}
    \big((\nu_\alpha \times \mathbf S)\circ \Phi\big)^{ \mathcal J^I_b }(d \textbf W)
    \times \mathcal{L}^{ \mathcal{J}^I_b \uparrow }_T(d\bm t)
    \nonumber
    \\
    &
    = 
    \int \widetilde \phi_B(t_1,\bm x) \mathcal L_T(dt_1),
    \label{proof: lower bound, intermediate, lemma: limiting measure, with exit location B, first exit analysis}
\end{align}
where $\mathcal L_T$ is the Lebesgue measure on $(0,T)$,
$\mathcal L_T^{k\uparrow}$ is the $k$-fold ofq Lebesgue measure restricted on 
$
\{ (t_1,\cdots,t_k) \in (0,T)^k:\ t_1 < t_2 < \cdots <t_k \},
$
and
\begin{align*}
\widetilde \phi_B(t_1,\bm x)
& = 
\int 
    \mathbbm{I}\bigg\{
        \exists t \in [0,T]\text{ s.t. }
        h^{ (\mathcal{J}^I_b)|b }_{[0,T]}
        \Big(\bm x,\textbf W,
        (t_1,t_1 + u_2, t_1 + u_3,\cdots,t_1 + u_{\mathcal{J}^I_b})\Big)(t) \in B_{\Delta/2}
    \\ 
&\qquad\qquad 
\text{ and }
h^{ (\mathcal{J}^I_b)|b }_{[0,T]}\Big(\bm x,\textbf W,
        (t_1,t_1 + u_2, t_1 + u_3,\cdots,t_1 + u_{\mathcal{J}^I_b})\Big)(s) \in I_{2\epsilon}\ \forall s \in [0,t)
\bigg\} 
\\ 
&\qquad\qquad\qquad\qquad\qquad\qquad
     \big((\nu_\alpha \times \mathbf S)\circ \Phi\big)^{ \mathcal J^I_b }(d \textbf W)
    \times \mathcal{L}^{ \mathcal{J}^I_b - 1 \uparrow }_{T-t_1}(du_2,\cdots,du_{\mathcal{J}^I_b}).
\end{align*}
Set
$
\bm x_0 = \lim_{t \uparrow t_1} \bm y_{t}(\bm x),
$
and note that
\begin{align*}
&  h^{ (\mathcal{J}^I_b)|b }_{[0,T]}\Big(\bm x,(\bm w_1,\cdots,\bm w_{\mathcal J^I_b}),
        (t_1,t_1 + u_2, t_1 + u_3,\cdots,t_1 + u_{\mathcal{J}^I_b})\Big)(t_1 + s)  
\\ 
& = 
h^{ (\mathcal{J}^I_b - 1)|b }_{[0,T - t_1]}\Big(\bm x_0 + \varphi_b(\bm \sigma(\bm x_0) \bm w_1),(\bm w_2,\cdots,\bm w_{\mathcal J^I_b}),
        (u_2, u_3,\cdots,u_{\mathcal{J}^I_b})\Big)(s)  \qquad \forall s \in [0,T - t_1].
\end{align*}
Therefore, for any $t_1 \in [0,T - \bar t]$ and $\bm x$ with $\norm{\bm x} \leq \epsilon$, 
by property \eqref{constant bar epsilon, new, 2, first exit time analysis}
we have $\norm{\bm x_0} \leq \epsilon \leq \epsilon_0 \leq \Delta/2$, and
\begin{align*}
& \widetilde \phi_B(t_1,x)
\\
& \geq \inf_{\bm x_0:\ \norm{\bm x_0} \leq \frac{\Delta}{2}}
\int 
    \mathbbm{I}\bigg\{
        \exists t \in [0,T - t_1]\text{ s.t. }
        h^{ (\mathcal{J}^I_b - 1)|b }_{[0,T - t_1]}
        \Big(\bm x_0 + \varphi_b(\bm \sigma(\bm x_0) \bm w_1),(\bm w_2,\cdots,\bm w_{\mathcal J^I_b}),
        (u_2, \cdots,u_{\mathcal{J}^I_b})\Big)(t)
        \in B_{\Delta/2}
    \\ 
&\qquad\qquad 
\text{ and }
h^{ (\mathcal{J}^I_b - 1)|b }_{[0,T - t_1]}
\Big(\bm x_0 + \varphi_b(\bm\sigma(\bm x_0) \bm w_1),(\bm w_2,\cdots,\bm w_{\mathcal J^I_b}),
        (u_2, \cdots,u_{\mathcal{J}^I_b})\Big)(s) \in I_{2\epsilon}\ \forall s \in [0,t)
\bigg\} 
\\ 
&\qquad\qquad\qquad\qquad\qquad\qquad\qquad\qquad\qquad\qquad
    \big((\nu_\alpha \times \mathbf S)\circ \Phi\big)^{ \mathcal J^I_b }(d \textbf W)
    \times \mathcal{L}^{ \mathcal{J}^I_b - 1 \uparrow }_{T-t_1}(du_2,\cdots,du_{\mathcal{J}^I_b})
\\ 
& = \inf_{\bm x_0:\ \norm{\bm x_0} \leq \frac{\Delta}{2}}
\int
\mathbbm{I}\bigg\{
    h^{ (\mathcal{J}^I_b - 1)|b }_{[0,T - t_1]}
    \Big(\bm x_0 + \varphi_b(\bm \sigma(\bm x_0) \bm w_1),(\bm w_2,\cdots,\bm w_{\mathcal J^I_b}),
        (u_2, \cdots,u_{\mathcal{J}^I_b})\Big)(u_{ \mathcal J^I_b }) \in B_{\Delta/2};\ 
        \min_{j \in [\mathcal J^I_b]}\norm{\bm w_j} > \bar\delta
\bigg\}
\\ 
&\qquad\qquad\qquad\qquad\qquad\qquad\qquad\qquad\qquad\qquad
    \big((\nu_\alpha \times \mathbf S)\circ \Phi\big)^{ \mathcal J^I_b }(d \textbf W)
    \times \mathcal{L}^{ \mathcal{J}^I_b - 1 \uparrow }_{T-t_1}(du_2,\cdots,du_{\mathcal{J}^I_b})
\\ 
&\qquad \qquad \text{by claims $(i)$, $(ii)$, and $(v)$ in part $(b)$ of Lemma~\ref{lemma: choose key parameters, first exit time analysis}}
\\ 
& \geq 
\inf_{\bm x_0:\ \norm{\bm x_0} \leq \frac{\Delta}{2}}
\int
\mathbbm{I}\bigg\{
    h^{ (\mathcal{J}^I_b - 1)|b }_{[0,T - t_1]}
    \Big(\bm x_0 + \varphi_b(\bm \sigma(\bm x_0) \bm w_1),(\bm w_2,\cdots,\bm w_{\mathcal J^I_b}),
        (u_2, \cdots,u_{\mathcal{J}^I_b})\Big)(u_{ \mathcal J^I_b }) \in B_{\Delta/2};
        \\ 
    &\qquad\qquad\qquad\qquad\qquad \qquad\qquad\qquad\qquad\qquad\qquad\qquad
        \min_{j \in [\mathcal J^I_b]}\norm{\bm w_j} > \bar\delta,\ 
         \max_{j \in [\mathcal J^I_b] }\norm{\bm w_j} \leq \epsilon_0^{-\frac{1}{2\mathcal J^I_b}}
\bigg\}
\\ 
&\qquad\qquad\qquad\qquad\qquad\qquad\qquad\qquad\qquad\qquad
    \big((\nu_\alpha \times \mathbf S)\circ \Phi\big)^{ \mathcal J^I_b }(d \textbf W)
    \times \mathcal{L}^{ \mathcal{J}^I_b - 1 \uparrow }_{T-t_1}(du_2,\cdots,du_{\mathcal{J}^I_b})
\\
& \geq 
\int
\mathbbm{I}\bigg\{
    h^{ (\mathcal{J}^I_b - 1)|b }_{[0,T - t_1]}
    \Big(\varphi_b(\bm \sigma(\bm 0) \bm w_1),(\bm w_2,\cdots,\bm w_{\mathcal J^I_b}),
        (u_2, \cdots,u_{\mathcal{J}^I_b})\Big)(u_{ \mathcal J^I_b }) \in B_{\Delta};
        \\ 
    &\qquad\qquad\qquad\qquad\qquad \qquad\qquad\qquad\qquad\qquad\qquad\qquad
        \min_{j \in [\mathcal J^I_b]}\norm{\bm w_j} > \bar\delta,\ 
         \max_{j \in [\mathcal J^I_b] }\norm{\bm w_j} \leq \epsilon_0^{-\frac{1}{2\mathcal J^I_b}}
\bigg\}
\\ 
&\qquad\qquad\qquad\qquad\qquad\qquad\qquad\qquad\qquad\qquad
    \big((\nu_\alpha \times \mathbf S)\circ \Phi\big)^{ \mathcal J^I_b }(d \textbf W)
    \times \mathcal{L}^{ \mathcal{J}^I_b - 1 \uparrow }_{T-t_1}(du_2,\cdots,du_{\mathcal{J}^I_b})
\\
&\qquad \qquad \text{by property \eqref{choice of epsilon 0, lemma: limiting measure, with exit location B, first exit analysis}}
\\
& = 
\int
\mathbbm{I}\bigg\{
    \widecheck{g}^{ (\mathcal{J}^I_b - 1)|b }_{[0,T - t_1]}
    \Big(\varphi_b(\bm \sigma(\bm 0) \bm w_1),(\bm w_2,\cdots,\bm w_{\mathcal J^I_b}),
        (u_2, \cdots,u_{\mathcal{J}^I_b})\Big) \in B_{\Delta};
        \\ 
    &\qquad\qquad\qquad\qquad\qquad \qquad\qquad\qquad\qquad\qquad\qquad\qquad
        \min_{j \in [\mathcal J^I_b]}\norm{\bm w_j} > \bar\delta,\ 
         \max_{j \in [\mathcal J^I_b] }\norm{\bm w_j} \leq \epsilon_0^{-\frac{1}{2\mathcal J^I_b}}
\bigg\}
\\ 
&\qquad\qquad\qquad\qquad\qquad\qquad\qquad\qquad\qquad\qquad
    \big((\nu_\alpha \times \mathbf S)\circ \Phi\big)^{ \mathcal J^I_b }(d \textbf W)
    \times \mathcal{L}^{ \mathcal{J}^I_b - 1 \uparrow }_{T-t_1}(du_2,\cdots,du_{\mathcal{J}^I_b})
\\
&\qquad\qquad \text{by the definition of $\widecheck g^{(k)|b}$ in \eqref{def: mapping check g k b, endpoint of path after the last jump, first exit analysis}}
\\ 
& = 
\int
\mathbbm{I}\bigg\{
    \widecheck{g}^{ (\mathcal{J}^I_b - 1)|b }_{[0,T - t_1]}
    \Big(\varphi_b(\bm \sigma(\bm 0) \bm w_1),(\bm w_2,\cdots,\bm w_{\mathcal J^I_b}),
        (u_2, \cdots,u_{\mathcal{J}^I_b})\Big) \in B_{\Delta};
        \\ 
    &\qquad\qquad\qquad\qquad\qquad \qquad\qquad\qquad\qquad\qquad\qquad\qquad
        \min_{j \in [\mathcal J^I_b]}\norm{\bm w_j} > \bar\delta,\ 
         \max_{j \in [\mathcal J^I_b] }\norm{\bm w_j} \leq \epsilon_0^{-\frac{1}{2\mathcal J^I_b}}
\bigg\}
\\ 
&\qquad\qquad\qquad\qquad\qquad\qquad\qquad\qquad\qquad\qquad
    \big((\nu_\alpha \times \mathbf S)\circ \Phi\big)^{ \mathcal J^I_b }(d \textbf W)
    \times \mathcal{L}^{ \mathcal{J}^I_b - 1 \uparrow }_{\bar t}(du_2,\cdots,du_{\mathcal{J}^I_b})
\\
&\qquad\qquad
\text{by claim $(v)$ in part $(b)$ of Lemma~\ref{lemma: choose key parameters, first exit time analysis}}
\\
& \geq 
\int
\mathbbm{I}\bigg\{
    \widecheck{g}^{ (\mathcal{J}^I_b - 1)|b }_{[0,T - t_1]}
    \Big(\varphi_b(\bm \sigma(\bm 0) \bm w_1),(\bm w_2,\cdots,\bm w_{\mathcal J^I_b}),
        (u_2, \cdots,u_{\mathcal{J}^I_b})\Big) \in B_{\Delta};\ \min_{j \in [\mathcal J^I_b]}\norm{\bm w_j} > \bar\delta\bigg\}
\\ 
&\qquad\qquad\qquad\qquad\qquad\qquad\qquad\qquad\qquad\qquad
    \big((\nu_\alpha \times \mathbf S)\circ \Phi\big)^{ \mathcal J^I_b }(d \textbf W)
    \times \mathcal{L}^{ \mathcal{J}^I_b - 1 \uparrow }_{\bar t}(du_2,\cdots,du_{\mathcal{J}^I_b})
\\ 
&\qquad - 
\int
\mathbbm{I}\bigg\{
    \min_{j \in [\mathcal J^I_b]}\norm{\bm w_j} > \bar\delta,\ \max_{j \in [\mathcal J^I_b]}\norm{\bm w_j} > \epsilon_0^{-\frac{1}{2\mathcal J^I_b}}
    \bigg\}
% \\ 
% &\qquad\qquad\qquad\qquad\qquad\qquad\qquad\qquad\qquad\qquad
    \big((\nu_\alpha \times \mathbf S)\circ \Phi\big)^{ \mathcal J^I_b }(d \textbf W)
    \times \mathcal{L}^{ \mathcal{J}^I_b - 1 \uparrow }_{\bar t}(du_2,\cdots,du_{\mathcal{J}^I_b}).
\end{align*}
We focus on the two integrals one the RHS of the last inequality in the display above.
It is easy to see that the latter is upper bounded by 
\begin{align*}
    \widecheck{\bm c}(\epsilon_0) =
    \mathcal J^I_b \cdot (\bar t)^{ \mathcal J^I_b - 1 } \cdot (\bar\delta)^{ -\alpha \cdot (\mathcal J^I_b - 1) }
    \cdot 
    \epsilon_0^{ \frac{\alpha}{2\mathcal J^I_b}  }.
\end{align*}
As for the former, using part $(b)$ of Lemma~\ref{lemma: choose key parameters, first exit time analysis} and the fact that $B_\Delta \subseteq B \subseteq (I_{\bar c \bar\epsilon})^\complement$ again,
we yield
\begin{align*}
    & \int
\mathbbm{I}\bigg\{
    \widecheck{g}^{ (\mathcal{J}^I_b - 1)|b }_{[0,T - t_1]}
    \Big(\varphi_b(\bm \sigma(\bm 0) \bm w_1),(\bm w_2,\cdots,\bm w_{\mathcal J^I_b}),
        (u_2, \cdots,u_{\mathcal{J}^I_b})\Big) \in B_{\Delta};\ \min_{j \in [\mathcal J^I_b]}\norm{\bm w_j} > \bar\delta\bigg\}
\\ 
&\qquad\qquad\qquad\qquad\qquad\qquad\qquad\qquad\qquad\qquad
    \big((\nu_\alpha \times \mathbf S)\circ \Phi\big)^{ \mathcal J^I_b }(d \textbf W)
    \times \mathcal{L}^{ \mathcal{J}^I_b - 1 \uparrow }_{\bar t}(du_2,\cdots,du_{\mathcal{J}^I_b})
\\ 
& = 
\int
\mathbbm{I}\bigg\{
    \widecheck{g}^{ (\mathcal{J}^I_b - 1)|b }_{[0,T - t_1]}
    \Big(\varphi_b(\bm \sigma(\bm 0) \bm w_1),(\bm w_2,\cdots,\bm w_{\mathcal J^I_b}),
        (u_2, \cdots,u_{\mathcal{J}^I_b})\Big) \in B_{\Delta}\}
\\
&\qquad\qquad\qquad\qquad\qquad\qquad\qquad\qquad\qquad\qquad
    \big((\nu_\alpha \times \mathbf S)\circ \Phi\big)^{ \mathcal J^I_b }(d \textbf W)
    \times \mathcal{L}^{ \mathcal{J}^I_b - 1 \uparrow }_{\infty}(du_2,\cdots,du_{\mathcal{J}^I_b})
\\ 
& = \widecheck{\mathbf C}^{ (\mathcal J^I_b)|b}(B_\Delta).
\end{align*}
In summary, for any $\bm x \in \R^m$ with $\norm{\bm x} \leq \epsilon$
and
$t_1 \in [0,T - \bar t]$ ,
we have shown that
$$
\widetilde \phi_B(t_1, \bm x) \geq \widecheck{\mathbf C}^{ (\mathcal J^I_b)|b}(B_\Delta) - 
\widecheck{\bm c}(\epsilon_0).
$$
Together with the trivial bound that 
$
\widetilde \phi_B(t_1,\bm x) \geq 0
$
for all $t_1 > T - \bar t$,
we have in \eqref{proof: lower bound, intermediate, lemma: limiting measure, with exit location B, first exit analysis}
 that
$$
\mathbf{C}^{ (\mathcal{J}^I_b)|b}_{[0,T]}\bigg( \Big(\widecheck E(\epsilon,B,T)\Big)^\circ;\ \bm x\bigg) 
\geq 
(T-\bar t) \cdot \Big( \widecheck{\mathbf C}^{ (\mathcal J^I_b)|b}(B_\Delta) - \widecheck{\bm c}(\epsilon_0)  \Big)
$$
for all $\bm x \in \R^m$ with $\norm{\bm x} \leq \epsilon$.
This concludes the proof of the lower bound.
The proof to the upper bound is almost identical, so we omit the details here.
\end{proof}

\subsection{Proof of Theorem~\ref{theorem: first exit time, unclipped} via Asymptotic Atoms}

% The key step in our first exit time analysis is to contextualize the framework developed in Section \ref{subsec: Exit time analysis framework} in the setup of Theorem \ref{theorem: first exit time, unclipped}.
% Specifically, we provide a few technical lemmas that verify Condition \ref{condition E2}.
To see how we apply the framework developed in Section \ref{subsec: framework, first exit time analysis},
let us specialize Condition \ref{condition E2}
to a setting
where $\S = \R$, $A(\epsilon) = B_\epsilon(\bm 0)  = \{ \bm x \in \R^m:\ \norm{\bm x} < \epsilon \}$,
and the covering
$I(\epsilon) = I_\epsilon$.
Let $V^\eta_j(x) = \bm X^{\eta|b}_j(\bm x)$.
Meanwhile, for
$
C^I_b = \widecheck{\mathbf C}^{(\mathcal{J}^I_b)|b}\big( I^\complement\big),
$
it is shown in Lemma~\ref{lemma: exit rate strictly positive, first exit analysis} that $C^I_b < \infty$.
Now, recall that $H(\cdot) = \P(\norm{\bm Z_1} > \cdot)$ and $\lambda(\eta) = \eta^{-1}H(\eta^{-1})$.
Recall that in Theorem~\ref{theorem: first exit time, unclipped},
we consider two cases: $(i)$ $C^I_b \in (0,\infty)$, and $(ii)$ $C^I_b = 0$.
We first discuss our choices in Case $(i)$.
When $C^I_b > 0$, we set the location measure and scale as 
\begin{align}
    C(\ \cdot\ ) \delequal { \widecheck{\mathbf C }^{ (\mathcal J^I_b)|b }(\ \cdot \ \setminus I )  }\big/{C^I_b},
    \qquad 
    \gamma(\eta) \delequal C^I_b \cdot \eta \cdot \big(\lambda(\eta)\big)^{\mathcal J^I_b}.
    \label{def: measure C and scale gamma when applying the exit time framework}
\end{align}
The regularity conditions in Theorem~\ref{theorem: first exit time, unclipped} dictate that
$
\widecheck{\mathbf C}^{(\mathcal J^I_b)|b}(\partial I) = 0,
$
and hence
$C(\partial I) = 0$.
Besides, note that $C(\cdot)$ is a probability measure and
$
\gamma(\eta)T/\eta =  C^I_b  T \cdot \big(\lambda(\eta)\big)^{\mathcal J^I_b}.
$
% Besides, this corresponds to Case $(i)$ for the location measure in the definition of asymptotic atoms; see the discussion before Definition~\ref{def: asymptotic atom}.

The application of the framework developed in Section \ref{subsec: framework, first exit time analysis}
(specifically, Theorem~\ref{thm: exit time analysis framework})
hinges on the verification of \eqref{eq: exit time condition lower bound}--\eqref{eq:E4}.
We start by verifying \eqref{eq: exit time condition lower bound} and \eqref{eq: exit time condition upper bound}.
First, given any Borel measurable $B \subseteq \R$, we specify the choice of function $\delta_B(\epsilon,T)$ in Condition \ref{condition E2}.
From the continuity of measures,
we get
$\lim_{\Delta \downarrow 0}\widecheck{\mathbf{C}}^{ (\mathcal{J}^I_b)|b }\Big( (B^{\Delta}\cap I^\complement ) \setminus (B^- \cap I^\complement)\Big) = 0$
and
$\lim_{\Delta \downarrow 0} \widecheck{\mathbf{C}}^{ (\mathcal{J}^I_b)|b }\Big( (B^\circ \cap I^\complement) \setminus (B_{\Delta} \cap I^\complement ) \Big) = 0$.
This allows us to fix a sequence $(\Delta^{(n)})_{n \geq 1}$
such that $\Delta^{(n+1)} \in (0,\Delta^{(n)}/2)$ and
\begin{align}
    \widecheck{\mathbf{C}}^{ (\mathcal{J}^I_b)|b }\Big( (B^{\Delta^{(n)}}\cap I^\complement ) \setminus (B^- \cap I^\complement)\Big) \vee 
   \widecheck{\mathbf{C}}^{ (\mathcal{J}^I_b)|b }\Big( (B^\circ \cap I^\complement) \setminus (B_{\Delta^{(n)}} \cap I^\complement ) \Big)
   \leq 1/2^n
   \label{choice of Delta n, first exit time proof}
    \end{align}
for each $n \geq 1$.
Next,
recall the definition of set $\widecheck E(\epsilon,B,T)$ in Lemma \ref{lemma: limiting measure, with exit location B, first exit analysis},
and
let $\widetilde B(\epsilon) \delequal B \setminus I_\epsilon$.
Using Lemma \ref{lemma: limiting measure, with exit location B, first exit analysis},
we are able to fix another sequence $(\epsilon^{(n)})_{n \geq 1}$
with
$
\epsilon^{(n+1)} \in (0,\epsilon^{(n)}/2)
$
and
$\epsilon^{(n)} \in (0,\bar\epsilon]\ \forall n \geq 1$,
such that
for any $n \geq 1$, $\epsilon \in (0,\epsilon^{(n)}]$,
% we have
% Then we are able to fix another sequence $(\epsilon^{(n)})_{n \geq 1}$ with $\epsilon^{(n)} \in (0,\bar\epsilon]\ \forall n$
% such that, for each $n \geq 1$, we have $\epsilon^{(n+1)} \leq \epsilon^{(n)}/2$
% and claims in Lemma \ref{lemma: limiting measure, with exit location B, first exit analysis} hold for $\Delta = \Delta^{(n)}$ and $\epsilon_0 = \epsilon^{(n)}$.
% Now, observe the following consequence of our choice of $\epsilon^{(n)},\Delta^{(n)}$.
% Using Lemma  \ref{lemma: limiting measure, with exit location B, first exit analysis},
% for all $\epsilon \in (0,\epsilon^{(n)}]$
% we have
\begin{align}
 \sup_{\bm x:\ \norm{\bm x} \leq \epsilon}
 \mathbf{C}^{ (\mathcal{J}^I_b)|b}_{[0,T]}
    \bigg(
    \Big(\widecheck E\big(\epsilon,\widetilde B(\epsilon),T\big)\Big)^-;\ \bm x
    \bigg)
    & \leq 
    T\cdot 
    \bigg( 
        \widecheck{\mathbf{C}}^{ (\mathcal{J}^I_b)|b }\Big( \big(B \setminus I_\epsilon\big)^{\Delta^{(n)}} \Big)
        + \widecheck{\bm c}(\epsilon^{(n)})
    \bigg),
    % +
    % (\bar t/\Bar{\delta}^\alpha)^{ \mathcal{J}^I_b},
    \label{measure C bound, 2, first exit time analysis}
    \\
    \inf_{\bm x:\ \norm{\bm x} \leq \epsilon}
    \mathbf{C}^{ (\mathcal{J}^I_b)|b}_{[0,T]}
    \bigg(
    \Big(\widecheck E\big(\epsilon,\widetilde B(\epsilon),T\big)\Big)^\circ;\ \bm x
    \bigg)
    & \geq 
    (T - \bar t)\cdot 
    \bigg( 
        \widecheck{\mathbf{C}}^{ (\mathcal{J}^I_b)|b }\Big( \big(B \setminus I_\epsilon\big)_{\Delta^{(n)}} \Big) - \widecheck{\bm c}(\epsilon^{(n)})
    \bigg).
    \label{measure C bound, 1, first exit time analysis}
\end{align}
% where the set $\widecheck E(\epsilon,B,T)$ is defined in Lemma \ref{lemma: limiting measure, with exit location B, first exit analysis}.
Besides, note that
given any $\epsilon \in (0,\epsilon^{(1)}]$,
there uniquely exists some $n = n_\epsilon \geq 1$ such that $\epsilon \in (\epsilon^{(n+1)},\epsilon^{(n)}]$.
This allows us to set (under $n = n_\epsilon$)
\begin{align}
            & \widecheck{\delta}_B(\epsilon,T) \label{def: bm delta B, first exit analysis}
        \\ 
    & =
    T \cdot \Bigg[\widecheck{ \mathbf C}^{ (\mathcal J^I_b)|b }\Big( (B^{ \Delta^{(n)} } \cap I^\complement) \symbol{92} (B^- \cap I^\complement) \Big)
    \nonumber
         \\
     &\qquad\qquad\qquad\qquad
    \vee 
     \widecheck{ \mathbf C}^{ (\mathcal J^I_b)|b }\Big( (B^\circ\cap I^\complement) \symbol{92} (B_{ \Delta^{(n)} } \cap I^\complement) \Big)
     \vee 
     \widecheck{ \mathbf C}^{ (\mathcal J^I_b)|b }\Big( (\partial I)^{ \epsilon + \Delta^{(n)} }\Big)
     \Bigg]
     \nonumber
     \\
     &\qquad
     +
     T \cdot \widecheck{\bm c}(\epsilon^{(n)})
     + \bar t \cdot  \widecheck{ \mathbf C}^{ (\mathcal J^I_b)|b }\big(B^\circ \setminus I\big),
     \nonumber
\end{align}
where $\widecheck{\bm c}(\cdot)$ is defined in \eqref{def: check c error function, first exit time proof}.
Also, let $$\delta_B(\epsilon,T) \delequal {\widecheck{\delta}_B(\epsilon,T)}/({ C^I_b \cdot T }).$$
By \eqref{choice of Delta n, first exit time proof}
and $\widecheck{\mathbf C}^{(\mathcal J^I_b)|b}( B \setminus I) \leq \widecheck{\mathbf C}^{(\mathcal J^I_b)|b}(I^\complement) < \infty$,
we get
\begin{align*}
   \lim_{T \to \infty}\delta_B(\epsilon,T)
   \leq 
   \big({C^I_b}\big)^{-1} \cdot \Big[ \widecheck{\bm c}(\epsilon^{(n)}) +  \frac{1}{2^{ n } } \vee \widecheck{ \mathbf C}^{ (\mathcal J^I_b)|b }\Big( (\partial I)^{ \epsilon + \Delta^{(n)} }\Big) \Big],
\end{align*}
where $n = n_\epsilon$ is the unique positive integer satisfying $\epsilon \in (\epsilon^{(n+1)},\epsilon^{(n)}]$.
Moreover, as $\epsilon \downarrow 0$ we get $n_\epsilon \to \infty$.
Since $\partial I$ is closed, we get $\cap_{r > 0}(\partial I)^r = \partial I$,
which implies $\lim_{r \downarrow 0}\widecheck{ \mathbf C}^{ (\mathcal J^I_b)|b }\Big( (\partial I)^{ r }\Big)
=
\widecheck{ \mathbf C}^{ (\mathcal J^I_b)|b }( \partial I) = 0.
$
Also, by definition of $\widecheck{\bm c}$ in \eqref{def: check c error function, first exit time proof},
we have $\lim_{\epsilon \downarrow 0}\widecheck{\bm c}(\epsilon) = 0$.
In summary, we have verified that $\lim_{\epsilon \downarrow 0}\lim_{T \to \infty}\delta_B(\epsilon,T) = 0$.

Next, in Case $(ii)$ (i.e., $C^I_b = 0$), we set 
\begin{align*}
    C(\cdot) \equiv 0,\qquad
    \gamma(\eta) \delequal \eta \cdot  \big(\lambda(\eta)\big)^{\mathcal J^I_b},
    \qquad
    \delta_B(\epsilon,T) \delequal \widecheck{\delta}_{B}(\epsilon,T)/T.
\end{align*}
The same calculations above verify that $\lim_{\epsilon \downarrow 0}\lim_{T \to \infty}\delta_B(\epsilon,T) = 0$.

% Note that $\lim_{T \to \infty}\bm \delta_B(\epsilon,T) \leq 1/(C^I_b \cdot 2^n)$
% where $n = n_\epsilon$ is the unique positive integer satisfying $\epsilon \in (\epsilon^{(n+1)},\epsilon^{(n)}]$,
% and hence $ \lim_{\epsilon \downarrow 0} \lim_{T \to \infty}\bm \delta_B(\epsilon,T) \leq \lim_{n \to \infty}  1/(C^I_b \cdot 2^n) = 0$.
Now, we are ready to verify conditions \eqref{eq: exit time condition lower bound} and \eqref{eq: exit time condition upper bound}.
Specifically,
we
introduce stopping time
\begin{align}
    \notationdef{notation-tau-eta-b-epsilon-exit-time}{\tau^{\eta|b}_\epsilon(\bm x)} & \delequal \min\big\{ j \geq 0:\ \bm X^{\eta|b}_j(\bm x) \notin I_\epsilon \big\}.
    \label{def: epsilon relaxed first exit time}
\end{align}

\begin{lemma}[Verifying conditions \eqref{eq: exit time condition lower bound} and \eqref{eq: exit time condition upper bound}]
\label{lemma: exit prob one cycle, with exit location B, first exit analysis}
\linksinthm{lemma: exit prob one cycle, with exit location B, first exit analysis}
% Let $\bar\epsilon$ be specified as in \eqref{constant bar epsilon, first exit time analysis}.
Let $\bar t$ be characterized as in Lemma~\ref{lemma: limiting measure, with exit location B, first exit analysis}.
Given any measurable $B \subseteq \R$,
any $\epsilon \in (0,\bar\epsilon]$ small enough,
and any $T > \bar t$,
% Given $\delta \in (0,1)$,
% there exists $\epsilon_0 = \epsilon_0(\delta) > 0$ such that 
% for any $\epsilon \in (0,\epsilon_0]$, any $T > \bar t$, and any measurable $B \subseteq (I_{\bar\epsilon/2})^c$,
\begin{align*}
 C(B^\circ) - \delta_B(\epsilon,T)
& \leq 
    \liminf_{\eta \downarrow 0}\inf_{\bm x:\ \norm{\bm x} \leq \epsilon }
    \frac{ 
        \P\Big(
            \tau^{\eta|b}_\epsilon(\bm x) \leq T/\eta;\ \bm X^{\eta|b}_{\tau^{\eta|b}_\epsilon(\bm x)}(\bm x) \in B
        \Big) }{\gamma(\eta) T/\eta }
    \\
    & \leq 
     \limsup_{\eta \downarrow 0}\sup_{\bm x:\ \norm{\bm x} \leq \epsilon}
    \frac{ \P\Big(
        \tau^{\eta|b}_\epsilon(\bm x) \leq T/\eta;\ \bm X^{\eta|b}_{\tau^{\eta|b}_\epsilon(\bm x)}(\bm x) \in B
    \Big) }{\gamma(\eta)T/\eta }
    \leq 
   C(B^-)
    +
    \delta_B(\epsilon,T).
\end{align*}
\end{lemma}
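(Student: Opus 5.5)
The plan is to convert the event $\{\tau^{\eta|b}_\epsilon(\bm x) \leq T/\eta;\ \bm X^{\eta|b}_{\tau^{\eta|b}_\epsilon(\bm x)}(\bm x) \in B\}$ into a path event for the scaled process $\bm X^{\eta|b}_{[0,T]}(\bm x)$ and then apply the uniform sample-path large deviations of Theorem~\ref{corollary: LDP 2} with $k = \mathcal J^I_b$ and $A = \bar B_\epsilon(\bm 0)$. Since $\bm X^{\eta|b}_{\tau_\epsilon}$ always lies outside $I_\epsilon$, we may replace $B$ by $\widetilde B(\epsilon) = B \setminus I_\epsilon$. Up to the discretization $t \mapsto \floor{t/\eta}$, which is harmless because $\bm X^{\eta|b}_{\floor{t/\eta}}$ is piecewise constant, the event then coincides with
\[
\Big\{ \bm X^{\eta|b}_{[0,T]}(\bm x) \in \widecheck E\big(\epsilon,\widetilde B(\epsilon),T\big) \Big\},
\]
where $\widecheck E$ is the set from Lemma~\ref{lemma: limiting measure, with exit location B, first exit analysis}. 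The probability is thus sandwiched between the probabilities of the interior and the closure of this set, with denominator $\gamma(\eta)T/\eta = C^I_b T\lambda^{\mathcal J^I_b}(\eta)$ (or $T\lambda^{\mathcal J^I_b}(\eta)$ in Case $(ii)$).

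The first step is to check the hypothesis of Theorem~\ref{corollary: LDP 2}: $\widecheck E(\epsilon,\widetilde B(\epsilon),T)$ must be bounded away from $\D^{(\mathcal J^I_b-1)|b}_{\bar B_\epsilon(\bm 0)}[0,T](\epsilon')$ for small $\epsilon'$. I would use part $(a)$ of Lemma~\ref{lemma: choose key parameters, first exit time analysis}, which says every path with at most $\mathcal J^I_b - 1$ jumps started in $\bar B_{\bar\epsilon}(\bm 0)$ (with $\bar\epsilon$-perturbations) stays in $I^-_{2\bar\epsilon}$. A path in $\widecheck E$ must visit $\widetilde B(\epsilon) \subseteq I_\epsilon^\complement$. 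Because $\bar\epsilon$ was chosen with $\epsilon \le \bar\epsilon$ and \eqref{constant bar epsilon, new, 3, first exit time analysis} gives a margin larger than $\bar\epsilon$, the Skorokhod $J_1$ distance between the two sets is at least of order $\bar\epsilon - \epsilon$ (taking $\epsilon$ smaller if needed). Uniform-in-time sup-norm separation implies $J_1$ separation, since $J_1$ only reparametrizes time.

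Next, Theorem~\ref{corollary: LDP 2} bounds the limsup (respectively liminf) of $\sup_{\norm{\bm x}\le\epsilon}$ (respectively $\inf$) of the probability divided by $\lambda^{\mathcal J^I_b}(\eta)$ by $\sup_{\bm x}\mathbf C^{(\mathcal J^I_b)|b}_{[0,T]}(\widecheck E^-;\bm x)$ (respectively $\inf_{\bm x}\mathbf C(\widecheck E^\circ;\bm x)$). These are controlled by \eqref{measure C bound, 2, first exit time analysis}--\eqref{measure C bound, 1, first exit time analysis}, giving
\[
T\Big(\widecheck{\mathbf C}^{(\mathcal J^I_b)|b}\big((B\setminus I_\epsilon)^{\Delta^{(n)}}\big) + \widecheck{\bm c}(\epsilon^{(n)})\Big)
\]
from above and the analogous expression with $(T-\bar t)$ and $(B\setminus I_\epsilon)_{\Delta^{(n)}}$ from below.

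The remaining step, which is the main bookkeeping obstacle, is to compare these with $C(B^-)$ and $C(B^\circ)$ via $\widecheck\delta_B$.

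For the upper bound, split
\[
(B\setminus I_\epsilon)^{\Delta^{(n)}} \subseteq (B^{\Delta^{(n)}}\cap I^\complement) \cup (\partial I)^{\epsilon+\Delta^{(n)}},
\]
since points of $B\setminus I_\epsilon$ lying in $I$ are within $\epsilon$ of $\partial I$. Then bound $(B^{\Delta^{(n)}}\cap I^\complement)$ by $B^-\cap I^\complement$ plus the difference set. Each error term appears in $\widecheck\delta_B$, and $\widecheck{\mathbf C}(B^-\cap I^\complement)/C^I_b = C(B^-)$.

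For the lower bound, use
\[
(B\setminus I_\epsilon)_{\Delta^{(n)}} \supseteq B_{\Delta^{(n)}}\cap I^\complement \supseteq (B^\circ\cap I^\complement)\setminus\text{(difference set)},
\]
and absorb the loss from $(T-\bar t)$ into the term $\bar t\,\widecheck{\mathbf C}(B^\circ\setminus I)$ of $\widecheck\delta_B$. Dividing by $C^I_b T$ yields exactly $C(B^\circ)-\delta_B(\epsilon,T)$ and $C(B^-)+\delta_B(\epsilon,T)$.

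In Case $(ii)$ the same computation applies without the normalization by $C^I_b$. One caveat must be checked there: Theorem~\ref{corollary: LDP 2} requires $B$ Borel in $\D[0,T]$, so measurability of $\widecheck E$ needs a short argument. I expect this to be routine.
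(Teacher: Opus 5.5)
Your route is the paper's. You rewrite the event as $\{\bm X^{\eta|b}_{[0,T]}(\bm x)\in \widecheck E(\epsilon,\widetilde B(\epsilon),T)\}$, get the bounded-away hypothesis from part $(a)$ of Lemma~\ref{lemma: choose key parameters, first exit time analysis}, apply Theorem~\ref{corollary: LDP 2}, and feed in \eqref{measure C bound, 2, first exit time analysis}--\eqref{measure C bound, 1, first exit time analysis}. Your upper-bound inclusion $(B\setminus I_\epsilon)^{\Delta^{(n)}}\subseteq (B^{\Delta^{(n)}}\cap I^\complement)\cup(\partial I)^{\epsilon+\Delta^{(n)}}$ is also correct.

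\textbf{The step that fails} is the first inclusion of the lower bound, $(B\setminus I_\epsilon)_{\Delta^{(n)}}\supseteq B_{\Delta^{(n)}}\cap I^\complement$. A point $\bm y\in B_{\Delta}\cap I^\complement$ lies in $(B\setminus I_\epsilon)_\Delta$ only if its distance to $I_\epsilon$ exceeds $\Delta$, and you only know that distance is at least $\epsilon$. In the construction $\epsilon\le\epsilon^{(n)}<\Delta^{(n)}/2$, since the $\epsilon_0$ of Lemma~\ref{lemma: limiting measure, with exit location B, first exit analysis} is taken below $\Delta/2$. So points of $I^\complement$ within $\Delta^{(n)}$ of $I_\epsilon$ are genuinely lost. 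For instance, take $m=1$, $I=(-1,1)$, $B=[1/2,5]$, $\epsilon=0.1$, $\Delta=0.5$. Then $B_\Delta\cap I^\complement=(1,4.5)$ but $(B\setminus I_\epsilon)_\Delta=(1.4,4.5)$. Since $\widecheck{\mathbf C}^{(\mathcal J^I_b)|b}$ can charge $(1,1.4]$, the resulting measure inequality, with only the difference-set error, is false in general.

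\textbf{The fix} is the same boundary-layer correction you already used in the upper bound. Since $I_\epsilon\subseteq I$, you have $(B\setminus I_\epsilon)_\Delta\supseteq (B\setminus I)_\Delta = B_\Delta\cap (I^\complement)_\Delta$, and $I^\complement\setminus (I^\complement)_\Delta\subseteq(\partial I)^\Delta$. Hence
\[
\widecheck{\mathbf C}^{(\mathcal J^I_b)|b}\big((B\setminus I_\epsilon)_\Delta\big)\ge \widecheck{\mathbf C}^{(\mathcal J^I_b)|b}(B_\Delta\setminus I)-\widecheck{\mathbf C}^{(\mathcal J^I_b)|b}\big((\partial I)^\Delta\big).
\]
The extra term is at most $\widecheck{\mathbf C}^{(\mathcal J^I_b)|b}\big((\partial I)^{\epsilon+\Delta^{(n)}}\big)$. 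That term is already part of $\widecheck\delta_B$ and vanishes as $\epsilon\downarrow 0$ because $\widecheck{\mathbf C}^{(\mathcal J^I_b)|b}(\partial I)=0$. This is how the paper argues.

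One further bookkeeping point, which the paper shares: in both bounds the difference-set term and the boundary term enter as a \emph{sum}. But $\widecheck\delta_B$ in \eqref{def: bm delta B, first exit analysis} combines them with $\vee$. So dividing by $C^I_b T$ gives $C(B^\circ)-\delta_B(\epsilon,T)$ and $C(B^-)+\delta_B(\epsilon,T)$ only after replacing $\vee$ by $+$ there, or doubling. This is harmless for $\lim_{\epsilon\downarrow0}\lim_{T\to\infty}\delta_B(\epsilon,T)=0$.
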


\begin{proof}
\linksinpf{lemma: exit prob one cycle, with exit location B, first exit analysis}
Recall that 
\begin{enumerate}[$(i)$]
    \item
        in case that $C^I_b \in (0,\infty)$, we have
        $
        \gamma(\eta)T/\eta =  C^I_b  T \cdot \big(\lambda(\eta)\big)^{\mathcal J^I_b},
        $
        $
        C(\cdot) = \widecheck{\mathbf C}^{ (\mathcal J^I_b)|b }(\ \cdot\ \setminus I)/C^I_b,
        $
        and
        $\delta_B(\epsilon,T) = {\widecheck{\delta}_B(\epsilon,T)}/({ C^I_b \cdot T })$;

    \item 
        in case that $C^I_b = 0$, we have
        $
        \gamma(\eta)T/\eta =  T \cdot \big(\lambda(\eta)\big)^{\mathcal J^I_b},
        $
        $
        C(\cdot) \equiv 0,
        $
        and
        $\delta_B(\epsilon,T) = {\widecheck{\delta}_B(\epsilon,T)}/T$.
\end{enumerate}
In both cases,
by rearranging the terms, it suffices to show that
\begin{align}
 \limsup_{\eta \downarrow 0}\sup_{\bm x:\ \norm{\bm x} \leq \epsilon}
    \frac{ \P\Big(\tau^{\eta|b}_\epsilon(\bm x) \leq T/\eta;\ \bm X^{\eta|b}_{\tau^{\eta|b}_\epsilon(\bm x)}(\bm x) \in B\Big) }{\big(\lambda(\eta)\big)^{\mathcal J^I_b} }
    & \leq 
   T \cdot \widecheck{ \mathbf C }^{ (\mathcal J^I_b)|b }(B^- \setminus I)
    +
   \widecheck{\delta}_B(\epsilon,T),
   \label{proof, goal upper bound, lemma: exit prob one cycle, with exit location B, first exit analysis}
   \\
    \liminf_{\eta \downarrow 0}\inf_{\bm x:\ \norm{\bm x} \leq \epsilon}
    \frac{ \P\Big(\tau^{\eta|b}_\epsilon(\bm x) \leq T/\eta;\ \bm X^{\eta|b}_{\tau^{\eta|b}_\epsilon(\bm x)}(\bm x) \in B\Big) }{\big(\lambda(\eta)\big)^{\mathcal J^I_b} }
    & \geq 
   T \cdot \widecheck{ \mathbf C }^{ (\mathcal J^I_b)|b }(B^\circ \setminus I ) - \widecheck{\delta}_B(\epsilon,T).
   \label{proof, goal lower bound, lemma: exit prob one cycle, with exit location B, first exit analysis}
\end{align}

Recall the definition of set $\widecheck E(\epsilon,\cdot,T)$ in \eqref{def: set check E epsilon B T, measure check C k b}.
Let $\widetilde B(\epsilon) \delequal B \setminus I_\epsilon$.
Note that
\begin{align*}
    \Big\{
    \tau^{\eta|b}_\epsilon(\bm x) \leq T/\eta;\ \bm X^{\eta|b}_{\tau^{\eta|b}_\epsilon(\bm x)}(\bm x) \in B
\Big\}
& =
\Big\{
    \tau^{\eta|b}_\epsilon(\bm x) \leq T/\eta;\ \bm X^{\eta|b}_{\tau^{\eta|b}_\epsilon(\bm x)}(\bm x) \in \widetilde B(\epsilon)
\Big\}
\\ 
& 
=
 \Big\{ 
    \bm{X}^{\eta|b}_{[0,T]}(\bm x) \in \widecheck E\big(\epsilon, \widetilde B(\epsilon) ,T\big)
\Big\}.
\end{align*}
For any $\epsilon \in (0,\bar \epsilon)$ and $\xi \in \widecheck E(\epsilon,\widetilde B(\epsilon),T)$, 
there exists $t \in [0,T]$ such that $\xi_t \notin I_\epsilon$.
On the other hand,
recall that we use $\bar B_\epsilon(\bm 0)$ to denote the closed ball with radius $\epsilon$ centered at the origin.
By part $(a)$ of Lemma~\ref{lemma: choose key parameters, first exit time analysis},
given $\epsilon \in (0,\bar\epsilon]$,
it holds for any $\xi \in \mathbb{D}^{ (\mathcal{J}^I_b - 1)|b }_{ \bar B_\epsilon(\bm 0) }[0,T](\epsilon)$
that $\xi_t \in (I_{2\bar\epsilon})^-\ \forall t \in [0,T]$.
Therefore, the claim
\begin{align}
    \dj{[0,T]}
    \bigg( \widecheck E\big(\epsilon,\widetilde B(\epsilon),T\big),\ 
    \mathbb{D}^{ (\mathcal{J}^I_b - 1)|b }_{ \bar B_\epsilon(\bm 0) }[0,T](\epsilon)
    \bigg) \geq \bar\epsilon
    \nonumber
\end{align}
holds for any $\epsilon \in (0,\bar\epsilon]$.
Next, recall the 
sequence $(\epsilon^{(n)})_{n \geq 1}$
specified in \eqref{measure C bound, 2, first exit time analysis}--\eqref{measure C bound, 1, first exit time analysis}.
For any $\epsilon > 0$ small enough we have $\epsilon \in (0,\epsilon^{(1)}]$,
and for such $\epsilon$ we set $n = n_\epsilon$ as the unique positive integer such that $\epsilon \in (\epsilon^{(n+1)},\epsilon^{(n)}]$.
% where the strictly decreasing positive real number sequence $(\epsilon^{(n)})_{n \geq 1}$
% is specified in \eqref{measure C bound, 2, first exit time analysis}--\eqref{measure C bound, 1, first exit time analysis}.
It then follows from Theorem~\ref{corollary: LDP 2} that
\begin{align}
     & \limsup_{\eta \downarrow 0}\sup_{ \bm x:\ \norm{\bm x} \leq \epsilon }
         \frac{ 
    \P\Big(
        \tau^{\eta|b}_\epsilon(\bm x) \leq T/\eta;\ \bm X^{\eta|b}_{\tau^{\eta|b}_\epsilon(\bm x)}(\bm x) \in B
    \Big)   
    }{ \big(\lambda(\eta)\big)^{ \mathcal{J}^I_b }} 
    \label{proof, ineq for upper bound, lemma: exit prob one cycle, with exit location B, first exit analysis}
    \\
    &  \leq 
    \sup_{ \bm x:\ \norm{\bm x} \leq \epsilon }
    \mathbf{C}^{ (\mathcal{J}^I_b)|b}_{[0,T]}\bigg( \Big(\widecheck E\big(\epsilon, \widetilde B(\epsilon) ,T\big)\Big)^-;\bm x\bigg)
    % \nonumber
    % \\ 
    % & 
    \leq
    T\cdot
    \bigg( 
        \widecheck{\mathbf{C}}^{ (\mathcal{J}^I_b)|b }\Big( (B \setminus I_\epsilon)^{\Delta^{(n)}} \Big) + \widecheck{\bm c}(\epsilon^{(n)})
    \bigg),
    \nonumber
\end{align}
where we applied property \eqref{measure C bound, 2, first exit time analysis} in the last inequality.
Furthermore,
\begin{align*}
    & \widecheck{\mathbf{C}}^{ (\mathcal{J}^I_b)|b }\Big( (B \setminus I_\epsilon)^{\Delta^{(n)}} \Big)
    \\
    & \leq 
    \widecheck{\mathbf{C}}^{ (\mathcal{J}^I_b)|b }\Big( B^{\Delta^{(n)}} \cup ( (I_\epsilon)^\complement )^{\Delta^{(n)}} \Big)
    \qquad 
    \text{due to }(E\cup F)^\Delta \subseteq E^\Delta \cup F^\Delta
    \\ 
    & = 
    \widecheck{\mathbf{C}}^{ (\mathcal{J}^I_b)|b }\Big( B^{\Delta^{(n)}} \cup ( (I_\epsilon)^\complement )^{\Delta^{(n)}} \cap I^\complement \Big)
    +
    \widecheck{\mathbf{C}}^{ (\mathcal{J}^I_b)|b }\Big( B^{\Delta^{(n)}} \cup ( (I_\epsilon)^\complement )^{\Delta^{(n)}} \cap I \Big)
    \\
    & \leq 
    \widecheck{\mathbf{C}}^{ (\mathcal{J}^I_b)|b }\Big( B^{\Delta^{(n)}}\setminus I\Big)
    +
    \widecheck{\mathbf{C}}^{ (\mathcal{J}^I_b)|b }\Big( ( (I_\epsilon)^\complement )^{\Delta^{(n)}} \cap I \Big)
    \\ 
    & \leq 
    \widecheck{\mathbf{C}}^{ (\mathcal{J}^I_b)|b }\Big( B^{\Delta^{(n)}}\setminus I\Big)
    +
    \widecheck{\mathbf{C}}^{ (\mathcal{J}^I_b)|b }\Big( (\partial I)^{ \epsilon + \Delta^{(n)}} \Big)
    \\
    & \leq 
    \widecheck{\mathbf{C}}^{ (\mathcal{J}^I_b)|b }\Big( B^-\setminus I\Big)
    +
    \widecheck{\mathbf{C}}^{ (\mathcal{J}^I_b)|b }\Big( (B^{\Delta^{(n)}}\cap I^\complement ) \setminus (B^- \cap I^\complement)\Big)
    +
    \widecheck{\mathbf{C}}^{ (\mathcal{J}^I_b)|b }\Big( (\partial I)^{ \epsilon + \Delta^{(n)}} \Big)
\end{align*}
Plugging this bound back into \eqref{proof, ineq for upper bound, lemma: exit prob one cycle, with exit location B, first exit analysis},
the upper bound in the claim~\eqref{proof, goal upper bound, lemma: exit prob one cycle, with exit location B, first exit analysis} then follows from 
the definition of $\widecheck{\delta}_B$ in \eqref{def: bm delta B, first exit analysis}
and our choice of $C(\cdot)$ in \eqref{def: measure C and scale gamma when applying the exit time framework}.
Similarly, by Theorem~\ref{corollary: LDP 2} and the property \eqref{measure C bound, 1, first exit time analysis}, we obtain (for any $\epsilon > 0$ small enough)
\begin{align}
    & \liminf_{\eta \downarrow 0}\inf_{ \bm x:\ \norm{\bm x} \leq \epsilon }
         \frac{ 
    \P\Big(
        \tau^{\eta|b}_\epsilon(\bm x) \leq T/\eta;\ \bm X^{\eta|b}_{\tau^{\eta|b}_\epsilon(\bm x)}(\bm x) \in B
    \Big)   
    }{ \big(\lambda(\eta)\big)^{ \mathcal{J}^I_b }} 
    \label{proof, ineq for lower bound, lemma: exit prob one cycle, with exit location B, first exit analysis}
    \\
    &  \geq 
    \inf_{ \bm x:\ \norm{\bm x} \leq \epsilon }
    \mathbf{C}^{ (\mathcal{J}^I_b)|b}_{[0,T]}\bigg( \Big(\widecheck E\big(\epsilon, \widetilde B(\epsilon) ,T\big)\Big)^\circ;\bm x\bigg)
    % \nonumber
    % \\ 
    % & 
    \geq
    (T - \bar t)\cdot 
    \bigg( 
        \widecheck{\mathbf{C}}^{ (\mathcal{J}^I_b)|b }\Big( (B \setminus I_\epsilon)_{\Delta^{(n)}} \Big) - \widecheck{\bm c}(\epsilon^{(n)})
    \bigg),
    \nonumber
\end{align}
where $n = n_\epsilon$ is the unique positive integer such that $\epsilon \in (\epsilon^{(n+1)},\epsilon^{(n)}]$.
Furthermore, by the preliminary bound $(E\cap F)_\Delta \supseteq E_\Delta \cap F_\Delta$,
\begin{align*}
  \widecheck{\mathbf{C}}^{ (\mathcal{J}^I_b)|b }\Big( (B \setminus I_\epsilon)_{\Delta^{(n)}} \Big)
  & \geq 
  \widecheck{\mathbf{C}}^{ (\mathcal{J}^I_b)|b }\Big( (B \setminus I)_{\Delta^{(n)}} \Big)
  % \\ 
  % & 
  \geq 
  \widecheck{\mathbf{C}}^{ (\mathcal{J}^I_b)|b }\Big( B_{\Delta^{(n)}} \cap (I^\complement)_{\Delta^{(n)}} \Big).
\end{align*}
Together with the fact that
$
B_\Delta \setminus I 
= B_\Delta \cap I^\complement
\subseteq
\Big(B_\Delta \cap (I^\complement)_\Delta\Big) 
\cup 
\Big( I^\complement \setminus (I^\complement)_\Delta \Big),
$
we yield
\begin{align*}
    & \widecheck{\mathbf{C}}^{ (\mathcal{J}^I_b)|b }\Big( (B \setminus I_\epsilon)_{\Delta^{(n)}} \Big)
    \geq 
  \widecheck{\mathbf{C}}^{ (\mathcal{J}^I_b)|b }\Big( B_{\Delta^{(n)}} \cap (I^\complement)_{\Delta^{(n)}} \Big)
    \\ 
    & \geq 
    \widecheck{\mathbf{C}}^{ (\mathcal{J}^I_b)|b }\Big( B_{\Delta^{(n)}} \setminus I \Big)
    -
    \widecheck{\mathbf{C}}^{ (\mathcal{J}^I_b)|b }\Big( I^\complement \setminus (I^\complement)_{\Delta^{(n)}} \Big)
    \\ 
    & \geq 
    \widecheck{\mathbf{C}}^{ (\mathcal{J}^I_b)|b }\Big( B_{\Delta^{(n)}} \setminus I \Big)
    -
    \widecheck{\mathbf{C}}^{ (\mathcal{J}^I_b)|b }\Big( (\partial I)^{\Delta^{(n)}} \Big)
    \\
    & =
    \widecheck{\mathbf{C}}^{ (\mathcal{J}^I_b)|b }\Big( B^\circ \setminus I \Big)
    -
    \widecheck{\mathbf{C}}^{ (\mathcal{J}^I_b)|b }\Big( (B^\circ \cap I^\complement) \setminus (B_{\Delta^{(n)}} \cap I^\complement ) \Big)
    -
    \widecheck{\mathbf{C}}^{ (\mathcal{J}^I_b)|b }\Big( (\partial I)^{\Delta^{(n)}} \Big).
\end{align*}
Plugging this back into \eqref{proof, ineq for lower bound, lemma: exit prob one cycle, with exit location B, first exit analysis},
we conclude the proof for the lower bound \eqref{proof, goal lower bound, lemma: exit prob one cycle, with exit location B, first exit analysis}.
\end{proof}

The next result verifies condition \eqref{eq:E3}.
Under our choice of $A(\epsilon) = \{\bm x \in \R^m:\ \norm{\bm x} < \epsilon\}$
and $I(\epsilon) = I_\epsilon$,
the event $\{\tau^{\eta}_{(I(\epsilon)\setminus A(\epsilon))^\complement}(x) > T/\eta\}$ in condition \eqref{eq:E3}
means that $\bm X^{\eta|b}_j(\bm x) \in I_\epsilon \symbol{92} \{ \bm x:\norm{\bm x} < \epsilon  \}$ for all $j \leq T/\eta$.
Also, recall
% the definition of $\bm t(\cdot)$ in \eqref{def: t epsilon function, first exit analysis},
% as well as 
our choice of 
$
\gamma(\eta)T/\eta =  C^I_b  T \cdot \big(\lambda(\eta)\big)^{\mathcal J^I_b}
$
if $C^I_b > 0$,
or 
$
\gamma(\eta)T/\eta =  T \cdot \big(\lambda(\eta)\big)^{\mathcal J^I_b}
$
if $C^I_b = 0$.
To verify condition \eqref{eq:E3}, it suffices to prove the following result.

\begin{lemma}[Verifying condition \eqref{eq:E3}]
\label{lemma: fixed cycle exit or return taking too long, first exit analysis}
\linksinthm{lemma: fixed cycle exit or return taking too long, first exit analysis}
 % Let Assumptions \ref{assumption gradient noise heavy-tailed}, \ref{assumption: f and sigma, stationary distribution of SGD}, and \ref{assumption: shape of f, first exit analysis} hold.
 Let $\bar\epsilon > 0$ be the constant in \eqref{constant bar epsilon, new, 1, first exit time analysis}--\eqref{constant bar epsilon, new, 3, first exit time analysis},
 and $\bm t(\cdot)$ be defined as in \eqref{def: t epsilon function, first exit analysis}.
 Given $k \geq 1$ and $\epsilon \in (0,\bar\epsilon)$,
 it holds for any $T \geq k \cdot \bm{t}(\epsilon/2)$ that
 \begin{align*}
     \lim_{\eta \downarrow 0}\ \sup_{ \bm x \in I_\epsilon }
     \ \, \frac1{\lambda^{k-1}(\eta)}\P\Big( \bm X^{\eta|b}_j(\bm x) \in I_\epsilon\setminus \{ \bm x:\norm{\bm x} < \epsilon  \}\ \  \forall j \leq T/\eta \Big)
      = 0.
 \end{align*}
\end{lemma}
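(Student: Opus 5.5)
The plan is to cast the event as a sample-path event for the scaled process $\bm X^{\eta|b}_{[0,T]}(\bm x)$ and bound it with the uniform sample-path large deviations of Theorem~\ref{corollary: LDP 2} at level $k-1$. The sup over $\bm x \in I_\epsilon$ requires a compact set of initial values, so we take $A = (I_\epsilon)^-$, which is compact because $I$ is bounded. Let
\[
F \delequal \big\{\xi \in \D[0,T]:\ \xi(t) \in (I_\epsilon)^- \setminus B_\epsilon(\bm 0)\ \ \forall t \in [0,T]\big\},
\]
where $B_\epsilon(\bm 0)$ is the open ball. The event in question is contained in $\{\bm X^{\eta|b}_{[0,T]}(\bm x) \in F\}$ (up to the harmless discrepancy between $j \le T/\eta$ and $\floor{t/\eta}$ for $t\le T$). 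Since $F$ is closed in the $J_1$ topology, the upper bound in \eqref{claim, uniform sample path LD, corollary: LDP 2} with $k-1$ jumps gives the limsup of the ratio as at most $\sup_{\bm x}\mathbf C^{(k-1)|b}_{[0,T]}(F;\bm x)$. The lemma with ``$\lim = 0$'' then follows if we show two things: $F$ is bounded away from $\D^{(k-2)|b}_{A}[0,T](\delta)$ for small $\delta>0$, and $\mathbf C^{(k-1)|b}_{[0,T]}(F;\bm x)=0$ for every $\bm x\in A$. The case $k=1$ is special because the level-$0$ statement is just the law of large numbers; there the $\mathbf C^{(0)}$ measure is the Dirac mass at the ODE path, which is handled by the same geometric fact below.

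The key geometric input is that any path built from the ODE with at most $k-1$ jumps spends a long stretch of jump-free time somewhere in $[0,T]$ and is therefore pulled into the small ball. With $T \ge k\,\bm t(\epsilon/2)$ and at most $k-1$ jump times in $(0,T]$, the pigeonhole principle gives an interval between consecutive jump times (or between $0$ or $T$ and a jump) of length at least $\bm t(\epsilon/2)$. Suppose the path stays in $(I_\epsilon)^- \subseteq (I_{\epsilon/2})^-$ at the start of that interval. By \eqref{property: t epsilon function, first exit analysis} applied with $\epsilon/2$, the ODE flow enters $\bar B_{\epsilon/2}(\bm 0)$ within time $\bm t(\epsilon/2)$, and a jump-free segment follows the flow exactly. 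So $\xi(t)\in\bar B_{\epsilon/2}(\bm 0)$ at some time, contradicting $\xi\in F$. For a perturbed path in $\D^{(k-2)|b}_A[0,T](\delta)$, the same argument puts the path in $\bar B_{\epsilon/2}(\bm 0)$. Any $\xi$ that is $J_1$-close within $\epsilon/4$ then lies in $B_\epsilon(\bm 0)$ at some time, which yields a positive distance between $F$ and that set. This establishes the bounded-away condition.

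For the measure being zero, $\mathbf C^{(k-1)|b}_{[0,T]}(\cdot;\bm x)$ is supported on $h^{(k-1)|b}_{[0,T]}$-paths with exactly $k-1$ jumps at times in $(0,T)$. The pigeonhole argument applies verbatim to these paths with zero perturbation, so none of them lies in $F$. Hence $\mathbf C^{(k-1)|b}_{[0,T]}(F;\bm x)=0$ for every $\bm x$.

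The main obstacle is the boundary behavior at the jump times. A jump can land the path exactly on the boundary of $(I_\epsilon)^-$, and the times $t_j$ and the endpoints need care when forming $J_1$-neighborhoods. To handle this, I would run the pigeonhole with slack by using $\bm t(\epsilon/2)$ and the ball $\bar B_{\epsilon/2}$, rather than $\bar B_\epsilon$. I would also use the positive invariance \eqref{constant bar epsilon, new, 2, first exit time analysis}, so that once the path enters $\bar B_{\epsilon/2}(\bm 0)$ it remains there until the next jump. This leaves a uniform margin of $\epsilon/2$ that absorbs both the $J_1$ time distortion and the $\delta$-perturbations.
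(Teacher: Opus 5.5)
Your argument is correct in substance, but it applies the sample-path LDP one level lower than the paper does.

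\textbf{Comparison of routes.} The paper uses Theorem~\ref{corollary: LDP 2} at level $k$. It checks only that $E(\epsilon)$ (essentially your $F$, with $I_\epsilon$ in place of $(I_\epsilon)^-$) is bounded away from $\mathbb{D}^{(k-1)|b}_{(I_\epsilon)^-}[0,T](\epsilon)$. The finiteness of the upper bound $\sup_{\bm x}\mathbf C^{(k)|b}_{[0,T]}(\,\cdot\,;\bm x)$, which is built into the theorem, then gives $\bm O(\lambda^k(\eta))=\bm o(\lambda^{k-1}(\eta))$ at once.

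You apply the theorem at level $k-1$ instead. This forces you to verify three things:
\begin{enumerate}
\item[(1)] $F$ is closed;
\item[(2)] $F$ is bounded away from $\mathbb{D}^{(k-2)|b}_A[0,T](\delta)$;
\item[(3)] $\mathbf C^{(k-1)|b}_{[0,T]}(F;\bm x)=0$ for all $\bm x$.
\end{enumerate}
It also yields only the $\bm o(\lambda^{k-1}(\eta))$ rate. Both of your geometric checks follow from the paper's single one. Indeed, $\mathbb{D}^{(k-2)|b}_A[0,T](\delta)\subseteq\mathbb{D}^{(k-1)|b}_A[0,T](\delta)$, by adding a dummy jump with $\bm w=\bm v=\bm 0$. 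Also, $\mathbf C^{(k-1)|b}_{[0,T]}(\cdot;\bm x)$ is supported in $\mathbb{D}^{(k-1)|b}_{\{\bm x\}}[0,T](0)$. The geometric core is the same as in the paper: pigeonhole over the inter-jump intervals, plus \eqref{property: t epsilon function, first exit analysis} at level $\epsilon/2$. So your route is valid, but it does strictly more work for a weaker conclusion. For $k=1$ it also relies on the level-$0$ case of the theorem, which is covered since $0\in\mathbb N$.

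\textbf{A step to tighten.} In the bounded-away verification you only treat perturbed paths that lie in $(I_\epsilon)^-$ (or $(I_{\epsilon/2})^-$) at the start of the long jump-free interval. A path in $\mathbb{D}^{(k-2)|b}_A[0,T](\delta)$ can be thrown outside $I$ by a jump, and the flow from there need not reach $\bar B_{\epsilon/2}(\bm 0)$. You need the paper's case split:
\begin{itemize}
\item If the path leaves $I_{\epsilon/2}$ at some time, it is already at $J_1$-distance at least $\epsilon/2$ from $F$. This holds because every element of $F$ takes values in $(I_\epsilon)^-$, and $\inf\{\norm{\bm x-\bm y}:\bm x\in(I_\epsilon)^-,\ \bm y\notin I_{\epsilon/2}\}\geq\epsilon/2$.
\item Otherwise the path stays in $I_{\epsilon/2}$ throughout, and your pigeonhole argument with $\bm t(\epsilon/2)$ applies.
\end{itemize}
Your measure-zero step does not need this split, since paths in $F$ stay in $(I_\epsilon)^-\subseteq I_{\epsilon/2}$ by definition.
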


\begin{proof}
\linksinpf{lemma: fixed cycle exit or return taking too long, first exit analysis}
In this proof, we write $\xi(t) = \xi_t$ for any $\xi \in \D[0,T]$,
and write $B_\epsilon(\bm 0) = \{\bm x \in \R^m:\ \norm{\bm x} < \epsilon\}$.
Note that
$\big\{
\bm X^{\eta|b}_j(\bm x) \in I_\epsilon \setminus B_\epsilon(\bm 0)\ \ \forall j \leq T/\eta
\big\}
= \big\{ \bm{X}^{\eta|b}_{[0,T]}(x) \in E(\epsilon)\big\}
$
where
\begin{align*}
    E(\epsilon) \delequal 
    \Big\{ \xi \in \mathbb{D}[0,T]:\ \xi(t) \in I_\epsilon\setminus B_\epsilon(\bm 0) \ \ \forall t \in[0,T]\Big\}.
\end{align*}
Recall the definition of $\mathbb{D}^{(k)|b}_{A}[0,T](\epsilon)$ in \eqref{def: l * tilde jump number for function g, clipped SGD}.
We claim that $E(\epsilon)$ is bounded away from $\mathbb{D}^{ (k - 1)|b }_{ (I_\epsilon)^- }[0,T](\epsilon)$.
This allows us to apply Theorem \ref{corollary: LDP 2} and conclude that
\begin{align*}
    \sup_{ x \in I_\epsilon }
    \P\Big( \bm{X}^{\eta|b}_{[0,T]}(\bm x) \in E(\epsilon)\Big)
    =
    \bm{O}\big( \lambda^{k}(\eta) \big)
    =
    \bm{o}\big(\lambda^{k - 1}(\eta)\big)\ \ \ \text{as }\eta \downarrow 0.
\end{align*}
Now, it only remains to verify that $E(\epsilon)$ is bounded away from $\mathbb{D}^{ (k - 1)|b }_{ (I_\epsilon)^- }[0,T](\epsilon)$,
which follows if we show
the existence of some $\delta > 0$ such that
\begin{align}
    \dj{[0,T]}(\xi,\xi^\prime) \geq \delta > 0
    \qquad
    \forall \xi \in \mathbb{D}^{ (k - 1)|b }_{ (I_\epsilon)^- }[0,T](\epsilon),\ \xi^\prime \in E(\epsilon).
    \label{proof, ineq, lemma: fixed cycle exit or return taking too long, first exit analysis}
\end{align}
First, 
by definition of $E(\epsilon)$, we have $\xi^\prime(t) \in I_\epsilon\ \forall t \in [0,T]$
for any $\xi^\prime \in E(\epsilon)$.
Also, note that
$
\inf\{ \norm{\bm x - \bm y}:\ \bm x \in I_\epsilon,\ \bm y \notin I_{\epsilon/2} \} \geq \epsilon/2.
$
Therefore, 
for any $\xi \in \mathbb{D}^{ (k - 1)|b }_{ (I_\epsilon)^- }[0,T](\epsilon)$,
if $\xi(t) \notin I_{\epsilon/2}$
for some $t \in [0,T]$,
then
$
\dj{[0,T]}(\xi,\xi^\prime) \geq \epsilon/2 > 0.
$
Next,
given $\xi \in \mathbb{D}^{ (k - 1)|b }_{ (I_\epsilon)^- }[0,T](\epsilon)$,
we consider the case where $\xi(t) \in I_{\epsilon/2}$
for any $t \in [0,T]$.
By definition of
$
\mathbb{D}^{ (k - 1)|b }_{  (I_\epsilon)^- }[0,T](\epsilon),
$
there is some $\bm x \in (I_\epsilon)^-$, $\textbf W \in \R^{ d \times (k - 1)}$, 
$\textbf V \in \big(\bar{B}_{\epsilon}(\bm 0)\big)^{k-1}$,
and $(t_1,\cdots,t_{k - 1}) \in (0,T]^{ k - 1 \uparrow}$ such that
$
\xi = \bar h^{ (k - 1)|b }_{[0,T]}\big(\bm x,\textbf W,\textbf V,(t_1,\cdots,t_{k - 1})\big).
$
Under the convention that $t_0 = 0$ and $t_{k} = T$,
we have (for each $j \in [k]$)
\begin{align}
    \xi(t) = \bm{y}_{ t - t_{j-1} }\big(\xi(t_{j-1})\big)\qquad 
    \forall t \in [t_{j-1},t_j).
    \label{proof, property of path xi, lemma: fixed cycle exit or return taking too long, first exit analysis}
\end{align}
Here, $\bm{y}_\cdot(x)$ is the ODE defined in \eqref{def ODE path y t}.
By the running assumption in this lemma that $T \geq k \cdot \bm{t}(\epsilon/2)$, 
there must be some $j \in [k]$ such that $t_j - t_{j - 1} \geq \bm{t}({\epsilon}/{2})$.
Since $\xi(t) \in I_{\epsilon/2}\ \forall t \in [0,T]$, we have $\xi(t_{j-1}) \in I_{\epsilon/2}$.
Combining \eqref{proof, property of path xi, lemma: fixed cycle exit or return taking too long, first exit analysis} and the property \eqref{property: t epsilon function, first exit analysis}, we get
$
\lim_{t \uparrow t_j}\xi(t) \in \bar B_{\epsilon/2}(\bm 0) \subset B_\epsilon(\bm 0).
$
On the other hand, by definition of $E(\epsilon)$, we have $\xi^\prime(t) \notin B_\epsilon(\bm 0)$
for all $t \in [0,T]$, 
which implies 
$
\dj{[0,T]}(\xi,\xi^\prime) \geq \frac{\epsilon}{2}.
$
This concludes the proof.
\end{proof}

Let
\begin{align}
     \notationdef{notation-R-eta-b-epsilon-return-time}{R^{\eta|b}_\epsilon(\bm x)} & \delequal 
     \min\bigg\{ j \geq 0:\ \norm{\bm X^{\eta|b}_j(\bm x)} < \epsilon \bigg\}
    \label{def: return time R in first cycle, first exit analysis}
\end{align}
be the first time $\bm X^{\eta|b}_j(\bm x)$ returns to the $\epsilon$-neighborhood of the origin.
Lastly, we establish condition \eqref{eq:E4}
under our choice of $A(\epsilon) = \{ \bm x \in \R^m:\ \norm{\bm x} < \epsilon \}$.
Note that the first visit time $\tau^{\eta}_{A(\epsilon)}(x)$ therein coincides with $R^{\eta|b}_\epsilon(x)$.

\begin{lemma}[Verifying condition \eqref{eq:E4}]
\label{lemma: cycle, efficient return}
\linksinthm{lemma: cycle, efficient return}
% Let Assumptions \ref{assumption gradient noise heavy-tailed}, \ref{assumption: f and sigma, stationary distribution of SGD}, and \ref{assumption: shape of f, first exit analysis} hold.
    Let $\bm{t}(\cdot)$ be defined as in \eqref{def: t epsilon function, first exit analysis} and
    \begin{align*}
        E(\eta,\epsilon,\bm x)
        \delequal 
        \Bigg\{ R^{\eta|b}_\epsilon(\bm x) \leq \frac{\bm t(\epsilon/2)}{\eta};\ \bm X^{\eta|b}_j(\bm x) \in I\ \forall j \leq R^{\eta|b}_\epsilon(\bm x) \Bigg\}.
    \end{align*}
    For any $\epsilon \in (0,\bar\epsilon)$,
    $
     \lim_{\eta\downarrow 0}\sup_{\bm x \in (I_\epsilon)^-}
        \P\Big( \big(E(\eta,\epsilon,\bm x)\big)^\complement\Big) = 0.
    $
    % \begin{align*}
    %     \lim_{\eta\downarrow 0}\sup_{x \in [s_\text{left} + \epsilon,s_\text{right}-\epsilon]}
    %     \P\Big( \big(A(\eta,\epsilon,x)\big)^\complement\Big) = 0.
    % \end{align*}
\end{lemma}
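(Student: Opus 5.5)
We apply Theorem~\ref{corollary: LDP 2} with $k=1$ (exponent $k-1=0$), exactly as in the proof of Lemma~\ref{lemma: fixed cycle exit or return taking too long, first exit analysis}. Set $T \delequal \bm t(\epsilon/2)$ and $A \delequal (I_\epsilon)^-$; $A$ is compact because $I$ is bounded. The event $\big(E(\eta,\epsilon,\bm x)\big)^\complement$ is contained in $\big\{\bm X^{\eta|b}_{[0,T]}(\bm x)\in F(\epsilon)\big\}$, where
\begin{align*}
    F(\epsilon) \delequal \Big\{\xi\in\D[0,T]:\ \xi(t)\notin B_\epsilon(\bm 0)\ \forall t\in[0,T]\Big\}
    \cup
    \Big\{\xi\in\D[0,T]:\ \exists t\in[0,T]\text{ s.t. }\xi(t)\notin I\text{ and }\xi(s)\notin B_\epsilon(\bm 0)\ \forall s\in[0,t)\Big\}.
\end{align*}
To see the inclusion, suppose $E(\eta,\epsilon,\bm x)$ fails. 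Either there is no return to $B_\epsilon(\bm 0)$ within $\floor{T/\eta}$ steps (up to rounding of $T/\eta$, handled by shrinking $T$ slightly or using $\bm t(\epsilon/4)$ in the definition), or the chain leaves $I$ before its first return. In both cases the scaled path lies in $F(\epsilon)$.

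Theorem~\ref{corollary: LDP 2} with $k=1$ then gives $\limsup_{\eta\downarrow0}\sup_{\bm x\in A}\P\big(\bm X^{\eta|b}_{[0,T]}(\bm x)\in F(\epsilon)\big)\le \sup_{\bm x\in A}\mathbf C^{(1)|b}_{[0,T]}(F(\epsilon)^-;\bm x)\cdot\lim_{\eta\downarrow 0}\lambda(\eta)=0$. This requires two facts: $\lambda(\eta)\to0$, which holds because $\lambda\in\RV_{\alpha-1}(\eta)$ with $\alpha>1$; and $F(\epsilon)$ bounded away from $\D^{(0)|b}_A[0,T](\delta)$ for some small $\delta>0$.

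The real work is this bounded-away property. A path in $\D^{(0)|b}_A[0,T](\delta)$ is just an ODE path $\bm y_t(\bm x)$ with $\bm x\in(I_\epsilon)^-$; the $\bm v_j$ perturbations play no role since there are no jumps.
\begin{itemize}
    \item By \eqref{property: coverage of I_epsilon, bounded away from I complement}, every such path stays at distance at least some $c_\epsilon>0$ from $I^\complement$ uniformly in $t\ge0$.
    \item By \eqref{property: t epsilon function, first exit analysis}, every such path satisfies $\bm y_{T}(\bm x)\in\bar B_{\epsilon/2}(\bm 0)$, where the time $T=\bm t(\epsilon/2)$ is valid because $(I_\epsilon)^-\subseteq (I_{\epsilon/2})^-$.
\end{itemize}

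Now take $\xi'\in F(\epsilon)$ and an ODE path $\xi$ as above, and suppose $\dj{[0,T]}(\xi,\xi')<\rho$ with $\rho<\min(c_\epsilon,\epsilon/2)$. Any $J_1$ time change $\lambda$ on $[0,T]$ fixes the endpoint $T$, so $\norm{\xi'(T)-\xi(T)}<\rho$ and hence $\norm{\xi'(T)}<\epsilon/2+\rho<\epsilon$. This puts $\xi'$ in $B_\epsilon(\bm 0)$ at time $T$, so $\xi'$ is not in the first set of $F(\epsilon)$. Also $\xi'(t)$ lies within $\rho<c_\epsilon$ of some point of the ODE path, so $\xi'(t)\in I$ for all $t$, and $\xi'$ is not in the second set either. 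This contradiction gives the bounded-away property, and the $\sup$ over $A$ follows from the uniformity in Theorem~\ref{corollary: LDP 2}.

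The only delicate point I expect is the endpoint and rounding issue: the scaled path evaluated at $T$ corresponds to step $\floor{T/\eta}$, which is at most $T/\eta$. The containment therefore holds directly, since returning by step $\floor{T/\eta}$ implies $R^{\eta|b}_\epsilon\le T/\eta$.
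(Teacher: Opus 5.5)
Your proposal is correct and follows essentially the same route as the paper. Both arguments cover $(E(\eta,\epsilon,\bm x))^\complement$ by the event that the scaled path on $[0,\bm t(\epsilon/2)]$ either never enters $B_\epsilon(\bm 0)$ or leaves $I$ around its return. Both show this set is bounded away from the ODE paths $\mathbb{D}^{(0)|b}_{(I_\epsilon)^-}[0,\bm t(\epsilon/2)]$, using \eqref{property: t epsilon function, first exit analysis} at the endpoint and \eqref{property: coverage of I_epsilon, bounded away from I complement}, and both conclude via Theorem~\ref{corollary: LDP 2} with $k=1$ and $\lambda(\eta)\to 0$.

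The differences are cosmetic. Your second set (leaving $I$ before the first entry into $B_\epsilon(\bm 0)$) replaces the paper's $E^*_2(\epsilon)$ (leaving $I$ and then entering $B_\epsilon(\bm 0)$ later). Your check that $\lfloor T/\eta\rfloor \le T/\eta$ makes the inclusion exact, so no adjustment of $T$ is needed.
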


\begin{proof}
\linksinpf{lemma: cycle, efficient return}
In this proof, we write $B_\epsilon(\bm 0) = \{\bm x \in \R^m:\ \norm{\bm x} < \epsilon\}$.
    Note that
    $
    \big(E(\eta,\epsilon,\bm x)\big)^\complement
    \subseteq 
    \big\{ \bm{X}^{\eta|b}_{ [0,\bm{t}(\epsilon/2)] }(\bm x) \in E^*_1(\epsilon) \cup E^*_2(\epsilon) \big\},
    $
    where 
\begin{align*}
    E^*_1(\epsilon) & \delequal \bigg\{ \xi \in \mathbb{D}[0,\bm{t}(\epsilon/2)]:\ \xi(t) \notin B_\epsilon(\bm 0)\ \forall t \in [0,\bm{t}(\epsilon/2)] \bigg\},
    \\
    E^*_2(\epsilon) & \delequal \bigg\{ \xi \in \mathbb{D}[0,\bm{t}(\epsilon/2)]:\ \exists 0 \leq s < t \leq \bm{t}(\epsilon/2)\ s.t.\ \xi(t) \in B_\epsilon(\bm 0),\ \xi(s) \notin I \bigg\}.
    % \\
    % E^*_2(\epsilon) & \delequal \big\{ \xi \in \mathbb{D}[0,\bm{t}(\epsilon/2)]:\ \exists 0 \leq s \leq t \leq \bm{t}(\epsilon/2)\ s.t.\ \xi(t) \in (-\epsilon,\epsilon),\ |\xi(s)| > |x| + \frac{\epsilon}{2} \big\}.
\end{align*}
Recall the definition of $\mathbb{D}^{(k)|b}_{A}[0,T](\epsilon)$ in \eqref{def: l * tilde jump number for function g, clipped SGD}.
We claim that both $E^*_1(\epsilon)$ and $E^*_2(\epsilon)$ are bounded away from 
\begin{align*}
    \mathbb{D}^{(0)|b}_{ (I(\epsilon))^- }[0,\bm{t}(\epsilon/2)] = 
    \bigg\{ 
        \Big\{ \bm{y}_t(\bm x):\ t \in [0,\bm{t}(\epsilon/2)] \Big\}:\ \bm x \in  \big(I_\epsilon\big)^-
    \bigg\}.
\end{align*}
To see why, note that
% the property \eqref{property: bounded away, I epsilon, different epsilon} allows us to fix some $\delta > 0$ such that  
$
\inf\{ \norm{\bm x - \bm y}:\ \bm x \in I_\epsilon,\ \bm y \notin I_{\epsilon}) \} \geq \epsilon/2.
$
On the other hand, properties \eqref{constant bar epsilon, new, 2, first exit time analysis} and \eqref{property: t epsilon function, first exit analysis}
imply that
 $\bm{y}_{ \bm{t}(\epsilon/2) }(\bm x) \in \bar B_{\epsilon/2}(\bm 0)$ for any $\bm x \in \big(I_\epsilon\big)^-$.
% from Assumption \ref{assumption: shape of f, first exit analysis} and property \eqref{property: t epsilon function, first exit analysis},
% we get and $\bm{y}_t(x) \in I_\epsilon$, $|\bm y_t(x)| \leq |x|$ for all $t$ and $x$ such that $t \in [0,\bm{t}(\epsilon/2)]$ and $x \in I^-_\epsilon$.
Therefore,
\begin{align}
    \dj{[0,\bm{t}(\epsilon/2)]}
    \bigg( 
        \mathbb{D}^{(0)|b}_{ (I(\epsilon))^- }[0,\bm{t}(\epsilon/2)],\ E^*_1(\epsilon)
    \bigg) & \geq \frac{\epsilon}{2} > 0,
    \label{proof, bound 1, lemma: cycle, efficient return}
     % \\ 
     % \dj{[0,\bm{t}(\epsilon/2)]}\Big( \mathbb{D}^{(0)|b}_{ I_\epsilon }[0,\bm{t}(\epsilon/2)],\ E^*_3(\epsilon)\Big) & \geq \frac{\epsilon}{2}> 0.
     % \label{proof, bound 3, lemma: cycle, efficient return}
\end{align}
Meanwhile, by property \eqref{property: coverage of I_epsilon, bounded away from I complement},
\begin{align}
    \dj{[0,\bm{t}(\epsilon/2)]}
    \bigg( 
        \mathbb{D}^{(0)|b}_{ (I(\epsilon))^- }[0,\bm{t}(\epsilon/2)],\ E^*_2(\epsilon)
    \bigg) & > 0.
     \label{proof, bound 2, lemma: cycle, efficient return}
\end{align}
This allows us to apply Theorem \ref{corollary: LDP 2} and obtain 
$$
\sup_{\bm x \in (I_\epsilon)^-}
\P\bigg( \big(E(\eta,\epsilon,\bm x)\big)^\complement\bigg) \leq 
\sup_{\bm x \in (I_\epsilon)^-}
\P\bigg(\bm{X}^{\eta|b}_{ [0,\bm{t}(\epsilon/2)] }(\bm x) \in E^*_1(\epsilon) \cup E^*_2(\epsilon)\bigg)
= \bm{O}\big(\lambda(\eta)\big)
$$
as $\eta \downarrow 0$.
To conclude the proof, one only needs to note that $\lambda(\eta) \in \RV_{\alpha - 1}(\eta)$ (with $\alpha > 1$) and hence $\lim_{\eta \downarrow 0}\lambda(\eta) = 0$.
\end{proof}

We conclude this section with the proof of Theorem \ref{theorem: first exit time, unclipped}.

\begin{proof}[Proof of Theorem \ref{theorem: first exit time, unclipped}]
\linksinpf{theorem: first exit time, unclipped}
As noted above,
the claim $C^I_b < \infty$ is verified by Lemma~\ref{lemma: exit rate strictly positive, first exit analysis}.
Next, since Lemmas~\ref{lemma: exit prob one cycle, with exit location B, first exit analysis}--\ref{lemma: cycle, efficient return}
verify Condition~\ref{condition E2},
Theorem~\ref{theorem: first exit time, unclipped}
follows immediately from Theorem~\ref{thm: exit time analysis framework}.
\end{proof}

\section{Proof of Corollary~\ref{corollary: first exit time, untruncated case}}
\label{sec: proof for untruncated case}

This section collects the proof of Corollary~\ref{corollary: first exit time, untruncated case}.
We first give a straightforward bound for the law of geometric random variables.
\begin{lemma} \label{lemmaGeomFront}
\linksinthm{lemmaGeomFront}
Let $a:(0,\infty) \to (0,\infty)$, $b:(0,\infty) \to (0,\infty)$ 
be two functions
such that 
$\lim_{\epsilon \downarrow 0} a(\epsilon) = 0, \lim_{\epsilon \downarrow 0} b(\epsilon) = 0$.
Let $\{U(\epsilon): \epsilon > 0\}$ be a family of geometric RVs with success rate $a(\epsilon)$,
i.e.
$\P(U(\epsilon) > k) = (1 - a(\epsilon))^{k}$ for $k \in \mathbb{N}$.
For any $c > 1$, there exists $\epsilon_0 > 0$ such that
$$\exp\Big(-\frac{c\cdot a(\epsilon)}{b(\epsilon)}\Big) \leq \P\Big( U(\epsilon) > \frac{1}{b(\epsilon)} \Big) \leq \exp\Big(-\frac{a(\epsilon)}{c\cdot b(\epsilon)}\Big)
\ \ \ \forall \epsilon \in (0,\epsilon_0)
.$$

\end{lemma}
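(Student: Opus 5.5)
We will prove both inequalities directly from the explicit tail $\P(U(\epsilon) > k) = (1-a(\epsilon))^k$, evaluated at the non-integer level $1/b(\epsilon)$. Since $U(\epsilon)$ is integer valued, $\P(U(\epsilon) > 1/b(\epsilon)) = (1-a(\epsilon))^{\lfloor 1/b(\epsilon)\rfloor}$. We then sandwich $\lfloor 1/b(\epsilon)\rfloor$ between $1/b(\epsilon) - 1$ and $1/b(\epsilon)$, and sandwich $\log(1-a)$ between multiples of $-a$.

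\textbf{Upper bound.} Use $\log(1-a) \le -a$. This gives
\[
\P\big(U(\epsilon) > 1/b(\epsilon)\big) \le \exp\big(-a(\epsilon)\lfloor 1/b(\epsilon)\rfloor\big) \le \exp\big(-a(\epsilon)(1/b(\epsilon) - 1)\big).
\]
Because $b(\epsilon)\to 0$, we have $1/b(\epsilon) - 1 \ge 1/(c\, b(\epsilon))$ once $b(\epsilon) \le 1 - 1/c$. This yields the upper bound $\exp\big(-a(\epsilon)/(c\,b(\epsilon))\big)$ for all small $\epsilon$.

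\textbf{Lower bound.} Since $a(\epsilon)\to 0$ and $\log(1-a)/(-a) \to 1$ as $a\downarrow 0$, for small $\epsilon$ we have $\log(1-a(\epsilon)) \ge -c\, a(\epsilon)$. Together with $\lfloor 1/b\rfloor \le 1/b$ and $\log(1-a) < 0$, this gives
\[
(1-a)^{\lfloor 1/b\rfloor} \ge (1-a)^{1/b} \ge \exp\big(-c\,a/b\big).
\]

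Take $\epsilon_0$ as the minimum of the two thresholds, namely $b(\epsilon) \le 1-1/c$ and $\log(1-a(\epsilon)) \ge -c\,a(\epsilon)$. No step is a real obstacle. The only care needed is the floor: in the upper bound we lose one factor $(1-a)$, and this is absorbed into the constant $c$ only because $b(\epsilon)\to 0$.
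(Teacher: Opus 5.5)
Your proof is correct and follows essentially the same route as the paper: write the probability as $(1-a(\epsilon))^{\lfloor 1/b(\epsilon)\rfloor}$, take logarithms, and control both the floor and $\ln(1-a)/(-a)$ as $\epsilon\downarrow 0$. The only cosmetic difference is in the upper bound. There you use the exact inequality $\ln(1-a)\le -a$, which gives the explicit threshold $b(\epsilon)\le 1-1/c$, whereas the paper simply lets both ratios $\lfloor 1/b\rfloor/(1/b)$ and $\ln(1-a)/(-a)$ tend to $1$ together.
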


\begin{proof}
\linksinpf{lemmaGeomFront}%
Note that
$\P( U(\epsilon) > \frac{1}{b(\epsilon)} ) = \big(1 - a(\epsilon)\big)^{ \floor{1/b(\epsilon)} }.$
By taking logarithm on both sides, we have
\begin{align*}
    \ln \P\Big( U(\epsilon) > \frac{1}{b(\epsilon)} \Big) & = \floor{1/b(\epsilon)}\ln\Big(1 - a(\epsilon)\Big) = \frac{\floor{1/b(\epsilon)} }{ 1/b(\epsilon) }\frac{\ln\Big(1 - a(\epsilon)\Big) }{-a(\epsilon) }\frac{-a(\epsilon) }{ b(\epsilon) }.
\end{align*}
Since $\lim_{x \rightarrow 0}\frac{\ln(1 + x)}{x} = 1$, we know that for $\epsilon$ sufficiently small, we will have
$-c \frac{a(\epsilon)}{b(\epsilon)}   \leq \ln \P\Big( U(\epsilon) > \frac{1}{b(\epsilon)} \Big) \leq -\frac{a(\epsilon)}{c\cdot b(\epsilon)}.$
% \begin{align}
%  -c \frac{a(\epsilon)}{b(\epsilon)}   \leq \ln \P\Big( U(\epsilon) > \frac{1}{b(\epsilon)} \Big) \leq -\frac{a(\epsilon)}{c\cdot b(\epsilon)}.
%  \label{proofGeomBound}
% \end{align}
By taking exponential on both sides, we conclude the proof.
% (ii)
% To begin with, from the lower bound of part (i), we have
% $$\P\Big( U(\epsilon) \leq \frac{1}{b(\epsilon)} \Big) = 1 -  \P\Big( U(\epsilon) > \frac{1}{b(\epsilon)} \Big)
% \leq 1 -\exp\Big(-c\cdot \frac{a(\epsilon)}{b(\epsilon)}\Big)
% \leq 
%  \frac{c\cdot a(\epsilon)}{b(\epsilon)}
% $$
% for sufficiently small $\epsilon > 0$.
% % by applying the fact that $1-\exp(-x) \leq x\ \forall x\in \mathbb{R}$ at $x=c\cdot a(\epsilon)/b(\epsilon)$.
% For the lower bound, recall that
% $1 - \exp(-x) \geq \frac{x}{\sqrt{c}}$ holds for $x>0$ sufficiently close to $0$. 
% Since we assume $\lim_{\epsilon \downarrow 0}a(\epsilon)/b(\epsilon) = 0$, applying this bound with $x = \frac{a(\epsilon)}{\sqrt c\cdot b(\epsilon)}$
% along with the upper bound of part (i), we get
% $$\P\Big( U(\epsilon)  \leq \frac{1}{b(\epsilon)} \Big) 
% \geq 1 - \exp\Big(-\frac{1}{\sqrt{c}}\cdot\frac{a(\epsilon)}{b(\epsilon)}\Big) 
% \geq \frac{a(\epsilon)}{c\cdot b(\epsilon)}$$
% for sufficiently small $\epsilon$.
% % Therefore, it holds for any $\epsilon$ small enough that
% % $ \P\Big( U(\epsilon)  \leq \frac{1}{b(\epsilon)} \Big) \geq \frac{1}{c}\cdot\frac{a(\epsilon)}{b(\epsilon)}. $
\end{proof}

\begin{proof}[Proof of Corollary~\ref{corollary: first exit time, untruncated case}]
\linksinpf{corollary: first exit time, untruncated case}
Note that the value of $\bm \sigma(\cdot)$ and $\bm a(\cdot)$ outside of the domain $I$ has no impact on the first exit analysis.
Therefore, by modifying the value of $\bm \sigma(\cdot)$ and $\bm a(\cdot)$ outside of $I$,
we can assume w.l.o.g.\ that
\begin{align}
    \norm{\bm a(\bm x)}\vee \norm{\bm \sigma(\bm x)} \leq C
    \qquad
    \forall \bm x \in \R^m.
    \label{boundedness assumption, corollary: first exit time, untruncated case}
\end{align}
for some $C \in (0,\infty)$.
That is, we can impose the boundedness condition in Assumption~\ref{assumption: boundedness of drift and diffusion coefficients} w.l.o.g.
% there is some
% $C > 0$
% such that $0 \leq \sigma(x) \leq C$ 
% and $|a(x)| \leq C$ for all $x \in \R$.
% Also, from the continuity of $a(\cdot)$ (see Assumption \ref{assumption: lipschitz continuity of drift and diffusion coefficients}),
% we are able to fix some $\widetilde{C} \in [ C,\infty)$ such that 
% $
% |a(x)| \leq \widetilde{C}
% $
% for all $x \in [s_\text{left},s_\text{right}]$.

We start with a few observations.
First, under any $\eta \in (0,\frac{b}{2C})$, 
on the event $\{\eta\norm{\bm Z_j} \leq \frac{b}{2C}\ \forall j \leq t\}$ the norm of the step-size (before truncation) 
$\eta \bm a\big(\bm X^{\eta|b}_{j - 1}(\bm x)\big) + \eta \bm \sigma\big(\bm X^{\eta|b}_{j - 1}(\bm x)\big)\bm Z_j$ of $\bm X_j^{\eta|b}(\bm x)$ is less than $b$ for each $j \leq t$. 
Therefore, $\bm X^{\eta|b}_j(\bm x)$ and $\bm X^\eta_j(\bm x)$ coincide for such $j$'s.
\elaborate{First, note that
given any $\eta \in (0, \frac{b}{2 C })$ and $|w| \leq \frac{b}{2C}$, we have
\begin{align*}
    \Big| \eta a(y) + \sigma(y) w   \Big|
    & \leq 
    \eta |a(y)| + | \sigma(y) | \cdot |w|
    % \\
    % & 
    \leq \eta \cdot C + C \cdot \frac{b}{ 2C }
    < \frac{b}{2} + \frac{b}{2} = b
    \qquad\forall y \in I^-.
\end{align*}
As a result, given any $t > 0$, $x \in \R$,  and $\eta \in (0,1 \wedge \frac{b}{2 C })$,
it holds on event $\{\eta|Z_j| \leq \frac{b}{2C}\ \forall j \leq t\}$ that
\begin{align*}
    X^{\eta|b}_j(x) & = X^{\eta|b}_{j - 1}(x) + \varphi_b\Big( \eta a\big(X^{\eta|b}_{j - 1}(x)\big) + \eta \sigma\big(X^{\eta|b}_{j - 1}(x)\big)Z_j  \Big)
    \\
    & = 
    X^{\eta|b}_{j - 1}(x) + \eta a\big(X^{\eta|b}_{j - 1}(x)\big) + \eta \sigma\big(X^{\eta|b}_{j - 1}(x)\big)Z_j
    \ \ \ \ \forall j \leq \tau^{\eta|b}(x) \wedge t.
\end{align*}
}%
In other words, for any $\eta \in (0,\frac{b}{2C})$, on event $ \big\{ \eta\norm{\bm Z_j} \leq \frac{b}{2C}\ \forall j \leq t\big\}$ 
we have
\begin{equation}
    \bm X^{\eta|b}_j(\bm x) = \bm X^\eta_j(\bm x)\qquad \forall j \leq t.
    \label{proof, observation 1, theorem: first exit time, unclipped}
\end{equation}
Next, recall that $I$ is a bounded under Assumption~\ref{assumption: shape of f, first exit analysis}.
Therefore, under $b > \sup_{\bm x \in I}\norm{\bm x}$,
it holds for $\bm w \in \R^d$ that
\begin{align*}
    \varphi_b\big(\bm\sigma(\bm 0)\bm w\big) \notin I\qquad \Longleftrightarrow \qquad \bm \sigma(\bm 0)\bm w \notin I.
\end{align*}
\elaborate{To see why, first suppose that $|\sigma(0)\cdot w| \geq b$. 
Then $\sigma(0)\cdot w \notin I$. 
In the meantime, we also have $\big|\varphi_b\big(\sigma(0)\cdot w\big)\big| = b$, 
and hence, $\varphi_b\big(\sigma(0)\cdot w\big) \notin I$.
On the other hand, the equivalence is trivial if $|\sigma(0)\cdot w| < b$ since $\varphi_b(\sigma(0)\cdot w)  = \sigma(0)\cdot w$ in such a case.
This establishes claim \eqref{proof, observation 2, theorem: first exit time, unclipped}.
}%
As a result, for all $b$ large enough, we have
\begin{align}
   C^I_b = \widecheck{ \mathbf{C} }^{(1)|b}(I^\complement) 
   & = \int \mathbbm{I}\Big\{ \varphi_b\big(\bm \sigma(\bm 0) \bm w\big) \notin  I \Big\} \big((\nu_\alpha \times \mathbf S)\circ \Phi\big)(d \bm w)
   \nonumber
   \\
   & = \int \mathbbm{I}\Big\{ \bm \sigma(\bm 0) \bm w \notin I \Big\} \big((\nu_\alpha \times \mathbf S)\circ \Phi\big)(d \bm w)
   = \widecheck{ \mathbf{C} }(I^\complement) \delequal C^I_\infty.
   \label{proof, observation 2, theorem: first exit time, unclipped}
\end{align}
Similarly, one can show that for all $b$ large enough,
\begin{align}
     \widecheck{ \mathbf{C} }^{(1)|b}(\partial I)
     =
      \widecheck{ \mathbf{C} }^{(1)}(\partial I).
      \label{proof, observation, mass on the boundary set of I, theorem: first exit time, unclipped}
\end{align}
Moreover, given any measurable $A \subseteq \R$ such that $r_A = \inf\{\norm{\bm x}:\ \bm x \in A\} > 0$,
we claim that
\begin{align}
    \lim_{b \to \infty}\widecheck{\mathbf C}^{(1)|b}(A) = \widecheck{ \mathbf C }(A).
    \label{proof, observation 2 general form, theorem: first exit time, unclipped}
\end{align}
This claim follows from a simple application of the dominated convergence theorem.
Indeed, by definition, we have
$
\widecheck{\mathbf C}^{(1)|b}(A) = \int \mathbbm{I}\big\{ \varphi_b\big(\bm \sigma(\bm 0) \bm w\big) \in A \big\}
 \big((\nu_\alpha \times \mathbf S)\circ \Phi\big)(d \bm w).
$
For $f_b(\bm w) \delequal \mathbbm{I}\big\{ \varphi_b\big(\bm \sigma(\bm 0) \bm w\big) \in A \big\}$,
we first note that given $\bm w \in \R^m$, we have $f_b(\bm w) = f(\bm w)\delequal \mathbbm{I}\big\{ \bm \sigma(\bm 0) \bm w \in A \big\}$ for all $b > \norm{\bm w} \norm{\bm  \sigma(\bm 0)}$.
Therefore, the point-wise convergence $\lim_{b \to \infty}f_b(\bm w) = f(\bm w)$ holds for all $\bm w \in \R^m$.
Next, 
observe that
\begin{align*}
    \big\{ \varphi_b\big(\bm \sigma(\bm 0)\bm w\big) \in A \big\}
    &
    \subseteq 
    \big\{
        \norm{ \bm \sigma(\bm 0)\bm w } \geq r_A
    \big\}
    \subseteq 
    \big\{
        \norm{ \bm \sigma(\bm 0)}\cdot \norm{\bm w } \geq r_A
    \big\}
    =
    \big\{
        \norm{\bm w } \geq r_A/\norm{ \bm \sigma(\bm 0)}
    \big\}.
\end{align*}
This implies
$
f_b(\bm w) \leq \mathbbm{I}\big\{ \norm{\bm w } \geq r_A/\norm{ \bm \sigma(\bm 0)} \big\}
$
for all $b > 0$ and $\bm w \in \R^d$.
Also, by definition of the measure $\nu_\alpha$ in \eqref{def: measure nu alpha},
\begin{align}
    \int \mathbbm{I}\big\{ \norm{\bm w} \geq r_A/ \norm{\bm \sigma(\bm 0)} \big\}  \big((\nu_\alpha \times \mathbf S)\circ \Phi\big)(d \bm w) = ( \norm{\bm \sigma(\bm 0)}/r_A)^\alpha < \infty.
    \label{proof: upper bound for measure hat C 1 b, theorem: first exit time, unclipped}
\end{align}
The last inequality follows from $r_A > 0$.
This allows us to apply dominated convergence theorem and establish \eqref{proof, observation 2 general form, theorem: first exit time, unclipped}.

Moving on, we verify a few regularity conditions.
By repeating the calculations in \eqref{proof: upper bound for measure hat C 1 b, theorem: first exit time, unclipped} with  $A = I^\complement$,
we are able to verify the condition ${C^I_\infty} =\widecheck{ \mathbf{C} }(I^\complement) < \infty$
in Corollary~\ref{corollary: first exit time, untruncated case}.
Next, by the convention in \eqref{def: 0 jump coverage set, first exit times},
we have that $\mathcal{G}^{(0)|b}(\epsilon)$ is bounded away from $I^\complement$ for all $\epsilon > 0$ small enough and all $b > 0$.
In the meantime, recall the definition of 
$
\mathcal{G}^{(1)|b} = \big\{ \varphi_b\big(\bm \sigma(\bm 0)\bm w\big):\ \bm w \in \R^d  \big\}.
$
Due to $\norm{\bm \sigma(\bm 0)} > 0$, there exists $\bm w^*$ such that 
$
\norm{\bm \sigma(\bm 0)\bm w^*} > \sup_{\bm x \in I}\norm{\bm x},
$
and hence $\bm \sigma(\bm 0)\bm w^* \notin I$.
As a result, for all $b > \norm{\bm \sigma(\bm 0)\bm w^*}$ we have
$
\mathcal{G}^{(1)|b} \cap I^\complement \neq \emptyset.
$
That is, we have shown that 
\begin{align*}
    \mathcal J^I_b = 1\qquad\text{for all $b > 0$ large enough};
\end{align*}
see \eqref{def: first exit time, J *} for the definition.
Together with \eqref{proof, observation, mass on the boundary set of I, theorem: first exit time, unclipped} and the running assumption $\widecheck{\mathbf C}(\partial I) = 0$ in Corollary~\ref{corollary: first exit time, untruncated case},
we have 
$
 \widecheck{ \mathbf{C} }^{(1)|b}(\partial I) = 0
$
for all $b$ large enough.
These conditions will allow us to apply Theorem~\ref{theorem: first exit time, unclipped}, with $b > 0$ large enough, in the remainder of this proof.

Now, we fix $t \geq 0$ and $B \subseteq I^c$, and recall that
our goal is to study the probability of the event
$$A(\eta,\bm x) \delequal \big\{ C^I_\infty H(\eta^{-1}) \tau^\eta(\bm x) > t,\ \bm X^\eta_{\tau^\eta(\bm x)}(\bm x) \in B \big\}.$$
Here, note that $\lambda(\eta) = \eta^{-1}  H(\eta^{-1})$ and hence $\eta \cdot \lambda(\eta) = H(\eta^{-1})$.
Also, henceforth in the proof we only consider $b$ large enough
such that
$
C^I_\infty = C^I_b;
$
see \eqref{proof, observation 2, theorem: first exit time, unclipped}.
We focus on the case where $C^I_\infty > 0$, but we stress that the proof for the case with $C^I_\infty = 0$ is almost identical.
First,
we arbitrarily pick some $T > t$ and observe that
\begin{align}
    & A(\eta,\bm x) 
    \nonumber \\ 
    & = 
    \underbrace{\Big\{ C^I_\infty H(\eta^{-1}) \tau^\eta(\bm x) \in (t,T],\ \bm X^\eta_{\tau^\eta(\bm x)}(\bm x) \in B \Big\}}_{ \delequal A_1(\eta,\bm x,T) }
    \cup 
    \underbrace{\Big\{ C^I_\infty H(\eta^{-1}) \tau^\eta(\bm x) > T,\ \bm X^\eta_{\tau^\eta(\bm x)}(\bm x) \in B \Big\}}_{ \delequal A_2(\eta,\bm x,T) }.
    \label{proof, decompose event A, theorem: first exit time, unclipped}
\end{align}
Let $E_b(\eta,T) \delequal \big\{ \eta\norm{\bm Z_j} \leq \frac{b}{2C}\ \forall j \leq \frac{T}{ C^I_\infty H(\eta^{-1}) }   \big\}$ and note that
$$
A_1(\eta,\bm x,T) = \Big(A_1(\eta,\bm x,T) \cap E_b(\eta,T) \Big) \cup \Big(A_1(\eta,\bm x,T) \setminus E_b(\eta,T) \Big).
$$
Moreover, for all $\eta \in (0,\frac{b}{2C})$,
\begin{align*}
    & \P\Big(A_1(\eta,\bm x,T) \cap E_b(\eta,T) \Big) 
    \\ 
    & = 
    \P\bigg( \Big\{  C^I_b \eta \cdot \lambda(\eta) \tau^{\eta|b}(\bm x) \in (t,T],\ \bm X^{\eta|b}_{ \tau^{\eta|b}(\bm x) }(\bm x) \in B  \Big\} \cap E_b(\eta,T) \bigg)
    \qquad 
    \text{due to \eqref{proof, observation 1, theorem: first exit time, unclipped} and \eqref{proof, observation 2, theorem: first exit time, unclipped}}
    \\ 
    & \leq 
    \P\bigg( C^I_b \eta \cdot \lambda(\eta) \tau^{\eta|b}(\bm x) \in (t,T],\ \bm X^{\eta|b}_{ \tau^{\eta|b}(\bm x) }(\bm x) \in B  \bigg)
    \\ & = 
    \P\bigg( C^I_b \eta \cdot \lambda(\eta) \tau^{\eta|b}(\bm x) > t,\ \bm X^{\eta|b}_{ \tau^{\eta|b}(\bm x) }(\bm x) \in B  \bigg)
    % \\
    % &\qquad \qquad 
    -
    \P\bigg( C^I_b \eta \cdot \lambda(\eta) \tau^{\eta|b}(\bm x) > T,\ \bm X^{\eta|b}_{ \tau^{\eta|b}(\bm x) }(\bm x) \in B  \bigg).
\end{align*}
By Theorem~\ref{theorem: first exit time, unclipped} and claim~\eqref{proof, observation 2, theorem: first exit time, unclipped}, we get
\begin{align}
\limsup_{\eta \downarrow 0}\sup_{\bm x \in I_\epsilon }\P\Big(A_1(\eta,\bm x,T) \cap E_b(\eta,T) \Big) 
\leq 
\frac{ \widecheck{\mathbf{C}}^{ (1)|b }(B^-) }{ C^I_\infty }\cdot\exp(-t) - \frac{ \widecheck{\mathbf{C}}^{ (1)|b }(B^\circ) }{ C^I_\infty }\cdot\exp(-T).
\label{proof, ineq 0, theorem: first exit time, unclipped}
\end{align}
Meanwhile,
$$
\sup_{\bm x \in I_\epsilon}\P\big(A_1(\eta,\bm x,T) \setminus E_b(\eta,T)\big) \leq \P\big( ( E_b(\eta,T))^\complement\big)
=
\P\bigg( \eta\norm{\bm Z_j} > \frac{b}{2C}\text{ for some }j \leq \frac{T}{C^I_\infty H(\eta^{-1})} \bigg).
$$
Applying Lemma \ref{lemmaGeomFront}, we get (recall that $H(\cdot) = \P(\norm{\bm Z_j} > \cdot)$ and $H(x)\in \RV_{-\alpha}(x)$ as $x \to \infty$)
\begin{align}
    % &  
    \limsup_{\eta \downarrow 0}\P\bigg( \eta\norm{\bm Z_j} >\frac{b}{2C}\text{ for some }j \leq \frac{T}{C^I_\infty H(\eta^{-1})} \bigg)
    \nonumber
    % \\
    & = 
    1 - 
    \liminf_{\eta \downarrow 0}\P\bigg( \text{Geom}\Big(H\big( \frac{b}{\eta \cdot 2 C} \big)\Big) > \frac{T}{C^I_\infty H(\eta^{-1})} \bigg)
    \nonumber
    \\
    &
    \leq 
    1 - \lim_{\eta \downarrow 0}\exp\bigg( -\frac{T \cdot  H(\eta^{-1} \cdot \frac{b}{2C} )}{ C^I_\infty H(\eta^{-1})  }\bigg)
    \nonumber
    \\ 
    & = 
    1 - \exp\bigg( - \frac{T}{C^I_\infty} \cdot \Big( \frac{2C}{b} \Big)^\alpha  \bigg).
    % \qquad 
    % \text{due to }H(x) \in \RV_{-\alpha}(x)\text{ as }x \to \infty
    \label{proof, ineq 1, theorem: first exit time, unclipped}
\end{align}
Similarly,
\begin{align*}
    A_2(\eta,\bm x,T) & \subseteq \Big\{ C^I_\infty H(\eta^{-1})\tau^\eta(\bm x) > T  \Big\}
    \\ 
    & = \bigg( \Big\{ C^I_\infty H(\eta^{-1})\tau^\eta(\bm x) > T  \Big\} \cap E_b(\eta,T) \bigg) \cup  \bigg( \Big\{ C^I_\infty H(\eta^{-1})\tau^\eta(\bm x) > T  \Big\} \setminus E_b(\eta,T) \bigg).
\end{align*}
On $\{ C^I_\infty  H(\eta^{-1})\tau^\eta(\bm x) > T \} \cap E_b(\eta,T)$, we have $\tau^\eta(\bm x) = \tau^{\eta|b}(\bm x)$
again due to \eqref{proof, observation 1, theorem: first exit time, unclipped}.
By Theorem \ref{theorem: first exit time, unclipped} and \eqref{proof, observation 2, theorem: first exit time, unclipped},
we get
\begin{align}
    & \limsup_{\eta \downarrow 0}\sup_{\bm x \in I_\epsilon}\P\bigg( \Big\{ C^I_\infty H(\eta^{-1})\tau^\eta(\bm x) > T  \Big\} \cap E_b(\eta,T) \bigg)
    \nonumber
    \\ 
    &
    \leq 
    \limsup_{\eta \downarrow 0}\sup_{\bm x \in I_\epsilon}
        \P\Big( C^I_b \eta \cdot \lambda(\eta)\tau^{\eta|b}(\bm x) > T\Big) \leq \exp(-T).
    \label{proof, ineq 2, theorem: first exit time, unclipped}
\end{align}
Meanwhile, the limit of $\sup_{\bm x \in I_\epsilon}\P\big( C^I_\infty H(\eta^{-1})\tau^\eta(\bm x) > T \} \setminus E_b(\eta,T) \big)$ as $\eta \downarrow 0$ is again bounded by \eqref{proof, ineq 1, theorem: first exit time, unclipped}.
Collecting \eqref{proof, ineq 0, theorem: first exit time, unclipped}, \eqref{proof, ineq 1, theorem: first exit time, unclipped}, and \eqref{proof, ineq 2, theorem: first exit time, unclipped},
we yield that for all $b > 0$ large enough and all $T > t$,
\begin{align*}
    & \limsup_{\eta \downarrow 0}\sup_{\bm x \in I_\epsilon}\P\big(A(\eta,\bm x)\big)
    \\
    & \leq 
        \frac{ \widecheck{\mathbf{C}}^{ (1)|b }(B^-) }{ C^I_\infty }\cdot\exp(-t) - \frac{ \widecheck{\mathbf{C}}^{ (1)|b }(B^\circ) }{ C^I_\infty }\cdot\exp(-T) + \exp(-T)
    % \\ 
    % &\qquad
    + 
    2 \cdot \bigg[  1 - \exp\bigg( - \frac{T}{C^I_\infty} \cdot \Big( \frac{2C}{b} \Big)^\alpha  \bigg)\bigg].
\end{align*}
In light of claim \eqref{proof, observation 2 general form, theorem: first exit time, unclipped},
we send $b \to \infty$ and $T \to \infty$
% $
%  \limsup_{\eta \downarrow 0}\sup_{x \in I_\epsilon}\P\big(A(\eta,x)\big)
%  \leq 
%  \frac{ \widecheck{\mathbf{C}}(B^-) }{ C^* }\cdot\exp(-t) - \frac{ \widecheck{\mathbf{C}}(B^\circ) }{ C^* }\cdot\exp(-T) + \exp(-T).
% $
% Letting $T$ tend to $\infty$, we 
to conclude the proof of the upper bound.

The lower bound can be established analogously. 
By the decomposition of events in \eqref{proof, decompose event A, theorem: first exit time, unclipped},
\begin{align*}
    & \inf_{\bm x \in I_\epsilon}\P\big(A(\eta,\bm x)\big)
    \\ 
    & \geq 
    \inf_{\bm x \in I_\epsilon}\P\big(A_1(\eta,\bm x,T)\big)
    \geq 
    \inf_{\bm x \in I_\epsilon}\P\big(A_1(\eta,\bm x,T) \cap E_b(\eta,T)\big)
    \\
    & = 
    \inf_{\bm x \in I_\epsilon}
    \P\bigg( \Big\{  C^I_b \eta \cdot \lambda(\eta) \tau^{\eta|b}(\bm x) \in (t,T],\ \bm X^{\eta|b}_{ \tau^{\eta|b}(\bm x) }(\bm x) \in B  \Big\} \cap E_b(\eta,T) \bigg)
    \qquad 
    \text{due to \eqref{proof, observation 1, theorem: first exit time, unclipped} and \eqref{proof, observation 2, theorem: first exit time, unclipped}}
    \\ 
    & \geq 
    \inf_{\bm x \in I_\epsilon}
    \P\bigg( C^I_b \eta \cdot \lambda(\eta) \tau^{\eta|b}(\bm x) \in (t,T],\ \bm X^{\eta|b}_{ \tau^{\eta|b}(\bm x) }(\bm x) \in B  \bigg)
    -
    \P\Big( \big(E_b(\eta,T)\big)^\complement\Big)
    \\ 
    & \geq 
    \inf_{\bm x \in I_\epsilon}
    \P\bigg( C^I_b \eta \cdot \lambda(\eta) \tau^{\eta|b}(\bm x) > t,\ \bm X^{\eta|b}_{ \tau^{\eta|b}(\bm x) }(\bm x) \in B  \bigg)
    -
    \sup_{\bm x \in I_\epsilon}
    \P\bigg( C^I_b \eta \cdot \lambda(\eta) \tau^{\eta|b}(\bm x) > T,\ \bm X^{\eta|b}_{ \tau^{\eta|b}(\bm x) }(\bm x) \in B  \bigg)
    \\ 
    &\qquad \qquad 
    -
    \P\Big( \big(E_b(\eta,T)\big)^\complement\Big).
\end{align*}
By Theorem \ref{theorem: first exit time, unclipped} and the limit in \eqref{proof, ineq 1, theorem: first exit time, unclipped},
we yield (for all $b > 0$ large enough and all $T  > t$)
\begin{align*}
    \liminf_{\eta \downarrow 0}\inf_{\bm x \in I_\epsilon}\P\Big(A(\eta,\bm x)\Big)
    & \leq 
        \frac{ \widecheck{\mathbf{C}}^{ (1)|b }(B^\circ) }{ C^I_\infty }\cdot\exp(-t) - \frac{ \widecheck{\mathbf{C}}^{ (1)|b }(B^-) }{ C^I_\infty }\cdot\exp(-T)
    % \\ 
    % &\qquad 
    -
    \bigg[  1 - \exp\bigg( - \frac{T}{C^I_\infty} \cdot \Big( \frac{2C}{b} \Big)^\alpha  \bigg)\bigg].
\end{align*}
By claim \eqref{proof, observation 2 general form, theorem: first exit time, unclipped},
we send $b \to \infty$ and $T \to \infty$ to conclude the proof of the lower bound.
\end{proof}

To conclude, we provide the proof of Lemma~\ref{lemma: limiting measure, with exit location B, first exit analysis}.

\section*{Acknowledgements}
The authors gratefully acknowledge the support of the National Science Foundation (NSF) under CMMI-2146530.

\bibliographystyle{abbrv} % outcomment this and next line in Case 1
\bibliography{bib_appendix} % if more than one, comma separated

@misc{wang2024largedeviationsmetastabilityanalysis,
      title={Large Deviations and Metastability Analysis for Heavy-Tailed Dynamical Systems}, 
      author={Xingyu Wang and Chang-Han Rhee},
      year={2024},
      eprint={2307.03479},
      archivePrefix={arXiv},
      primaryClass={math.PR},
      url={https://arxiv.org/abs/2307.03479}, 
}

@article{gaudilliere2014dirichlet,
  title={A Dirichlet principle for non reversible Markov chains and some recurrence theorems},
  author={Gaudilliere, Alexandre and Landim, Claudio},
  journal={Probability Theory and Related Fields},
  volume={158},
  pages={55--89},
  year={2014},
  publisher={Springer}
}

@article{landim2014metastability,
  title={Metastability for a non-reversible dynamics: the evolution of the condensate in totally asymmetric zero range processes},
  author={Landim, C},
  journal={Communications in Mathematical Physics},
  volume={330},
  pages={1--32},
  year={2014},
  publisher={Springer}
}

@article{slowik2012note,
  title={A note on variational representations of capacities for reversible and nonreversible Markov chains},
  author={Slowik, Martin},
  journal={Unpublished, Technische Universit{\"a}t Berlin},
  year={2012}
}

@article{lynch1987large,
  author={Lynch, James and Jayaram Sethuraman},
  title={Large deviations for processes with independent increments},
  journal={The annals of probability},
  volume={15},
  issue={2}, 
  year={1987},
  pages={610--627}
}

@article{mogulskii1993large,
  author={Mogulskii, AA},
  year={1993},
  title={Large deviations for processes with independent increments},
  journal={The annals of probability},
  year={1993},
  volume={12},
  issue={1},
  pages={202--215}
}

@article{imkeller2008levy,
  author={Imkeller, P. and Pavlyukevich, I.}, 
  year = {2008},
  title={Lévy flights: transitions and meta-stability},
  journal={Journal of Physics A: Mathematical and General}, 
  volume={39},
  number={15},
  pages={L237}
}

@book{glasstone1941theory,
  title={The theory of rate processes: the kinetics of chemical reactions, viscosity, diffusion and electrochemical phenomena},
  author={Glasstone, Samuel and Laidler, Keith James and Eyring, Henry},
  year={1941},
  publisher={McGraw-Hill},
  address={New York}
}

@article{freidlin1973some,
  title={Some problems concerning stability under small random perturbations},
  author={M I Freidlin and A D Wentzell},
  journal={Theory of Probability \& Its Applications},
  volume={17},
  number={2},
  pages={269--283},
  year={1973},
  publisher={SIAM}
}

@article{cassandro1984metastable,
  title={Metastable behavior of stochastic dynamics: a pathwise approach},
  author={Cassandro, Marzio and Galves, Antonio and Olivieri, Enzo and Vares, Maria Eul{\'a}lia},
  journal={Journal of statistical physics},
  volume={35},
  pages={603--634},
  year={1984},
  publisher={Springer}
}

@article{lee2022non-b,
  title={Non-reversible metastable diffusions with Gibbs invariant measure II: Markov chain convergence},
  author={Lee, Jungkyoung and Seo, Insuk},
  journal={Journal of Statistical Physics},
  volume={189},
  number={2},
  pages={25},
  year={2022},
  publisher={Springer}
}

@inproceedings{rezakhanlou2023scaling,
  title={Scaling limit of small random perturbation of dynamical systems},
  author={Rezakhanlou, Fraydoun and Seo, Insuk},
  booktitle={Annales de l'Institut Henri Poincare (B) Probabilites et statistiques},
  volume={59},
  number={2},
  pages={867--903},
  year={2023},
  organization={Institut Henri Poincar{\'e}}
}

@article{lee2022non,
  title={Non-reversible metastable diffusions with Gibbs invariant measure I: Eyring--Kramers formula},
  author={Lee, Jungkyoung and Seo, Insuk},
  journal={Probability Theory and Related Fields},
  volume={182},
  number={3},
  pages={849--903},
  year={2022},
  publisher={Springer}
}

@inproceedings{
Engstrom2020Implementation,
title={Implementation Matters in Deep RL: A Case Study on PPO and TRPO},
author={Logan Engstrom and Andrew Ilyas and Shibani Santurkar and Dimitris Tsipras and Firdaus Janoos and Larry Rudolph and Aleksander Madry},
booktitle={International Conference on Learning Representations},
year={2020},
url={https://openreview.net/forum?id=r1etN1rtPB}
}

@misc{wangSGDpaper2,
      title={Global Dynamics of Heavy-Tailed SGDs in Nonconvex Loss Landscape: Characterization and Control}, 
      author={Xingyu Wang and Chang-Han Rhee},
      year={2025},
      eprint={2510.20905},
      archivePrefix={arXiv},
      primaryClass={cs.LG},
      url={https://arxiv.org/abs/2510.20905}, 
}

@article{borovkov2010large,
  title={On large deviation principles in metric spaces},
  author={Borovkov, Aleksandr A and Mogul’skii, Anatolii A},
  journal={Siberian Mathematical Journal},
  volume={51},
  pages={989--1003},
  year={2010},
  publisher={Springer},
  doi = {10.1007/s11202-010-0098-0},
  url = {ttps://doi.org/10.1007/s11202-010-0098-0}
}

@article{imkeller2010first,
  title={First exit times of non-linear dynamical systems in {$\mathbb R^d$} perturbed by multifractal {L}{\'e}vy noise},
  author={Imkeller, Peter and Pavlyukevich, Ilya and Stauch, Michael},
  journal={Journal of Statistical Physics},
  volume={141},
  number={1},
  pages={94--119},
  year={2010},
  publisher={Springer}
}

@article{imkeller2010hierarchy,
  title={The hierarchy of exit times of {L}{\'e}vy-driven {L}angevin equations},
  author={Imkeller, Peter and Pavlyukevich, Ilya and Wetzel, Torsten},
  journal={The European Physical Journal Special Topics},
  volume={191},
  number={1},
  pages={211--222},
  year={2010},
  publisher={Springer}
}

@book{resnick2007heavy,
  title={Heavy-tail phenomena: probabilistic and statistical modeling},
  author={Resnick, Sidney I},
  year={2007},
  publisher={Springer Science \& Business Media}
}

@book{protter2005stochastic,
  author={Protter, Philip E},
  title={Stochastic integration and differential equations},
  year={2005},
  publisher={Springer}
}

@article{bernhard2020heavy,
  title={Heavy-tailed random walks, buffered queues and hidden large deviations},
  author={Bernhard, Harald and Das, Bikramjit and others},
  journal={Bernoulli},
  volume={26},
  number={1},
  pages={61--92},
  year={2020},
  publisher={Bernoulli Society for Mathematical Statistics and Probability}
}

@article{pavlyukevich2008metastable,
  author = {Imkeller, Peter and Pavlyukevich, Ilya},
	title = {Metastable behaviour of small noise {L}\'evy-Driven diffusions},
	DOI= "10.1051/ps:2007051",
	url= "https://doi.org/10.1051/ps:2007051",
	journal = {ESAIM: PS},
	year = 2008,
	volume = 12,
	pages = "412-437",
	month = "",
}

@book{sato1999levy,
  title={{L}{\'e}vy Processes and Infinitely Divisible Distributions},
  author={Sato, Ken-iti and Ken-Iti, Sato and Katok, A},
  year={1999},
  publisher={Cambridge university press}
}

@article{rhee2019sample,
  title={Sample path large deviations for {L}{\'e}vy processes and random walks with regularly varying increments},
  author={Rhee, Chang-Han and Blanchet, Jose and Zwart, Bert and others},
  journal={The Annals of Probability},
  volume={47},
  number={6},
  pages={3551--3605},
  year={2019},
  publisher={Institute of Mathematical Statistics}
}

@article{lindskog2014regularly,
  title={Regularly varying measures on metric spaces: Hidden regular variation and hidden jumps},
  author={Lindskog, Filip and Resnick, Sidney I and Roy, Joyjit and others},
  journal={Probability Surveys},
  volume={11},
  pages={270--314},
  year={2014},
  publisher={The Institute of Mathematical Statistics and the Bernoulli Society}
}

@book{bingham1989regular,
  title={Regular variation},
  author={Bingham, Nicholas H and Goldie, Charles M and Teugels, Jef L},
  number={27},
  year={1989},
  publisher={Cambridge university press}
}

@inproceedings{
zhang2020why,
title={Why Gradient Clipping Accelerates Training: A Theoretical Justification for Adaptivity},
author={Jingzhao Zhang and Tianxing He and Suvrit Sra and Ali Jadbabaie},
booktitle={International Conference on Learning Representations},
year={2020},
url={https://openreview.net/forum?id=BJgnXpVYwS}
}

@article{kramers1940brownian,
  title={Brownian motion in a field of force and the diffusion model of chemical reactions},
  author={Kramers, Hendrik Anthony},
  journal={Physica},
  volume={7},
  number={4},
  pages={284--304},
  year={1940},
  publisher={Elsevier}
}

@book{foss2011introduction,
  title={An introduction to heavy-tailed and subexponential distributions},
  author={Foss, Sergey and Korshunov, Dmitry and Zachary, Stan and others},
  volume={6},
  year={2011},
  publisher={Springer},
  DOI={https://doi.org/10.1007/978-1-4419-9473-8}
}

@article{hult2005functional,
author = {Henrik Hult and Filip Lindskog and Thomas Mikosch and Gennady Samorodnitsky},
title = {{Functional large deviations for multivariate regularly varying random walks}},
volume = {15},
journal = {The Annals of Applied Probability},
number = {4},
publisher = {Institute of Mathematical Statistics},
pages = {2651 -- 2680},
year = {2005},
doi = {10.1214/105051605000000502},
URL = {https://doi.org/10.1214/105051605000000502}
}

@article{doi:10.1287/moor.1120.0539,
author = {Foss, Sergey and Korshunov, Dmitry},
title = {On Large Delays in Multi-Server Queues with Heavy Tails},
journal = {Mathematics of Operations Research},
volume = {37},
number = {2},
pages = {201-218},
year = {2012},
doi = {10.1287/moor.1120.0539},

URL = { 
    
        https://doi.org/10.1287/moor.1120.0539
    
    

},
eprint = { 
    
        https://doi.org/10.1287/moor.1120.0539
    
    

}
}

@book{freidlin1984random ,
  title={Random Perturbations of Dynamical Systems},
  author={M. I. Freidlin and A. D. Wentzell},
  year={1984},
  publisher={Springer},
  address={New York, NY},
  DOI = {https://doi.org/10.1007/978-1-4612-0611-8}
}

@article{imkeller2006first,
    title = {First exit times of SDEs driven by stable Lévy processes},
    journal = {Stochastic Processes and their Applications},
    volume = {116},
    number = {4},
    pages = {611-642},
    year = {2006},
    issn = {0304-4149},
    doi = {https://doi.org/10.1016/j.spa.2005.11.006},
    url = {https://www.sciencedirect.com/science/article/pii/S0304414905001614},
    author = {P. Imkeller and I. Pavlyukevich}
}

@article{imkeller2009exponential,
author = {Peter Imkeller and Ilya Pavlyukevich and Torsten Wetzel},
title = {{First exit times for Lévy-driven diffusions with exponentially light jumps}},
volume = {37},
journal = {The Annals of Probability},
number = {2},
publisher = {Institute of Mathematical Statistics},
pages = {530 -- 564},
year = {2009},
doi = {10.1214/08-AOP412},
URL = {https://doi.org/10.1214/08-AOP412}
}

@article{eyring1935chemical,
author = {Eyring,Henry},
title = {The Activated Complex in Chemical Reactions},
journal = {The Journal of Chemical Physics},
volume = {3},
number = {2},
pages = {107-115},
year = {1935},
doi = {10.1063/1.1749604},
publisher={American Institute of Physics},
URL = { 
        https://doi.org/10.1063/1.1749604
},
eprint = { 
        https://doi.org/10.1063/1.1749604
}
}

@inproceedings{
keskar2017on,
title={On Large-Batch Training for Deep Learning: Generalization Gap and Sharp Minima},
author={Nitish Shirish Keskar and Dheevatsa Mudigere and Jorge Nocedal and Mikhail Smelyanskiy and Ping Tak Peter Tang},
booktitle={International Conference on Learning Representations},
year={2017},
url={https://openreview.net/forum?id=H1oyRlYgg}
}

@article{Albrecher_Chen_Vatamidou_Zwart_2020, title={Finite-time ruin probabilities under large-claim reinsurance treaties for heavy-tailed claim sizes}, volume={57}, DOI={10.1017/jpr.2020.8}, number={2}, journal={Journal of Applied Probability}, author={Albrecher, Hansjörg and Chen, Bohan and Vatamidou, Eleni and Zwart, Bert}, year={2020}, pages={513–530}}

@inproceedings{
wang2022eliminating,
title={Eliminating Sharp Minima from {SGD} with Truncated Heavy-tailed Noise},
author={Xingyu Wang and Sewoong Oh and Chang-Han Rhee},
booktitle={International Conference on Learning Representations},
year={2022},
url={https://openreview.net/forum?id=B3Nde6lvab}
}

@book{tankov2003financial,
  title={Financial modelling with jump processes},
  author={Tankov, Peter},
  year={2003},
  publisher={Chapman and Hall/CRC}
}

@article{foss2006heavy,
  title={Heavy tails in multi-server queue},
  author={Foss, Serguei and Korshunov, Dmitry},
  journal={Queueing Systems},
  volume={52},
  pages={31--48},
  year={2006},
  publisher={Springer}
}

@article{freidlin1970onsmall,
doi = {10.1070/RM1970v025n01ABEH001254},
url = {https://dx.doi.org/10.1070/RM1970v025n01ABEH001254},
year = {1970},
month = {Feb},
publisher = {},
volume = {25},
number = {1},
pages = {1},
author = {M I Freidlin and A D Wentzell},
title = {ON SMALL RANDOM PERTURBATIONS OF DYNAMICAL SYSTEMS},
journal = {Russian Mathematical Surveys}
}

@book{bovier2016metastability,
  title={Metastability: a potential-theoretic approach},
  author={Bovier, Anton and Den Hollander, Frank},
  volume={351},
  year={2016},
  publisher={Springer}
}

@inproceedings{NEURIPS2019_a97da629,
 author = {Nguyen, Thanh Huy and Simsekli, Umut and Gurbuzbalaban, Mert and Richard, Ga\"{e}l},
 booktitle = {Advances in Neural Information Processing Systems},
 editor = {H. Wallach and H. Larochelle and A. Beygelzimer and F. d\textquotesingle Alch\'{e}-Buc and E. Fox and R. Garnett},
 pages = {},
 publisher = {Curran Associates, Inc.},
 title = {First Exit Time Analysis of Stochastic Gradient Descent Under Heavy-Tailed Gradient Noise},
 url = {https://proceedings.neurips.cc/paper_files/paper/2019/file/a97da629b098b75c294dffdc3e463904-Paper.pdf},
 volume = {32},
 year = {2019}
}

@article{Resnick_2004, title={On the foundations of multivariate heavy-tail analysis}, volume={41}, DOI={10.1239/jap/1082552199}, number={A}, journal={Journal of Applied Probability}, author={Resnick, Sidney}, year={2004}, pages={191–212}}

@article{hult2006regular,
  title={Regular variation for measures on metric spaces},
  author={Hult, Henrik and Lindskog, Filip},
  journal={Publications de l'Institut Math{\'e}matique},
  volume={80},
  number={94},
  pages={121--140},
  year={2006}
}

@article{doi:10.1142/S0219493711003413,
author = {Pavlyukevich, Ilya},
title = {FIRST EXIT TIMES OF SOLUTIONS OF STOCHASTIC DIFFERENTIAL EQUATIONS DRIVEN BY MULTIPLICATIVE LÉVY NOISE WITH HEAVY TAILS},
journal = {Stochastics and Dynamics},
volume = {11},
number = {02n03},
pages = {495-519},
year = {2011},
doi = {10.1142/S0219493711003413},

URL = { 
    
        https://doi.org/10.1142/S0219493711003413
    
    

},
eprint = { 
    
        https://doi.org/10.1142/S0219493711003413
    
    

}
}

@article{doi:10.1142/S0219493715500197,
author = {H\"{o}gele, Michael and Pavlyukevich, Ilya},
title = {Metastability in a class of hyperbolic dynamical systems perturbed by heavy-tailed {L}\'evy type noise},
journal = {Stochastics and Dynamics},
volume = {15},
number = {03},
pages = {1550019},
year = {2015},
doi = {10.1142/S0219493715500197},

URL = { 
    
        https://doi.org/10.1142/S0219493715500197
    
    

},
eprint = { 
    
        https://doi.org/10.1142/S0219493715500197
    
    

}
}

@article{hogele2014exit,
author = {Michael Högele and Ilya Pavlyukevich},
title = {The Exit Problem from a Neighborhood of the Global Attractor for Dynamical Systems Perturbed by Heavy-Tailed Lévy Processes},
journal = {Stochastic Analysis and Applications},
volume = {32},
number = {1},
pages = {163--190},
year = {2014},
publisher = {Taylor \& Francis},
doi = {10.1080/07362994.2014.858554},


URL = { 
    
        https://doi.org/10.1080/07362994.2014.858554
    
    

},
eprint = { 
    
        https://doi.org/10.1080/07362994.2014.858554
    
    

}

}

@article{bovier2004metastability,
  title={Metastability in reversible diffusion processes. I. Sharp asymptotics for capacities and exit times},
  author={Bovier, Anton and Eckhoff, Michael and Gayrard, V{\'e}ronique and Klein, Markus},
  journal={J. Eur. Math. Soc.(JEMS)},
  volume={6},
  number={4},
  pages={399--424},
  year={2004}
}

@article{bovier2005metastability,
  title={Metastability in reversible diffusion processes II: Precise asymptotics for small eigenvalues},
  author={Bovier, Anton and Eckhoff, Michael and Gayrard, V{\'e}ronique and Klein, Markus},
  journal={Journal of the European Mathematical Society},
  volume={7},
  number={1},
  pages={69--99},
  year={2005}
}

@article{bovier2001metastability,
  title={Metastability in stochastic dynamics of disordered mean-field models},
  author={Bovier, Anton and Eckhoff, Michael and Gayrard, V{\'e}ronique and Klein, Markus},
  journal={Probability Theory and Related Fields},
  volume={119},
  pages={99--161},
  year={2001},
  publisher={Springer}
}

@article{pascanu2013difficulty,
  title={On the difficulty of training recurrent neural networks},
  author={Pascanu, R},
  journal={arXiv preprint arXiv:1211.5063},
  year={2013}
}

@InProceedings{pmlr-v202-koloskova23a,
  title = 	 {Revisiting Gradient Clipping: Stochastic bias and tight convergence guarantees},
  author =       {Koloskova, Anastasia and Hendrikx, Hadrien and Stich, Sebastian U},
  booktitle = 	 {Proceedings of the 40th International Conference on Machine Learning},
  pages = 	 {17343--17363},
  year = 	 {2023},
  editor = 	 {Krause, Andreas and Brunskill, Emma and Cho, Kyunghyun and Engelhardt, Barbara and Sabato, Sivan and Scarlett, Jonathan},
  volume = 	 {202},
  series = 	 {Proceedings of Machine Learning Research},
  month = 	 {23--29 Jul},
  publisher =    {PMLR},
  url = 	 {https://proceedings.mlr.press/v202/koloskova23a.html}
}

@book{Olivieri_Vares_2005, 
  place={Cambridge}, 
  series={Encyclopedia of Mathematics and its Applications}, 
  title={Large Deviations and Metastability}, 
  publisher={Cambridge University Press}, 
  author={Olivieri, Enzo and Vares, Maria Eulália}, 
  year={2005}, 
  collection={Encyclopedia of Mathematics and its Applications}
}

@article{pavlyukevich2011first,
  title={First exit times of solutions of stochastic differential equations driven by multiplicative {L}{\'e}vy noise with heavy tails},
  author={Pavlyukevich, Ilya},
  journal={Stochastics and Dynamics},
  volume={11},
  number={02n03},
  pages={495--519},
  year={2011},
  publisher={World Scientific}
}

@book{debussche2013dynamics,
  title={The dynamics of nonlinear reaction-diffusion equations with small {L}{\'e}vy noise},
  author={Debussche, Arnaud and H{\"o}gele, Michael and Imkeller, Peter},
  volume={2085},
  year={2013},
  publisher={Springer}
}

@article{JMLR:v25:21-1343,
  author  = {Amrit Singh Bedi and Anjaly Parayil and Junyu Zhang and Mengdi Wang and Alec Koppel},
  title   = {On the Sample Complexity and Metastability of Heavy-tailed Policy Search in Continuous Control},
  journal = {Journal of Machine Learning Research},
  year    = {2024},
  volume  = {25},
  number  = {39},
  pages   = {1--58},
  url     = {http://jmlr.org/papers/v25/21-1343.html}
}

@inproceedings{simsekli2019tail,
  title={A tail-index analysis of stochastic gradient noise in deep neural networks},
  author={Simsekli, Umut and Sagun, Levent and Gurbuzbalaban, Mert},
  booktitle={International Conference on Machine Learning},
  pages={5827--5837},
  year={2019},
  organization={PMLR}
}

\newpage
\appendix

\section{Results under General Scaling}
\label{sec: appendix, SGD, general scaling, results}

Below, we present results analogous to those in Section~\ref{sec: main results} under a general scaling.
Specifically, throughout this section we define 
% $(\bm X^\eta_j(\bm x))_{j \geq 0}$ 
% and 
% $(\bm X^{\eta|b}_j(\bm x))_{j \geq 0}$ by the recursions
\begin{equation}
\begin{aligned}
    &
    \bm X^\eta_t(\bm x) = \bm X^\eta_{t - 1}(\bm x) +  \eta \bm a\big(\bm X^\eta_{t - 1}(\bm x)\big) + \eta^\gamma\bm \sigma\big(\bm X^\eta_{t - 1}(\bm x)\big)\bm Z_t,\quad \forall t \geq 1,
    \\
    &{\bm X^{\eta|b}_t(\bm x)} = \bm X^{\eta|b}_{t - 1}(\bm x) +  \varphi_b\Big(\eta \bm a\big(\bm X^{\eta|b}_{t - 1}(\bm x)\big) + \eta^\gamma \bm \sigma\big(\bm X^{\eta|b}_{t - 1}(\bm x)\big)\bm Z_t\Big),\quad \forall t \geq 1,
\end{aligned}
\label{eq:gneral-scaling}
\end{equation}
with $\gamma \in (\frac{1}{2 \wedge \alpha}, \infty)$,
where $\alpha > 1$ is the heavy-tailed index in Assumption~\ref{assumption gradient noise heavy-tailed}.
Let 
\begin{align*}
    \lambda(\eta;\gamma) = \eta^{-1}H(\eta^{-\gamma}).
\end{align*}
We adopt the notations
$
\mathbf{C}^{(k)|b}_{[0,T]},
$
$
\mathbb{D}^{(k)|b}_A[0,T](\epsilon),
$
$
{\bm{X}^{\eta|b}_{[0,T]}(\bm x)},
$
etc., introduced
in the main paper.
Let $I \subseteq \R^m$ be an open set such that $\bm 0 \in \bm I$ and Assumption~\ref{assumption: shape of f, first exit analysis} holds.
Let the first exit times $\tau^\eta(\bm x)$ and $\tau^{\eta|b}(\bm x)$ be defined as in \eqref{def: first exit time for heavy tailed SGD}.
We adopt the notations $\mathcal J^I_b$, $\mathcal G^{(k)|b}(\epsilon)$, $\widecheck{\mathbf C}^{k|b}$, etc.,
introduced
in Section~\ref{sec: first exit time simple version}.

\begin{theorem}
    Let Assumptions \ref{assumption gradient noise heavy-tailed}, \ref{assumption: lipschitz continuity of drift and diffusion coefficients}, and \ref{assumption: shape of f, first exit analysis} hold.
    Let $\gamma \in (\frac{1}{2 \wedge \alpha}, \infty)$.
\begin{enumerate}[(a)]
    \item 
         Let $b > 0$.
         Suppose that $\mathcal J^I_b < \infty$,
         $I^c$ is bounded away from $\mathcal G^{(\mathcal J^I_b - 1)|b}(\epsilon)$ for some (and hence all) $\epsilon > 0$ small enough,
        and
        $
        \widecheck{\mathbf C}^{( \mathcal J^I_b )|b}(\partial I) = 0.
        $
        Then 
        ${C^I_b} \delequal  \widecheck{ \mathbf{C} }^{ (\mathcal{J}^I_b)|b }(I^\complement) < \infty$.
        Furthermore, if $C^I_b \in (0,\infty)$,
        then
        for any $\epsilon > 0$, $t \geq 0$, and measurable set $B \subseteq I^c$,
        \begin{align*}
            \limsup_{\eta\downarrow 0}\sup_{\bm x \in I_\epsilon}
            \P\bigg(
                C^I_b \eta\cdot \lambda^{ \mathcal{J}^I_b }(\eta;\gamma)\tau^{\eta|b}(\bm x) > t
             ;\ \bm X^{\eta|b}_{ \tau^{\eta|b}(\bm x)}(\bm x) \in B
             \bigg)
             & \leq \frac{ \widecheck{\mathbf{C}}^{ (\mathcal{J}^I_b)|b }(B^-) }{ C^I_b }\cdot\exp(-t),
             \\
             \liminf_{\eta\downarrow 0}\inf_{\bm x \in I_\epsilon}
            \P\bigg(
                C^I_b \eta\cdot \lambda^{ \mathcal{J}^I_b }(\eta;\gamma)\tau^{\eta|b}(\bm x) > t
             ;\ \bm X^{\eta|b}_{ \tau^{\eta|b}(\bm x)}(\bm x) \in B
             \bigg)
             & \geq \frac{ \widecheck{\mathbf{C}}^{ (\mathcal{J}^I_b)|b }(B^\circ) }{ C^I_b }\cdot\exp(-t).
        \end{align*}
        Otherwise, we have $C^I_b = 0$, and
        \begin{align*}
            \limsup_{\eta\downarrow 0}\sup_{\bm x \in I_\epsilon}
        \P\bigg(
            \eta\cdot \lambda^{ \mathcal J^I_b }(\eta;\gamma)\tau^{\eta|b}(\bm x) \leq t
         \bigg) = 0
         \qquad
         \forall \epsilon > 0,\ t \geq 0.
        \end{align*}

    \item 
        Suppose that $\widecheck{\mathbf C}(\partial I) = 0$.
        Then ${C^I_\infty} \delequal  \widecheck{ \mathbf{C} }(I^\complement) < \infty$.
        Furthermore, if $C^I_\infty > 0$,
        then
        for any $t \geq 0$ and measurable set $B \subseteq I^c$,
    \begin{align*}
        \limsup_{\eta\downarrow 0}\sup_{\bm x \in I_\epsilon}
        \P\bigg(
            C^I_\infty \eta\cdot \lambda(\eta;\gamma)\tau^\eta(\bm x) > t;\ \bm X^\eta_{ \tau^\eta(\bm x)}(\bm x) \in B
        \bigg)
        & \leq
        \frac{ \widecheck{\mathbf{C}}(B^-) }{ C^I_\infty }\cdot\exp(-t),
        \\ 
        \liminf_{\eta\downarrow 0}\inf_{\bm x \in I_\epsilon}
        \P\bigg(
            C^I_\infty \eta\cdot \lambda(\eta;\gamma)\tau^\eta(\bm x) > t;\ \bm X^\eta_{ \tau^\eta(\bm x)}(\bm x) \in B
        \bigg)
        & \geq
        \frac{ \widecheck{\mathbf{C}}(B^\circ) }{ C^I_\infty  }\cdot\exp(-t).
    \end{align*}
     Otherwise, we have $C^I_\infty = 0$, and 
    \begin{align*}
        \limsup_{\eta\downarrow 0}\sup_{\bm x \in I_\epsilon}
        \P\bigg(
            \eta\cdot \lambda(\eta;\gamma)\tau^\eta(\bm x) \leq t
        \bigg) = 0
        \qquad
        \forall \epsilon > 0,\ t \geq 0.
    \end{align*}
    
\end{enumerate}
\end{theorem}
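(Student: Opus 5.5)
The plan is to reproduce the proof of Theorem~\ref{theorem: first exit time, unclipped} and Corollary~\ref{corollary: first exit time, untruncated case} essentially line by line, with $\lambda(\eta)$ replaced by $\lambda(\eta;\gamma)=\eta^{-1}H(\eta^{-\gamma})$ throughout. The only probabilistic input used in Section~\ref{sec: proof of proposition: first exit time} is the uniform sample-path large deviations of Theorem~\ref{corollary: LDP 2}. Every other ingredient is deterministic: the constants $\bar\epsilon,\bar c,\bar t,\bar\delta$, Lemmas~\ref{lemma: choose key parameters, first exit time analysis}--\ref{lemma: limiting measure, with exit location B, first exit analysis}, the bounds on $\widecheck{\mathbf C}^{(\mathcal J^I_b)|b}$, and the abstract framework Theorem~\ref{thm: exit time analysis framework}. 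None of these depends on the scaling of the noise.

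The first step is therefore to state the analogue of Theorem~\ref{corollary: LDP 2} for the recursion \eqref{eq:gneral-scaling}. For $B$ bounded away from $\mathbb D^{(k-1)|b}_A[0,T](\epsilon)$, the normalized probabilities $\P(\bm X^{\eta|b}_{[0,T]}(\bm x)\in B)/\lambda^k(\eta;\gamma)$ should be sandwiched between $\inf_{\bm x\in A}\mathbf C^{(k)|b}_{[0,T]}(B^\circ;\bm x)$ and $\sup_{\bm x\in A}\mathbf C^{(k)|b}_{[0,T]}(B^-;\bm x)$, uniformly over compact $A$. This is the general-$\gamma$ version of the locally uniform LDP in \cite{wang2024largedeviationsmetastabilityanalysis}, which holds precisely in the regime $\gamma>1/(2\wedge\alpha)$. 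In that regime $\eta^\gamma\sum_{j\le T/\eta}\bm Z_j\mathbbm I\{\eta^\gamma\norm{\bm Z_j}\le\delta\}$ is uniformly negligible, by a Doob/Rosenthal-type bound with second moment when $\alpha>2$ and a truncated $\alpha$-moment otherwise. In addition, each step has a big jump with probability $H(\delta\eta^{-\gamma})\approx\delta^{-\alpha}\eta\lambda(\eta;\gamma)$, so over $T/\eta$ steps the count of big jumps is approximately Poisson with mean proportional to $T\lambda(\eta;\gamma)$. I will invoke that result as given. This is where the main obstacle lies: if the cited paper only covers $\gamma=1$, one has to recheck its proof, namely the small-noise concentration and the asymptotic equivalence between the $k$-big-jump process and $\bar h^{(k)|b}$, under the weaker moment control. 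I would try first to rescale $\bm Z'_j=\eta^{\gamma-1}\bm Z_j$, which puts the noise back in the unit-step form with tail $H(\eta^{-\gamma}x)$. I would then check that the only properties used there are regular variation of this tail together with $\lambda(\eta;\gamma)\to0$, which follows from $\gamma\alpha>1$, and the small-jump concentration.

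With that LDP in hand, I verify Condition~\ref{condition E2} with $A(\epsilon)=B_\epsilon(\bm 0)$, $I(\epsilon)=I_\epsilon$, $C(\cdot)=\widecheck{\mathbf C}^{(\mathcal J^I_b)|b}(\cdot\setminus I)/C^I_b$, and $\gamma$-scale $C^I_b\eta\lambda^{\mathcal J^I_b}(\eta;\gamma)$. The arguments of Lemmas~\ref{lemma: exit prob one cycle, with exit location B, first exit analysis}, \ref{lemma: fixed cycle exit or return taking too long, first exit analysis}, and \ref{lemma: cycle, efficient return} go through verbatim, since each reduces to a bounded-away statement plus the LDP. The requirement that the scale divided by $\eta$ tends to zero holds because $\lambda(\eta;\gamma)\to0$. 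Theorem~\ref{thm: exit time analysis framework} then yields part (a), including the degenerate case $C^I_b=0$.

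For part (b), I follow the proof of Corollary~\ref{corollary: first exit time, untruncated case}. After making $\bm a,\bm\sigma$ bounded by $C$ outside $I$, the two processes coincide on $E_b=\{\eta^\gamma\norm{\bm Z_j}\le b/(2C)\ \forall j\le T/(C^I_\infty\eta\lambda(\eta;\gamma))\}$. Lemma~\ref{lemmaGeomFront} bounds $\P(E_b^\complement)$ by approximately $1-\exp(-T(2C/b)^\alpha/C^I_\infty)$, using $H(\eta^{-\gamma}b/2C)/H(\eta^{-\gamma})\to(2C/b)^\alpha$. For large $b$ we have $\mathcal J^I_b=1$ and $\widecheck{\mathbf C}^{(1)|b}\to\widecheck{\mathbf C}$ by dominated convergence. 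The upper and lower bounds then follow by applying part (a) and letting $b\to\infty$ and then $T\to\infty$. One point to note is that (b) as stated omits $\norm{\bm\sigma(\bm 0)}>0$. If $\bm\sigma(\bm 0)=0$, then $\widecheck{\mathbf C}\equiv0$ and the degenerate branch is handled by choosing any large $b$, using the $C^I_b=0$ case of (a) after checking its hypotheses (if $\mathcal J^I_b=\infty$, I would instead bound the exit probability directly via the LDP with arbitrarily large $k$).
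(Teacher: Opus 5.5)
Your main line matches the paper's. The paper says only that the appendix proof is ``almost identical'' to those of Theorem~\ref{theorem: first exit time, unclipped} and Corollary~\ref{corollary: first exit time, untruncated case}. That means: replace Theorem~\ref{corollary: LDP 2} by its general-$\gamma$ analogue, re-verify Condition~\ref{condition E2} with the same atoms, covering and scale $C^I_b\eta\lambda^{\mathcal J^I_b}(\eta;\gamma)$ (where $\gamma\alpha>1$ gives $\lambda(\eta;\gamma)\to 0$), and redo the coupling on $\{\eta^\gamma\norm{\bm Z_j}\le b/(2C)\}$. Your checks of these points are correct.

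The step that would fail is your patch for $\bm\sigma(\bm 0)=\bm 0$ in (b). You are right that (b) drops the hypothesis $\norm{\bm\sigma(\bm 0)}>0$, which the proof of Corollary~\ref{corollary: first exit time, untruncated case} uses to get $\mathcal J^I_b=1$. However, neither of your fallbacks works.

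\begin{itemize}
\item Part (a) is never available. Since $\bm a(\bm 0)=\bm 0$ and $\varphi_b(\bm\sigma(\bm 0)\bm w)=\bm 0$, every $\mathcal G^{(k)|b}$ equals $\{\bm 0\}$, so $\mathcal J^I_b=\infty$ for every $b$.
\item The large-$k$ route fails. Unless $\bm\sigma$ vanishes on a whole neighbourhood of $\bm 0$, take $k\ge 2$ and $b$ large. Then the exit event is not bounded away from $\mathbb D^{(k-1)|b}_{\bar B_\epsilon(\bm 0)}[0,T](\epsilon')$ for any $\epsilon'>0$. At a starting point or perturbation where $\bm\sigma\neq\bm 0$, a single large $\bm w$ produces a jump of norm $b$. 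Correspondingly, the one-cycle exit probability from $\bar B_\epsilon(\bm 0)$ is in general of order $\lambda(\eta;\gamma)$, not $\lambda^2(\eta;\gamma)$.
\end{itemize}

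What works is case $(ii)$ of Theorem~\ref{thm: exit time analysis framework}, with $C\equiv 0$, scale $\eta\lambda(\eta;\gamma)$, and the LDP with $k=1$.

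\begin{itemize}
\item Take $\bar\epsilon,\bar c$ as in \eqref{constant bar epsilon, new, 1, first exit time analysis}, \eqref{constant bar epsilon, new, 2, first exit time analysis} and \eqref{constant bar c for bar epsilon, first exit time}; none of these involves $\mathcal J^I_b$.
\item Let $\epsilon<\bar c\bar\epsilon$. A one-jump limit path started in $\bar B_\epsilon(\bm 0)$ can reach $(I_\epsilon)^\complement$ only if the jump has norm at least $\bar\epsilon-\epsilon$. Otherwise it lands in $\bar B_{\bar\epsilon}(\bm 0)\subseteq I_{\bar\epsilon}$, and the subsequent flow stays in $I_{\bar c\bar\epsilon}$, at distance at least $\bar c\bar\epsilon-\epsilon$ from $(I_\epsilon)^\complement$; this margin also handles the closure in the LDP upper bound.
\item Since $\norm{\bm\sigma(\bm y)}\le D\norm{\bm y}$, such a jump forces $\norm{\bm w}\ge(\bar\epsilon-\epsilon)/(D\epsilon)$. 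Hence the $k=1$ upper bound is at most $T\,(D\epsilon/(\bar\epsilon-\epsilon))^{\alpha}$, so $\delta_B(\epsilon,T)\to 0$ as $\epsilon\downarrow 0$.
\item Conditions \eqref{eq:E3} and \eqref{eq:E4} follow from Lemma~\ref{lemma: fixed cycle exit or return taking too long, first exit analysis} (with $k=2$) and Lemma~\ref{lemma: cycle, efficient return}, neither of which uses $\bm\sigma(\bm 0)$.
\end{itemize}

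Your $E_b$ coupling over $t/(\eta\lambda(\eta;\gamma))$ steps, followed by $b\to\infty$, then transfers the conclusion to $\tau^\eta$.
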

The proofs for results in this section will be almost identical to those presented in the main paper.
We omit the details to avoid repetition.

\section{Results for L\'evy-Driven Stochastic Differential Equations}
\label{sec: appendix, SDE reults}

In this section, we collect the results for stochastic differential equations driven by L\'evy processes with regularly varying increments. Specifically,
any multidimensional L\'evy process $\notationdef{notation-levy-process}{\bm{L}} = \{ \bm L_t:\ t \geq 0\}$
can be characterized by its generating triplet $(\bm c_{\bm L},\bm \Sigma_{\bm L},\nu)$ where $\bm c_{\bm L} \in \mathbb{R}^m$ is the drift parameter, the positive semi-definite matrix $\bm \Sigma_{\bm L} \in \R^{m \times m}$ is the magnitude of the Brownian motion term in $\bm L_t$, 
and $\nu$ is the L\'evy measure characterizing the intensity of jumps in $\bm L_t$. 
More precisely, 
we have the following L\'evy–Itô decomposition
\begin{align}
    \bm L_t \distequal \bm c_{\bm L}t + \bm \Sigma_{\bm L}^{1/2} 
    \bm B_t+ 
    \int_{\norm{\bm x} \leq 1}\bm x\big[ N([0,t]\times d\bm x) - t\nu(dx) \big] 
    + 
    \int_{\norm{\bm x} > 1}\bm xN( [0,t]\times d\bm x ) 
    \label{prelim: levy ito decomp}
\end{align}
where $\bm B_t$ is a standard Brownian motion in $\R^m$, 
the measure $\nu$ satisfies $\int (\norm{\bm x}^2\wedge 1) \nu(d\bm x) < \infty$,
and $N$ is a Poisson random measure independent of $\bm B_t$ with intensity measure $\mathcal L_\infty\times \nu$.
See Chapter~4 of \cite{sato1999levy} for details.
We impose the following heavy-tailed assumption on the increments of $\bm L_t$.

\begin{assumption}\label{assumption: heavy-tailed levy process}
$\E \bm L_1 = \bm 0$. Besides, there exist $\alpha > 1$ 
and 
a probability measure $\mathbf S(\cdot)$ on the unit sphere of $\R^d$ such that
\begin{itemize}
    \item 
         $\notationdef{notation-H-L}{H_L(x)} \delequal \nu\Big( \big\{\bm y \in \R^d:\ \norm{\bm y} > x  \big\} \Big) \in \RV_{-\alpha}(x)$ as $x \to \infty$,

    \item As $r \to \infty$,
        \begin{align*}
            \frac{ \big(\nu \circ \Phi^{-1}_r\big)(\cdot) }{ H_L(r)} 
            \xrightarrow{v}
            \nu_\alpha \times \mathbf S,
            % \qquad 
            % \text{in $\mathbb M\Big( 
            % \big([0,\infty) \times \mathfrak N_d \big)
            % \setminus
            % \big( \{0\} \times \mathfrak N_d \big)
            % \Big)$},
        \end{align*}
        where 
        $\mathfrak N_d$ is the unit sphere of $\R^d$,
        the measure $\nu_\alpha$ is defined in \eqref{def: measure nu alpha},
        and
        % the measure $\big(\nu \circ \Phi^{-1}_r\big)$ is defined by
        \begin{align*}
            \big(\nu \circ \Phi^{-1}_r\big)(\cdot)
            \delequal
            \nu\Big(
                \Phi^{-1}(r\cdot\ ,\ \cdot)
            \Big),
        \end{align*}
        i.e. $ \big(\nu \circ \Phi^{-1}_r\big)(A\times B) = 
        \nu\big(
        \Phi^{-1}(rA,B)
        \big)
        $
        for all Borel sets $A \subseteq (0,\infty)$ and $B \subseteq \mathfrak N_d$.
\end{itemize}
\end{assumption}
Consider a filtered probability space
$\big(\Omega,\mathcal{F},\mathbb{F} = (\mathcal{F}_{t})_{t \geq 0},\P\big)$
satisfying the usual hypotheses stated in Chapter I, \cite{protter2005stochastic} and supporting the L\'evy process $\bm L$, where
$\mathcal{F}_{0} = \{\emptyset,\Omega\}$ and $\mathcal{F}_{t}$ is the $\sigma$-algebra generated by $\{\bm L_s:s\in[0,t]\}$.
% Let $\beta > 0$.
For $\eta \in (0,1]$
and $\beta \geq 0$, define the scaled process
\begin{align}
    {\bar{\bm L}^\eta} \delequal \big\{ \bar{\bm L}^\eta_t = \eta \bm L_{t/\eta^\beta}:\ t \geq 0\big\},
    \label{def: scaled levy process}
\end{align}
and
let $\bm Y^\eta_t(\bm x)$ be the solution to SDE
\begin{align}
\bm Y^\eta_0(\bm x) = \bm x,\qquad 
    d\notationdef{notation-Y-eta-SDE}{\bm Y^\eta_t(\bm x)} 
    % = a\big(Y^\eta_{t-}(x)\big)dt + \eta \sigma\big(Y^\eta_{t-}(x)\big)dL_{t/\eta^\beta}
    =
    \bm a\big(\bm Y^\eta_{t-}(\bm x)\big)dt
        +
    \bm \sigma\big(\bm Y^\eta_{t-}(\bm x)\big) d\bar{\bm L}^\eta_t.
    \label{defSDE, initial condition x}
\end{align}
Henceforth in Section~\ref{sec: appendix, SDE reults}, we consider $\beta \in [0,2\wedge \alpha)$
where $\alpha > 1$ is the tail index in Assumption~\ref{assumption: heavy-tailed levy process}.
% Below, we present the sample-path large deviations and metastability analysis of $\bm Y^\eta_t(\bm x)$.
% \subsection{Sample Path Large Deviations}
% \label{subsec: LD, SDE}
% Recall the definitions of the mapping $h^{(k)}_{[0,T]} = h^{(k)|\infty}_{[0,T]}$ in \eqref{def: perturb ode mapping h k b, 1}--\eqref{def: perturb ode mapping h k b, 3}
% as well as the measure $\mathbf{C}^{(k)}_{[0,T]} = \mathbf{C}^{(k)|\infty}_{[0,T]}$ in \eqref{def: measure mu k b t}.
% Also, recall uniform $\mathbb{M}$-convergence introduced in Definition \ref{def: uniform M convergence}.
% Define
% $\notationdef{notation-bm-Y-0T-eta-SDE}{\bm Y^\eta_{[0,T]}(\bm x)} = \{\bm Y^\eta_t(\bm x):\ t\in[0,T]\}$ as a random element in $\D[0,T]$.
% In case that $T = 1$, we suppress $[0,1]$ and write $\notationdef{notation-bm-Y-eta-SDE}{\bm Y^\eta(\bm x)}$.
% The next result characterizes the sample-path large deviations for $\bm Y^\eta_{[0,T]}(x)$  by establishing $\mathbb{M}$-convergence that is uniform in the initial condition $x$.
% The proofs are almost identical to those of $\bm X^\eta_j(\bm x)$ and hence omitted to avoid repetition.
% Recall that ${H_L(x)} = \nu\big( \{\bm y \in \R^d:\ \norm{\bm y} > x  \} \big)$.
Let
\begin{align*}
   \notationdef{notation-scale-function-lambda-L}{\lambda_L(\eta;\beta)} \delequal \eta^{-\beta} H_L(\eta^{-1})
\end{align*}
and
$
\lambda^k_L(\eta;\beta) = \big(\lambda_L(\eta;\beta)\big)^k,
$
where $\beta \in [0,2\wedge \alpha)$ determines the time scaling in \eqref{def: scaled levy process}.

Analogous to the truncated dynamics $\bm X^{\eta|b}_j(\bm x)$,
we introduce a truncated variation ${\bm Y}^{\eta|b}_t(\bm x)$
where all jumps are truncated under the threshold value $b$.
More generally, we consider
a sequence of stochastic processes $\big(\bm{Y}^{\eta|b;(k)}_t(\bm x)\big)_{k \geq 0}$.
% given any $f: \R \to \R$ and $g: \R \to \R$ that are Lipschitz continuous.
First, for any $\bm x \in \R^m$ and $t \geq 0$, let 
\begin{align}
    d{\bm Y}^{\eta|b;(0)}_t(\bm x) = \bm a\big({\bm Y}^{\eta|b;(0)}_{t-}(\bm x)\big)dt + 
    % \eta
    \bm \sigma\big(\bm{Y}^{\eta|b;(0)}_{t-}(\bm x)\big)d\bar{\bm L}_t
    \label{def: Y eta b 0 f g, SDE clipped}
\end{align}
% and set $\bm{Y}^{\eta|b;(0)}(x) \delequal \big\{{Y}^{\eta|b;(0)}_{t}(x):\ t \in [0,1]\big\}$ for any $b > 0$.
% As an immediate result of this construction, we have 
% $
% {Y}^{\eta|b;(0)}_t(x;a,\sigma) = Y^\eta_t(x)
% $
% and
% $\bm{Y}^{\eta|b;(0)}(x;a,\sigma) = \bm{Y}^\eta(x)$.
under initial condition ${\bm Y}^{\eta|b;(0)}_0(\bm x) = \bm x$.
Next, we define
\begin{align}
    \tau^{\eta|b;(1)}_{Y}(\bm x) & \delequal 
    \inf\Big\{ t > 0: \norm{\bm\sigma\big(\bm {Y}^{\eta|b;(0)}_{t-}(\bm x)\big) \Delta \bar{\bm L}^\eta_t } 
    % = \norm{\Delta \bm{Y}^{\eta|b;(0)}_t(\bm x) } 
    > b \Big\},
    \label{def, tau and W, discont in Y, eta b, 1}
    \\
    \bm W^{\eta|b;(1)}_{Y}(\bm x) & \delequal \Delta \bm{Y}^{\eta|b;(0)}_{ \tau^{\eta|b;(1)}_{Y}(\bm x)}(\bm x)
    \label{def, tau and W, discont in Y, eta b, 2}
\end{align}
as the arrival time and size of the first jump in $\bm{Y}^{\eta|b;(0)}_t(\bm x)$ with $L_2$ norm larger than $b$.
Furthermore, we define (for any $k \geq 1$)
\begin{align}
    {
    \bm{Y}^{\eta|b;(k)}_{ \tau^{\eta|b;(k)}_{Y}(\bm x) }(\bm x)
    } & \delequal 
    \bm{Y}^{\eta|b;(k)}_{ \tau^{\eta|b;(k)}_{Y}(\bm x)- }(\bm x)
    +
    \varphi_b\Big(
    \bm W^{\eta|b;(k)}_{Y}(\bm x)
    \Big),
    \label{def: objects for defining Y eta b, clipped SDE, 1}
    \\
    d{\bm Y^{\eta|b;(k)}_{ t}(\bm x)}& \delequal 
    \bm a\big(\bm{Y}^{\eta|b;(k)}_{ t-}(\bm x)\big) dt 
    + 
    \bm\sigma\big(\bm{Y}^{\eta|b;(k)}_{ t-}(\bm x)\big)d \bar{\bm L}^\eta_t
    \qquad \forall t > \tau^{\eta|b;(k)}_{Y}(\bm x),
    \label{def: objects for defining Y eta b, clipped SDE, 2}
    \\
    {
    \tau^{\eta|b;(k+1)}_{Y}(\bm x)
    } & \delequal 
    \min\Big\{ t > \tau^{\eta|b;(k)}_{Y}(\bm x): \norm{\sigma\big(\bm{Y}^{\eta|b;(k)}_{t-}(\bm x)\big) \Delta{\bar{\bm L}}^\eta_t } > b \Big\},
    \label{def: objects for defining Y eta b, clipped SDE, 3}
    \\
    {\bm W^{\eta|b;(k+1)}_{Y}(\bm x)} & \delequal 
    \Delta\bm{Y}^{\eta|b;(k)}_{ \tau^{\eta|b;(k + 1)}_{Y}(\bm x)}(\bm x)
    \label{def: objects for defining Y eta b, clipped SDE, 4}
\end{align}
Lastly, for any $t \geq 0,\ b > 0,\ k \in \mathbb N$ and $\bm x \in \R^m$, we define
(under convention $\tau^{\eta|b;(0)}_{Y}(\bm x) = 0$)
\begin{align}
    \notationdef{notation-Y-eta-b-SDE}{\bm Y^{\eta|b}_t(\bm x)}
    \delequal 
    \sum_{k \geq 0}
    \bm Y^{\eta|b;(k)}_{t}(\bm x)
    \cdot 
    \mathbbm{I}
    \Big\{
    t \in \Big[  
    \tau^{\eta|b;(k)}_{Y}(\bm x), 
     \tau^{\eta|b;(k+1)}_{Y}(\bm x) 
    \Big)
    \Big\}
    \label{defSDE, o.g., initial condition x, clipped}
\end{align}
and let $\notationdef{notation-bm-Y-0T-eta-b-SDE}{\bm{Y}^{\eta|b}_{[0,T]}(x)} \delequal \big\{\bm Y^{\eta|b}_t(\bm x):\ t \in [0,T]\big\}$.
By definition, for any $t \geq 0,\ b > 0,\ k \in \mathbb N$ and $\bm x \in \R^m$,
\begin{align}
    {\bm Y^{\eta|b}_t(\bm x)}  = \bm Y^{\eta|b;(k)}_{t}(\bm x)
    \qquad \Longleftrightarrow \qquad 
    t \in \Big[  
    \tau^{\eta|b;(k)}_{Y}(\bm x), 
     \tau^{\eta|b;(k+1)}_{Y}(\bm x) 
    \Big).
    \label{defSDE, initial condition x, clipped}
\end{align}
% In case that $T = 1$, we suppress $[0,1]$ and write $\notationdef{notation-bm-Y-eta-b-SDE}{\bm Y^{\eta|b}(\bm x)}$.
% The next theorem presents the sample path large deviations for $\bm Y^{\eta|b}_t(\bm x)$.
% Once again, the proof is omitted as it closely resembles that of $\bm X^{\eta|b}_j(\bm x)$.

Consider an
open set $I \subseteq \R^m$ such that $\bm 0 \in \bm I$ and Assumption~\ref{assumption: shape of f, first exit analysis} holds.
% We impose Assumption~\ref{assumption: shape of f, first exit analysis} on $a(\cdot)$.
Define stopping times
\begin{align*}
    {\tau^\eta_Y(\bm x)} \delequal \inf\big\{t \geq 0:\ \bm Y^\eta_t(\bm x) \notin I\big\},\qquad
    {\tau^{\eta|b}_Y(\bm x)} \delequal \inf\big\{t \geq 0:\ \bm Y^{\eta|b}_t(\bm x) \notin I \big\}
\end{align*}
as the first exit times of $\bm Y^\eta_t(\bm x)$ and $\bm Y^{\eta|b}_t(\bm x)$ from $I$, respectively.
The following result characterizes the asymptotic law of the first exit times and exit locations,
using the measures ${\widecheck{ \mathbf C }^{(k)|b}(\cdot )}$ defined in \eqref{def: measure check C k b}
and
${\widecheck{\mathbf C}(\cdot)}$ defined in \eqref{def: measure check C}.
We omit the proof
due to its similarity to that of Theorem \ref{theorem: first exit time, unclipped}.

\begin{theorem}
\label{theorem: first exit time, unclipped, SDE}
\linksinthm{theorem: first exit time, unclipped, SDE}
    Let Assumptions \ref{assumption: lipschitz continuity of drift and diffusion coefficients},
    \ref{assumption: shape of f, first exit analysis}, and \ref{assumption: heavy-tailed levy process} hold.
    Let $\beta \in [0,2\wedge \alpha)$.

    \begin{enumerate}[(a)]

\item 
    Let $b > 0$.
         Suppose that $\mathcal J^I_b < \infty$, $I^c$ is bounded away from $\mathcal G^{(\mathcal J^I_b - 1)|b}(\epsilon)$ for some (and hence all) $\epsilon > 0$ small enough,
        and
        $
        \widecheck{\mathbf C}^{( \mathcal J^I_b )|b}(\partial I) = 0.
        $
        Then 
        ${C^I_b} \delequal  \widecheck{ \mathbf{C} }^{ (\mathcal{J}^I_b)|b }(I^\complement) < \infty$.
        Furthermore, if $C^I_b \in (0,\infty)$,
        then
        for any $\epsilon > 0$, $t \geq 0$, and measurable set $B \subseteq I^c$,
\begin{align*}
    \limsup_{\eta\downarrow 0}\sup_{\bm x \in I_\epsilon}
    \P\bigg(C^I_b \lambda^{ \mathcal{J}^I_b }_L(\eta;\beta)\tau^{\eta|b}_Y(\bm x) > t
     ;\ \bm Y^{\eta|b}_{ \tau^{\eta|b}_Y(\bm x)}(\bm x) \in B\bigg)
     & \leq \frac{ \widecheck{\mathbf{C}}^{ (\mathcal{J}^I_b)|b }(B^-) }{ C^I_b }\cdot\exp(-t),
     \\
     \liminf_{\eta\downarrow 0}\inf_{\bm x \in I_\epsilon}
    \P\bigg(C^I_b \lambda^{ \mathcal{J}^I_b }_L(\eta;\beta)\tau^{\eta|b}_Y(\bm x) > t
     ;\ \bm Y^{\eta|b}_{ \tau^{\eta|b}_Y(\bm x)}(\bm x) \in B\bigg)
     & \geq \frac{ \widecheck{\mathbf{C}}^{ (\mathcal{J}^I_b)|b }(B^\circ) }{ C^I_b }\cdot\exp(-t).
\end{align*}
    Otherwise, we have $C^I_b = 0$, and
        \begin{align*}
            \limsup_{\eta\downarrow 0}\sup_{\bm x \in I_\epsilon}
        \P\bigg(
            \lambda^{ \mathcal J^I_b }_L(\eta;\gamma)\tau^{\eta|b}_Y(\bm x) \leq t
         \bigg) = 0
         \qquad
         \forall \epsilon > 0,\ t \geq 0.
        \end{align*}

    \item
        Suppose that $\widecheck{\mathbf C}(\partial I) = 0$.
        Then ${C^I_\infty} \delequal  \widecheck{ \mathbf{C} }(I^\complement) < \infty$.
        Furthermore, if $C^I_\infty > 0$,
        then
        for any $t \geq 0$ and measurable set $B \subseteq I^c$,
    \begin{align*}
        \limsup_{\eta\downarrow 0}\sup_{\bm x \in I_\epsilon}
        \P\bigg(
            C^I_\infty \lambda_L(\eta;\beta)\tau^\eta_Y(\bm x) > t;\ \bm Y^\eta_{ \tau^\eta_Y(\bm x)}(\bm x) \in B
        \bigg)
        & \leq
        \frac{ \widecheck{\mathbf{C}}(B^-) }{ C^I_\infty }\cdot\exp(-t),
        \\ 
        \liminf_{\eta\downarrow 0}\inf_{\bm x \in I_\epsilon}
        \P\bigg(
            C^I_\infty \lambda_L(\eta;\beta)\tau^\eta_Y(\bm x) > t;\ \bm Y^\eta_{ \tau^\eta_Y(\bm x)}(\bm x) \in B
        \bigg)
        & \geq
        \frac{ \widecheck{\mathbf{C}}(B^\circ) }{ C^I_\infty }\cdot\exp(-t).
    \end{align*}
     Otherwise, we have $C^I_\infty = 0$, and 
    \begin{align*}
        \limsup_{\eta\downarrow 0}\sup_{\bm x \in I_\epsilon}
        \P\bigg(
            \lambda(_L\eta;\gamma)\tau^\eta_Y(\bm x) \leq t
        \bigg) = 0
        \qquad
        \forall \epsilon > 0,\ t \geq 0.
    \end{align*}

    \end{enumerate}
\end{theorem}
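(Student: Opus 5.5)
The plan is to follow the proof of Theorem~\ref{theorem: first exit time, unclipped} and Corollary~\ref{corollary: first exit time, untruncated case} almost verbatim, with the Markov chain $V^\eta_j$ replaced by a time discretization of $\bm Y^{\eta|b}_t(\bm x)$. Fix a mesh and set $V^\eta_j(\bm x) \delequal \bm Y^{\eta|b}_{j\eta}(\bm x)$, so that $\bm V^\eta_{[0,T]}$ is a Markov chain embedding of the SDE. The continuous-time exit time differs from the discrete one only by an excursion out of $I$ strictly between grid points. Such an excursion requires at least one large jump during a window of length $\eta$, and its probability is negligible on the scale $\lambda_L^{\mathcal J^I_b}(\eta;\beta)$ after accounting for the number of cycles.

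Alternatively, and more cleanly, I would rerun Theorem~\ref{thm: exit time analysis framework} in continuous time. Its proof only uses the strong Markov property at the regeneration times $\rho_i$ and the three conditions \eqref{eq: exit time condition lower bound}--\eqref{eq:E4}, with $T/\eta$ replaced by $T$. The choices are as before: $A(\epsilon)=B_\epsilon(\bm 0)$, $I(\epsilon)=I_\epsilon$, location measure $C(\cdot)=\widecheck{\mathbf C}^{(\mathcal J^I_b)|b}(\cdot\setminus I)/C^I_b$, and scale $\gamma(\eta)=C^I_b\lambda_L^{\mathcal J^I_b}(\eta;\beta)$. The input replacing Theorem~\ref{corollary: LDP 2} is the uniform sample-path large deviations for $\bm Y^{\eta|b}_{[0,T]}(\bm x)$ from \cite{wang2024largedeviationsmetastabilityanalysis}, with the same limit measures $\mathbf C^{(k)|b}_{[0,T]}$ and normalization $\lambda^k_L(\eta;\beta)$. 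The condition $\beta<2\wedge\alpha$ is exactly what makes the small-jump and Brownian parts negligible there.

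Given that LD result, the verification runs as follows.
\begin{itemize}
\item Lemma~\ref{lemma: choose key parameters, first exit time analysis}, Lemma~\ref{lemma: exit rate strictly positive, first exit analysis} and Lemma~\ref{lemma: limiting measure, with exit location B, first exit analysis} are purely deterministic statements about $\widecheck{g}^{(k)|b}$ and $\mathbf C^{(k)|b}_{[0,T]}$, so they carry over unchanged. In particular they give $C^I_b<\infty$.
\item Lemma~\ref{lemma: exit prob one cycle, with exit location B, first exit analysis} (conditions \eqref{eq: exit time condition lower bound}--\eqref{eq: exit time condition upper bound}) uses the same sets $\widecheck E(\epsilon,B,T)$ and the same $\delta_B$.
\item Lemma~\ref{lemma: fixed cycle exit or return taking too long, first exit analysis} (condition \eqref{eq:E3}) carries over because $E(\epsilon)$ is bounded away from $\mathbb D^{(k-1)|b}_{(I_\epsilon)^-}[0,T](\epsilon)$.
\item Lemma~\ref{lemma: cycle, efficient return} (condition \eqref{eq:E4}) is an $O(\lambda_L(\eta;\beta))$ estimate, and this tends to zero since $\alpha>\beta$.
\end{itemize}
One must also check that $\gamma(\eta)\to0$, which here replaces $\gamma(\eta)/\eta\to0$. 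Part (a) then follows from Theorem~\ref{thm: exit time analysis framework}, including the case $C^I_b=0$.

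For part (b), I would copy the proof of Corollary~\ref{corollary: first exit time, untruncated case}.
\begin{itemize}
\item Modify $\bm a,\bm\sigma$ outside $I$ so that they are bounded.
\item On the event that $\bar{\bm L}^\eta$ has no jump of norm greater than $b/C$ before time $T/(C^I_\infty\lambda_L(\eta;\beta))$, the processes $\bm Y^\eta$ and $\bm Y^{\eta|b}$ coincide.
\item The probability of this event's complement is $1-\exp\big(-T(C/b)^\alpha/C^I_\infty\big)+o(1)$. The number of such jumps is Poisson, with rate $\eta^{-\beta}H_L(b/(C\eta))\sim (C/b)^\alpha\lambda_L(\eta;\beta)$, so it replaces Lemma~\ref{lemmaGeomFront}.
\item For large $b$ one has $\mathcal J^I_b=1$ and $C^I_b=C^I_\infty$, and $\widecheck{\mathbf C}^{(1)|b}\to\widecheck{\mathbf C}$ by dominated convergence. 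Apply part (a), then send $b,T\to\infty$.
\end{itemize}

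The main obstacle is continuous-time bookkeeping inside the framework. The first step is to confirm that the Prop.~\ref{prop: exit time analysis main proposition} argument is insensitive to replacing $\lfloor rT/\eta\rfloor$ by $rT$. The second is to confirm that hitting times of the open sets $B_\epsilon(\bm 0)$ and $I_\epsilon^\complement$ by a càdlàg process are stopping times at which the strong Markov property applies. Both points are standard under the usual hypotheses.
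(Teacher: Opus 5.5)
Your plan is essentially the paper's route. The paper omits this proof as being similar to those of Theorem~\ref{theorem: first exit time, unclipped} and Corollary~\ref{corollary: first exit time, untruncated case}: the asymptotic-atom framework with $A(\epsilon)$, $I_\epsilon$ and $\gamma(\eta)=C^I_b\lambda_L^{\mathcal J^I_b}(\eta;\beta)$, fed by the SDE sample-path large deviations, followed by $b\to\infty$ for part (b). Your continuous-time rerun of Theorem~\ref{thm: exit time analysis framework}, with $T/\eta$ replaced by $T$ and $\gamma(\eta)/\eta\to 0$ replaced by $\gamma(\eta)\to 0$, is the right way to make that precise.

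Two small points need fixing. First, take $A(\epsilon)=\bar B_\epsilon(\bm 0)$ rather than the open ball. At the hitting time of the open ball the path may sit on its boundary, so the restart point would not lie in $A(\epsilon)$. Lemma~\ref{lemma: exit prob one cycle, with exit location B, first exit analysis} already covers $\norm{\bm x}\le\epsilon$, and \eqref{eq:E3}--\eqref{eq:E4} only get easier with the closed ball. Second, your step ``$\mathcal J^I_b=1$ for large $b$'' in part (b) needs $\norm{\bm\sigma(\bm 0)}>0$. Corollary~\ref{corollary: first exit time, untruncated case} assumes this, but the present statement does not list it. If $\bm\sigma(\bm 0)=\bm 0$, then $\mathcal J^I_b=\infty$ for every $b$, so that case needs its own argument.
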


\ifshowreminders
\newpage
\footnotesize
\newgeometry{left=1cm,right=1cm,top=0.5cm,bottom=1.5cm}

\section*{\linkdest{location of reminders}Some Reminders}
\begin{itemize}[leftmargin=*]
    \item Assumption \ref{assumption gradient noise heavy-tailed}
    \begin{itemize}
        \item $\E Z_j = 0;\ H(\cdot) = \P(|Z_1| > \cdot) \in \RV_{-\alpha}$
        \item
        $\lim_{x \rightarrow \infty}\frac{ H^{(+)}(x) }{H(x)} = p^{(+)},\ \lim_{x \rightarrow \infty}\frac{ H^{(-)}(x) }{H(x)} = p^{(-)} = 1 - p^{(+)}$
        with $p^{(+)},p^{(-)} \in [0,1]$ 
        and $p^{(+)} + p^{(-)} = 1$.
    \end{itemize}
    \item Assumption \ref{assumption: lipschitz continuity of drift and diffusion coefficients} (Lipschitz Continuity)
    \begin{enumerate}
        \item[] There exists some $D \in [1, \infty)$ such that $|\sigma(x) - \sigma(y)| \vee |a(x)-a(y)| \leq D|x-y|\ \ \ \forall x,y \in \mathbb{R}.$
    \end{enumerate}
    \item Assumption \ref{assumption: boundedness of drift and diffusion coefficients} (Boundedness)
    \begin{enumerate}
        \item[] There exist some $0<c \leq 1 \leq C < \infty$ such that $|a(x)| \leq C,\ c \leq \sigma(x) \leq C\ \ \forall x \in \mathbb{R}.$
    \end{enumerate}
    \item Assumption \ref{assumption: nondegeneracy of diffusion coefficients} (Nondegeneracy)
    \begin{enumerate}
        \item[] $\sigma(x) > 0$ $\forall x\in \R$.
    \end{enumerate}
    
    % \item Assumption \ref{assumption: f and sigma, stationary distribution of SGD}
    % \begin{enumerate}
    %     \item[] 
    %     There exist $D,c,C \in (0,\infty)$ such that
    %     \begin{align*}
    %         |a(x) - a(y)| \vee |\sigma(x) - \sigma(y)| \leq D|x - y|\ \ \ &\forall x,y \in \R,
    %         \\
    %         c \leq \sigma(x) \leq C\ \ \ &\forall x \in \R,
    %         \\
    %         |a(x)| \geq c\ \ \ &\forall |x| \geq 1.
    %     \end{align*}
    % \end{enumerate}

    \item Assumption \ref{assumption: shape of f, first exit analysis}
    \begin{enumerate}
    \item[] 
        It holds for all $x \in (s_\text{left},s_\text{right})\setminus \{0\}$ that $a(x)x < 0$. Besides,
        \begin{itemize}
            \item $a(0)= 0$; $a(\cdot)$ is differentiable around $0$ with $a^\prime(0) \neq 0$;
            \item The following claims hold for $s \in \{s_\text{left},s_\text{right}\}$: 
            If $a(s) \neq 0$, then
            $a(\cdot)$ is differentiable around $s$ and $a^\prime(s) \neq 0$. 
        \end{itemize}
    \end{enumerate}
    
\end{itemize}
\fi

\ifshownotationindex
\newpage
\footnotesize
% \newgeometry{left=1cm,right=1cm,top=0.5cm,bottom=1.5cm}
\section{Notation Index}
\label{subsec: Notation Index}
% \section*{\linkdest{location of notation index}Notation Index}
\begin{itemize}[leftmargin=*]

\item 
    \notationidx{asymptotic-equivalence}{Asymptotic Equivalence}:
    $X_n$ is asymptotically equivalent to $Y^\delta_n$ when bounded away from $\mathbb{C}$
    w.r.t.\ $\epsilon_n$ as $\delta \downarrow 0$ if the following holds:
    for each $\Delta > 0$
    and each $B \in \mathscr{S}_\mathbb{S}$ that is bounded away from $\mathbb{C}$,
    \begin{align*}
    \lim_{\delta \downarrow 0}\lim_{n \rightarrow \infty} \frac{\P\big( \bm{d}(X_n, Y^\delta_n)\mathbbm{I}( X_n\in B\text{ or }Y^\delta_n \in B) > \Delta \big)}{ \epsilon_n } = 0.
\end{align*}

\item 
    \notationidx{order-k-time-on-[0,t]}{$A^{k\uparrow}$}:
    Given $A \subseteq \R$,
    ${A^{k \uparrow}} \delequal 
\{
(t_1,\cdots,t_k) \in A^k:\ t_1 < t_2 < \cdots < t_k
\}
    $
    
% \item 
%     \notationidx{order-k-time-on-(0,t]}{$(0,t]^{k\uparrow}$}:
%     $(0,t]^{k\uparrow}
%     \delequal
%     \big\{
%     (t_1,\cdots,t_k) \in \mathbb{R}^k:\ 0 \leq t_1 < t_2 < \cdots < t_k < t
%     \big\}.
%     $

\item 
    \notationidx{set-for-integers-below-n}{$[n]$}: $[n] = \{1,2,\cdots,n\}$ for any positive integer $n$. For $n = 0$ we set $[n] = \emptyset$.

\item 
    \notationidx{floor-operator}{$\floor{x}$}:
    $\floor{x} = \max\{n \in \mathbb{Z}:\ n \leq x\}$
    
\item
    \notationidx{ceil-operator}{$\ceil{x}$}:
    ${\ceil{x}} = \min\{n \in \Z:\ n \geq x\}$
    % $
    % {\ceil{x}} \delequal \min\{n \in \Z:\ n \geq x\}$.
    % $

\item 
    \notationidx{notation-closure-of-set-E}{$E^-$}: closure of set $E$ 

\item 
    \notationidx{notation-interior-of-set-E}{$E^\circ$}: interior of set $E$

\item 
    \notationidx{notation-epsilon-enlargement-of-set-E}{$E^\epsilon$}: 
    $E^\epsilon \delequal 
\{ y \in \mathbb{S}:\ \bm{d}(E,y)\leq\epsilon \}$
    ($\epsilon$-enlargement)

\item 
    \notationidx{notation-epsilon-shrinkage-of-set-E}{$E_{\epsilon}$}:
    $E_{\epsilon} \delequal
    ((E^\complement)^\epsilon)^\complement
    $
    ($\epsilon$-shrinkage)

\linkdest{location, notation index A}

\item 
    \notationidx{a}{$\bm a$}: drift coefficient $\bm a: \mathbb{R}^m \to \mathbb{R}^m$

\item
    \notationidx{a-M}{$\bm a_M$}: drift coefficient with the argument $\bm x$ projected onto the ball $\{ \bm x \in \R^m:\ \norm{\bm x} \leq M \}$

\item 
    \notationidx{alpha-noise-tail-index-LDP}{$\alpha$}:
    $\alpha > 1$; the heavy tail index for $(\bm Z_j)_{j \geq 1}$ in Assumption \ref{assumption gradient noise heavy-tailed}

\item
    \notationidx{notation-event-A-i-concentration-of-small-jumps}{$A_i(\eta,b,\epsilon,\delta,\bm x)$}:
    $
    A_i(\eta,b,\epsilon,\delta,\bm x)
    \delequal 
    % \left\{\rule{0cm}{0.9cm}
    \Big\{
    \underset{j \in I_i(\eta,\delta) }{\max}\  
        \eta\norm{\sum_{n = \tau^{>\delta}_{i-1}(\eta) + 1}^j \bm \sigma\big( \bm X^{\eta|b}_{n-1}(\bm x) \big)\bm Z_n} \leq \epsilon 
        \Big\}
    % \right\}
    $

\linkdest{location, notation index B}

\item   
    $\notationidx{notation-ball-r-x}{\bar B_r(\bm x)}:\ {\bar B_r(\bm x)} \delequal \{ \bm{y}\in\R^m:\ \norm{\bm y - \bm x} \leq r \}$

\item 
    $
    \notationidx{notation-set-B-0}{B_0}:\ {B_0}\delequal 
    \Big\{ \bm{X}^{\eta|b}(\bm x)\in B\text{ or }\hat{\bm X}^{\eta|b;(k)}(\bm x) \in B;\   
    \bm{d}_{J_1}\big(\bm{X}^{\eta|b}(\bm x),\hat{\bm X}^{\eta|b;(k)}(\bm x)\big) > \Delta
    \Big\}
    $

\item 
    $
    \notationidx{notation-B1}
    {B_1}:\ {B_1} \delequal 
    \{ \tau^{>\delta}_{k+1}(\eta) > \floor{1/\eta}\}
    $

\item 
    $
    \notationidx{notation-B2}
    {B_2}:\ {B_2} \delequal 
    \{ \tau^{>\delta}_{k}(\eta) \leq \floor{1/\eta} \}
    $

\item 
    $
    \notationidx{notation-B3}
    {B_3}:\ {B_3} \delequal 
    \Big\{\eta \norm{ \bm W^{>\delta}_{i}(\eta) } > \bar{\delta}\ \text{for all }i \in [k] \Big\}
    $

\item
    $
    \notationidx{notation-B4}
    {B_4}:\ {B_4} \delequal 
    \Big\{\eta \norm{ \bm W^{>\delta}_{i}(\eta) } \leq 1/\epsilon^{ \frac{1}{2k}  } \ \text{for all }i \in [k] \Big\}
    $

\linkdest{location, notation index C}

\item 
    \notationidx{notation-constant-C-boundedness-assumption}{$C$}:
    $C \in [1,\infty)$ is the constant in Assumption \ref{assumption: boundedness of drift and diffusion coefficients} with $\norm{\bm a(\bm x)} \vee \norm{\bm \sigma(\bm x)} \leq  C\ \ \forall \bm x \in \mathbb{R}^m$.

\item 
    \notationidx{notation-C-*-first-exit-time}{$C^I_\infty$}: 
    $
    C^I_\infty \delequal  \widecheck{ \mathbf{C} }\big( I^\complement\big)
    $

\item 
    \notationidx{notation-C-b-*}{$C_b^I$}: 
    $
    {C^I_b} \delequal  \widecheck{ \mathbf{C} }^{ (\mathcal{J}^I_b)|b }(I^\complement)
    $

\item 
    \notationidx{notation-measure-C-k-t-mu-LDP}{$\mathbf{C}^{(k)}_{[0,T]}(\ \cdot\ ;\bm x)$}:
    $
    {\mathbf{C}^{(k)}_{ {[0,T]} }(\ \cdot\ ;\bm x)} 
    \delequal
    {\mathbf{C}^{(k)|\infty}_{ {[0,T]} }(\ \cdot\ ;\bm x)}
    =
   \int \mathbbm{I}\Big\{ h^{(k)}_{ {[0,T]}}\big( \bm x, \bm W, \bm t  \big) \in\ \cdot\  \Big\}
    \big((\nu_\alpha \times \mathbf S)\circ \Phi\big)^k(d \bm W) \times\mathcal{L}^{k\uparrow}_{{T}}(d\bm t).
    $

\item 
    \notationidx{notation-measure-C-k-t-mu-LDP-T-1}{$\mathbf C^{(k)}$}:
    $\mathbf C^{(k)} = \mathbf C^{(k)}_{[0,1]}$
    % =
    % \int \mathbbm{I}\Big\{ h^{(k)}\big( x,(w_1,\cdots,w_k),(t_1,\cdots,t_k)   \big) \in\ \cdot\  \Big\} \nu^k_\alpha(d w_1,\cdots,dw_k) \times\mathcal{L}^{k\uparrow}_1(dt_1, dt_2,\cdots,dt_k)

\item 
    \notationidx{notation-measure-C-k-t-truncation-b-LDP}{${\mathbf{C}}^{(k)|b}_{[0,T]}(\ \cdot\ ;\bm x)$}:
    $
    {\mathbf{C}}^{(k)|b}_{[0,T]}(\ \cdot\ ;\bm x) \delequal 
    \int \mathbbm{I}\Big\{ h^{(k)|b}_{{[0,T]}}\big( \bm x,\textbf W,\bm t  \big) \in\ \cdot\  \Big\} 
   \big((\nu_\alpha \times \mathbf S)\circ \Phi\big)^k(d \textbf W) \times\mathcal{L}^{k\uparrow}_{ {T} }(d\bm t)
   $

\item 
    \notationidx{notation-measure-C-k-t-truncation-b-LDP-T=1}{${\mathbf{C}}^{(k)|b}$}:
    $
    {\mathbf{C}}^{(k)|b} = {\mathbf{C}}^{(k)|b}_{[0,1]}
    $

\item  
    \notationidx{notation-check-C}{$\widecheck{\mathbf C}(\ \cdot\ )$}:
    $
    {\widecheck{\mathbf C}(\ \cdot\ )} \delequal \int \mathbbm{I}\Big\{ \bm \sigma(\bm 0) \bm w \in\ \cdot\ \Big\}
    \big((\nu_\alpha \times \mathbf S)\circ \Phi\big)(d \bm w)
    $
    
% \item
%     \notationidx{notation-check-C-x=0}{$\widecheck{\mathbf C}(\cdot)$}:
%     $\widecheck{\mathbf C}(\cdot)=\widecheck{\mathbf C}(\ \cdot\ ;0)$

\item 
    \notationidx{notation-check-C-k-b}{$\widecheck{ \mathbf C }^{(k)|b}(\ \cdot\ )$}:
    $
    {\widecheck{ \mathbf C }^{(k)|b}(\ \cdot\ )}
    \delequal 
    \int \mathbbm{I}\bigg\{ \widecheck{g}^{(k-1)|b}\Big( \varphi_b\big(\bm\sigma(\bm x)\bm w_1\big),(\bm w_2,\cdots,\bm w_k),\bm t \Big) \in \ \cdot \  \bigg\}
     \big((\nu_\alpha \times \mathbf S)\circ \Phi\big)^k(d \textbf W) \times \mathcal{L}^{k-1\uparrow}_\infty(d\bm t)
    $

% \item 
%     \notationidx{notation-check-C-k-b-x=0}{$\widecheck{ \mathbf C }^{(k)|b}(\cdot)$}:
%     $
%     \widecheck{ \mathbf C }^{(k)|b}(\cdot) =
%     \widecheck{ \mathbf C }^{(k)|b}(\ \cdot\ ;0 )
%     $

\item 
    \notationidx{notation-mathcal-C-S-exclude-C}{$\mathcal{C}({ \mathbb{S}\setminus \mathbb{C} })$}:
    % $
% \mathcal{C}({ \mathbb{S}\setminus \mathbb{C} })
% $
% is 
the set of all real-valued, non-negative, bounded and continuous functions with support bounded away from $\mathbb{C}$

\item   
    $
    \notationidx{notation-error-function-check-c-epsilon}{\widecheck{\bm c}(\epsilon)}:\ {\widecheck{\bm c}(\epsilon)} 
    \delequal 
    \mathcal J^I_b \cdot (\bar t)^{ \mathcal J^I_b - 1 } \cdot (\bar\delta)^{ -\alpha \cdot (\mathcal J^I_b - 1) }
    \cdot 
    \epsilon^{ \frac{\alpha}{2\mathcal J^I_b}  }.
    $

\linkdest{location, notation index D}
\item
    \notationidx{notation-Lipschitz-constant-L-LDP}{$D$}:
    The Lipschitz $D \in[1,\infty)$ in Assumption \ref{assumption: lipschitz continuity of drift and diffusion coefficients}:
    $\norm{\bm \sigma(\bm x) - \bm \sigma(\bm y)} \vee \norm{\bm a(\bm x)-\bm a(\bm y)} \leq D\norm{\bm x - \bm y}\ \ \ \forall \bm x,\ \bm y \in \mathbb{R}^m$

\item 
    \notationidx{notation-D-0T-cadlag-space}{$\mathbb{D}{[0,T]}$}:
     $\mathbb{D}[0,{T}] = \D\big([0,T],\R^m\big)$ is
    the space of all càdlàg functions with domain $[0,{T}]$ and codomain $\R^m$
    % we suppress the subscript $m$ in the notation $\D_m[0,T]$ since the ambient space $\R^m$ is fixed.

\item 
    \notationidx{notation-D-0T-cadlag-space-T=1}{$\mathbb{D}$}:
    $
    \mathbb D \delequal \mathbb D[0,1]
    $

\item 
    \notationidx{notation-D-A-k-t-LDP}{$\mathbb{D}^{(k)}_A[0,T](\epsilon)$}:
    $\mathbb{D}^{(k)}_A[0,T](\epsilon) \delequal h^{(k)}_{ {[0,T]} }\Big( A \times \mathbb{R}^{m \times k} \times \big( \bar B_\epsilon(\bm 0)\big)^k \times (0,{T}]^{k\uparrow} \Big)$ with convention that $\mathbb{D}_A^{(-1)}[0,T](\epsilon) = \emptyset$

\item
    \notationidx{notation-D-A-k-t-LDP-T=1}{$\mathbb{D}^{(k)}_A(\epsilon)$}:
    $
    \mathbb{D}^{(k)}_A(\epsilon) \delequal \mathbb{D}^{(k)}_A[0,1](\epsilon) = \bar h^{(k)}\Big( A \times \mathbb{R}^{m \times k} \times \big(\bar B_\epsilon(\bm 0)\big)^k \times (0,1]^{k\uparrow} \Big)
    $

\item 
    \notationidx{notation-D-A-k-t-truncation-b-LDP}{$\mathbb{D}_{A}^{(k)|b} [0,T](\epsilon)$}:
    ${ \mathbb{D}}_{A}^{(k)|b}[0,T](\epsilon) \delequal h^{(k)|b}_{[0,T]} \Big( A \times \mathbb{R}^{m\times k} \times \big(\bar B_\epsilon(\bm 0)\big)^k\times(0,T]^{k\uparrow} \Big)$ with convention that $\mathbb{D}_{A}^{(-1)|b}[0,T](\epsilon)  = \emptyset$

\item 
    \notationidx{notation-D-A-k-t-truncation-b-LDP-T=1}{${\mathbb{D}}_{A}^{(k)|b}(\epsilon)$}:
    ${\mathbb{D}}_{A}^{(k)|b}(\epsilon) \delequal {\mathbb{D}}_{A}^{(k)|b}[0,1](\epsilon)
    =
    \bar h^{(k)|b} \Big( A \times \mathbb{R}^{m \times k} \times \big(\bar B_\epsilon(\bm 0)\big)^k\times(0,1]^{k\uparrow} \Big)$

\item 
    \notationidx{notation-D-A-k-t-truncation-b-M-LDP}{$\mathbb{D}_{A;M\downarrow}^{(k)|b}(\epsilon)$}: 
    $\mathbb{D}_{A;M\downarrow}^{(k)|b}(\epsilon) \delequal 
\bar h^{(k)|b}_{M\downarrow}\Big( A \times \mathbb{R}^{m \times k} \times \big(\bar B_\epsilon(\bm 0)\big)^k \times (0,1]^{k\uparrow} \Big)
$

% \item
%     \notationidx{notation-D-0-A-T}{$\mathbb{D}_A^{(0)}[0,T]$}:
%     $\mathbb{D}_A^{(0)}[0,T] \delequal \Big\{ \{\bm{x}(t;x):\ t \in [0,T]\} :\ x \in A \Big\}$
    
% \item
%     \notationidx{notation-D-1-A-T}{$\mathbb{D}_A^{(1)}[0,T]$}:
%     $\mathbb{D}_A^{(1)}[0,T] \delequal h^{(1)}_{[0,T]}\big(A \times \R \times [0,T]\big).$

% \item 
%     \notationidx{notation-set-D-k-A-b-T}{$\mathbb{D}^{(k)|b}_{A}[0,T]$}:
%     ${\mathbb{D}^{(k)|b}_{A}[0,T]} \delequal h^{(k)|b}_{[0,T]}\big( A \times \R^k \times (0,T)^{k\uparrow} \big)$

\item 
    \notationidx{notation-D-J1}{$\dj{[0,T]}$}:
    Skorokhod $J_1$ metric on $\mathbb{D}[0,T]$

\item 
    \notationidx{notation-D-J1-T=1}{$\bm{d}_{J_1}$}:
    $\bm{d}_{J_1} = \dj{[0,1]}$ is the Skorodhod metric on $\mathbb{D} = \mathbb{D}[0,1]$

\linkdest{location, notation index E}

\item 
    \notationidx{notation-set-E-delta-LDP}{$E^\delta_{c,k}(\eta)$}:
    $E^\delta_{c,k}(\eta) \delequal \{ \tau^{>\delta}_{k}(\eta) \leq \floor{1/\eta} < \tau^{>\delta}_{k+1}(\eta);\ \eta\norm{\bm W^{>\delta}_j(\eta)} > c\ \ \forall j \in [k] \}$
    \quad
    $(c > \delta)$
    \quad
    (event that there are exactly $k$ ``big'' jumps by $\floor{1/\eta}$)

\item 
    $
    \notationidx{notation-set-check-E-epsilon-B-T}{\widecheck{E}(\epsilon,B,T)}:\ {\widecheck{E}(\epsilon,B,T)} 
    \delequal 
    \Big\{ \xi \in \mathbb{D}[0,T]:\ \exists t \leq T\ s.t.\ \xi_t \in B\text{ and }\xi_s \in I(\epsilon)\ \forall s \in [0,t) \Big\}
    $

\item 
    \notationidx{notation-eta}{$\eta$}: step length

\linkdest{location, notation index F}

\item
    $\notationidx{notation-sigma-algebra-F}{\mathcal{F}}$:
    the $\sigma-$algebra generated by iid copies $(\bm Z_j)_{j \geq 1}$
    
\item 
    \notationidx{notation-F}{$\mathbb F$}: 
    the filtration
    $\mathbb{F} = (\mathcal{F}_j)_{j \geq 0}$ where  $\mathcal{F}_0 \delequal \{\Omega, \emptyset\}$
    and $\mathcal{F}_j$ is the $\sigma$-algebra generated by $\bm Z_1,\cdots,\bm Z_j$

\linkdest{location, notation index G}

\item 
    $
    \notationidx{notation-mapping-bar-g-k-b}{\bar g^{(k)|b}}:\ 
    \bar g^{(k)|b}\big( \bm x, \textbf W, \textbf V, (t_1,\cdots,t_k)\big)
    \delequal 
    \bar h^{(k)|b}_{ [0,t_k + 1] }
    \Big(
        \bm x,
        \textbf W,
        \textbf V,
        (t_1,\cdots,t_k)
    \Big)(t_k)
    $

\item
    \notationidx{notation-check-g-k-b}{$\widecheck{g}^{(k)|b}$}:
    $
    {\widecheck{g}^{(k)|b}(\bm x,\textbf W,\bm t)}
    \delequal 
    \bar g^{(k)|b}\big(\bm x, \textbf W, (\bm 0,\cdots,\bm 0), \bm t\big)
    =
    h^{(k)|b}_{[0,t_k+1]}(\bm x,\textbf W,\bm t)(t_{k})
    $

\item 
    $
    \notationidx{notation-set-G-k-b-epsilon}{\mathcal G^{(k)|b}(\epsilon)}:\ 
    {\mathcal G^{(k)|b}(\epsilon)}
    \delequal 
    \bigg\{
    \bar g^{(k - 1)|b}
    \Big( \bm v_1 + \varphi_b\big(\bm \sigma(\bm v_1)\bm w_1\big),
    (\bm w_2,\cdots, \bm w_k), (\bm v_2,\cdots,\bm v_k), \bm t 
    \Big):
    % \\ 
    % & \qquad\qquad
    \textbf W = (\bm w_1,\cdots, \bm w_k) \in \R^{d\times k},
    \textbf V = (\bm v_1,\cdots, \bm v_k) \in \Big(\bar B_\epsilon(\bm 0)\Big)^k,
    \bm t \in (0,\infty)^{k \uparrow}
    \bigg\}.
    $
    $
    \mathcal{G}^{(0)|b}(\epsilon) \delequal \bar B_{\epsilon}(\bm 0).
    $

\item 
    $
    \notationidx{notation-set-G-k-b}{\mathcal G^{(k)|b}}:\ 
    {\mathcal G^{(k)|b}}
    \delequal 
    \mathcal G^{(k)|b}(0)
    = 
    \bigg\{
    \widecheck{g}^{(k - 1)|b}
    \Big( \varphi_b\big(\bm \sigma(\bm 0)\bm w_1\big),
    (\bm w_2,\cdots, \bm w_k), \bm t 
    \Big):\ 
    \textbf W = (\bm w_1,\cdots,\bm w_k) \in \R^{d \times k},
    \bm t \in (0,\infty)^{k \uparrow}
    \bigg\}.
    $

\item 
    $
    \notationidx{notation-extended-coverage-set-bar-G-k-b-epsilon}{\bar{\mathcal G}^{(k)|b}(\epsilon)}:\ 
    {\bar{\mathcal G}^{(k)|b}(\epsilon)}
    \delequal
    \Big\{
        \bm y_t(\bm x):\ \bm x \in \mathcal G^{(k)|b}(\epsilon),\ t \geq 0
    \Big\},
    $

% \item 
%     \notationidx{notation-F-mu}{$\mathbb F_\mu$}: 
%     $\mathbb{F}_\mu = (\mathcal{F}_{j,\mu})_{j \geq 0}$

% \item 
%     \notationidx{notation-fixed-function-g-LDP}{$g$}:
%     a bounded, continuous function $g: \mathbb{D}[0,T]\to \mathbb{R}$ that is fixed at the beginning of Section \ref{subsec: LDP unclipped, proof of main results} and \ref{subsec: LDP clipped, proof of main results}

% \item
%     $\notationidx{notation-G-b-typical-transition-graph}{\mathcal{G}_b}$: Directed graph ${\mathcal{G}_b} = (V,E_b)$,
%     where $V = \{m_1,\cdots,m_{ n_\text{min} }\}$,
%     and
%     an edge $(m_i\rightarrow m_j)$ is in $E_b$ iff $\mathcal J^I_b(i,j) = \mathcal J^I_b(i)$.

\item 
    \notationidx{notation-Gamma-M-adapted-process-bounded-by-M-LDP}{$\bm{\Gamma}_M$}:
    $\bm{\Gamma}_M \delequal \big\{ (\bm V_j)_{j \geq 0}\text{ is adapted to }\mathbb{F}:\ \norm{\bm V_j} \leq M\ \forall j \geq 0\text{ almost surely} \big\};$
    see \eqref{def: Gamma M, set of bounded adapted process}
    
% \item
%     \notationidx{notation-Gamma-M-cont-adapted-process-bounded-by-M-LDP}{$\bm{\Gamma}_M^{\text{cont}}$}:
%     $\bm{\Gamma}^{\text{cont}}_M \delequal \Big\{ V\text{ takes value in }\mathbb{D}\text{ and is adapted to }\mathbb{F}:\ \sup_{t \in [0,1]}|V(t)| \leq M\text{ almost surely} \Big\}.$

\linkdest{location, notation index H}
% \item 
%     \notationidx{notation-H-plus}{$H^{(+)}$}:
%     $H^{(+)}(x)  \delequal \P(Z > x)$ 

% \item 
%     \notationidx{notation-H-minus}{$H^{(-)}$:} $H^{(-)}(x) \delequal \P(Z < -x) $

\item 
    \notationidx{notation-H}{$H$:} $H(x)  \delequal \P(\norm{\bm Z} > x) \in \RV_{-\alpha}(x)$

\item
    \notationidx{notation-H-L}{$H_L$}:
    $H_L(x)\delequal  \nu\big( \{ \bm y \in \R^m:\ \norm{\bm y} > x\} \big) \in \RV_{-\alpha}(x)$

% \item 
%     \notationidx{notation-h-0-T-stationary-dist-LDP}{$h^{(1)}_{[0,T]}$}:
%     $h^{(1)}_{[0,T]}: \R \times \R \times (0,T) \to \mathbb{D}[0,T]$ is such that $\xi = h^{(1)}_{[0,T]}(x_0,w,t)$ solves \eqref{def: mapping h, 1, LD of stationary distribution of heavy tailed SGD}-\eqref{def: mapping h, 3, LD of stationary distribution of heavy tailed SGD}.

% \item 
%     \notationidx{notation-h-k-t-mapping-LDP}{$h^{(k)}_{[0,T]}$}: 
%     ${h^{(k)}_{[0,T]}}(\bm x,\bm{W},\bm{t}) \delequal
%     h^{(k)}_{[0,T];\bm 1}(\bm x,\bm{W},\bm{t})$;
%     That is, with all $\sigma_{i,j}(\cdot) \equiv 1$.

\item
    $\notationidx{notation-h-k-t-bar-mapping-LDP}{\bar h^{(k)}_{[0,T]}}$: 
    ${\bar h^{(k)}_{[0,T]}} = {\bar h^{(k)|\infty}_{[0,T]}}$.
    An operator for perturbed gradient flow under $\bm a(\cdot)$ with initial value $\bm x$, jump sizes $\bm w_j$'s (modulated by $\bm \sigma(\cdot)$) with perturbations $\bm v_j$'s, and jump times $t_j$'s

\item 
    $\notationidx{notation-h-k-t-mapping-LDP}{h^{(k)}_{[0,T]}}:\ 
    h^{(k)}_{[0,T]}(\bm x, \bm W, \bm t)
    \delequal 
    h^{(k)|\infty}_{[0,T]}(\bm x, \bm W, \bm t)
    =
    \bar h^{(k)}_{[0,T]}\big(\bm x,\bm W, (\bm 0,\cdots,\bm 0), \bm t\big)
    $

% \item 
%     \notationidx{notation-h-k-t-mapping-LDP-T=1}{$h^{(k)}$}:
%     $h^{(k)}\delequal h^{(k)}_{[0,1]}$

\item 
    $\notationidx{notation-h-k-t-sigma-mapping-T=1}{h^{(k)}}:\ 
    {h^{(k)}} \delequal {h^{(k)}_{[0,1]}}
    $

\item 
    $\notationidx{notation-h-k-t-bar-mapping-truncation-level-b-LDP}{\bar h^{(k)|b}_{[0,T]}}$:
    an operator for perturbed gradient flow under $\bm a(\cdot)$ with initial value $\bm x$, jump sizes $\bm w_j$'s (modulated by $\bm \sigma(\cdot)$ and truncated under $b > 0$) with perturbations $\bm v_j$'s, and jump times $t_j$'s; see \eqref{def: perturb ode mapping h k b, 1}--\eqref{def: perturb ode mapping h k b, 3}

\item 
    $\notationidx{notation-h-k-b-t-mapping-LDP}{h^{(k)|b}_{[0,T]}}:\ h^{(k)|b}_{[0,T]}(\bm x, \bm W, \bm t)
    \delequal 
    \bar h^{(k)|b}_{[0,T]}\big(\bm x,\bm W, (\bm 0,\cdots,\bm 0), \bm t\big)$

% \item 
%     \notationidx{notation-h-k-b-t-mapping-LDP}{$h^{(k)|b}_{[0,T]}$}:
%     $
%     {h^{(k)|b}_{[0,T]}} \delequal
%     h^{(k)|b}_{[0,T];\bm 1}
%     $

% \item 
%     \notationidx{notation-h-k-t-mapping-truncation-level-b-LDP-T=1}{$h^{(k)|b}$}:
%     $h^{(k)|b} \delequal h^{(k)|b}_{[0,1]}$

\item 
    $\notationidx{notation-h-k-t-sigma-mapping-truncation-level-b-LDP-T=1}{h^{(k)|b}}:\ {h^{(k)|b}}  \delequal h^{(k)|b}_{[0,1]}$

\item
    \notationidx{notation-mapping-bar-h-k-t-b-M-LDP}{$\bar h^{(k)|b}_{M\downarrow}$}: a modified version of {$\bar h^{(k)|b}$}
    where the truncated drift and diffusion coefficients $\bm a_{M}$, $\bm \sigma_M$ are applied instead of $\bm a$, $\bm\sigma$;
    see \eqref{def: perturb ode mapping h k b, truncated at M, 1}--\eqref{def: perturb ode mapping h k b, truncated at M, 3}

\item
    $\notationidx{mapping-h-k-b-M-down}{h^{(k)|b}_{M\downarrow}}:\ 
    {h^{(k)|b}_{M\downarrow}}\big(\bm x, \textbf W,\bm t\big)
    \delequal 
    \bar h^{(k)|b}_{M\downarrow}\big(\bm x, \textbf W,(\bm 0,\cdots,\bm 0),\bm t\big).$

\linkdest{location, notation index I}

\item 
    $\notationidx{notation-exit-domain-I}{I}$:
    the open, bounded domain $\bm 0 \in {I} \subset \R^m$ that belongs to the attraction field for $\bm 0$ satisfying Assumption~\ref{assumption: shape of f, first exit analysis}.

\item
    $
    \notationidx{notation-I-epsilon-shrinkage}{I_\epsilon}:\ {I_\epsilon} = \{ \bm y:\ \norm{\bm x - \bm y} < \epsilon\ \Longrightarrow\ \bm x \in I \}
    $
    % $\notationidx{notation-exit-domain-I-epsilon}{I_\epsilon}$:
    % ${I_\epsilon} = \{ \bm y:\ \bm x \in I\ \forall \bm x \in \R^m\ s.t.\ \norm{\bm x - \bm y} < \epsilon \}$

\item 
    $\notationidx{notation-domain-check-I-epsilon}{\widecheck I(\epsilon)}:\ 
    {\widecheck I(\epsilon)}\delequal 
    \Big\{
        \bm x \in I:\  \norm{ \bm y_{1/\epsilon}(\bm x) } < \widecheck \epsilon
    \Big\}$
    with $\widecheck{\epsilon} > 0$ defined in \eqref{def: covering sets I epsilon, first exit time}

\item

    \notationidx{notation-A-i-concentration-of-small-jumps-2}{$I_i(\eta,\delta)$}:
    $I_i(\eta,\delta) 
    \delequal 
    \big\{j \in \mathbb{N}:\  \tau^{>\delta}_{i-1}(\eta) + 1 \leq j \leq \big(\tau^{>\delta}_{i}(\eta) - 1 \big) \wedge \floor{1/\eta}\big\}.$

% \item 
%     $\notationidx{notation-identity-matrix}{\textbf I_m}$: identity matrix in $\R^{m \times m}$

\linkdest{location, notation index J}

\item
    \notationidx{J-Z-c-n}{$\mathcal{J}_Z(c,n):$}
    $\mathcal{J}_{\bm Z}(c,n) \delequal \#\{i \in [n]:\ \norm{\bm Z_i} \geq c \}$

% \item 
%     \notationidx{notation-J-L-c-T}{$\mathcal{J}_L(c,T)$}:
%     $
%      \mathcal{J}_L(c,T) \delequal \#\big\{ i \in \mathbb{N}:\ \widetilde{\tau}^L_i \leq T,\ |Z^L_i| > c \big\}
%     $

% \item 
%     \notationidx{notation-J-eta-leq-k-L-i}{$J^{\eta;(k)}_L(i)$}:
%     $
%     J^{\eta;(k)}_L(i) \delequal \min\big\{ j > J^{\eta;(k)}_L(i-1):\ |Z^{L}_j| \geq \bm{Z}^{(k)}_L(\eta)  \big\}
%     $

% \item 
%     \notationidx{notation-J-B-x-jump-number}{$\mathcal{J}(B;x)$}:
%     $
% \mathcal{J}(B;x)\delequal \min\{ k \geq 0: B\cap \mathbb{D}^{(k)}_{\{x\}}\neq \emptyset \}.
%     $

% \item 
%     \notationidx{notation-jump-number-J-b-B-x}{$\mathcal{J}_b(B;x)$}:
%     $\mathcal{J}_b(B;x)\delequal \min\{ k \geq 0: B\cap \mathbb{D}^{(k)|b}_{\{x\}}\neq \emptyset \}.$

\item  
    \notationidx{notation-J-*-first-exit-analysis}{$\mathcal{J}^I_b$}:
    $
    \mathcal{J}^I_b \delequal \min\big\{ k \geq 1:\ \mathcal G^{(k)|b} \cap I^\complement \neq \emptyset \big\}.
    $
    The ``discretized width'' metric for $I$ w.r.t.\ truncation threshold $b$.

\linkdest{location, notation index K}

% \item 
%     $\notationidx{notation-compacta-of-S-minus-C}{\mathcal K(\mathbb S\setminus \mathbb C)}$: 
%     the collection of all compact sets of $\mathbb S\setminus \mathbb C$

\linkdest{location, notation index L}

% \item 
%     $\notationidx{notation-r-radius-of-exit-domain}{l}$:
%     ${l} \delequal \inf_{x \in I^\complement}|x| = |s_\text{left}| \wedge s_\text{right}$

% \item 
%     $\notationidx{notation-r-i-radius-of-I-i}{l_i}: {l_i} \delequal{} \inf_{x \in I_i^\complement}|x - m_i|
%     = 
%     |m_i - s_{i-1}| \wedge |s_i - m_i|$

% \item 
%     $\notationidx{notation-l-i-j}{l_{i,j}}: {l_{i,j}} \delequal \inf_{x \in I_j}|x - m_i| =
%     \begin{cases}
%         s_{j-1} - m_i & \text{if }\ j > i \\
%         m_i - s_j & \text{if }\ j < i
%     \end{cases}$

\item
    \notationidx{notation-levy-process}{$\bm L$}:
    $\bm{L} = \{\bm L_t: t \geq 0\}$ is the L\'evy process taking values in $\R^m$ with the generating triplet $(c_{\bm L},\bm \Sigma_{\bm L},\nu)$ where $c_{\bm L} \in \mathbb{R}^m$ is the drift parameter, 
    $\bm \Sigma_{\bm L}$ is the positive semi-definite matrix that dictates the magnitude of the Brownian motion term in $\bm L_t$, and $\nu$ is the L\'evy measure.

% \item
%     \notationidx{notation-scaled-levy-process}{$\bar{\bm L}^\eta$}:
%     $\bar{\bm L}^\eta \delequal \big\{ \bar{L}^\eta_t = \eta L_{t/\eta}:\ t \in [0,1]\big\}$

% \item
%     \notationidx{notation-L-larger-than-delta-eta}{$L^{>\frac{\delta}{\eta}}_t$}:
%      $L^{>\frac{\delta}{\eta}}_t \delequal{} \sum_{0 \leq s \leq t}\Delta L_s\mathbbm{I}\{ |\Delta L_s| > \delta/\eta\}$
     
%  \item
%     \notationidx{notation-L-larger-than-delta-eta-scaled}{$\bar{L}^{\eta,>\frac{\delta}{\eta}}_t$}:
%      $\bar{L}^{\eta,>\frac{\delta}{\eta}}_t= \eta \cdot L^{>\frac{\delta}{\eta}}_{t/\eta} = \eta\sum_{0 \leq s \leq \frac{t}{\eta}}\Delta L_s\mathbbm{I}\{ |\Delta L_s| > \delta/\eta  \}$

% \item
%     \notationidx{notation-L-smaller-than-delta-eta-N}{$L^{(N,\frac{\delta}{\eta}]}_t$}:
%     $L^{(N,\frac{\delta}{\eta}]}_t \delequal{} -\mu_L(N)t + \sum_{0 \leq s \leq t}\Delta L_s\mathbbm{I}\{ |\Delta L_s| \in (N,\delta/\eta]  \}$ where $\mu_L(z) \delequal{} \int_{|x| > z}x\nu(dx)$
    
% \item 
%     \notationidx{notation-L-smaller-than-delta-eta-N-scaled}{$\bar{L}^{\eta,(N,\frac{\delta}{\eta}] }_t$}:
%     $\bar{L}^{\eta,(N,\frac{\delta}{\eta}] }_t = \eta \cdot L^{(N,\frac{\delta}{\eta}]}_{t/\eta}$

\item 
    \notationidx{notation-lebesgue-measure-restricted}{$\mathcal{L}_t$}: 
    Lebesgue measure restricted on $(0,t)$

\item 
    \notationidx{notation-lebesgue-measure-on-ordered-[0,t]}{$\mathcal{L}^{k\uparrow}_t$}:
    Lebesgue measure restricted on $(0,t)^{k \uparrow}$

\item
    \notationidx{notation-measure-L-k-up-infty}{$\mathcal{L}^{k\uparrow}_\infty$}:
    Lebesgue measure restricted on $\{ (t_1,\cdots,t_k) \in (0,\infty)^k:\ 0 < t_1 < t_2 < \cdots < t_k \}$

\item 
    \notationidx{notation-law-of-X}{$\mathscr{L}(X)$}:
    law of the random element $X$

\item 
    \notationidx{notation-law-of-X-cond-on-A}{$\mathscr{L}(X|A)$}:
    conditional law of $X$ on event $A$

% \item
%     \notationidx{notation-L-P-X-LDP}{$\mathscr L_\P(X)$}:
%     The distribution of random variable $X$ under $\P$
    
% \item
%     \notationidx{notation-L-P-X-A-LDP}{$\mathscr L_\P(X|A)$}:
%     The distribution of random variable $X$ conditioning on event $A$ under $\P$

% \item 
%     \notationidx{notation-scale-function-lambda}{$\lambda^k(\eta)$}:
%     $\lambda^k(\eta)\delequal \Big(\frac{H(1/\eta)}{\eta}\Big)^{k} \in \RV_{k(\alpha - 1)}(\eta)$
\item 
    $\notationidx{notation-lambda-scale-function}{\lambda(\eta)}$:
    $
    {\lambda(\eta)} \delequal \eta^{-1}H(\eta^{-1}) \in \RV_{\alpha -1}(\eta)
    $
    as $\eta \downarrow 0$. 
    
\item 
    \notationidx{notation-scale-function-lambda-L}{$\lambda_L(\eta;\beta)$}:
    $
    {\lambda_L(\eta;\beta)} \delequal \eta^{-\beta}H_L(\eta^{-1}) \in \RV_{\alpha-1}(\eta)
    $ as $\eta \downarrow 0$

% \item 
%     $\notationidx{notation-scale-function-lambda-*-b}{\lambda^*_b(\eta)}: 
%     {\lambda^*_b(\eta)} \delequal \eta \cdot \lambda^{ \mathcal J^I_b(V) }(\eta) \in \RV_{ \mathcal J^I_b(V)\cdot (\alpha-1)  + 1 }(\eta).$

% \item
%     $\notationidx{notation-scale-function-metastability-SDE}{\lambda^*_{b;L}(\eta)}: {\lambda^*_{b;L}(\eta)}\delequal \big(\lambda_L(\eta)\big)^{ \mathcal J^I_b(V) } \in \RV_{ \mathcal J^I_b(V)\cdot (\alpha - 1) }(\eta).$
    
\linkdest{location, notation index M}
    
\item
    \notationidx{notation-M-S-exclude-C}{$\mathbb{M}(\mathbb{S}\setminus \mathbb{C})$}:
    $\mathbb{M}(\mathbb{S}\setminus \mathbb{C})
    \delequal 
    \{
    \nu(\cdot)\text{ is a Borel measure on }\mathbb{S}\setminus \mathbb{C} :\ \nu(\mathbb{S}\setminus \mathbb{C}^r) < \infty\ \forall r > 0
    \}.$

% \item 
%     $\notationidx{notation-M-+-S-minus-C}{\mathbb M_+(\mathbb S \setminus \mathbb C)}:\ 
%     {\mathbb M_+(\mathbb S \setminus \mathbb C)}
%     \delequal 
%     \{
%          \nu(\cdot)\text{ is a Borel measure on }\mathbb{S}\setminus \mathbb{C}:\ 
%          \nu(K) < \infty\ \forall K \in \mathcal K(\mathbb S\setminus \mathbb C)
%     \}$
    
% \item
%     \notationidx{notation-M-convergence}{$\mathbb{M}(\mathbb{S}\setminus \mathbb{C})$-convergence}:
%     $\mu_n(f) \rightarrow \mu(f)$ for any $f \in \mathcal{C}({ \mathbb{S}\setminus \mathbb{C} })$

% \item
%     \notationidx{notation-uniform-M-convergence}{$\mathbb{M}(\mathbb{S}\setminus \mathbb{C})$-convergence uniformly over $\Theta$}:
%     $ \lim_{\eta \downarrow 0}\sup_{\theta \in \Theta}|\mu^\eta_\theta(f) - \mu_\theta(f)| = 0$ for any $f \in \mathcal{C}({ \mathbb{S}\setminus \mathbb{C} })$

\linkdest{location, notation index N}

\item
    \notationidx{notation-non-negative-numbers}{$\mathbb N$}:
    $\mathbb{N} = \{0,1,2,\cdots\}$

\item 
    $\notationidx{notation-R-d-unit-sphere}{\mathfrak N_d}:\ {\mathfrak N_d} \delequal \{\bm x \in \R^d:\ \norm{\bm x} = 1\}$,
    unit sphere in $\R^d$

\item 
    \notationidx{notation-measure-nu-alpha}{$\nu_\alpha$}: $\nu_\alpha[x,\infty) = x^{-\alpha}$

% \item
%     $\notationidx{notation-nu-k-alpha}{\nu_\alpha^k(\cdot)}$:
%     $k$-fold product measure of $\nu_\alpha$.

\linkdest{location, notation index O}
\linkdest{location, notation index P}

% \item 
%     \notationidx{notation-p-plus-and-minus}{$p^{(+)},\ p^{(-)}$}: 
%     $\lim_{x \rightarrow \infty}H^{(+)}(x)\big/H(x) = p^{(+)},\ \lim_{x \rightarrow \infty}H^{(-)}(x)\big/H(x) = p^{(-)}$.

\item 
    \notationidx{notation-truncation-operator-level-b}{$\varphi_c$}:
    ${\varphi_c}(\bm w) 
    \delequal{} 
    \Big(\frac{c}{\norm{\bm w}} \wedge 1\Big)\cdot \bm w$; truncation operator at level $c > 0$

\item 
    $\notationidx{notation-Phi-polar-transform}{\Phi(\bm x)}:\ {\Phi(\bm x)} \delequal \big(\norm{\bm x},\frac{\bm x}{\norm{\bm x}}\big)$ for all $\bm x \neq 0$; polar transform

\linkdest{location, notation index Q}

% \item 
%     $\notationidx{notation-q-b-i}{q_b(i)}: {q_b(i)} \delequal \widecheck{\mathbf C}^{ ( \mathcal J^I_b(i) )|b }( I_i^c;m_i)$

% \item 
%     $\notationidx{notation-q-b-i,j}{q_{b}(i,j)}: {q_{b}(i,j)} \delequal \widecheck{\mathbf C}^{ ( \mathcal J^I_b(i) )|b }( I_j;m_i)$

% % \item 
% %     $\notationidx{notation-q-i}{q(i)}: q(i) \delequal \widecheck{\mathbf C}( I_i^c;m_i)$

% \item 
%     $\notationidx{notation-q-i-j}{q(i,j)}: {q(i,j)} \delequal \widecheck{\mathbf C}( I_j;m_i)$

\linkdest{location, notation index R}

\item 
    \notationidx{notation-rho-LDP}{$\rho$}:
    $\rho \delequal \exp(D)$;  $D$ is the constant in Assumption \ref{assumption: lipschitz continuity of drift and diffusion coefficients}

% \item 
%     \notationidx{notation-rho-t-LDP}{$\rho(t)$}:
%     $\rho(t) \delequal \exp(Dt)$; $D$ is the constant in Assumption \ref{assumption: lipschitz continuity of drift and diffusion coefficients}
    
% \item 
%     \notationidx{notation-rho-j}{$\rho^{(j)}$}:
%     $\rho^{(j)} = (6\rho D)^{j+1}$

% \item 
%     \notationidx{notation-rho-*}{$\rho^*$}:
%     $\rho^* \delequal 4\cdot\big[ 6 \rho(T) \cdot (L \vee 1) \big]^{k}$
    
% \item 
%     \notationidx{notation-rho-*-b}{$\rho^*_b$}:
%     $\rho^*_b \delequal 4\cdot\big[ 6 \rho(T) \cdot (L \vee 1)(b \vee 1) \big]^{k}$

\item 
    \notationidx{notation-RV-LDP}{$\RV_\beta$}:
    $\phi \in \RV_\beta$ (as $x \rightarrow \infty$) if $\lim_{x \rightarrow \infty}\phi(tx)/\phi(x) = t^\beta$ for any $t>0$;
    $\phi \in \RV_\beta(\eta)$ (as $\eta \downarrow 0$) if $\lim_{\eta \downarrow 0}\phi(t\eta)/\phi(\eta) = t^\beta$ for any $t>0$

\item  
    \notationidx{notation-R-eta-b-epsilon-return-time}{$R^{\eta|b}_\epsilon(\bm x)$}:
    $
    {R^{\eta|b}_\epsilon(\bm x)}  \delequal \min\big\{ j \geq 0:\ \norm{\bm X^{\eta|b}_j(\bm x)} < \epsilon \big\}
    $ 

% \item 
%     $\notationidx{notation-return-time-to-I-i}{R_{i;\epsilon}^{\eta|b}(x)}: 
%     {R_{i;\epsilon}^{\eta|b}(x)} \delequal \min\{ j \geq 0:\ X^{\eta|b}_j(x) \in (m_i - \epsilon,m_i + \epsilon) \}$

\linkdest{location, notation index S}

% \item 
%     $\notationidx{notation-S-delta-boundary-set}{S(\delta)}: {S(\delta)} \delequal \bigcup_{i \in [n_\text{min} - 1]}[s_i - \delta,s_i + \delta]$

\item 
    \notationidx{sigma}{$\bm \sigma$}: diffusion coefficient $\bm \sigma: \mathbb{R}^m \to \mathbb{R}^{m\times d}$
    
\item
    \notationidx{sigma-M}{$\bm \sigma_M$}: diffusion coefficient with the argument $\bm x$ projected onto the ball $\{ \bm x \in \R^m:\ \norm{\bm x} \leq M \}$

% \item 
%      $\notationidx{notation-sigma-eta-b-i-epsilon}{\sigma^{\eta|b}_{i;\epsilon}(x)}:
%      {\sigma^{\eta|b}_{i;\epsilon}(x)} \delequal \min\{j \geq 0:\ X^{\eta|b}_j(x) \in \bigcup_{ l \neq i }(m_l - \epsilon,m_l + \epsilon)\}$
    
% \item 
%     \notationidx{notation-bar-sigma}{$\sigma(0)$}:
%     $
%     \sigma(0) \delequal \sigma(0)
%     $

\item 
    \notationidx{notation-support-of-function-g}{$\text{supp} (g)$}:
$\text{supp} (g) \delequal \big(\{ x \in \mathbb S:\ g(x) \neq 0 \}\big)^-$;
support of $g: \mathbb S \to \mathbb{R}$

\item 
    \notationidx{notation-support-of-mu}{$\text{supp}(\mu)$}: the smallest closed set $C$ such that $\mu(\S \setminus C)= 0$

\item 
    \notationidx{notation-borel-sigma-algebra}{$\mathscr{S}_\mathbb{S}$}:
    Borel $\sigma$-algebra of the metric space $(\mathbb{S},\bm{d})$

\linkdest{location, notation index T}
% \item 
%     \notationidx{notation-constant-T-LDP}{$T$}:
%     A constant $T \in (0,\infty)$ that is fixed at the beginning of Section \ref{subsec: LDP unclipped, proof of main results} and \ref{subsec: LDP clipped, proof of main results}

\item
    $
    \notationidx{notation-hitting-time-t-x-epsilon}{\bm t_{\bm x}(\epsilon)}:\ {\bm t_{\bm x}(\epsilon)} \delequal \inf\Big\{ t \geq 0:\ \bm y_t(\bm x) \in \bar B_{\epsilon}(\bm 0) \Big\}
    $

\item 
    $\notationidx{notation-t-epsilon-ode-return-time}{ \bm{t}(\epsilon) }:
        { \bm{t}(\epsilon) } \delequal \sup\Big\{ \bm t_{\bm x}(\epsilon):\ \bm x \in (I_\epsilon)^-   \Big\}$

\item 
    \notationidx{notation-large-jump-time}{$\tau^{>\delta}_i(\eta)$}:
    $\tau^{>\delta}_i(\eta) \delequal{} \min\{ n > \tau^{>\delta}_{i-1}(\eta):\ \eta\norm{\bm Z_j} > \delta  \},\ \tau^{>\delta}_0(\eta) = 0$; arrival time of $j$\textsuperscript{th} large jump

\item   
    \notationidx{notation-tau-eta-x-first-exit-time}{$\tau^\eta(\bm x)$}:
    $
    \tau^\eta(\bm x) \delequal \min\big\{j \geq 0:\ \bm X^\eta_j(\bm x) \notin I\big\}
    $

\item  
    \notationidx{notation-tau-eta-b-x-first-exit-time}{$\tau^{\eta|b}(\bm x)$}:
    $
    \tau^{\eta|b}(\bm x) \delequal \min\big\{j \geq 0:\ \bm X^{\eta|b}_j(\bm x) \notin I \big\}
    $

\item  
    \notationidx{notation-tau-eta-b-epsilon-exit-time}{$\tau^{\eta|b}_\epsilon(\bm x)$}:
    $
    {\tau^{\eta|b}_\epsilon(\bm x)} \delequal \min\big\{ j \geq 0:\ \bm X^{\eta|b}_j(\bm x) \notin I_\epsilon \big\}
    $

% \item 
%     $\notationidx{notation-transition-time-metastability}{\hat \tau^{\eta,\epsilon|b}_k(x)}: {\hat \tau^{\eta,\epsilon|b}_k(x)} \delequal \min\Big\{ j \geq \hat \tau^{\eta,\epsilon|b}_{k-1}(x):\ X^{\eta|b}_j(x) \in \bigcup_{i \neq \hat{\mathcal I}^{\eta,\epsilon|b}_{k-1}(x)}(m_i - \epsilon,m_i+\epsilon) \Big\}$,
%     the $k$-th transition time to a different $\epsilon$-neighborhood of local minima $m_i$

% \item
%     \notationidx{notation-tilde-tau-L-i}{$\widetilde{\tau}^L_i$}:
%     $
%     \widetilde{\tau}^L_i \delequal \min\{ t > \widetilde{\tau}^L_{i-1}:\ |\Delta L_t| > 1 \}
%     $

% \item 
%     \notationidx{notation-tau-eta-leq-k-L-i}{$ \tau^{\eta;(k)}_L(i)$}:
%     $
%     \tau^{\eta;(k)}_L(i) \delequal \widetilde{\tau}^{L}_{J^{\eta;(k)}_L(i)}
%     $

\linkdest{location, notation index U}

\item 
    \notationidx{notation-U-j-t}{$U_j$}:
    iid copies of Unif$(0,1)$
    
\item  
    \notationidx{notation-U-j-k-LDP}{$U_{(j;k)}$}:
    $0 \leq U_{(1;k)} \leq U_{(2;k)} \leq \cdots \leq U_{(k;k)}$;
    the order statistics of iid $\big(U_j\big)_{j = 1}^k$

\linkdest{location, notation index V}

% \item 
%     $\notationidx{notation-V-of-G-b}{V}: V = \{m_1,\cdots,m_{ n_\text{min} }\}$

% \item
%     $\notationidx{notation-V-*-b}{V^*_b}: V^*_b \delequal \{m_i:\ \mathcal J^I_b(i) = \mathcal{J}^I_b(V)\}$

\linkdest{location, notation index W}

% \item 
%     \notationidx{notation-w-plus-limit-for-LDP-of-stationary-dist}{$w^{(+)}(t;\gamma)$},\notationidx{notation-w-minus-limit-for-LDP-of-stationary-dist}{$\ w^{(-)}(t;\gamma)$}:
%     $w^{(+)}(t;\gamma) = \inf\big\{ y > 0:\ \bm{y}_t(y) \geq \gamma \big\};\ \ \ 
%     w^{(-)}(t;\gamma) = \inf\big\{ y > 0:\ \bm{y}_t(-y) \leq -\gamma \big\}.$
   
\item 
    \notationidx{notation-large-jump-size}{$\bm W^{>\delta}_i(\eta)$}: 
    $\bm W^{>\delta}_i(\eta) \delequal{} \bm Z_{\tau^{>\delta}_i(\eta)}$; size of $j$\textsuperscript{th} large jump, i.e., with size above threshold $\delta/\eta$
   
\item 
    \notationidx{notation-W-*_j}{$\bm W^*_j(\cdot)$}:
    iid copies of $\bm W^*(c)$ defined in \eqref{def: prob measure Q, LDP}
    
% \item
%     \notationidx{W-leq-j-i-n}{$W^{(j)}_i(\eta)$}:
%     $W^{(j)}_i(\eta) \delequal Z_{ \tau^{(j)}_i(\eta) }$

\linkdest{location, notation index X}

\item 
    \notationidx{notation-discrete-gradient-descent}{$\bm{x}^\eta_{j}(x)$}: (deterministic) difference equation
    $\bm{x}^\eta_{j}(x) = \bm{x}^\eta_{j-1}(x) + \eta \bm a\big(\bm{x}^\eta_{j-1}(x) \big)$ for any $j \geq 1$ with initial condition $\bm{x}^\eta_{0}(x) = x$.

\item 
    \notationidx{notation-breve-X-eta-delta-t}{$\breve{\bm X}^{\eta,\delta}_t(\bm x)$}: ODE that coincides with $\bm X^{\eta}_{\floor{t/\eta} }(\bm x)$ at times $t = \eta \tau^{>\delta}_i(\eta)$, $i=1,2,\ldots$.
    
\item  
    \notationidx{notation-breve-X-eta-b-delta-t}{$\breve{\bm X}^{\eta|b;\delta}_t(\bm x)$}:
    ODE that coincides with $\bm X^{\eta|b}_{\floor{t/\eta} }(\bm x)$ at times $t = \eta \tau^{>\delta}_i(\eta)$, $i=1,2,\ldots$.

\item
    \notationidx{notation-hat-X-clip-b-top-j-jumps}{$\hat{\bm{X}}^{\eta|b; >\delta }(\bm x)$}:
    ODE perturbed by $\bm W^{>\delta}_i(\eta)$'s, with sizes modulated by $\bm \sigma(\cdot)$ and truncated under $b$.
    
% \item 
%     $\notationidx{notation-marker-process-metastability-hat-X}{\hat X^{\eta,\epsilon|b}_t(x)}$:
%      ${\hat X^{\eta,\epsilon|b}_t(x)}$ is the $\Big( \Big( \big({\hat \tau^{\eta,\epsilon|b}_k(x) - \hat \tau^{\eta,\epsilon|b}_{k-1}(x)}\big) \cdot { \lambda^*_b(\eta) } \Big)_{k \geq 1},\big( m_{ \hat{\mathcal I}^{\eta,\epsilon|b}_k(x) } \big)_{k \geq 1} \Big)$ jump process.
    
\item 
    \notationidx{notation-X-j-eta-x}{$\bm X^\eta_j(x)$}: $\bm X^\eta_0(\bm x) = \bm x;\ 
    \bm X^\eta_j(\bm x) = \bm X^\eta_{j - 1}(\bm x) +  \eta \bm a\big(\bm X^\eta_{j - 1}(\bm x)\big) + \eta\bm \sigma\big(\bm X^\eta_{j - 1}(\bm x)\big)\bm Z_j,\ \ \forall j \geq 1$

% \item 
%     \notationidx{notation-X-eta-x-cont-time-version}{$X^\eta(s;x)$}:
%     $X^\eta(s;x) = X^\eta_{\floor{s}}(x)$

\item 
    \notationidx{notation-scaled-X-0T-eta-LDP}{$\bm{X}^\eta_{[0,T]}(x)$}:
    $\bm{X}_{[0,T]}^\eta(x) \delequal \big\{ \bm X^\eta_{ \floor{ t/\eta } }(x):\ t \in [0,T] \big\}$

\item 
    \notationidx{notation-scaled-X-eta-LDP}{$\bm{X}^\eta(x)$}: $\bm{X}^\eta(x) = \bm{X}_{[0,1]}^\eta(x) \delequal \big\{ \bm X^\eta_{ \floor{ t/\eta } }(x):\ t \in [0,1] \big\}$

% \item 
%     \notationidx{notation-scaled-X-eta-mu-LDP}{$\bm{X}^\eta(x)$}:
%     $\bm{X}^\eta(x) = \bm{X}^\eta(\bm{\delta}_x)$ for any $x \in \mathbb{R}$   

\item 
    \notationidx{notation-X-eta-j-truncation-b-LDP}{$\bm X^{\eta|b}_j(\bm x)$}:
    $\bm X^{\eta|b}_j(\bm x)= \bm X^{\eta|b}_{j - 1}(\bm x)+  \varphi_b\Big(\eta \big[\bm a\big(\bm X^{\eta|b}_{j - 1}(\bm x)\big) + \bm \sigma\big(\bm X^{\eta|b}_{j - 1}(\bm x)\big)\bm Z_j\big]\Big)\ \ \forall j \geq 1$

% \item 
%     \notationidx{notation-X-j-eta-b-x}{$X^{\eta|b}_j(x)$}:
%     $X^{\eta|b}_0(x) = x;\ X^{\eta|b}_j(x) = X^{\eta|b}_{j-1}(x) + \varphi_b\Big(\eta a\big(X^{\eta|b}_{j-1}(x)\big) + \eta \sigma\big(X^{\eta|b}_{j-1}(x)\big)Z_j\Big)$

% \item 
%     \notationidx{notation-X-eta-j-mu-truncation-b-LDP}{$X^{\eta|b}_j(\mu)$}:
%     $X^{\eta|b}_0(\mu) \stackrel{d}{\sim}\mu;\ \ X^{\eta|b}_j(\mu) = X^{\eta|b}_{j - 1}(\mu) +  \varphi_b\Big(\eta \big[a\big(X^{\eta|b}_{j - 1}(\mu)\big) + \sigma\big(X^{\eta|b}_{j - 1}(\mu)\big)Z_j\big]\Big)\ \ \forall j \geq 1.$
    
% \item 
%     \notationidx{notation-X-eta-b-M-LDP}{$X^{\eta|b}_{M\downarrow}(j)$}:
%     $ X^{\eta|b}_{M\downarrow}(j) = X^{\eta|b}_{M\downarrow}(j - 1) + \varphi_b\Big( \eta a_M\big( X^{\eta|b}_{M\downarrow}(j- 1) \big) + \eta \sigma_M\big( X^{\eta|b}_{M\downarrow}(j- 1) \big)Z_j \Big)\ \ \forall j \geq 1$;
%     The main difference between $X^{\eta|b}_{M\downarrow}$ and $X^{\eta|b}$ is that $X^{\eta|b}_{M\downarrow}$ is constructed under the truncated $a_M(\cdot),\sigma_M(\cdot)$

\item 
    \notationidx{notation-scaled-X-eta-mu-truncation-b-LDP}{${\bm{X}}^{\eta|b}_{[0,T]}(\bm x)$}:
    $ {\bm{X}}^{\eta|b}_{[0,T]}(\bm x) \delequal \big\{ \bm X^{\eta|b}_{ \floor{ t/\eta } }(\bm x):\ t \in [0,T] \big\}$

\item 
    \notationidx{notation-scaled-X-eta-mu-truncation-b-LDP-T=1}{${\bm{X}}^{\eta|b}(\bm x)$}:
    ${\bm{X}}^{\eta|b}(\bm x) = {\bm{X}}^{\eta|b}_{[0,1]}(\bm x) \delequal \big\{ \bm X^{\eta|b}_{ \floor{ t/\eta } }(\bm x):\ t \in [0,1] \big\}$

% \item 
%     \notationidx{notation-scaled-X-eta-mu-truncation-b-LDP}{${\bm{X}}^{\eta|b}(x)$}:
%     ${\bm{X}}^{\eta|b}(x) \delequal \big\{ X^{\eta|b}_{ \floor{ s/\eta } }(x):\ s \geq 0 \big\}$ for any $x \in \mathbb{R}$

% \item
%     \notationidx{notation-scaled-X-eta-b-M}{$\bm{X}^{\eta|b}_{M\downarrow}(x)$}:
% $\bm{X}^{\eta|b}_{M\downarrow}(x) \delequal \big\{ \bm{X}^{\eta|b}_{M\downarrow}(s;x) = X^{\eta|b}_{M\downarrow}\big(\floor{s/\eta};x\big):\ s \in [0,t] \big\}$

\linkdest{location, notation index Y}

\item 
    \notationidx{notation-continuous-gradient-descent}{$\bm{y}_t(\bm x)$}:  ODE path
    $\frac{d\bm{y}_t(\bm x)}{dt} = \bm a\big(\bm{y}_t(\bm x)\big)$ for any $t > 0$
    with initial condition $\bm{y}_0(\bm x) = \bm x$.

\item
    \notationidx{notation-Y-eta-SDE}{$\bm Y^\eta_t(\bm x)$}:
    $d\bm Y^\eta_t(\bm x) 
    =
    \bm a\big(\bm Y^\eta_{t-}(\bm x)\big)dt
        +
    \bm \sigma\big(\bm Y^\eta_{t-}(\bm x)\big) d\bar{\bm L}^\eta_t$
    
\item 
    \notationidx{notation-bm-Y-0T-eta-SDE}{$\bm Y^\eta_{[0,T]}(\bm x)$}:
    $
    {\bm Y^\eta_{[0,T]}(\bm x)} = \{\bm Y^\eta_t(\bm x):\ t\in[0,T]\}
    $

\item 
    $\notationidx{notation-bm-Y-eta-SDE}{\bm Y^\eta(\bm x)} = \{\bm Y^\eta_t(\bm x):\ t\in[0,1]\}$

\item
    \notationidx{notation-Y-eta-b-SDE}{$\bm Y^{\eta|b}_t(\bm x)$}:
    A modified version of the SDE $\bm Y^\eta_t(\bm x)$ with each discontinuity truncated under $b$
    
\item
    \notationidx{notation-bm-Y-0T-eta-b-SDE}{$\bm{Y}^{\eta|b}_{[0,T]}(\bm x)$}:
    $
    {\bm{Y}^{\eta|b}_{[0,T]}(\bm x)} \delequal \big\{\bm Y^{\eta|b}_t(\bm x):\ t \in [0,T]\big\}
    $

\item 
    $\notationidx{notation-bm-Y-eta-b-SDE}{\bm Y^{\eta|b}(\bm x)}= \{\bm Y^{\eta|b}_t(\bm x):\ t\in[0,1]\}$

\linkdest{location, notation index Z}

\item 
    \notationidx{notation-Z-iid-noise-LDP}{$\bm Z_j$}:
    $(\bm Z_j)_{j \geq 1}$ is a sequence of iid copies of a random vector $\bm Z$ such that
    $\E \bm Z = \bm 0$ and 
    the multivariate regular variation assumption (i.e., Assumption~\ref{assumption gradient noise heavy-tailed}) holds for the law of $\bm Z$.

%   \item \hyperlink{{notation-Y-GD-perturbed-by-large-jump-LDP}}{$Y^{\eta,\delta}_j(x)$}:
%   $ Y^{\eta,\delta}_j(x) = Y^{\eta,\delta}_{j-1}(x) +  \eta a\big(Y^{\eta,\delta}_{j-1}(x)\big) + \eta\sigma\big(Y^{\eta,\delta}_{j-1}(x)\big)\sum_{k \geq 1}W^{>\delta}_{k}(\eta)\mathbbm{I}\{j = \tau^{>\delta}_k(\eta)\}$

\end{itemize}
\fi

\ifshowtheoremtree
\newpage
\footnotesize
\newgeometry{left=1cm,right=1cm,top=0.5cm,bottom=1.5cm}

\section*{\linkdest{location of theorem tree}Theorem Tree}
\begin{thmdependence}[leftmargin=*]

\thmtreenode{-}
    {Theorem}{theorem: portmanteau, uniform M convergence}
    {0.8}{Portmanteau Theorem for uniform $\M(\S\setminus\C)$-convergence}
\bigskip

\thmtreenodewopf{}
    {Lemma}{lemma: asymptotic equivalence when bounded away, equivalence of M convergence}
    {0.8}{
        asympt.\ equiv.\ of $X_n$ and $Y_n$ w.r.t.\\ $\epsilon_n^{-1}$ in $\M(\mathbb S\setminus\mathbb{C})$ implies the same $\M$-convergence of $\epsilon_n^{-1}\P(X_n\in \cdot)$ and $\epsilon_n^{-1}\P(Y_n\in \cdot)$.%
    }
    % \begin{thmdependence}
    % \thmtreenodewopf{}
    %     {Theorem}{thm: portmanteau, M convergence}
    %     {0.8}{Portmanteau Theorem for $\M(\S\setminus\C)$-convergence}
    % \end{thmdependence}
\bigskip

% \thmtreenode{\complete}
%     {Theorem}{theorem: sample path LDP, unclipped}
%     {0.8}{
%         [A\ref{assumption gradient noise heavy-tailed},\ref{assumption: lipschitz continuity of drift and diffusion coefficients},\ref{assumption: boundedness of drift and diffusion coefficients}]
%         Sample Path Large Deviations for SGD $\bm X^\eta$
%     }
%     \begin{thmdependence}
%     \thmtreeref
%         {Theorem}{thm: portmanteau, M convergence}
        
%     \thmtreeref
%         {Theorem}{theorem: LDP 1, unclipped}
%     \end{thmdependence}
% \bigskip

\thmtreenode{\complete}
    {Theorem}{theorem: LDP 1, unclipped}
    {0.8}{
    [A\ref{assumption gradient noise heavy-tailed},\ref{assumption: lipschitz continuity of drift and diffusion coefficients},\ref{assumption: nondegeneracy of diffusion coefficients},\ref{assumption: boundedness of drift and diffusion coefficients}]
    Sample Path Large Deviations for SGD $\bm X^\eta$.
    $\P ({\bm X}^{\eta}(x)\in \cdot)/\lambda^k(\eta) \to \mathbf{C}^{(k)}(\cdot;x)$ in $\mathbb{M}(\D \setminus \D_A^{(k-1)})$ uniformly in $x$ on any compact set $A$ 
    }
    \begin{thmdependence}
    \thmtreenode{-}
            {Lemma}
            {lemma: continuity of h k b mapping}
            {0.8}{
            [A\ref{assumption: lipschitz continuity of drift and diffusion coefficients},\ref{assumption: boundedness of drift and diffusion coefficients}]
            $h^{(k)}_{[0,T]}$ is continuous on $\R\times \R^k \times (0,T)^{k\uparrow}$.
            }
        \begin{thmdependence}
        \thmtreenode{\issue}
            {Lemma}
            {lemma: continuity of h k b mapping clipped}
            {0.8}{
                [A\ref{assumption: lipschitz continuity of drift and diffusion coefficients},\ref{assumption: nondegeneracy of diffusion coefficients}] 
                $h^{(k)|b}$ is continuous.
            }
            \begin{thmdependence}
                 \thmtreenode{-}
            {Corollary}{corollary: existence of M 0 bar delta bar epsilon, clipped case, LDP}
            {0.8}{
                [A\ref{assumption: lipschitz continuity of drift and diffusion coefficients},\ref{assumption: nondegeneracy of diffusion coefficients}] 
                If $d(B, \D_{A|b}^{(k-1)}) >0$, the shocks of the paths in $B \cap \D_{A}^{(k)|b}$ are bounded away from 0. 
            }
        
            \begin{thmdependence}
            \thmtreenode{-}
                {Lemma}{lemma: boundedness of k jump set under truncation, LDP clipped}
                {0.8}{
                    $\sup_{\xi\in \D_{A}^{(k)|b}} \|\xi\| < \infty$ for any $b\in(0,\infty)$, $k\in \mathbb N$, and compact set $A$.
                }
            \end{thmdependence}
            \end{thmdependence}
        \end{thmdependence}

    \thmtreenode{-}
            {Lemma}{lemma: LDP, bar epsilon and delta}
            {0.8}{
                [A\ref{assumption: lipschitz continuity of drift and diffusion coefficients},\ref{assumption: boundedness of drift and diffusion coefficients}] 
                If $d(B, \D_{A}^{(k-1)}) >0$, the shocks of the paths in $B \cap \D_{A}^{(k)}$ are bounded away from 0. 
            }

    \thmtreenode{-}
        {Lemma}{lemma: sequential compactness for limiting measures, LD of SGD}{0.8}
        {
        Verify $\lim_{n \to \infty}\mathbf C^{(k)}(f;x_{n})
            =
            \mathbf C^{(k)}(f;x^*)$
            and
            $\lim_{n \to \infty}\mathbf C^{(k)|b}(f;x_{n})
            =
            \mathbf C^{(k)|b}(f;x^*)$
        }
        \begin{thmdependence}
            \thmtreeref
                {Lemma}{lemma: continuity of h k b mapping}
            \thmtreeref
                {Lemma}{lemma: continuity of h k b mapping clipped}
            \thmtreeref
                {Lemma}{lemma: LDP, bar epsilon and delta}
            \thmtreenode{-}
                {Lemma}
                {lemma: LDP, bar epsilon and delta, clipped version}
                {0.8}{
                [A\ref{assumption: lipschitz continuity of drift and diffusion coefficients}, A\ref{assumption: boundedness of drift and diffusion coefficients}]
                If $d(B, \D_{A|b}^{(k-1)}) >0$, the shocks of the paths in $B \cap \D_{A}^{(k)|b}$ are bounded away from 0.
                }
        \end{thmdependence}

    \thmtreeref
        {Theorem}{theorem: portmanteau, uniform M convergence}

    \thmtreeref
        {Proposition}{proposition: standard M convergence, LDP unclipped}

    \end{thmdependence}

\bigskip

\thmtreenode{\complete}
    {Theorem}{corollary: LDP 2}
    {0.8}{
    [A\ref{assumption gradient noise heavy-tailed},\ref{assumption: lipschitz continuity of drift and diffusion coefficients},\ref{assumption: nondegeneracy of diffusion coefficients}]
    Sample path large deviations for $\bm X^{\eta|b}$.
    $\P ({\bm X}^{\eta|b}(x)\in \cdot)/\lambda^k(\eta) \to \mathbf{C}_b^{(k)}(\cdot;x)$ in $\mathbb{M}(\D \setminus \D_{A|b}^{(k-1)})$ uniformly in $x$ on any compact set $A$ 
    }
    
    \begin{thmdependence}
        \thmtreeref
            {Proposition}{proposition: standard M convergence, LDP clipped}
        \thmtreeref
            {Lemma}{lemma: sequential compactness for limiting measures, LD of SGD}
        \thmtreeref
            {Lemma}{lemma: LDP, bar epsilon and delta, clipped version}
        \thmtreeref
            {Theorem}{theorem: portmanteau, uniform M convergence}
    \end{thmdependence}

\bigskip

\thmtreenode{-}
    {Proposition}{proposition: standard M convergence, LDP unclipped}
    {0.8}{
    [A\ref{assumption gradient noise heavy-tailed},\ref{assumption: lipschitz continuity of drift and diffusion coefficients},\ref{assumption: boundedness of drift and diffusion coefficients}]
        $\P({\bm X}^{\eta_n}(x_n)\in \cdot)/\lambda^k(\eta_n) \to \mathbf{C}^{(k)}(\cdot; x^*)$ in $\M(\D\setminus \D_{A}^{(k-1)})$ if $\eta_n\to 0$, $x_n \to x^*$, and $x_n, x^* \in A$: cpt
    }
    \begin{thmdependence}
    
    \thmtreenode{-}
    {Proposition}{proposition: standard M convergence, LDP clipped}
    {0.8}{ [A\ref{assumption gradient noise heavy-tailed},\ref{assumption: lipschitz continuity of drift and diffusion coefficients},\ref{assumption: nondegeneracy of diffusion coefficients}]
        $\P({\bm X}^{\eta_n}(x_n)\in \cdot)/\lambda^k(\eta_n) \to \mathbf{C}^{(k)}(\cdot; x^*)$ in $\M(\D\setminus \D_{A}^{(k-1)})$ if $\eta_n\to 0$, $x_n \to x^*$, and $x_n, x^* \in A$: cpt
    }
    
    \begin{thmdependence}
    \thmtreeref{Proposition}{proposition: standard M convergence, LDP clipped, stronger boundedness assumption}
    
    \thmtreeref
            {Corollary}{corollary: existence of M 0 bar delta bar epsilon, clipped case, LDP}
            % {0.8}{
            %     [A\ref{assumption: lipschitz continuity of drift and diffusion coefficients},\ref{assumption: nondegeneracy of diffusion coefficients}] 
            %     If $d(B, \D_{A|b}^{(k-1)}) >0$, the shocks of the paths in $B \cap \D_{A}^{(k)|b}$ are bounded away from 0. 
            % }
        
            % \begin{thmdependence}
            % \thmtreenode{-}
            %     {Lemma}{lemma: boundedness of k jump set under truncation, LDP clipped}
            %     {0.8}{
            %         $\sup_{\xi\in \D_{A}^{(k)|b}} \|\xi\| < \infty$ for any $b\in(0,\infty)$, $k\in \mathbb N$, and compact set $A$.
            %     }
            % \end{thmdependence}
    \end{thmdependence}

    % \thmtreeref
    %     {Theorem}{thm: portmanteau, M convergence}

    \thmtreeref{Lemma}{lemma LDP, small jump perturbation}
    
    \thmtreeref
            {Lemma}{lemma: LDP, bar epsilon and delta}
            % {0.8}{
            %     [A\ref{assumption: lipschitz continuity of drift and diffusion coefficients},\ref{assumption: boundedness of drift and diffusion coefficients}] 
            %     If $d(B, \D_{A}^{(k-1)}) >0$, the shocks of the paths in $B \cap \D_{A}^{(k)}$ are bounded away from 0. 
            % }

    \end{thmdependence}

\bigskip

\thmtreenode{-}
    {Proposition}{proposition: standard M convergence, LDP clipped, stronger boundedness assumption}
    {0.8}{ [A\ref{assumption gradient noise heavy-tailed},\ref{assumption: lipschitz continuity of drift and diffusion coefficients},\ref{assumption: boundedness of drift and diffusion coefficients}]
        $\P({\bm X}^{\eta_n|b}(x_n)\in \cdot)/\lambda^k(\eta_n) \to \mathbf{C}^{(j)}_b(\cdot; x^*)$ in $\M(\D\setminus \D_{A|b}^{(k-1)})$ 
        if $\eta_n\to 0$, $x_n \to x^*$, and $x_n, x^* \in A$: cpt
    }
    \begin{thmdependence}
        \thmtreeref
            {Lemma}{lemma: asymptotic equivalence when bounded away, equivalence of M convergence}
        
        \thmtreenode{-}
            {Proposition}{proposition: asymptotic equivalence, clipped}
            {0.8}{
                $\bm{X}^{\eta_n|b}(x_n)$ and $\hat{\bm X}^{\eta_n|b;(k)}(x_n)$ are asympt.\ equiv.\ w.r.t.\\ $\lambda^k(\eta)$ in $\M(\D\setminus \D_{A|b}^{(k-1)})$
            }
    
    \begin{thmdependence}
    
        \thmtreeref
                {Lemma}
                {lemma: LDP, bar epsilon and delta, clipped version}

        \thmtreenode{-}
            {Lemma}{lemma: SGD close to approximation x circ, LDP}
            {0.8}
            {
                (time-scaled) SGD $X^{\eta|b}_{\lfloor t/\eta \rfloor}$ is close to (slow) ODE until first large jump
            }
            \begin{thmdependence}
            \thmtreenode{-} 
                {Lemma}{lemmaBasicGronwall}
                {0.8}{
                SGD and GD are close to each other if the noises are small
                }

            \thmtreenode{-}
                {Lemma}{lemma Ode Gd Gap} 
                {0.8}{
                GD ($\bm y^\eta_{\floor{s}}(y)$) and GF ($\bm x^\eta(\cdot;x)$) are close to each other
                }

            \end{thmdependence}
        
        \thmtreenode{-}
            {Lemma}{lemma LDP, small jump perturbation} 
            {0.8}{
                \begin{minipage}[t]{\linewidth}
                (a) 
                $
                \sup_{ (W_i)_{i \geq 0} \in  \bm{\Gamma}_M}\P\Big( \max_{ j \leq \floor{t/\eta} \wedge \big(\tau^{>\delta}_{1}(\eta) - 1\big) }\ \eta\big|\sum_{i = 1}^j W_{i-1}Z_i \big| > \epsilon \Big)   
                = \bm o({\eta^N}) 
                $\\
                
                (b) 
                $
                \sup_{x \in \R} \P\Big( \big(\bigcap_{i = 1}^k A_i(\eta,\epsilon,\delta,t,x)\big)^c \Big) 
                = \bm o(\eta^N)
                $
                \end{minipage}
            }
        
        \thmtreenode{-}
            {Lemma}{lemma: SGD close to approximation x breve, LDP clipped}
            {0.8}{
                $\hat{X}^{\eta|b;>\delta}_{t}(x)$ and $X_{\lfloor t\rfloor}^{\eta|b}(x)$ are close to each other on $\cap_{i=1}^{k+1} A_i(\eta,\epsilon,\delta, x)$
            }
    \end{thmdependence}
            
    \thmtreenode{-}
        {Proposition}{proposition: uniform weak convergence, clipped}
        {0.8}{ [A\ref{assumption: boundedness of drift and diffusion coefficients}]
        % $\mathbb{M}$-convergence from $\hat{\bm X}^{\eta|b;(j)}$ to $\mathbf{C}^{(j)}_b$
        $\P(\hat{\bm X}^{\eta|b;(k)}\in \cdot)/\lambda^k(\eta) \to \mathbf{C}^{(k)|b}   $ in $\M(\D\setminus \D_{A|b}^{(k-1)})$
        }
    
        \begin{thmdependence}
        \thmtreeref{Lemma}{lemma: LDP, bar epsilon and delta, clipped version}
        \thmtreeref
            {Lemma}
            {lemma: continuity of h k b mapping clipped}
        \thmtreenode{-}
            {Lemma}
            {lemma: weak convergence, expectation wrt approximation, LDP, preparation}
            {0.8}{
            On \hyperlink{index, notation-set-E-delta-LDP}{$E^\delta_{c,k}(\eta)$}, continuous and bounded functionals of scaled jump times and jump sizes converge to that of uniform and Pareto distributions.
            }
            \begin{thmdependence}
            \thmtreenode{-}
                {Lemma}{lemma: weak convergence of cond law of large jump, LDP}
                {0.8}{
                Conditional on \hyperlink{index, notation-set-E-delta-LDP}{$E^\delta_{c,k}(\eta)$}, the scaled jump times and jump sizes converge to uniform and Pareto distributions. 
                }
                % \vspace{-5pt}
            \end{thmdependence}
        
        \end{thmdependence}
    \end{thmdependence}

\bigskip

\bigskip

\thmtreenode{-}
    {Theorem}{theorem: first exit time, unclipped}{0.8}
    {
    (a)
    $
        \lim_{\eta \downarrow 0}\P\Big(C^I_b \eta\cdot \lambda^{ \mathcal{J}^I_b }(\eta)\tau^{\eta|b}(x) > t
        % ;\ X^{\eta|b}_{ \tau^{\eta|b}(x)}(x) \in B
        \Big)
        =
        % \big[ 
        \exp(-t)
        % \big]\cdot\frac{ \widecheck{\mathbf{C}}^{ (\mathcal{J}^I_b)|b }(B) }{ C^I_b }.
        $\\
        % where 
        % $\mathcal{J}^I_b \delequal \ceil{|s_\text{left}|\wedge s_\text{right}/b}$,
        % $C^I_b \delequal  \widecheck{ \mathbf{C} }^{ (\mathcal{J}^I_b)|b }\big( (-\infty,s_\text{left}]\cup[s_\text{right},\infty)\big)$,
        % and $\lambda(\eta)\delequal \eta^{-1}H(\eta^{-1})$.
    
    (b)
    $
    \lim_{\eta \downarrow 0}\P\Big(C^* \eta \lambda(\eta)\tau^\eta(x) > t
    % ;\ X^\eta_{ \tau^\eta(x)}(x) \in B
    \Big)
    =
    % \big[
    \exp(-t)
    % \big
    % ]\cdot\frac{ \widecheck{\mathbf{C}}(B) }{ C^* }.
    $
    % where 
    % $C^* \delequal  \widecheck{ \mathbf{C} }\big( (-\infty,s_\text{left}]\cup[s_\text{right},\infty)\big)$
    }
    \begin{thmdependence}
        \thmtreenode{-}
            {Lemma}{lemmaGeomFront}{0.8}{} 
        \thmtreenode{-}
            {Theorem}{thm: exit time analysis framework} {0.8}
            {
            First Exit Time Analysis Framework: $\P\big(
        \gamma(\eta)\tau_{I^\complement}^{\eta}(x)>t,\,V_{\tau}^\eta(x)\in B  
    \big) \approx C(B)e^{-t}$
            }
        \begin{thmdependence}
            \thmtreenode{-}
                {Proposition}{prop: exit time analysis main proposition}{0.8}{
                First Exit Time Analysis Framework ($\epsilon$-relaxed version):
                $
                \P\big(\gamma(\eta) \tau_{I(\epsilon)^\complement}^\eta(x) > t;\; V^\eta_{\tau_\epsilon}(x) \in B\big)
                \approx C(B)e^{-t} + \delta_{t,B}(\epsilon)
                $
                }
        \end{thmdependence}
        \thmtreenodewopf{}
            {Lemma}{lemma: exit prob one cycle, with exit location B, first exit analysis}{0.8}{Apply general framework: Verifying conditions \eqref{eq: exit time condition lower bound} and \eqref{eq: exit time condition upper bound}}
        \thmtreenodewopf{}
            {Lemma}{lemma: fixed cycle exit or return taking too long, first exit analysis}{0.8}{Apply general framework: Verifying condition \eqref{eq:E3}}
        \thmtreenodewopf{}
            {Lemma}{lemma: cycle, efficient return}{0.8}{Apply general framework: Verifying condition \eqref{eq:E4}}
    \end{thmdependence}

\bigskip

\thmtreenode{-}
    {Lemma}{lemma: exit prob one cycle, with exit location B, first exit analysis}{0.8}{Apply general framework: Verifying conditions \eqref{eq: exit time condition lower bound} and \eqref{eq: exit time condition upper bound}}
    
    \begin{thmdependence}
        \thmtreeref
            {Theorem}{corollary: LDP 2}
        \thmtreenode{-}
            {Lemma}{lemma: choose key parameters, first exit time analysis}{0.8}{}
        \thmtreenode{-}
            {Lemma}{lemma: measure check C J * b, continuity, first exit analysis}{0.8}{
            Verifying  $C(\partial I) = 0$ for the measure $C$ defined in \eqref{def: measure C and scale gamma when applying the exit time framework}
            }
            \begin{thmdependence}
                \thmtreeref{Lemma}{lemma: choose key parameters, first exit time analysis}
            \end{thmdependence}

        \thmtreenode{-}
            {Lemma}{lemma: exit rate strictly positive, first exit analysis}{0.8}{
            Verifying $
                    C^I_b = \widecheck{\mathbf C}^{(\mathcal{J}^I_b)|b}\big( I^\complement\big)\in(0,\infty)
                        $
            for the measure $C$ defined in \eqref{def: measure C and scale gamma when applying the exit time framework}
            }
            \begin{thmdependence}
                \thmtreeref{Lemma}{lemma: choose key parameters, first exit time analysis}
                \thmtreeref{Lemma}{lemma: continuity of h k b mapping clipped}
            \end{thmdependence}
        
        \thmtreenode{-}
            {Lemma}{lemma: limiting measure, with exit location B, first exit analysis}{0.8}{}
            \begin{thmdependence}
                \thmtreeref{Lemma}{lemma: choose key parameters, first exit time analysis}
            \end{thmdependence}
        
    \end{thmdependence}

\thmtreenode{-}
    {Lemma}{lemma: fixed cycle exit or return taking too long, first exit analysis}{0.8}{Apply general framework: Verifying condition \eqref{eq:E3}}
    \begin{thmdependence}
        \thmtreeref
            {Theorem}{corollary: LDP 2}
    \end{thmdependence}

\thmtreenode{-}
    {Lemma}{lemma: cycle, efficient return}{0.8}{Apply general framework: Verifying condition \eqref{eq:E4}}
    \begin{thmdependence}
        \thmtreeref
            {Theorem}{corollary: LDP 2}
    \end{thmdependence}

\bigskip

\bigskip

\end{thmdependence}
\fi

\ifshowtheoremlist
\newpage
\footnotesize
\newgeometry{left=1cm,right=1cm,top=0.5cm,bottom=1.5cm}
\linkdest{location of theorem list}
%\section*{List of Theorems}
\listoftheorems
\fi

\ifshowequationlist
\newpage
\linkdest{location of equation number list}
\section*{Numbered Equations}
%\label{def: set of ODE with k jumps}

\eqref{def: X eta b j x, unclipped SGD}
\eqref{def: H, law of Z_j}
% \eqref{assumption gradient noise heavy-tailed}
% \eqref{assumption: lipschitz continuity of drift and diffusion coefficients}
% \eqref{assumption: boundedness of drift and diffusion coefficients}
% \eqref{def: perturb ode mapping h k, 1}
% \eqref{def: perturb ode mapping h k, 2}
% \eqref{def: perturb ode mapping h k, 3}
\eqref{def: measure nu alpha}
% \eqref{def: measure C k t}
\eqref{def: scaled SGD, LDP}
% \eqref{def: set of ODE with k jumps}
% \eqref{theorem: LDP 1, unclipped}
% \eqref{theorem: sample path LDP, unclipped}
% \eqref{corollary, LDP 1 till time T, unclipped}
\eqref{def: X eta b j x, clipped SGD}
% \eqref{defTruncationClippingOperator}
\eqref{def: perturb ode mapping h k b, 1}
\eqref{def: perturb ode mapping h k b, 2}
\eqref{def: perturb ode mapping h k b, 3}
\eqref{def: l * tilde jump number for function g, clipped SGD}
\eqref{def: measure mu k b t}
% \eqref{assumption: nondegeneracy of diffusion coefficients}
% \eqref{theorem: LDP 1}
% \eqref{corollary: LDP 2}
% \eqref{theorem: sample path LDP, clipped}
% \eqref{corollary, LDP 1 till time T}
% \eqref{proof, def recursion tilde X, corollary, LDP 1 till time T}
% \eqref{proof, goal 1, corollary, LDP 1 till time T}
% \eqref{proof, observation 1, corollary, LDP 1 till time T}
% \eqref{lemmaGeomFront}
% \eqref{lemmaBasicGronwall}
\eqref{def: gradient descent process y}
% \eqref{lemma Ode Gd Gap}
\eqref{defArrivalTime large jump}
\eqref{defSize large jump}
\eqref{property: large jump time probability}
% \eqref{property: large jump time probability, exact jump time at the endpoint}
\eqref{def: Gamma M, set of bounded adapted process}
% \eqref{lemma LDP, small jump perturbation}
\eqref{def: event A i concentration of small jumps, 1}
\eqref{def: event A i concentration of small jumps, 2}
\eqref{proof: LDP, small jump perturbation, asymptotics part 1}
\eqref{proof: LDP, small jump perturbation, asymptotics part 2}
\eqref{proof: LDP, small jump perturbation, asymptotics part 3}
\eqref{term E Z 1, lemma LDP, small jump perturbation}
% \eqref{proof: LDP, small jump perturbation, ineq 0}
\eqref{proof: LDP, small jump perturbation, choose p}
\eqref{proof: LDP, small jump perturbation, ineq 1}
\eqref{proof: applying berstein ineq, lemma LDP, small jump perturbation}
\eqref{term second order moment hat Z 1, lemma LDP, small jump perturbation}
\eqref{def: E eta delta set, LDP}
\eqref{def: prob measure Q, LDP}
% \eqref{lemma: weak convergence of cond law of large jump, LDP}
\eqref{proof: lemma weak convergence of cond law of large jump, 1, LDP}
% \eqref{lemma: LDP, bar epsilon and delta}
\eqref{proof: choose bar delta, lemma LDP, bar epsilon and delta}
% \eqref{definition of xi prime1}
\eqref{bound of difference between xi and xi prime between t_J and t_J+1}
% \eqref{}
\eqref{goal, lemma: LDP, bar epsilon and delta, clipped version}
% \eqref{}
\eqref{choice of delta, proof, lemma: continuity of h k b mapping}
\eqref{choice of delta, 2, proof, lemma: continuity of h k b mapping}
\eqref{ineq 1, proof, lemma: continuity of h k b mapping}
% \eqref{}
\eqref{condition, x prime and x, lemma: continuity of h k b mapping}
% \eqref{lemma: boundedness of k jump set under truncation, LDP clipped}
\eqref{def: a sigma truncated at M, LDP}
\eqref{def: perturb ode mapping h k b, truncated at M, 1}
\eqref{def: perturb ode mapping h k b, truncated at M, 2}
\eqref{def: perturb ode mapping h k b, truncated at M, 3}
% \eqref{corollary: existence of M 0 bar delta bar epsilon, clipped case, LDP}
% \eqref{}
\eqref{ineq, no jump time, a, lemma: SGD close to approximation x circ, LDP}
% \eqref{ineq, with jump time, a, lemma: SGD close to approximation x circ, LDP}
\eqref{ineq, no jump time, b, lemma: SGD close to approximation x circ, LDP}
\eqref{ineq, with jump time, b, lemma: SGD close to approximation x circ, LDP}
\eqref{proof, ineq gap between X and y, SGD close to approximation x circ, LDP}
\eqref{proof, ineq gap between y and xi, SGD close to approximation x circ, LDP}
\eqref{proof, up to t strictly less than eta tau_1^eta, SGD close to approximation x circ, LDP}
% \eqref{def: x breve approximation, clipped, LDP, 1}
% \eqref{def: x breve approximation, clipped, LDP, 2}
% \eqref{def: x breve approximation, clipped, LDP, 3}
% \eqref{lemma: SGD close to approximation x breve, LDP clipped}
% \eqref{subsec: LDP clipped, proof of main results}
% \eqref{proposition: standard M convergence, LDP clipped}
% \eqref{proposition: standard M convergence, LDP unclipped}
\eqref{goal 1, proposition: standard M convergence, LDP unclipped}
% \eqref{goal 2, proposition: standard M convergence, LDP unclipped}
\eqref{goal 3, proposition: standard M convergence, LDP unclipped}
\eqref{goal 4, proposition: standard M convergence, LDP unclipped}
\eqref{goal 5, proposition: standard M convergence, LDP unclipped}
\eqref{subgoal for goal 2, proposition: standard M convergence, LDP unclipped}
% \eqref{proof, ineq for mathring x, proposition: asymptotic equivalence, unclipped}
\eqref{subgoal, goal 4, proposition: standard M convergence, LDP unclipped}
% \eqref{pick bar delta, goal 4, proposition: standard M convergence, LDP unclipped}
% \eqref{proposition: standard M convergence, LDP clipped, stronger boundedness assumption}
\eqref{property: choice of M 0, new, proposition: standard M convergence, LDP clipped}
\eqref{property: Y and tilde Y, 1, proposition: standard M convergence, LDP clipped}
\eqref{property: Y and tilde Y, 2, proposition: standard M convergence, LDP clipped}
\eqref{goal new 1, proposition: standard M convergence, LDP clipped}
\eqref{goal new 2, proposition: standard M convergence, LDP clipped}
\eqref{def: objects for definition of hat X leq k}
\eqref{def: time and size of top j jumps before n steps}
\eqref{def: hat X truncated b, j top jumps, 1}
\eqref{def: hat X truncated b, j top jumps, 2}
\eqref{property: equivalence between breve X and hat X, clipped at b}
% \eqref{proposition: asymptotic equivalence, clipped}
% \eqref{proposition: uniform weak convergence, clipped}
\eqref{goal: asymptotic equivalence claim, proposition: asymptotic equivalence, clipped}
\eqref{choice of bar delta, proof, proposition: asymptotic equivalence, clipped}
\eqref{choice of bar epsilon, proof, proposition: asymptotic equivalence, clipped}
\eqref{goal, event B 1, clipped, proposition: asymptotic equivalence, clipped}
\eqref{goal, event B 2, clipped, proposition: asymptotic equivalence, clipped}
\eqref{goal, event B 3, clipped, proposition: asymptotic equivalence, clipped}
\eqref{goal, event B 4, clipped, proposition: asymptotic equivalence, clipped}
% \eqref{goal, event B 5, clipped, proposition: asymptotic equivalence, clipped}
% \eqref{def: x eta b M circ approximation, LDP, 1}
\eqref{def: x eta b M circ approximation, LDP, 2}
\eqref{def: x eta b M circ approximation, LDP, 3}
\eqref{proof, ineq for mathring x, proposition: asymptotic equivalence, clipped}
\eqref{proof: bounded jump size for breve X, goal, event B 2, clipped, proposition: asymptotic equivalence, clipped}
\eqref{proof: goal 1, lemma: atypical 3, large jumps being too small, LDP clipped}
\eqref{proof: ineq 1, lemma: atypical 3, large jumps being too small, LDP, clipped}
% \eqref{}
\eqref{choice of bar delta, proposition: uniform weak convergence, clipped}
% \eqref{def: Phi mapping, LDP, unclipped}
\smallskip

\noindent
% \eqref{def: first exit time for heavy tailed SGD}
\eqref{def: mapping check g k b, endpoint of path after the last jump, first exit analysis}
% \eqref{theorem: first exit time, unclipped}
% \eqref{theorem: first exit time, clipped}
\eqref{def: first exit time, J *}
% \eqref{lemma: choose key parameters, first exit time analysis}
\eqref{proof, observation on xi, lemma: choose key parameters, first exit time analysis}
% \eqref{property: fixed constant bar epsilon delta t, first exit analysis, 4}
% \eqref{property: fixed constant bar epsilon delta t, first exit analysis, 1}
% \eqref{property: fixed constant bar epsilon delta t, first exit analysis, 2}
% \eqref{property: fixed constant bar epsilon delta t, first exit analysis, 3}
% \eqref{property: between g k b and h k b, 1}
% \eqref{property: between g k b and h k b, 2}
\eqref{def: t epsilon function, first exit analysis}
\eqref{property: t epsilon function, first exit analysis}
% \eqref{property: bound return time log scale, first exit analysis}
% \eqref{lemma: measure check C J * b, continuity, first exit analysis}
% \eqref{lemma: exit rate strictly positive, first exit analysis}
\eqref{proof, goal, lemma: exit rate strictly positive, first exit analysis}
% \eqref{lemma: limiting measure, first exit analysis}
% \eqref{lemma: key lemma constant cycle bound}
\eqref{def: epsilon relaxed first exit time}
\eqref{def: return time R in first cycle, first exit analysis}
% % \eqref{lemma: cycle, efficient return}
% \eqref{lemma: cycle, exit when close to boundary}
% \eqref{lemma: fixed cycle exit or return taking too long, first exit analysis}
% \eqref{lemma: exit prob one cycle, first exit analysis}
% \eqref{proposition: first exit time}
% \eqref{claim: proposition: first exit time, upper bound}
% \eqref{claim: proposition: first exit time, lower bound}
% \eqref{proof: goal 1, proposition: first exit time}
% \eqref{proof: goal 2, proposition: first exit time}
% \eqref{proof, lower bound 1, proposition: first exit time}
% \eqref{proof, lower bound 2, proposition: first exit time}
% \eqref{proof: decomp of main event, theorem: first exit time, unclipped}
\eqref{proof, ineq 1, theorem: first exit time, unclipped}
\smallskip

\noindent
% \eqref{bounded away condition, theorem: lifiting from sample path LDP to LD of stationary distribuion of SGD}
% \eqref{sec: SDE}
% \eqref{assumption: heavy-tailed levy process}
\eqref{defSDE, initial condition x}
% \eqref{theorem: LDP, SDE, uniform M convergence}
% \eqref{theorem: LDP, SDE}
\eqref{def: Y eta b 0 f g, SDE clipped}
\eqref{def, tau and W, discont in Y, eta b, 1}
\eqref{def, tau and W, discont in Y, eta b, 2}
\eqref{def: objects for defining Y eta b, clipped SDE, 1}
\eqref{def: objects for defining Y eta b, clipped SDE, 2}
\eqref{def: objects for defining Y eta b, clipped SDE, 3}
\eqref{def: objects for defining Y eta b, clipped SDE, 4}
\eqref{defSDE, initial condition x, clipped}
\fi

\ifshownavigationpage
\newpage
\normalsize
\tableofcontents

\section*{Navigation Links}

\ifshowtheoremlist
    \noindent
    \hyperlink{location of theorem list}{List of Theorems}
    \bigskip
\fi

\ifshowtheoremtree
    \noindent
    \hyperlink{location of theorem tree}{Theorem Tree}
    \begin{itemize}
    \thmtreeref
        {Proposition}{proposition: standard M convergence, LDP clipped}
    
    \thmtreeref
        {Proposition}{proposition: standard M convergence, LDP unclipped}
        
    \thmtreeref
        {Theorem}{theorem: LDP 1, unclipped}
        
    \thmtreeref
        {Theorem}{theorem: sample path LDP, unclipped}
    
    \thmtreeref
        {Proposition}{proposition: standard M convergence, LDP clipped}
    
    \thmtreeref
        {Theorem}{theorem: LDP 1}
    
    \thmtreeref
        {Theorem}{theorem: first exit time, unclipped}
    
    % \thmtreeref
    %     {Theorem}{theorem: lifting from sample path LDP to LD of stationary distribution of SGD}
        
    \end{itemize}
\fi

\ifshowreminders
    \noindent
    \hyperlink{location of reminders}{Assumptions, etc}
    \bigskip
\fi

\ifshowtheoremlist
    \noindent
    \hyperlink{location of equation number list}{Numbered Equations}
    \bigskip
\fi

\ifshownotationindex
    \noindent
    \hyperlink{location of notation index}{Notation Index}
    \begin{itemize}
    \item[] 
        \hyperlink{location, notation index A}{A},
        \hyperlink{location, notation index B}{B},
        \hyperlink{location, notation index C}{C},
        \hyperlink{location, notation index D}{D},
        \hyperlink{location, notation index E}{E},
        \hyperlink{location, notation index F}{F},
        \hyperlink{location, notation index G}{G},
        \hyperlink{location, notation index H}{H},
        \hyperlink{location, notation index I}{I},
        \hyperlink{location, notation index J}{J},
        \hyperlink{location, notation index K}{K},
        \hyperlink{location, notation index L}{L},
        \hyperlink{location, notation index M}{M},
        \hyperlink{location, notation index N}{N},
        \hyperlink{location, notation index O}{O},
        \hyperlink{location, notation index P}{P},
        \hyperlink{location, notation index Q}{Q},
        \hyperlink{location, notation index R}{R},
        \hyperlink{location, notation index S}{S},
        \hyperlink{location, notation index T}{T},
        \hyperlink{location, notation index U}{U},
        \hyperlink{location, notation index V}{V},
        \hyperlink{location, notation index W}{W},
        \hyperlink{location, notation index X}{X},
        \hyperlink{location, notation index Y}{Y},
        \hyperlink{location, notation index Z}{Z}
    \end{itemize}
\fi

\fi

%\end{APPENDIX}
%%%%%%%%%%%%%%%%%
\end{document}
%%%%%%%%%%%%%%%%%